\documentclass[12pt,reqno]{amsart}
\numberwithin{equation}{section}
\usepackage{amssymb}
\usepackage[sc,osf]{mathpazo} 
\usepackage{fullpage}
\usepackage{tikz}
\usetikzlibrary{patterns}
\definecolor{linkred}{rgb}{0.7,0.2,0.2}
\definecolor{linkblue}{rgb}{0,0.2,0.6}
\definecolor{linkgreen}{rgb}{0,0.6,0.2}
\usepackage[colorlinks,plainpages,backref,
    linkcolor=linkblue,
    citecolor=linkgreen,
    urlcolor=linkred]{hyperref}

\usepackage[all]{xy}

\newtheorem{theorem}{Theorem}[section]
\newtheorem{lemma}[theorem]{Lemma}
\newtheorem{prop}[theorem]{Proposition}
\newtheorem{coro}[theorem]{Corollary}

\theoremstyle{definition}
\newtheorem{definition}[theorem]{Definition}
\newtheorem{example}[theorem]{Example}
\newtheorem{remark}[theorem]{Remark}

\newcommand{\BPD}[2][1.2pc]{%
\setlength{\unitlength}{#1}
\definecolor{lightcyan}{rgb}{0.8,1,1}%
\def\FF{%
    \qbezier(0.5,0)(0.5,0.5)(1,0.5)    
}
\def\JJ{%
    \qbezier(0.5,1)(0.5,0.5)(0,0.5)
    }
\def\II{%
    \qbezier(0.5,0)(0.5,0.5)(0.5,1)}%
\def\HH{%
    \qbezier(0,0.5)(0.5,0.5)(1,0.5)}%
\def\XX{\NN\HH}
\def\NN{%
    \qbezier(0.5,0)(0.5,0.3)(0.5,0.3)
    \qbezier(0.5,1)(0.5,0.7)(0.5,0.7)}%
\def\BPDfr##1{%
\begin{picture}(1,1)%
    \linethickness{0.08\unitlength}
    ##1
    \thinlines
    \color{lightgray}%
    \put(0,0){\line(0,1){1}}%
    \put(1,0){\line(0,1){1}}%
    \put(0,0){\line(1,0){1}}%
    \put(0,1){\line(1,0){1}}%
\end{picture}}
\def\BPDfrc##1{\BPDfr{\put(0,0){\color{lightcyan}\rule{\unitlength}{\unitlength}}##1}}
\def\x{\BPDfrc{\XX}}
\def\f{\BPDfrc{\FF}}
\def\o{\BPDfrc{}}
\def\j{\BPDfrc{\JJ}}
\def\h{\BPDfrc{\HH}}
\def\i{\BPDfrc{\II}}
\def\O{\BPDfr{}}
\def\X{\BPDfr{\XX}}
\def\BX{\BPDfr{\II\HH}}
\def\F{\BPDfr{\FF}}
\def\J{\BPDfr{\JJ}}
\def\H{\BPDfr{\HH}}
\def\I{\BPDfr{\II}}
\def\B{\BPDfr{\JJ\FF}}
\def\M##1{\begin{picture}(1,1)%
    \put(0,0.2){\makebox[\unitlength]{$##1$}}
\end{picture}}
\begin{array}{@{\,}c@{\,}}
\def\dots{\scriptstyle\cdots}
{\def\arraystretch{0}
\setlength{\arraycolsep}{0pc}
\color{teal}
\begin{array}{@{}l@{}}%
#2\end{array}}
\end{array}}

\newcommand{\clan}[2][1.2pc]{%
\setlength{\unitlength}{\dimexpr#1/2}%
\def\drawclan##1{%
\expandafter%
    \drawclanbegin.##1%
    \drawclanend\drawclanendend}
\def\drawclanbegin.##1##2\drawclanendend{%
    \ifx\drawclanend##1%
        {\relax}%
    \else%
    \clandot{##1}%
    \expandafter%
        \drawclanbegin.##2\drawclanendend%
\fi}
\def\clandot##1{%
    \if+##1
    \begin{picture}(2,1)
        \linethickness{0.15\unitlength}
        \color{red}
        \qbezier(0.4,0.4)(1,0.4)(1.6,0.4)
        \qbezier(1,1.0)(1,0.4)(1,-.2)
    \end{picture}
    \else\if-##1
    \begin{picture}(2,1)
        \linethickness{0.15\unitlength}
        \color{blue}
        \qbezier(0.5,0.4)(1,0.4)(1.5,0.4)
    \end{picture}
    \else\if.##1
    \begin{picture}(2,1)
        \color{teal}
        \put(1,0.4){\circle*{0.7}}
    \end{picture}
    \else
        \def\inpA{##1}\def\inpB{\dots}%
    \ifx\inpA\inpB%
    \begin{picture}(2,1)
        \color{teal}
        \put(0.5,0.4){\circle*{0.3}}
        \put(1,0.4){\circle*{0.3}}
        \put(1.5,0.4){\circle*{0.3}}
    \end{picture}
    \else
    \def\inpA{##1}\def\inpB{}%
    \ifx\inpA\inpB%
        \relax%
    \else
        \def\inpA{##1}\def\inpB{\pm}%
    \ifx\inpA\inpB%
    \begin{picture}(2,1)
        \linethickness{0.15\unitlength}
          \color{red}
          \qbezier(0.4,0.4)(1,0.4)(1.6,0.4)
          \qbezier(1,1.0)(1,0.4)(1,-.2)
          \color{blue}
          \qbezier(0.4,-.2)(1,-.2)(1.6,-.2)
    \end{picture}
    \else
        \def\inpA{##1}\def\inpB{\mp}%
    \ifx\inpA\inpB%
    \begin{picture}(2,1)
        \linethickness{0.15\unitlength}
        \color{red}
        \qbezier(0.4,0.4)(1,0.4)(1.6,0.4)
        \qbezier(1,1.0)(1,0.4)(1,-.2)
        \color{blue}
        \qbezier(0.4,1.0)(1,1.0)(1.6,1.)
    \end{picture}
    \else\if,##1
    \begin{picture}(2,1)
        \linethickness{0.15\unitlength}
        \color{teal!50!white}
        \put(1,0.4){\circle*{0.7}}
    \end{picture}
    \else
    \begin{picture}(2,1)
        \color{white}\linethickness{4pt}
        \qbezier(1,0.4)({\numexpr(##1)+1},{\numexpr(##1)+1})({\numexpr(##1)*2+1},0.4)%
        \color{black}\linethickness{0.8pt}
        \qbezier(1,0.4)({\numexpr(##1)+1},{\numexpr(##1)+1})({\numexpr(##1)*2+1},0.4)%
        \color{teal}
        \put(1,0.4){\circle*{0.7}}
    \end{picture}
    \rule{0pc}{\dimexpr\unitlength*(##1)/2+\unitlength}%
    \fi\fi\fi\fi\fi\fi\fi\fi%
}%
{\drawclan{#2}}%
}%

\DeclareFontEncoding{LS1}{}{}
\DeclareFontSubstitution{LS1}{stix}{m}{n}
\DeclareSymbolFont{stixletters}{LS1}{stix}{m}{it}
\DeclareMathAccent{\cev}{\mathord}{stixletters}{"91}
\DeclareMathAccent{\vec}{\mathord}{stixletters}{"92}

\title{Equivariant Schubert Calculus for\\
Inverse Grassmannian Permutations}

\author{Yiming Chen}
\address[Yiming Chen, Neil J.Y. Fan, Ming Yao]{Department of Mathematics, 
Sichuan University, Chengdu, Sichuan 610065, P.R. China}
\email{ym\_chen@stu.scu.edu.cn, fan@scu.edu.cn, yaom@stu.scu.edu.cn}

\author{Neil J.Y. Fan}
\author{Rui Xiong}
\address[Rui Xiong]{Department of Mathematics and Statistics, University of Ottawa, 150 Louis-Pasteur, Ottawa, ON, K1N 6N5, Canada}
\email{rxion043@uottawa.ca}
\author{Ming Yao}

\begin{document}

\def\vv{\mathbf{v}}

\begin{abstract}
We give a Graham-positive expansion for the product of two double Schubert polynomials indexed by two inverse Grassmannian permutations. Surprisingly, the nonzero structure constants are double Schubert polynomials in two disjoint sets of equivariant variables. 
We also give a positive expansion for the product of two single Schubert polynomials indexed by a $321$-avoiding permutation (e.g., a Grassmannian permutation) and an inverse Grassmannian permutation. Unexpectedly, the nonzero structure constants are Edelman--Greene coefficients. 

\end{abstract}

\maketitle

\setcounter{tocdepth}{1}
\tableofcontents

\section{Introduction}

The central problem in equivariant Schubert calculus is to find manifestly positive combinatorial models for the coefficients $c_{u,v}^w(t)$ in the expansion
\begin{equation}\label{eq:cuvw}
\mathfrak{S}_u(x;t)\cdot \mathfrak{S}_{v}(x;t)
=\sum_{w\in S_\infty} c_{u,v}^w(t)\cdot\mathfrak{S}_{w}(x;t),
\end{equation}
where $\mathfrak{S}_w(x;t)$ are double Schubert polynomials  indexed by permutations $w\in S_\infty = \bigcup_{n\geq 0} S_n$. 
Geometrically, these coefficients $c_{u,v}^w(t)$ correspond to the structure constants of equivariant cohomology of flag varieties. 
These coefficients are known for very limited cases, just to name some famous rules: 
\begin{itemize}
   \item When one of $u,v$ is $s_{k-r+1}\cdots s_{k-1}s_k$ for some $1\leq r\leq k$, the expansion \eqref{eq:cuvw} is known as  the equivariant Pieri rule \cite{Robinson,LSY}; 
    \item When $u,v$ are both Grassmannian permutations \cite{KT03,MS99,PeYo15-1,PeYo15-2,Wheeler}, and more generally of separated descents,     these expansions admit  puzzle rules \cite{Pzz3}. 
    \item There is also a rule for two-step flag varieties due to Buch \cite{Buch}; see also Knutson and Zinn-Justin \cite{Pzz1}. 
\end{itemize}
In this paper, we  give a new family of positive combinatorial models of the expansion \eqref{eq:cuvw} under the assumption 
\begin{equation}\label{eq:invGr}
\text{$u,v$ are both inverse Grassmannian permutations}, 
\end{equation}
that is, both $u^{-1}$ and $v^{-1}$ have at most one descent.

\subsection{Non-equivariant Rule and the Symmetric Subgroup $K$}
When $u,v$ are inverse Grassmannian permutations, there is a model for the   single Schubert structure constants  $c_{u,v}^w(0)$  obtained by Pechenik and Weigandt \cite{Pandt} based on the result of Wyser \cite{Wyser}. 
Let us sketch the main idea of their proof and explain the essential obstacle of generalizing to the equivariant setting. 

Let $p,q$ be the descents (if any) of $u^{-1}$ and $v^{-1}$, respectively. 
Let $n=p+q$. 
Consider the following subgroups of $G= GL_{n}$
$$\def\mygrid{
\draw(0,2.5) to (4.5,2.5);
\draw(2.0,0) to (2.0,4.5);
\draw[thick](0.1,0) to (-.1,0) to (-.1,4.5) to (0.1,4.5);
\draw[thick](4.4,0) to (4.6,0) to (4.6,4.5) to (4.4,4.5);}
P = 
\begin{matrix}
\begin{tikzpicture}[scale=0.5]
\mygrid
\fill[pattern=north east lines, pattern color=gray] 
    ({0},{4.5}) rectangle ({2},{2.5}); 
\fill[pattern=north east lines, pattern color=gray] 
    ({0},{2.5}) rectangle ({4.5},{0}); 
\end{tikzpicture}
\end{matrix}\,,
\qquad 
Q = 
\begin{matrix}
\begin{tikzpicture}[scale=0.5]
\mygrid
\fill[pattern=north east lines, pattern color=gray] 
    ({0},{4.5}) rectangle ({4.5},{2.5}); 
\fill[pattern=north east lines, pattern color=gray] 
    ({2},{2.5}) rectangle ({4.5},{0}); 
\end{tikzpicture}
\end{matrix}\,,
\qquad 
K = 
\begin{matrix}
\begin{tikzpicture}[scale=0.5]
\mygrid
\fill[pattern=north east lines, pattern color=gray] 
    ({0},{4.5}) rectangle ({2},{2.5}); 
\fill[pattern=north east lines, pattern color=gray] 
    ({2},{2.5}) rectangle ({4.5},{0}); 
\end{tikzpicture}
\end{matrix}\,,
$$
where  the matrices are partitioned into blocks of sizes $p$ and  $q$. 
The subgroup $K$ is a symmetric subgroup, that is, it is the invariant subgroup of an involution on $G$. 
In particular, $K$ is a spherical subgroup, i.e., it acts on the flag variety $G/B$ with finitely many orbits.

We first assume $u,v\in S_{p+q}$. 
The opposite Schubert variety $\overline{B^-uB/B}$ is $P$-invariant, and 
the Schubert variety $\overline{Bw_0vB/B}$ is $Q$-invariant. 
Thus the Richardson variety $R_{u,w_0v}=\overline{B{w_0v}B/B}\cap \overline{B^-uB/B}$ is $K$-invariant. 
Since there are only finitely many $K$-orbits on $G/B$, the Richardson variety $R_{u,w_0v}$ is necessarily a $K$-orbit closure. 
By Brion \cite{Brion}, there is a general formula for the non-equivariant Schubert expansion of a spherical subgroup orbit closure. This yields an explicit formula of $c_{u,v}^w(0)$ for $w\in S_n$. 
More generally, to address the case when $u,v,w\notin S_{p+q}$, Pechenik and Weigandt \cite{Pandt} applied the (back) stability of the structure constants of single Schubert polynomials to reduce to the $u,v,w\in S_{p+q}$ case.

The equivariant Schubert expansion of $K$-orbit closures was obtained recently by the authors \cite{WE}. Unfortunately, it does not directly apply to the equivariant product in \eqref{eq:cuvw}. 
The essential difficulty  lies in the fact that: 
\begin{equation}\label{eq:diff}
\begin{matrix}
\text{the product $\mathfrak{S}_u(x;t)\cdot \mathfrak{S}_{v}(x;t)$ } 
\text{
does not represent $R_{u,w_0v}$.}
\end{matrix}
\end{equation}
To overcome this difficulty, some new ideas must be introduced. 

\subsection{Triple Schubert Calculus and the Spherical Subgroup $S$}
Motivated by Molev and Sagan \cite{MS99}, Knutson and Tao \cite{KT03} pioneered the study of triple Schubert calculus, 
i.e., the product of two Schubert polynomials with two different secondary variables
\begin{equation}\label{eq:cuvwtri}
\mathfrak{S}_u(x;t)\cdot \mathfrak{S}_{v}(x;y)
=\sum_{w\in S_\infty} c_{u,v}^w(t;y)
\cdot\mathfrak{S}_{w}(x;t).
\end{equation}
The second author, Guo and the third author \cite{FGX} established a puzzle rule of $c_{u,v}^w(t;y)$ in the separated descents case. 
In \cite{Gaox}, Gao and the third author proved the positivity conjecture of Samuel \cite{Sam24} on the coefficients $c_{u,v}^w(t;y)$ via establishing a new geometric explanation of the product. 
The key idea of \cite{Gaox} is to realize \eqref{eq:cuvwtri} as the equivariant Schubert expansion of a Richardson variety.

Lifting to triple Schubert calculus will solve the difficulty \eqref{eq:diff} above, but at the same time we will have to replace the back stability by a geometric argument. 
It turns out that it is necessary to work with the following subgroups of $GL_{n}$ for $n\gg 0$
$$\def\mygrid{\draw(0,3.5) to (4.5,3.5);
\draw(0,1.5) to (4.5,1.5);
\draw(1.0,0) to (1.0,4.5);
\draw(3.0,0) to (3.0,4.5);
\draw[thick](0.1,0) to (-.1,0) to (-.1,4.5) to (0.1,4.5);
\draw[thick](4.4,0) to (4.6,0) to (4.6,4.5) to (4.4,4.5);}
P = 
\begin{matrix}
\begin{tikzpicture}[scale=0.5]
\mygrid
\fill[pattern=north east lines, pattern color=gray] 
    ({0},{4.5}) rectangle ({1},{3.5}); 
\fill[pattern=north east lines, pattern color=gray] 
    ({0},{3.5}) rectangle ({4.5},{0}); 
\end{tikzpicture}
\end{matrix}\,,
\qquad 
Q = 
\begin{matrix}
\begin{tikzpicture}[scale=0.5]
\mygrid
\fill[pattern=north east lines, pattern color=gray] 
    ({0},{4.5}) rectangle ({3},{1.5}); 
\fill[pattern=north east lines, pattern color=gray] 
    ({3},{4.5}) rectangle ({4.5},{0}); 
\end{tikzpicture}
\end{matrix}\,,
\qquad 
S = 
\begin{matrix}
\begin{tikzpicture}[scale=0.5]
\mygrid
\fill[pattern=north east lines, pattern color=gray] 
    ({0},{4.5}) rectangle ({1},{1.5}); 
\fill[pattern=north east lines, pattern color=gray] 
    ({3},{3.5}) rectangle ({4.5},{0}); 
\fill[pattern=north east lines, pattern color=gray] 
    ({1},{3.5}) rectangle ({3.0},{1.5}); 
\end{tikzpicture}
\end{matrix}\,,$$
where  the matrices are partitioned into blocks of sizes $p,m,q$ ($m=n-p-q$). 
It is worth pointing out  that even when $u,v,w\in S_{p+q}$, we still need to work with the subgroups above for $m\gg 0$. 

It turns out that the subgroup $S$ is also a spherical subgroup, which  can be seen from the following embeddings:
\begin{align}\label{sspherical}
{\{\text{$S$-orbits of $G/B$}\}}
& = \{\text{$G$-orbits of $G/S\times G/B$}\}\nonumber\\
& \hookrightarrow
\{\text{$G$-orbits of $G/P\times G/Q\times G/B$}\}\\
& \hookrightarrow
\{\text{quiver representations of type $D$}\}. \nonumber
\end{align}
The last embedding is   well-known, and was pointed out in the study of multiple flag varieties of finite type by Magyar, Weyman and Zelevinsky \cite{MWZ}; see also \cite{Lit2,Smirnov1,HHK,BZ02,KR21,Raj}. 
We will establish  properties of $S$-orbits, including the finiteness, using the representation theory of quivers in Appendix \ref{sec:Sorbit} for completeness. 

Moreover,  we introduce a new combinatorial structure, called \emph{preclans}, a generalization of clans \cite{Matsuki}, to parametrize the $S$-orbits, and many (known and unknown) properties of $S$-orbits can be stated uniformly in terms of preclans. For nonnegative integers $p,m,q$, a $(p,m,q)$-preclan is a partial matching on $p+m+q$ nodes 
with some unmatched nodes colored by plus signs $\clan{+}$ and minus signs $\clan{-}$  such that the number of $\clan{+}$ minus the number of $\clan{-}$ equals $p-q$ and the number of uncolored nodes $\clan{,}$ equals $m$. 
For example, the following is a $(4,2,5)$-preclan:
 $$\clan{6-84,-..,+.}.$$

\subsection{Schubert Expansion and Positroid Varieties}

The non-equivariant Schubert expansion of $S$-orbit closures can be obtained by a general theorem of Brion \cite{Brion}.
From this, we can recover the results of Pechenik and Weigandt \cite{Pandt} by embedding preclans into the set of back stable clans.

To determine the equivariant Schubert expansion of $S$-orbit closures, similar to the method used in \cite[Proof of Theorem B]{WE}, it reduces to computing the equivariant localization at the identity $1\cdot B/B$. 
Denote the following subgroups of $G= GL_{n}$
$$\def\mygrid{\draw(0,3.5) to (4.5,3.5);
\draw(0,1.5) to (4.5,1.5);
\draw(1.0,0) to (1.0,4.5);
\draw(3.0,0) to (3.0,4.5);
\draw[thick](0.1,0) to (-.1,0) to (-.1,4.5) to (0.1,4.5);
\draw[thick](4.4,0) to (4.6,0) to (4.6,4.5) to (4.4,4.5);}
N = 
\begin{matrix}
\begin{tikzpicture}[scale=0.5]
\mygrid
\fill[pattern=north east lines, pattern color=gray] 
    ({0},{1.5}) rectangle ({3.0},{0}); 
\node at (0.5,4) {$1$}; 
\node at (2,2.5) {$1$}; 
\node at (3.75,0.75) {$1$}; 
\end{tikzpicture}
\end{matrix}\,,\qquad
\bar{B} = 
\begin{matrix}
\begin{tikzpicture}[scale=0.5]
\mygrid
\fill[pattern=north east lines, pattern color=gray] 
    (0,4.5) to (0,4.3) to (0.8,3.5) to (1,3.5) to (1,4.5) ;
\fill[pattern=north east lines, pattern color=gray] 
    (1,3.5) to (1,3.3) to (4.3,0) to (4.5,0) to (4.5,3.5) ;
\end{tikzpicture}
\end{matrix}\,
\quad \subseteq \quad
B = 
\begin{matrix}
\begin{tikzpicture}[scale=0.5]
\mygrid
\fill[pattern=north east lines, pattern color=gray] 
    (0,4.5) to (0,4.3) to (4.3,0) to (4.5,0) to (4.5,4.5) ;
\end{tikzpicture}
\end{matrix}\,.
$$
We  show that the subgroup $N\cong N\cdot B/B$  is a transversal slice of all $S$-orbits in $P\cdot B/B$, which is also an $S$-equivariant neighborhood of $1\cdot B/B$. 
When $m=0$, the transversal intersection  of $S$-orbits with $N\cong \mathbb{M}_p(\mathbb{C})$ can be identified with matrix Schubert varieties \cite{WE}.

Surprisingly, for general $m>0$, the intersection is related to positroid varieties over Grassmannians. 
Positroids were originally introduced by Postnikov \cite{Pos} to understand totally nonnegative Grassmannians. 
Knutson, Lam and Speyer \cite{KLS} studied 
the positroid varieties from an algebro-geometric point of view. 
In our context, the key technical tool is ``globalization''. 
Precisely, the intersection of an $S$-orbit closure with $N\cong N\cdot B/B$ is isomorphic to the intersection of a positroid variety with an opposite Schubert cell in the Grassmannian. 
In particular, the localization of an $S$-orbit closure coincides with the localization of a positroid variety. 
Under this identification, computing the localization globalizes to a known problem: finding polynomial representatives for positroid varieties.

A significant special case is when 
$m=0$, $p=q$, and $\gamma$ is the preclan with the $i$-th node matched with the $(p+w(i))$-th node for $1\leq i\leq p$ for a permutation $w\in S_p$. 
The corresponding positroid variety is the graph Schubert variety of $w$ over Grassmannian $Gr_p(\mathbb{C}^{2p})$. 
This agrees with the aforementioned results of \cite{WE}.

It is well-known that the equivariant fundamental classes of positroid varieties are represented by double affine Stanley symmetric polynomials \cite{KLS,LLS18}. 
Using the above connections, it follows that every nonzero triple coefficient in \eqref{eq:cuvwtri}
$$
c_{u,v}^w(t, y)
\text{ is a double affine Stanley symmetric polynomial}.
$$
We remark that by our choice, all double affine Stanley symmetric polynomials appearing here are essentially finite, but it would be helpful to keep their affine nature.

Obviously, the coefficients $c_{u,v}^w(t)$ in \eqref{eq:cuvw} can be obtained by setting $y=t$ in $c_{u,v}^w(t;y)$. 
However, the Graham positivity \cite{Graham01}  of double coefficients $c_{u,v}^w(t)$ does not directly follow from the triple positivity of $c_{u,v}^w(t;y)$ in \cite{Sam24,Gaox}. 
In order to get a Graham-positive formula, we  use the pipe dream model of Shimozono and Zhang \cite{SZ} for  double affine Stanley symmetric polynomials. 
We show combinatorially that  every nonzero double coefficient 
$$c_{u,v}^w(t)  
\text{ is a double Schubert polynomial in two disjoint sets of variables.
}
$$
In the remainder of the Introduction, we will package all the above steps together and state the main result in a completely combinatorial way.

\subsection{Main results}
Instead of delving into formal definitions, we prefer to present the main results  by means of  examples.
We need two specific types of preclans:
$$
\begin{array}{c@{\qquad\qquad}c}
\text{Richardson preclan}&
\text{permutational preclan}\\
\text{Definition \ref{def:Richardosnpreclan}}& 
\text{Definition  \ref{def:perm}}
\\
\clan{1.,5+2-..},& 
\clan{463+..,.,}.\\
\end{array}$$
To state the main theorem, we still need the following notations: 
\begin{itemize}
    \item The preclan $w*\gamma$ for a permutation $w$ and a preclan $\gamma$. \hfill Definition \ref{def:monoidaction}
    \item The length $\ell(\gamma)$ of a preclan $\gamma$. \hfill Definition \ref{length}
    \item A Richardson preclan $\gamma_{v,u}$ for given inverse Grassmannians $v,u$. 
    \hfill Definition \ref{gammvu}
    \item A permutation $\eta_\gamma$ for a given permutational preclan $\gamma$. 
    \hfill Definition \ref{def:perm}
\end{itemize}

\subsection*{Main Theorem} 
Let $v,u$ be a $p$-inverse  and a $q$-inverse Grassmannian permutation, respectively. Without loss of generality, we can assume $p\geq q$. 
Then the structure constants $c_{v,u}^w(t)$ in \eqref{eq:cuvw} are given by
\begin{equation}\label{cuvt}
c_{v,u}^w(t) = \begin{cases}
\mathfrak{S}_{\eta_{\gamma}}(-t_q,\ldots,-t_1;-t_{p+1},-t_{p+2},\ldots,-t_n), \\
\qquad \text{if $\gamma:=w*\gamma_{v,u}$ is permutational and }
\ell(\gamma) = \ell(w)+\ell(\gamma_{v,u}),\\[1ex]
0, \quad \text{otherwise}.
\end{cases}
\end{equation}
It is easy to see that $c_{v,u}^w(t)$ is Graham-positive.
The triple version of \eqref{cuvt}, i.e., $c_{v,u}^w(t,y)$, takes a similar form (see Theorem \ref{th:triplecoeffi}), where the role of Schubert polynomials is played by affine Stanley symmetric polynomials. 

\begin{example}\label{eg:maineg}
Let us compute $c_{v,u}^w(t)$ for 
$v=12673485, u = 15672834,$ and $ 
w = 15782643.$
Note that $v$ is $5$-inverse Grassmannian and $u$ is $4$-inverse Grassmannian. 
The corresponding Richardson preclan $\gamma_{v,u}$ is 
$$
\begin{matrix}
\gamma_{v,u}=
\raisebox{-0.15cm}{\begin{tikzpicture}[scale=0.5]
\draw[ultra thick, gray,->]
    (0.5,0.5)--++(1,-1)--++(1,0)--++(1,1)--++(1,1)--++(1,-1)--++(1,0)--++(1,0)--++(1,-1)--++(1,1)--++(1,1);
\draw[ultra thick, red]
    (1.5,-.5)--(2.5,-.5);
\draw[ultra thick, red]
    (5.5,0.5)--(6.5,0.5);
\draw[ultra thick, blue]
    (6.5,0.5)--(7.5,0.5);
\draw[very thick, rounded corners]
    (1,-2)--(1,0)--(3,0)--(3,-2);
\draw[very thick, rounded corners]
    (5,-2)--(5,1)--(10,1)--(10,-2);
\draw[very thick, rounded corners]
    (8,-2)--(8,0)--(9,0)--(9,-2);
\node at (1,-2) {$\clan{.}$};
\node at (2,-2){$\clan{+}$};
\node at (3,-2) {$\clan{.}$};
\node at (4,-2){$\clan{,}$};
\node at (5,-2) {$\clan{.}$};
\node at (6,-2){$\clan{+}$};
\node at (7,-2) {$\clan{-}$};
\node at (8,-2){$\clan{.}$};
\node at (9,-2) {$\clan{.}$};
\node at (10,-2){$\clan{.}$};
\end{tikzpicture}}
=\clan{2+.,5+-1..}.
\end{matrix}
$$
Take a reduced word of $w=s_4s_5s_6s_7s_3s_2s_5s_4s_3s_6s_5s_4s_6s_7$. We can compute $\gamma=w*\gamma_{v,u}$; see Figure \ref{fig:exinIntro}.
\begin{figure}[ht]
    \centering
$$
\setlength{\unitlength}{1pc}
\def\mapdown#1{\hspace{#1\unitlength}
    \makebox[0pc]{\rotatebox[origin=c]{-90}{$\mapsto$}}\,\,
    \makebox[1.2pc]{$s_{#1}$}}
\def\mapup#1{\hspace{#1\unitlength}
    \makebox[0pc]{\rotatebox[origin=c]{90}{$\mapsto$}}\,\,
    \makebox[1.2pc]{$s_{#1}$}}
\begin{array}{l@{\,\,}c@{\,\,}l@{\,\,}c@{\,\,}l}
\hfill\makebox[0pc]{Richardson $\gamma_{v,u}$}\hfill\\
\clan[\unitlength]{2+.,5+-1..}&&
    \clan[\unitlength]{58+5+.,-..}
    &\stackrel{\displaystyle s_3}{\mapsto}&
        \clan[\unitlength]{586++.,-..}\\
\mapdown{7}&&\mapup{2}&&\mapdown{7}\\
\clan[\unitlength]{2+.,5+2-..}&&
    \clan[\unitlength]{5+75+.,-..}&&
        \clan[\unitlength]{586++.-,..}\\
\mapdown{6}&&\mapup{5}&&\mapdown{6}\\
\clan[\unitlength]{2+.,53+-..}&&
    \clan[\unitlength]{4+75.+,-..}&&
        \clan[\unitlength]{686++-.,..}\\
\mapdown{4}&&\mapup{4}&&\mapdown{5}\\
\clan[\unitlength]{2+.6,3+-..}&&
    \clan[\unitlength]{3+7.4+,-..}&&
        \clan[\unitlength]{686+1..,..}\\
\mapdown{5}&&\mapup{3}&&\mapdown{4}\\
\clan[\unitlength]{2+.64,+-..} 
&\stackrel{\displaystyle s_6}{\mapsto}& 
    \clan[\unitlength]{2+.64+,-..}&&
        \clan[\unitlength]{6862+..,..}\\
        &&&&\hfill\makebox[0pc]{ $\gamma=w*\gamma_{v,u}$}\hfill
\end{array}$$
    \caption{Computation of $w*\gamma_{v,u}$}
    \label{fig:exinIntro}
\end{figure}
The preclan $\gamma=w\ast\gamma_{v,u}$ is permutational, $\ell(\gamma)=\ell(w)+\ell(\gamma_{v,u})$, and $\eta_\gamma=14523$.
The pipe dreams of $\eta_\gamma$ are shown as follows.
$$\begin{matrix}
 \BPD[1pc]{\M{}\M{6}\M{7}\M{8}\M{9}\M{10}\\
 \M{4}\B\B\B\B\J\\
 \M{3}\X\X\B\J\\
 \M{2}\X\X\J\\
 \M{1}\B\J}
\BPD[1pc]{\M{}\M{6}\M{7}\M{8}\M{9}\M{10}\\
 \M{4}\B\B\X\B\J\\
 \M{3}\X\B\B\J\\
 \M{2}\X\X\J\\
 \M{1}\B\J}
  \BPD[1pc]{\M{}\M{6}\M{7}\M{8}\M{9}\M{10}\\
 \M{4}\B\B\X\B\J\\
 \M{3}\X\B\X\J\\
 \M{2}\X\B\J\\
 \M{1}\B\J}
   \BPD[1pc]{\M{}\M{6}\M{7}\M{8}\M{9}\M{10}\\
 \M{4}\B\X\X\B\J\\
 \M{3}\B\B\B\J\\
 \M{2}\X\X\J\\
 \M{1}\B\J}
\BPD[1pc]{\M{}\M{6}\M{7}\M{8}\M{9}\M{10}\\
 \M{4}\B\X\X\B\J\\
 \M{3}\B\B\X\J\\
 \M{2}\X\B\J\\
 \M{1}\B\J}
    \BPD[1pc]{\M{}\M{6}\M{7}\M{8}\M{9}\M{10}\\
 \M{4}\B\X\X\B\J\\
 \M{3}\B\X\X\J\\
 \M{2}\B\B\J\\
 \M{1}\B\J}
\end{matrix}
$$
Here we relabeled the row and column indices. 
Then we have 
\begin{align*}
c_{v,u}^{w}(t)&=\mathfrak{S}_{14523}(-t_4,\ldots,-t_1;-t_6,\ldots,-t_{10})\\
&=
\begin{matrix}
    (t_6-t_3)(t_7-t_3)\\
    (t_6-t_2)(t_7-t_2)
\end{matrix}+
\begin{matrix}
    (t_6-t_3)(t_8-t_4)\\
    (t_6-t_2)(t_7-t_2)
\end{matrix}+
\begin{matrix}
    (t_6-t_3)(t_8-t_4)\\
    (t_6-t_2)(t_8-t_3)
\end{matrix}\\
&\quad +\begin{matrix}
    (t_7-t_4)(t_8-t_4)\\
    (t_6-t_2)(t_7-t_2)
\end{matrix}+
\begin{matrix}
(t_7-t_4)(t_8-t_4)\\
(t_6-t_2)(t_8-t_3)
\end{matrix}+
\begin{matrix}
    (t_7-t_4)(t_8-t_4)\\
    (t_7-t_3)(t_8-t_3)
\end{matrix}. 
\end{align*}
\end{example}

\subsection{Applications} 
We can obtain a positive combinatorial model for the coefficients $c_{u,v}^w(0)$ when 
\begin{equation}\label{eq:321andinvGr}
\text{$u$ is $321$-avoiding and $v$ is an inverse Grassmannian permutation}.
\end{equation}
Note that $321$-avoiding permutations include both Grassmannian permutations and inverse Grassmannian permutations. 
The specific case when $u$ is \emph{subjacent} (that is, $u=s_p\tilde{u}$ for some $p$-inverse Grassmannian permutation $\tilde{u}$) was considered by Pechenik and Weigandt \cite{Pandt}.
 
Recall that the Stanley symmetric function $F_{w}(x)$ was introduced by Stanley~\cite{Sta} to enumerate reduced words of $w\in S_n$. The coefficients $EG^{w}_{\mu}$ in the Schur expansion
\[
F_{w}(x)=\sum_{\mu} EG^{w}_{\mu}\cdot s_{\mu}(x)
\]
are known as the Edelman--Greene coefficients. In~\cite{EG}, Edelman and Greene showed that $EG^{w}_{\mu}$ is the number of reduced word tableaux of $w$ of shape $\mu$, thereby proving Stanley's conjecture that the Edelman--Greene coefficients are nonnegative.

When $u$ is 321-avoiding, we can write $u=w_\mu\tilde{u}$, where $w_\mu$ is a Grassmannian permutation for a partition $\mu$ and $\tilde{u}$ is an inverse Grassmannian permutation, and $\ell(u)+\ell(w_\mu)=\ell(\tilde{u})$.  To state our result, we need one more notation --- 
a permutation $g_\gamma$ for a preclan with $v_\gamma=e$, see Definition \ref{ggamma}.

\subsection*{Theorem}
Let $v$ be a $p$-inverse Grassmannian permutation and $u$ be a $321$-avoiding permutation. 
We have 
$$c_{v,u}^w(0)=\begin{cases}
EG^{g_{\gamma}}_{\mu}, & 
\text{if $\gamma=w*\gamma_{v,\tilde{u}}$ 
satisfies $\ell(\gamma)=\ell(w)+\ell(\gamma_{v,\tilde{u}})$ and 
$v_\gamma=e$}, \\[1ex]
0,& \text{otherwise}. 
\end{cases}$$

We believe that there exists a Graham-positive formula for $c_{u,v}^w(t)$, where $u$ is 321-avoiding and $v$ is an inverse Grassmannian.
But due to the length of this paper, we decide to leave it for future work. 

When $\mu$ is the partition of a single box, the Edelman--Greene coefficient $EG^{w}_{\mu}$ is nonzero only when $w$ is a simple reflection, and $EG^{w}_{\mu}=1$. In this case,  we recover the result of Pechenik and Weigandt \cite{Pandt}.

The following example shows that these coefficients are not always $0$-$1$. 

\begin{example}
Consider 
$v=1256347, u=1357246, w=24681357.$
 Note that $u$ is a 4-Grassmannian permutation, and in particular it is $321$-avoiding, and $v$ is $4$-inverse Grassmannian. 
We can write $u=w_{(2,1)}\tilde{u}$, where 
$
w_{(2,1)}=1246357,
\tilde{u}= 1567234.$
Note that $\tilde{u}$ is $4$-inverse Grassmannian and $w_{(2,1)}$ is $4$-Grassmannian. 
Take a reduced word of $w=s_1s_3s_2s_5s_4s_3s_7s_6s_5s_4$. We have
$$\gamma_{v,\tilde{u}}=
\raisebox{-0.15cm}{\begin{tikzpicture}[scale=0.5]
\draw[ultra thick, gray,->]
    (0.5,0.5)--++(1,-1)--++(1,0)--++(1,1)--++(1,1)--++(1,-1)--++(1,-1)--++(1,0)--++(1,1)--++(1,1);
\draw[ultra thick, red]
    (1.5,-.5)--(2.5,-.5);
\draw[ultra thick, blue]
    (6.5,-.5)--(7.5,-.5);
\draw[very thick, rounded corners]
    (1,-2)--(1,0)--(3,0)--(3,-2);
\draw[very thick, rounded corners]
    (5,-2)--(5,1)--(9,1)--(9,-2);
\draw[very thick, rounded corners]
    (6,-2)--(6,0)--(8,0)--(8,-2);
\node at (1,-2) {$\clan{.}$};
\node at (2,-2){$\clan{+}$};
\node at (3,-2) {$\clan{.}$};
\node at (4,-2){$\clan{,}$};
\node at (5,-2) {$\clan{.}$};
\node at (6,-2){$\clan{.}$};
\node at (7,-2) {$\clan{-}$};
\node at (8,-2){$\clan{.}$};
\node at (9,-2) {$\clan{.}$};
\end{tikzpicture}}
\stackrel{s_4}\mapsto \cdots 
\stackrel{s_1}\mapsto 
\clan{844+-..,.}
=w*\gamma_{v,\tilde{u}}=:\gamma.$$
Since the first $p$ nodes of $\gamma$ are either $\clan{+}$ or left-ends, we have $v_\gamma=e$, and $\ell(\gamma)=\ell(w)+\ell(\gamma_{v,\tilde{u}})$.
The resulting permutation $g_\gamma=123547698=s_{8}s_6s_4$, and the Edelman--Greene coefficient is 
$EG^{g_\gamma}_{(2,1)}=2$, i.e., 
$c_{v,u}^w(0)=2$. 
In fact, there are two reduced word tableaux of shape $(2,1)$ for $g_\gamma$:  
$\begin{array}{|c|c|}
\hline 
4 & 6\\\hline 8\\\cline{1-1}
\end{array},\ 
\begin{array}{|c|c|}
\hline 
4 & 8\\\hline 6\\\cline{1-1}
\end{array}.$

\end{example}

\subsection*{Structure of the paper}
This paper is organized as follows. In Section \ref{Preliminary}, we recall the necessary notations on flag varieties, Schubert polynomials, spherical subgroups, positroid varieties, etc. In Section \ref{S-orbits and Preclans}, we introduce preclans and establish some basic properties of preclans. In Section \ref{Polynomial Representatives}, we determine the polynomial representative of the fundamental class of an $S$-orbit closure. In Section \ref{Localization and Globalization}, we intersect an $S$-orbit closure with the transversal slice $N\cdot B/B$, and then globalize the intersection into the Grassmannian. Section \ref{Positroid Varieties} is devoted to identifying the intersection in Section \ref{Localization and Globalization} with a positroid variety. Finally, in Section \ref{Schubert Expansion}, we show our main results and give an application. In  Appendix \ref{sec:Sorbit}, we utilize the theory of quiver representations to establish some results on $S$-orbits.

\subsection*{Acknowledgment}
We are grateful to Zachary Hamaker, Oliver Pechenik, and Anna Weigandt for bringing this problem to our attention. We also thank Anders Buch, Daoji Huang, Allen Knutson, Thomas Lam,  Leonardo C. Mihalcea, Xiaolong Pan, Mark Shimozono, and Alexander Yong for helpful discussions. This work is supported by the National Key Research and Development Program of China (No. 2025YFA1017702) and the National Natural Science Foundation of China (No. 12471314).

\section{Preliminaries}\label{Preliminary}

In the entire paper, we will fix nonnegative integers 
$$p,m,q\geq 0,\qquad n= p+m+q\geq 0,$$
and denote $G=GL_n$ and its subgroups 
$$
T = \left[\begin{matrix}
* \\ & \ddots \\ && *
\end{matrix}\right],\qquad 
B = \left[\begin{matrix}
*&\cdots&* \\ & \ddots&\vdots \\ && *
\end{matrix}\right],\qquad 
B^- = \left[\begin{matrix}
*& \\ \vdots& \ddots& \\ {}*&\cdots& *
\end{matrix}\right]
.$$
The Weyl group of $G$ is the symmetric group $S_n$, and we denote 
    the simple reflections by $s_1,\ldots,s_{n-1}$,
    the length function by $\ell:S_n\to \mathbb{Z}_{\geq 0}$, and 
    the Bruhat order by $\leq$. 

We will use the following subgroups
$$
\def\O{\color{lightgray}0}
S=P\cap Q=\left[\begin{matrix}
GL_p&\O&\O \\{}*&GL_m&*\\\O&\O&GL_q
\end{matrix}\right],\qquad 
P = 
\left[\begin{matrix}
GL_p&\,\,\O\quad\O\,\, \\
\begin{matrix}
*\\{}*
\end{matrix}&GL_{m+q}
\end{matrix}\right],\qquad 
Q = \left[\begin{matrix}
GL_{p+m}&\begin{matrix}
*\\{}*\end{matrix}
\\\,\,\O\quad \O\,\,&GL_q
\end{matrix}\right],
$$
$$\def\O{\color{lightgray}0}
N = 
\left[\begin{matrix}
1_p&\O&\O \\ \O& 1_m&\O\\ {}*&*& 1_q
\end{matrix}\right],\qquad 
\bar{B}=B\cap P=\left[\begin{matrix}
B_p &\,\,\O\quad\O\,\,\\
\begin{matrix}
\O\\{}\O
\end{matrix}& B_{m+q}
\end{matrix}\right],$$
where $B_k$ is the subgroup of upper triangular matrices of $GL_k$. 
When $m=0$, we prefer to denote the subgroup $S$ by $K\cong GL_p\times GL_q$.

\subsection{Flag varieties and Schubert varieties}

The \emph{flag variety} is 
$$G/B\cong \{0=V_0\subset V_1\subset \cdots \subset V_n=\mathbb{C}^n:\dim V_i=i\}. $$
We can decompose 
$$G/B=\bigsqcup_{w\in S_n}BwB/B=\bigsqcup_{w\in S_n}B^-wB/B,$$
where $w$ is viewed as the corresponding permutation matrix. 
The Schubert variety of $w\in S_n$ is the closure $\overline{BwB/B}$ and the opposite Schubert variety of $w\in S_n$ is the closure $\overline{B^-wB/B}$. 

Let $w_0=n\cdots 21\in S_n$ be the longest permutation. The double Schubert polynomials $$\mathfrak{S}_w(x;t)\in 
\mathbb{Q}[x_i,t_j]_{i,j}=\mathbb{Q}[x_1,x_2,\ldots,t_1,t_2,\ldots].$$
for $w\in S_n$ can be characterized by 
$$\mathfrak{S}_{w_0}(x;t)=\prod_{i+j\leq n}
(x_i-t_j),\qquad
\mathfrak{S}_w(x;t)
=\partial_k\mathfrak{S}_{ws_k}(x;t)
\text{ if }
\ell(ws_k)=\ell(w)+1,$$
where $\partial_k$ is the divided difference operator on polynomials  
$$\partial_kf =\frac{f-f|_{x_k\leftrightarrow x_{k+1}}}{x_k-x_{k+1}}.$$

\begin{prop}[\cite{AF23}]
For $w\in S_n$, 
the double Schubert polynomial $\mathfrak{S}_w(x;t)$ represents the equivariant fundamental class of the opposite Schubert variety $[\overline{B^-wB/B}]_T\in H_T^*(G/B)$. 
\end{prop}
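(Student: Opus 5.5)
The approach is to prove the statement by descending induction on $\ell(w)$. Both sides will be shown to satisfy the same initial condition at $w_0$ and the same recursion under $\partial_k$. For the geometric side we use the standard facts about $T$-equivariant cohomology of $G/B$:
\begin{itemize}
\item $H_T^*(G/B)\cong \mathbb{Q}[x_1,\ldots,x_n,t_1,\ldots,t_n]/I$, where $I$ is generated by $e_i(x)-e_i(t)$. Here the $x_i$ are the equivariant Chern roots $-c_1^T(V_i/V_{i-1})$ of the tautological bundles (sign chosen to match the convention of the paper), and the $t_j$ are the weights of $T$.
\item Equivariant classes are determined by their restrictions to the fixed points $vB/B$, and these restrictions are obtained by substituting $x_i\mapsto t_{v(i)}$.
\end{itemize}

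\textbf{Base case $w=w_0$.} The variety $\overline{B^-w_0B/B}$ is the single point $w_0B/B$. Its class is the equivariant Euler class of the tangent space at that point, restricted appropriately. We show $\prod_{i+j\le n}(x_i-t_j)$ represents this class by checking two things.
\begin{itemize}
\item It vanishes at every fixed point $vB/B$ with $v\ne w_0$. After substitution the product becomes $\prod_{i+j\le n}(t_{v(i)}-t_j)$. This product is zero unless $v(i)>n-i$ for all $i$, and that condition forces $v=w_0$.
\item At $w_0$ it equals $\prod_{i<n+1-i'}\dots=\prod_{a<b}(t_b-t_a)$ up to ordering. This is the product of the tangent weights at $w_0B/B$, which is exactly the self-intersection of the point class.
\end{itemize}
Since the fixed-point restriction map is injective (localization theorem), the two classes agree.

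\textbf{Inductive step.} Suppose $\ell(ws_k)=\ell(w)+1$. Let $P_k\supset B$ be the minimal parabolic and $\pi:G/B\to G/P_k$ the projection, a $\mathbb{P}^1$-bundle. We use two facts.
\begin{itemize}
\item \emph{Geometric.} $\pi^{-1}\pi\big(\overline{B^-ws_kB/B}\big)=\overline{B^-wB/B}$, and the restricted map $\overline{B^-ws_kB/B}\to\pi(\cdot)$ is birational, because $ws_k<w$ fails in the opposite direction, i.e.\ $w< ws_k$. Hence $\pi^*\pi_*[\overline{B^-ws_kB/B}]_T=[\overline{B^-wB/B}]_T$.
\item \emph{Algebraic.} Under the Borel presentation, $\pi^*\pi_*$ acts as the divided difference $\partial_k$ in the $x$-variables. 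This is checked by the pushforward formula for the $\mathbb{P}^1$-bundle $\mathbb{P}(V_{k+1}/V_{k-1})$, whose relative Chern roots are $x_k,x_{k+1}$. Alternatively, one verifies it on fixed points via the localization formula $(\pi^*\pi_*f)|_v=\frac{f|_v-f|_{vs_k}}{t_{v(k)}-t_{v(k+1)}}$, up to sign convention.
\end{itemize}

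Combining the two facts with the recursion $\mathfrak{S}_w=\partial_k\mathfrak{S}_{ws_k}$ gives the claim for $w$. Every $w$ is reached from $w_0$ by such steps, so this completes the induction.

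\textbf{Main obstacle.} The main difficulty is sign and convention bookkeeping. We must check that the chosen normalization of the $x_i$ and $t_j$, and the direction of $\partial_k$ (right multiplication $ws_k$), are consistent with opposite Schubert varieties, so that the base-case computation and the $\partial_k=\pi^*\pi_*$ identity hold with the same signs. We would fix these first by checking $n=2$ directly: there $[\overline{B^-s_1B/B}]=x_1-t_1$.
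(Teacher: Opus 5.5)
The paper gives no proof of this proposition; it is quoted from \cite{AF23}. Your argument is the standard proof of that result, and it is correct. You identify the class of the point $w_0B/B$ by localization. You then run a descending induction using two facts: $\pi_k^*\pi_{k*}=\partial_k$ for the $\mathbb{P}^1$-bundle $G/B\to G/P_k$, and $\overline{B^-ws_kB/B}\to\pi_k(\overline{B^-ws_kB/B})$ is birational when $w<ws_k$. These are the same tools the paper itself uses in Theorem~\ref{thm:dkonUpsilon} and Proposition~\ref{prop:eqSchubertexp}.

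The one point to tidy is the normalization you flagged. Let $\epsilon_j$ be the character $\operatorname{diag}(z_1,\ldots,z_n)\mapsto z_j$. As written, $x_i=-c_1^T(V_i/V_{i-1})$ together with ``$t_j$ are the weights of $T$'' gives $x_i|_{vB}=-\epsilon_{v(i)}$. So the rule $x_i\mapsto t_{v(i)}$ forces $t_j=-\epsilon_j$, not $t_j=\epsilon_j$. This rule is also the paper's convention, since $\Upsilon_\gamma(t;t)$ is the localization at $1\cdot B/B$. Your $n=2$ check confirms it: $T_{s_1B}\mathbb{P}^1$ has weight $\epsilon_1-\epsilon_2$, which must equal $t_2-t_1$.

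With that single choice, no stray signs remain.
\begin{itemize}
\item The tangent space $T_{w_0B}(G/B)$ has weights $\epsilon_a-\epsilon_b$ for $a<b$. Its Euler class is $\prod_{a<b}(\epsilon_a-\epsilon_b)=\prod_{a<b}(t_b-t_a)$, which matches your base case.
\item The fibre of $\pi_k$ has tangent weight $\epsilon_{v(k+1)}-\epsilon_{v(k)}=t_{v(k)}-t_{v(k+1)}$ at $vB$, and its negative at $vs_kB$. The localization formula then gives $(\pi_k^*\pi_{k*}\alpha)|_v=(\alpha|_v-\alpha|_{vs_k})/(t_{v(k)}-t_{v(k+1)})$, which equals $(\partial_k f)|_v$ exactly.
\end{itemize}

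You only assert the birationality, and it deserves one line. Each fibre of $\pi_k$ over the dense open subset $B^-wP_k/P_k$ of the image meets $B^-ws_kB/B$ in exactly one point. The rest of that fibre lies in $B^-wB/B$, which is disjoint from $\overline{B^-ws_kB/B}$ because $w\not\geq ws_k$.
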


For a permutation $w\in S_n$, a {\it pipe dream} (also called an RC-graph \cite{BB}) of $w$ is a tiling of an $n\times n$ grid by $\BPD{\X}$'s and $\BPD{\B}$'s such that the 
$j$-th  pipe enters from the top, flows downward, and leftward and  exits from the left side, ending up at row $w^{-1}(j)$ for $1\le j\le n$. Then the double Schubert polynomial $\mathfrak{S}_{w}(x;t)$ is  the weighted sum over $\mathcal{PD}(w):=\{\text{pipe dreams of }w\text{ with no pipes cross more than once}\}$
$$\mathfrak{S}_{w}(x;t)=\sum_{D\in\mathcal{PD}(w)}\prod_{(i,j)\in D}(x_i-t_j),$$
where $i$ is the row index and $j$ the column index of  a $\BPD{\X}$ tile. 
For example, let $w=1342\in S_4$, there are three pipe dreams of $w$:
$$\BPD{\M{}\M{1}\M{2}\M{3}\M{4}\\\M{1}\B\B\B\J\\\M{3}\X\B\J\\\M{4}\X\J\\\M{2}\J} \BPD{\M{}\M{1}\M{2}\M{3}\M{4}\\\M{1}\B\X\B\J\\\M{3}\B\B\J\\\M{4}\X\J\\\M{2}\J} 
\BPD{\M{}\M{1}\M{2}\M{3}\M{4}\\\M{1}\B\X\B\J\\\M{3}\B\X\J\\\M{4}\B\J\\\M{2}\J}.$$
Thus $\mathfrak{S}_{1342}(x;t)=(x_2-t_1)(x_3-t_1)+(x_1-t_2)(x_3-t_1)+(x_1-t_2)(x_2-t_2).$

\subsection{Spherical subgroups}\label{spherical}
A subgroup $S\subseteq G$ is called \emph{spherical} if the number of $S$-orbits on flag variety $G/B$ is finite. 
Recall the Demazure monoid structure on the Weyl group  of $G$ is characterized by 
$$s_k*s_k=s_k,\qquad u*v=uv\text{ if $\ell(uv)=\ell(u)+\ell(v)$}.$$
By \cite[Section 3]{RS92}, the monoid acts on the set of $S$-orbits as follows. 
Let $P_k=B\cup Bs_kB$ be the minimal parabolic subgroup for $1\leq k\leq n-1$. 
We have a natural projection 
$$\pi_k: G/B\longrightarrow G/P_k. $$
Let $\mathbb{O}$ be an $S$-orbit of $G/B$. The monoid action is given by
$$s_k*\mathbb{O} = \text{the dense $S$-orbit in $\pi_k^{-1}(\pi_k(\mathbb{O}))$}.$$
Pick any $y\in \pi_k(\mathbb{O})$, the fiber $F=\pi_k^{-1}(y)\cong \mathbb{P}^1$. 
By Mars and Springer \cite{Mars}, there are seven types 
\begin{itemize}
    \item[(I)]  
    $\pi_k^{-1}(\pi_k(\mathbb{O})) = \mathbb{O}$, i.e., each fiber $F$ is contained in $\mathbb{O}$, in which case $s_k*\mathbb{O}=\mathbb{O}$; 
    \item[(IIa)] 
    $\pi_k^{-1}(\pi_k(\mathbb{O})) = \mathbb{O}\cup \mathbb{O}'$ with each fiber $F$ containing exactly one point of $\mathbb{O}$, in which case 
    $s_k*\mathbb{O}=\mathbb{O}'$; 
    
    \item[(IIb)]  
    $\pi_k^{-1}(\pi_k(\mathbb{O})) = \mathbb{O}\cup \mathbb{O}'$ with each fiber $F$ containing exactly one point of $\mathbb{O}'$, in which case 
    $s_k*\mathbb{O}=\mathbb{O}$; 

    \item[(IIIa)]   
    $\pi_k^{-1}(\pi_k(\mathbb{O})) =\mathbb{O}\cup \mathbb{O}'\cup \mathbb{O}''$ with each fiber $F$ containing exactly one point of $\mathbb{O}$ and one point of $\mathbb{O}''$, in which case $s_k*\mathbb{O}=\mathbb{O}'$; 
    
    \item[(IIIb)]   
    $\pi_k^{-1}(\pi_k(\mathbb{O})) =\mathbb{O}\cup \mathbb{O}'\cup \mathbb{O}''$ with each fiber $F$ containing exactly one point of $\mathbb{O}'$ and one point of $\mathbb{O}''$, in which case 
    $s_k*\mathbb{O}=\mathbb{O}$; 

    \item[(IVa)]   
    $\pi_k^{-1}(\pi_k(\mathbb{O})) =\mathbb{O}\cup \mathbb{O}'$ with each fiber $F$ containing exactly two points of $\mathbb{O}$, in which case $s_k*\mathbb{O}=\mathbb{O}'$; 

    \item[(IVb)]   
    $\pi_k^{-1}(\pi_k(\mathbb{O})) =\mathbb{O}\cup \mathbb{O}'$ with each fiber $F$ containing exactly two points of $\mathbb{O}'$, in which case $s_k*\mathbb{O}=\mathbb{O}$. 
\end{itemize}
Let $d$ be the number of $S$-orbits $\mathbb{O}'$ such that $s_k*\mathbb{O}'=s_k*\mathbb{O}$.
We have 
$$\begin{array}{c|c|c|c}\hline
\vphantom{\dfrac12}
& \qquad d=1\qquad &\qquad d=2 \qquad
& \qquad d=3\qquad \\\hline
\vphantom{\dfrac12}
s_k*\mathbb{O}\neq \mathbb{O}
& \text{\rm never} & \rm (IIa) \text{ or } (IVa) & \rm (IIIa)\\\hline
\vphantom{\dfrac12}
s_k*\mathbb{O}= \mathbb{O} & \rm (I) & \rm (IIb) \text{ or } (IVb) & \rm (IIIb)\\\hline
\end{array}$$

\subsection{Positroid varieties}
The affine symmetric group $\widetilde{S}_n$ is the group of bijections $f:\mathbb{Z}\to \mathbb{Z}$ such that $f(i+n)=f(i)+n$. 
Let 
$$\widetilde{S}_{n}^{k}
=\left\{f\in \widetilde{S}_n:
\sum_{i=1}^n (f(i)-i) = kn
\right\}\subseteq \widetilde{S}_n. $$

For an affine permutation $f$, define its inversion set by
$$
\operatorname{inv}(f):=\{(i,j)\in \mathbb Z^2 : i<j \text{ and } f(i)>f(j)\},
$$
where $(i,j)$ is identified with $(i+rn,j+rn)$ for every $r\in \mathbb Z$.
The length of $f$, denoted by $\ell(f)$, is the number of equivalence classes
in $\operatorname{inv}(f)$ under this identification.

For any $0\leq k\leq n$, the set of $(k,n)$ bounded affine permutations is 
$$\operatorname{Bound}(k,n)
=\{f\in\widetilde{S}_{n}^{k}:i\leq f(i)\leq n+i\}\subseteq \widetilde{S}_n^k. 
$$
Let $Gr_k(\mathbb{C}^n)$ denote the Grassmannian of $k$-dimensional subspaces in $\mathbb{C}^n$.
For each $V\in Gr_k(\mathbb{C}^n)$, we choose a full rank $k\times n$  matrix 
$$\left[
\begin{matrix}
\mid & \mid & \cdots & \mid\\
\vv_1 & \vv_2 & \cdots & \vv_n\\
\mid & \mid & \cdots & \mid\\
\end{matrix}\right]_{k\times n}$$
such that $V$ is its row  span. 
We can define a bounded affine permutation $f_V\in \operatorname{Bound}(k,n)$ by 
$$f_V(i)= \min\{j>i:\vv_i\in \\
\operatorname{span}(\vv_{i+1},\ldots,\vv_j)\},$$
where we extend $\vv_i$ to $i\in \mathbb{Z}$ by $\vv_{i}=\vv_{i\bmod n}$. 
When $\vv_i=0$, the minimum is understood as $i$ itself. 
The \emph{positroid variety} is the closure
$$\Pi_f = \overline{\{V\in Gr_k(\mathbb{C}^n): 
f_V=f\}}.$$

The   Stanley symmetric functions were introduced by Stanley \cite{Sta} as the forward limit of Schubert polynomials.
The (double) affine Stanley symmetric polynomials (functions) were introduced by Lam and Shimozono \cite{Lam,LS13}. 
Instead of introducing the algebraic definition, we prefer to present the pipe dream model of $\widetilde{F}_f(x;t)$ due to Shimozono--Zhang \cite{SZ} when restricted to $f\in \operatorname{Bound}(k,n)$. 
Explicitly, a pipe dream of $f$ is an $n$-periodic tiling of $\{1,\ldots,k\}\times \mathbb{Z}$ with tiles  $\BPD{\X}$ and $\BPD{\B}$, such that the pipe $i$ starts from the $i$-th position at the bottom, ends at the $f(i)$-th position at the top with no two pipes cross more than once, and  the tile at $(i,j)$ in a $n$-period is the same as $(i,j+n)$.
The weight of a $\BPD{\X}$  tile  at position $(i,j)$ is $x_i-t_j$. The weight of a pipe dream is the product of the weights of the  $\BPD{\X}$ tiles.
Then $\widetilde{F}_f(x;t)$ is the sum of  weights of all pipe dreams of $f$.  For example, let $f=[2547]\in \mathrm{Bound}(2,4)$, there are four pipe dreams of $f$: %
$$
\begin{array}{cccc}
\BPD{
\M{1}\M{2}\M{3}\M{4}\\
\B\B\B\B\\
\X\B\X\B\\
\M{1}\M{2}\M{3}\M{4}
}
&
\BPD{
\M{1}\M{2}\M{3}\M{4}\\
\B\B\B\X\\
\X\B\B\B\\
\M{1}\M{2}\M{3}\M{4}
}
&
\BPD{
\M{1}\M{2}\M{3}\M{4}\\
\B\X\B\B\\
\B\B\X\B\\
\M{1}\M{2}\M{3}\M{4}
}
&
\BPD{
\M{1}\M{2}\M{3}\M{4}\\
\B\X\B\X\\
\B\B\B\B\\
\M{1}\M{2}\M{3}\M{4}
}
\end{array}.
$$
Thus 
\begin{align*}
    \widetilde{F}_{[2547]}(x;t)&=(x_2-t_1)(x_2-t_3)+(x_1-t_2)(x_2-t_3)\\
    &\quad+(x_1-t_4)(x_2-t_1)+(x_1-t_2)(x_1-t_4).
\end{align*}

\begin{prop}[\cite{KLS,LLS18}]
For $f\in \operatorname{Bound}(k,n)$, the double affine Stanley symmetric function $\widetilde{F}_f(x;t)$ represents the equivariant fundamental class of the positroid variety $[\Pi_f]_T\in H_T^*(Gr_k(\mathbb{C}^n))$. 
\end{prop}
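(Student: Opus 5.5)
The plan is to realize $\Pi_f$ as the image of a Richardson variety in the full flag variety, compute its class by pushing forward a product of Schubert classes, and then match the outcome with the Shimozono--Zhang pipe dream sum $\widetilde{F}_f(x;t)$. The first step uses the projection description of Knutson, Lam and Speyer. Write $\pi\colon G/B\to Gr_k(\mathbb{C}^n)$ for the natural projection, with $G=GL_n$ here. Every $f\in\operatorname{Bound}(k,n)$ corresponds to a pair $u\leq w$ in $S_n$, where $w$ is $k$-Grassmannian, such that
$$\Pi_f=\pi\bigl(\overline{BwB/B}\cap\overline{B^-uB/B}\bigr),$$
and $\pi$ restricted to this Richardson variety is birational onto its image. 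Richardson varieties have rational singularities, so equivariant pushforward gives
$$[\Pi_f]_T=\pi_*\bigl([\overline{B^-uB/B}]_T\cdot[\overline{BwB/B}]_T\bigr).$$
By the first Proposition of this section, the opposite Schubert class is represented by $\mathfrak{S}_u(x;t)$. The Schubert class $[\overline{BwB/B}]_T$ is represented by a double Schubert polynomial after the substitution $t\mapsto w_0t$ and the twist by $w_0$.

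The second step is to replace this geometric pushforward by an algebraic characterization. An equivariant class on $Gr_k(\mathbb{C}^n)$ is determined by its localizations at the $T$-fixed points $e_I$, where $I$ runs over the $k$-subsets of $\{1,\ldots,n\}$. I would therefore aim to show that the restriction $\widetilde{F}_f(x;t)|_{x=t_I}$ agrees with $[\Pi_f]_T|_{e_I}$ for every $I$. I would prove this by induction on $\ell(f)$. The base case is $\Pi_f=Gr_k$, where $\widetilde{F}_f=1$. For the inductive step, I would use a recursion shared by both sides.

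On the geometric side, I would take the recursion obtained by fixing $w$ and moving $u$ down in Bruhat order. This is governed by the divided difference operator $\partial_k$ acting on the $x$-variables of the Richardson class, and it passes through $\pi_*$. On the combinatorial side, the recursion $\partial_i\widetilde{F}_{f}=\widetilde{F}_{fs_i}$, valid when $\ell(fs_i)<\ell(f)$, holds for the Lam--Shimozono double affine Stanley functions. I would check this recursion by the usual pipe dream ladder/chute argument adapted to the $n$-periodic tilings. The periodicity is what makes the operators with affine index $i=0$ (equivalently $n$) available.

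The step I expect to be the main obstacle is identifying the pipe dream sum with the algebraic (Lam--Shimozono) definition of $\widetilde{F}_f$. In particular, I need the periodic pipe dream model to satisfy the same affine divided difference recursion at $i=0$ as at the finite simple reflections. For this I would invoke Shimozono--Zhang \cite{SZ} directly rather than reprove it. The remaining geometric input, namely that the class of $\Pi_f$ equals the restriction to $Gr_k$ of the equivariant affine Stanley class, is the content of \cite{KLS,LLS18}. The plan therefore reduces to citing these two results and checking that the conventions agree: the $n$-periodic indexing and the weights $x_i-t_j$.
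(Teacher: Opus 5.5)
The paper gives no proof of this proposition. It is quoted from \cite{KLS,LLS18}, with $\widetilde F_f$ defined through the pipe dream model of \cite{SZ}. Your last paragraph reduces to exactly these citations, so in the end you agree with the paper. However, the induction you put forward as the actual plan would fail, because there is no recursion ``shared by both sides''.

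On the combinatorial side, $\widetilde F_f$ is symmetric in $x_1,\ldots,x_k$, so divided differences in $x_1,\ldots,x_k$ annihilate it. The identity $\partial_i\widetilde F_f=\widetilde F_{fs_i}$ could therefore only be meant in the periodic $t$-variables, and there it is false. Take the paper's own example $f=[2547]$. Its four pipe dreams sum to $\widetilde F_f=(x_1+x_2-t_1-t_2)(x_1+x_2-t_3-t_4)$, and $\partial^t_2\widetilde F_f=\pm(t_4-t_1)$ does not even involve $x$. By contrast, $fs_2=[2457]$ has length one and $\widetilde F_{[2457]}=x_1+x_2-t_1-t_2$ (a single crossing, at $(2,1)$ or at $(1,2)$). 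Using $s_2f=[3546]$ instead fails too, since $\widetilde F_{[3546]}=x_1+x_2-t_3-t_4$. This is consistent with the proposition: $\Pi_f=\{\mathbf{v}_1\parallel\mathbf{v}_2,\ \mathbf{v}_3\parallel\mathbf{v}_4\}$ is not stable under the root subgroups mixing columns $2$ and $3$. The divided differences in $t$ are the left divided differences on $H_T^*(Gr_k(\mathbb{C}^n))$, so there is no geometric reason for them to carry positroid classes to positroid classes.

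On the geometric side, divided differences in $x$ do not move $u$ with $w$ fixed. By the Leibniz rule, $\partial_j\bigl(\mathfrak{S}_u(x;t)\,\mathfrak{S}_{w_0w}(x;t_n,\ldots,t_1)\bigr)$ contains the extra term $(s_j\mathfrak{S}_u)\,\partial_j\mathfrak{S}_{w_0w}(x;t_n,\ldots,t_1)$. This term vanishes only when $j$ is a descent of $w$, which for Grassmannian $w$ means only $j=k$, and $\partial_k^2=0$, so you get at most one step. Nor do these operators descend along $\pi$: $\pi_*$ is $\partial_{w_P}$ for the longest element $w_P$ of $S_k\times S_{n-k}$, so $\pi_*\circ\partial_j=0$ for every $j\neq k$.

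Incidentally, in cohomology birationality of $\pi$ on the Richardson variety already gives $[\Pi_f]_T$ as the pushforward; rational singularities matter only in $K$-theory. So the induction cannot be carried out, and all of the content lies in the cited results of \cite{KLS,LLS18} and \cite{SZ}, exactly as in the paper.
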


\section{\texorpdfstring{$S$}{S}-orbits and Preclans}\label{S-orbits and Preclans}

 In this section, we introduce preclans and use them to parametrize the $S$-orbits on $G/B$.
We first recall the notion of clans, and then define preclans and their representatives in $G/B$, and prove that the $S$-orbits are indexed by $(p,m,q)$-preclans.
We then describe the weak order on preclans and show that it agrees with the weak order on $S$-orbits.
Finally, we define Richardson preclans and prove that they parametrize Richardson varieties arising from pairs of inverse Grassmannian permutations.

\subsection{Clans}
A \emph{clan} is a partial matching with all unmatched nodes colored by $\clan{+}$ or $\clan{-}$. 
We say a clan $\gamma$ is a $(p,q)$-clan if 
\begin{itemize}
    \item the number of $\clan{+}$'s and matchings is $p$; 
    \item the number of $\clan{-}$'s and matchings is $q$.
\end{itemize}
For example, the following diagram represents a $(4,5)$-clan
\begin{equation}\label{eq:claneg}
\gamma = \clan{5-63-..+.}.
\end{equation}
For a clan $\gamma$, its  length is defined as
$$\ell(\gamma) = \sum_{(i,j)\text{-matching}} (j-i) 
- \texttt{\#}\{\text{crossings in $\gamma$} \}.$$
For example, for the clan in \eqref{eq:claneg}, we have 
$\ell(\gamma)=(6-1)+(9-3)+(7-4)-2=12.$

A clan is said to be 
\begin{itemize}
    \item \emph{matchless}, if it has no matching. For example, there are $6$ matchless $(2,2)$-clans
    $$
    \begin{matrix}
    \clan{++--},\,
    \clan{+-+-},\,
    \clan{+--+},\,
    \clan{-++-},\,
    \clan{-+-+},\,
    \clan{--++}.
    \end{matrix}
    $$
    \item a \emph{rainbow}, if it is of the form 
$$
\clan{8\dots4+\dots+.\dots.}
\text{ or }
\clan{8\dots4-\dots-.\dots.}.$$
\end{itemize}
The main reason for studying clans is the following. 
\begin{prop}[Matsuki \cite{Matsuki}]
The $K$-orbits of $G/B$ are finite and parametrized by $(p,q)$-clans. 
\end{prop}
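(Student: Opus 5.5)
The plan is to realize the $K$-orbits as isomorphism classes of a finite-type linear algebra problem, as Appendix~\ref{sec:Sorbit} does for $S$. With $n=p+q$ and $K=GL_p\times GL_q$, a point of $G/B$ is a full flag $V_\bullet$ in $\mathbb{C}^n=E_+\oplus E_-$, where $E_+=\mathbb{C}^p$ and $E_-=\mathbb{C}^q$ are the coordinate subspaces. Two flags lie in the same $K$-orbit exactly when there is an automorphism of $\mathbb{C}^n$ preserving both $E_+$ and $E_-$ and carrying one flag to the other. So the $K$-orbits are the isomorphism classes of triples $(V_\bullet,E_+,E_-)$ with $E_+\oplus E_-=\mathbb{C}^n$ and $\dim E_\pm=p,q$. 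The proof then has three steps: attach discrete invariants to each flag, construct a clan-indexed representative for each possible value of the invariants, and show that the invariants determine the orbit.

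\textbf{Invariants.} For $0\le i\le j\le n$, record the dimensions
\[
\dim(V_i\cap E_+),\qquad \dim(V_i\cap E_-),\qquad \dim\bigl(\pi_+(V_i)+V_j\bigr),
\]
where $\pi_\pm$ are the projections to $E_\pm$. All of these are $K$-invariant. From them we extract a partial matching on $\{1,\dots,n\}$. Position $i$ is colored $\clan{+}$ (resp.\ $\clan{-}$) if $\dim(V_i\cap E_+)$ (resp.\ $\dim(V_i\cap E_-)$) jumps at step $i$. A pair $(i,j)$ is matched if the vector added at step $i$ has nonzero components in both $E_+$ and $E_-$, and $j$ is the first step at which $\pi_+$ of that vector enters $V_j$ modulo the earlier data.

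\textbf{Representatives.} Conversely, given a $(p,q)$-clan, write $e_1,\dots,e_p$ for a basis of $E_+$ and $f_1,\dots,f_q$ for a basis of $E_-$. Build the flag step by step: at each $\clan{+}$ position add a new $e_a$, at each $\clan{-}$ position add a new $f_b$, at the left end of a matching add $e_a+f_b$, and at its right end add $e_a-f_b$. The counting conditions in the definition of a $(p,q)$-clan (the number of $\clan{+}$'s plus matchings is $p$, and the number of $\clan{-}$'s plus matchings is $q$) ensure that every $e_a$ and $f_b$ is used exactly once. The resulting flag has the clan as its invariant, so the invariant map is surjective.

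\textbf{Injectivity (main step).} The substantive claim is that a flag is $K$-conjugate to the standard representative of its own clan. I would prove this by induction on $n$. Let $v_1$ span $V_1$. There are three cases:
\begin{itemize}
\item $v_1\in E_+$: position $1$ is $\clan{+}$; quotient by $\langle v_1\rangle$ and apply induction to the $(p-1,q)$ problem.
\item $v_1\in E_-$: symmetric, with position $1$ colored $\clan{-}$.
\item Otherwise: $v_1=a+b$ with $a\in E_+\setminus 0$ and $b\in E_-\setminus 0$. Use $K$ to normalize $v_1=e_1+f_1$, and let $j$ be the first index with $e_1\in V_j$, which is the matching partner of $1$. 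Quotient by $\langle e_1,f_1\rangle$ and apply induction to the $(p-1,q-1)$ problem on the flag induced by $V_\bullet$ with the step $j$ removed.
\end{itemize}
The obstacle is in the third case: one must check that the stabilizer of $e_1+f_1$ in $K$ still acts on the quotient flag through the full $GL_{p-1}\times GL_{q-1}$. One must also check that the inductive normalization can be lifted without disturbing $V_j$. I would handle this with the unipotent elements of $K$ that fix $e_1$ and $f_1$ and move the other basis vectors by multiples of $e_1$ and $f_1$; these let us adjust the choice of complements so the lift goes through.

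\textbf{Finiteness.} The number of $(p,q)$-clans is finite, so the bijection above gives finitely many $K$-orbits. Alternatively, finiteness follows from the embedding into type-$D$ quiver representations discussed around \eqref{sspherical}, specialized to $m=0$.
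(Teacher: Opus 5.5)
Your peeling induction is sound, and the lifting obstacle you flag resolves as you propose. Put $W=\langle e_1,f_1\rangle$, $E'_+=\langle e_2,\dots,e_p\rangle$ and $E'_-=\langle f_2,\dots,f_q\rangle$.

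\begin{itemize}
\item For $1\le i<j$ one has $V_i\cap W=\langle e_1+f_1\rangle$. Hence $V_i=\langle e_1+f_1\rangle+\{x+\lambda(x)e_1 : x\in U_i\}$, where $U_i\subseteq E'_+\oplus E'_-$ is the projection of $V_i$ along $W$ (this is the quotient flag) and $\lambda$ is a single linear functional on $U_{j-1}$.
\item For $i\ge j$ one has $V_i\supseteq W$, so $V_i$ is determined by the quotient flag.
\item Consider the elements of $K$ that fix $e_1,f_1$, act by $x_+\mapsto x_++\alpha(x_+)e_1$ on $E'_+$ and by $x_-\mapsto x_-+\beta(x_-)f_1$ on $E'_-$. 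They act trivially modulo $W$ and replace $\lambda$ by $\lambda+\alpha\circ\pi_+-\beta\circ\pi_-$. That is an arbitrary functional, so they act transitively on lifts.
\end{itemize}

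Combined with the surjection of the stabilizer of $\langle e_1+f_1\rangle$ onto $GL_{p-1}\times GL_{q-1}$, this shows every flag is $K$-conjugate to \emph{some} standard representative. That already proves finiteness.

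\textbf{The gap.} Your argument does not yet establish the converse: standard representatives of distinct clans lie in distinct orbits. Your induction, as phrased (``conjugate to the representative of its \emph{own} clan''), also silently depends on this. It rests entirely on the Invariants paragraph, which is not correct as written.

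\begin{itemize}
\item At the right end $j$ of an arc, both $\dim(V_j\cap E_+)$ and $\dim(V_j\cap E_-)$ jump. So your rule colors right ends both $+$ and $-$, and the standard representative of $\gamma$ does not return $\gamma$.
\item ``The vector added at step $i$'' is not well defined, and ``modulo the earlier data'' does unspecified work. Take arcs $(1,4),(2,3)$ with representative $e_1+f_1,\,e_2+f_2,\,e_2-f_2,\,e_1-f_1$. The vector $e_2+f_2+c(e_1+f_1)\in V_2\setminus V_1$ has $\pi_+$ entering $V_j$ only at $j=4$ when $c\neq 0$, although the partner of $2$ is $3$.
\end{itemize}

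\textbf{Two fixes.} A correct recipe: position $i$ is $+$, $-$, a left end, or a right end according as the jumps of $\dim(V_\bullet\cap E_+)$ and $\dim(V_\bullet\cap E_-)$ at $i$ are $(1,0),(0,1),(0,0),(1,1)$. A left end $i$ is matched to the least $j>i$ with $\dim(\pi_+(V_i)+V_j)=\dim(\pi_+(V_{i-1})+V_j)$. These numbers are $K$-invariant and every flag is conjugate to a standard representative, so you only need to check that they return $\gamma$ on the representative of $\gamma$. That is a direct coordinate computation.

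Simpler still, define the label recursively by your own peeling: the type of $V_1$; then $j=\min\{j:\pi_+(V_1)\subseteq V_j\}$; then the label of the quotient flag. Each ingredient is $K$-equivariant, so the label is a $K$-invariant. Peeling the representative of $\gamma$ visibly returns $\gamma$.

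\textbf{Comparison with the paper.} Repaired this way, your proof takes a genuinely different route. The paper only cites Matsuki here. Its Appendix~\ref{sec:Sorbit} at $m=0$ (where $S=K$) proves the statement via representations of the $D_{n+2}$ quiver with dimension vector $(1,\dots,n;p,q)$, injective along the chain and with $V_n\to V_+\oplus V_-$ an isomorphism.
\begin{itemize}
\item Gabriel's theorem gives finiteness.
\item The admissible indecomposables are exactly arcs and $\pm$ nodes.
\item Krull--Schmidt uniqueness gives for free the separation of orbits that you must supply by hand.
\end{itemize}
Your argument is elementary and algorithmic (it computes the clan of a given flag) and needs no Gabriel. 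The quiver framework is what the paper needs afterwards: it extends verbatim to preclans, and its deletion and contraction operations produce the maps to $P$- and $Q$-orbits and the weak order.
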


\subsection{Preclans}\label{sec:preclans}
A \emph{preclan} is a clan with some unmatched nodes uncolored. 
We say a preclan is a $(p,m,q)$-preclan if 
\begin{itemize}
    \item the number of $\clan{+}$'s and matchings is $p$; 
    \item the number of $\clan{-}$'s and matchings is $q$; 
    \item the number of uncolored nodes $\clan{,}$'s is $m$. 
\end{itemize}
We will always denote $n=p+m+q$. 
For example, the following diagram is a $(4,2,5)$-preclan 
\begin{equation}\label{eq:preclaneg}
\gamma=\clan{6-84,-..,+.}.
\end{equation}
For a $(p,m,q)$-preclan $\gamma$, we associate a permutation $\sigma_{\gamma}$ to $\gamma$ as follows.  First assign $1,\ldots,p$ to $\clan{+}$'s and left-ends of $\gamma$ from left to right, then assign $p+1,\ldots,p+m$ to uncolored $\clan{,}$'s of $\gamma$ from left to right, and finally assign $p+m+1,\ldots,p+m+q$ to $\clan{-}$'s and right-ends of $\gamma$ from left to right. Then  $\sigma_{\gamma}$ is obtained by reading the assigned labels of $\gamma$ from left to right.

For example, let $\gamma$ be the preclan in \eqref{eq:preclaneg}. We have 
\begin{align}\label{u_v_gamma}
\def\m#1{\makebox[1.2pc]{$#1$}}
\begin{aligned}
\gamma&=\,\clan{6-84,-..,+.}\\
\sigma_\gamma &= \,\m{1}\m{7}\m{2}\m{3}\m{5}\m{8}\m{9}\m{10}\m{6}\m{4}\m{11}.
\end{aligned}
\end{align}

\begin{definition}\label{def:dotgamma}
For a $(p,m,q)$-preclan $\gamma$, define  $\dot{\gamma}=[\dot{\gamma}_1\,\,\dot{\gamma}_2\,\, \ldots\,\, \dot{\gamma}_n]\in GL_n$, where 
$\dot{\gamma}_1,\ldots,\dot{\gamma}_n$ are  column vectors defined  in the following way.  For $1\le i\le n$, if the $i$-th node of $\gamma$ is the left-end of an $(i,j)$-matching, then 
   $\dot{\gamma}_i = \mathbf{e}_{\sigma_{\gamma}(i)}+\mathbf{e}_{\sigma_{\gamma}(j)}$. Otherwise,
let 
   $\dot{\gamma}_i = \mathbf{e}_{\sigma_{\gamma}(i)}$.
\end{definition}
For example, for the preclan in \eqref{u_v_gamma}, we have 
$$
\def\O{\color{lightgray}0}
\def\I{1}
\dot{\gamma}
=\left[\begin{array}{ccccccccccc}
\I&\O&\O&\O&\O&\O&\O&\O&\O&\O&\O\\
\O&\O&\I&\O&\O&\O&\O&\O&\O&\O&\O\\
\O&\O&\O&\I&\O&\O&\O&\O&\O&\O&\O\\
\O&\O&\O&\O&\O&\O&\O&\O&\O&\I&\O\\
\O&\O&\O&\O&\I&\O&\O&\O&\O&\O&\O\\
\O&\O&\O&\O&\O&\O&\O&\O&\I&\O&\O\\
\O&\I&\O&\O&\O&\O&\O&\O&\O&\O&\O\\
\O&\O&\O&\O&\O&\I&\O&\O&\O&\O&\O\\
\I&\O&\O&\O&\O&\O&\I&\O&\O&\O&\O\\
\O&\O&\O&\I&\O&\O&\O&\I&\O&\O&\O\\
\O&\O&\I&\O&\O&\O&\O&\O&\O&\O&\I\\
\end{array}\right]\in GL_{11}.$$

\begin{theorem}
The $S$-orbits of $G/B$ are finite and parametrized by $(p,m,q)$-preclans: 
$$G/B=\bigsqcup_{\gamma} S\dot{\gamma}B/B.$$
\end{theorem}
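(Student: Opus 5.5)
We translate $S$-orbits on $G/B$ into orbits of a product of general linear groups on a space of linear-algebra data, classify those, and check that each class contains exactly one $\dot\gamma B$.

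A flag $V_\bullet\in G/B$ interacts with $S=P\cap Q$ only through the two subspaces it stabilizes, $E=\operatorname{span}(\mathbf e_{p+1},\dots,\mathbf e_n)$ (fixed by $P$) and $F=\operatorname{span}(\mathbf e_1,\dots,\mathbf e_{p+m})$ (fixed by $Q$), together with the induced actions on $\mathbb C^n/E\cong\mathbb C^p$ and $F$ or $\mathbb C^n/F$. Concretely, $S$ is the stabilizer in $G$ of the pair $(E,F)$, with $\dim E=m+q$, $\dim F=p+m$ and $E\cap F=\operatorname{span}(\mathbf e_{p+1},\dots,\mathbf e_{p+m})$ of dimension $m$; moreover $G$ acts transitively on such pairs in general position. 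Hence
\[
S\backslash G/B\;\cong\;G\backslash\{(E,F,V_\bullet)\},
\]
where $(E,F)$ ranges over this single $G$-orbit. A $G$-orbit of such a triple is an isomorphism class of a representation of a star-shaped quiver: a central vertex $\mathbb C^n$ with arms given by the two subspaces and the full flag. As in \eqref{sspherical} and \cite{MWZ}, this is a type $D$ (or $E$-free) configuration of finite representation type. I will do the finiteness and the classification directly, as the paper does in Appendix~\ref{sec:Sorbit}: decompose the representation into indecomposables.

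\textbf{Classify the indecomposables.} The indecomposables whose subspace arms have dimension $0$ or $1$ correspond to vectors placed at a step of the flag. A basis vector $\mathbf e$ added at step $i$ can be of four kinds:
\begin{itemize}
\item it lies in $F$ but not $E$: a $\clan{+}$;
\item it lies in $E$ but not $F$: a $\clan{-}$;
\item it lies in $E\cap F$: an uncolored node;
\item it is a sum of a vector of $F$ and a vector of $E$ whose $E$-part appears later, at step $j$: a matching $(i,j)$.
\end{itemize}
Normal form: choose a basis $\mathbf v_1,\dots,\mathbf v_n$ adapted to the flag and, by row reduction using $S$ (equivalently, Gaussian elimination on the quiver representation), put it in the form of the columns of $\dot\gamma$. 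I would prove the reduction by induction on $n$. Look at the line $V_1$. If it lies in $F\setminus E$, in $E\setminus F$, or in $E\cap F$, quotient by it; this gives a smaller problem of the same kind with $(p,m,q)$ reduced in the first, third or second entry respectively. Otherwise $V_1$ is spanned by $f+e$ with $f\in F\setminus E$ and $e\in E\setminus F$. Let $j$ be the first index with $e\in V_j+F$, suitably interpreted; then one pairs node $1$ with node $j$. The counts then force exactly $p$ nodes that are $\clan{+}$'s or left-ends, $q$ that are $\clan{-}$'s or right-ends, and $m$ uncolored nodes. This reproduces the labelling by $\sigma_\gamma$ and shows $V_\bullet\in S\dot\gamma B/B$.

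\textbf{Distinctness.} The preclan is recovered from rank invariants, which are constant on $S$-orbits:
\[
\dim(V_i\cap E),\quad \dim(V_i\cap F),\quad \dim(V_i\cap E\cap F),\quad \dim\bigl(V_i\cap(V_j+E)\cap F\bigr)\ \text{or equivalently}\ \dim\bigl((V_i+E)\cap(V_j+F)\bigr).
\]
Computing these on $\dot\gamma$ gives counting functions from which the $\clan{+}$, $\clan{-}$ and uncolored positions, and the matching, are read off. This is just as for Matsuki's clans, where the matching is detected by $\dim(V_i\cap V_j^{\perp\text{-type}})$. Finiteness then follows because there are finitely many preclans.

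\textbf{Main obstacle.} The hard step is the inductive normal form in the matched case. When quotienting, one must check that the residual data is again of type $(p',m',q')$ in general position, and that the partner $j$ is well defined independently of choices. This is where the uncolored $E\cap F$ part interacts: $F$ and $E$ overlap, so $f$ and $e$ are only defined modulo $E\cap F$. I would handle it by first quotienting everything by $E\cap F$ where possible, which reduces to Matsuki's $K$-orbit classification on $\mathbb C^n/(E\cap F)\cong\mathbb C^{p+q}$ for the induced flag. Uncolored nodes are then exactly the jumps of $\dim(V_i\cap E\cap F)$. The remaining ambiguity, the lower-left $*$ blocks of $S$, is used to clear the lifts, as in the structure of $\dot\gamma$.
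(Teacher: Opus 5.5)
Your main route, peeling off $V_1$ and inducting on $n$, breaks down in the matched case, and that is the only nontrivial case.

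First, the partner rule ``first $j$ with $e\in V_j+F$'' is vacuous. Since $e=v-f\in V_1+F$, it always returns $j=1$. The correct invariant is the first $j$ with $e\in V_j+(E\cap F)$, which is well defined because $e$ is determined modulo $E\cap F$.

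More seriously, quotienting by a matched line $V_1=\langle f+e\rangle$ does not give a problem of the same kind. In $\mathbb{C}^n/V_1$ one has $\bar e=-\bar f\in \bar E\cap\bar F$, so the new configuration is of type $(p-1,m+1,q-1)$ and is in general position. However, the stabilizer of $V_1$ in $S$ maps only into the subgroup of the new $S'$ that preserves the image of $E\cap F$. That image is a hyperplane in the $(m+1)$-dimensional space $\bar E\cap\bar F$, whereas $S'$ acts on $\bar E\cap\bar F$ through all of $GL_{m+1}$. So for $m\geq 1$ the induction leaves the class of $(p,m,q)$-problems. The obstruction is the group, not general position.

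The fallback in your last paragraph does give a correct proof by a genuinely different route. But it is the entire argument, and it needs to be stated and proved as a lemma rather than described as ``clearing the lifts''. Here is the lemma:
\begin{itemize}
\item Put $W=E\cap F$ and $K=GL(F/W)\times GL(E/W)\cong GL_p\times GL_q$. The map $S\to K$ is surjective. Its kernel $U$ consists of the $g$ with $g(W)=W$ and $g\equiv 1 \bmod W$: these are the elements of $S$ whose $GL_p$ and $GL_q$ blocks are identities, and the whole middle block row is free.
\item Send a flag to the pair consisting of $J=\{i: V_i\cap W\neq V_{i-1}\cap W\}$, an $m$-subset of $[n]$, and the full flag $\bar V_\bullet$ of $\mathbb{C}^n/W$ formed by the distinct spaces $(V_i+W)/W$. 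This map is $S$-equivariant, with $S$ acting on $\bar V_\bullet$ through $K$, and it is surjective.
\item Its fibers are single $U$-orbits. For two flags with the same data, choose adapted bases with $u_i,u_i'\in W$ for $i\in J$ and $u_i'\equiv u_i \bmod W$ for $i\notin J$. Then the linear map $u_i\mapsto u_i'$ lies in $U$.
\end{itemize}
Hence $S$-orbits correspond to pairs consisting of an $m$-subset and a $K$-orbit, i.e. a $(p,q)$-clan, and such a pair is exactly a preclan. Moreover $\dot\gamma$ maps to the pair (positions of uncolored nodes, $\dot{\bar\gamma}$). This holds because the uncolored columns of $\dot\gamma$ span $W$ and its other columns have no entries in rows $p+1,\dots,p+m$. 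You need the $m=0$ case with these explicit representatives, not just Matsuki's orbit count.

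With this lemma the rank-invariant paragraph is unnecessary. Note also that $\dim\bigl((V_i+E)\cap(V_j+F)\bigr)=\dim(V_i+E)+\dim(V_j+F)-n$ because $E+F=\mathbb{C}^n$. So that quantity is not equivalent to your other invariant and carries no new information.

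The paper instead identifies $S$-orbits with representations of a $D_{n+2}$ quiver. It reads off preclans from Gabriel's list of indecomposables satisfying the injectivity and surjectivity conditions. Your reduction is more elementary and explains directly why a preclan is a clan plus $m$ marked positions. The paper's route is self-contained, recovering the clan case as $m=0$. It also sets up the deletion and contraction machinery that the paper reuses for the weak order and for the maps to $P$- and $Q$-orbits.
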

\begin{proof}
Let us sketch the key steps of the proof. 
\begin{itemize}
    \item We first show that the $S$-orbits are bijective to certain isomorphism classes of quiver representations of type $D$. 
    This step is essentially due to Magyar, Weyman and Zelevinsky \cite{MWZ}. 
    \item Using the classification of indecomposable representations of type $D$, we show that the isomorphism classes are naturally indexed by $(p,m,q)$-preclans. 
    \item Finally, in Proposition \ref{prop:an-ele-S}, we show that $S\dot{\gamma}B/B$ is the corresponding orbit of $\gamma$. 
\end{itemize}
The complete proof is given in Section \ref{sec:finiteofS}.  
\end{proof}

 \subsection{Weak order}
For a preclan $\gamma$, 
we define $s_k*\gamma$ to be the preclan obtained from $\gamma$ by the following 9 local moves on the $k$-th node and the $(k+1)$-st node. Otherwise, let $s_k*\gamma=\gamma$.

\def\mpto{\rotatebox[origin=c]{-90}{$\mapsto$}}
\begin{gather}\label{eq:clanweak1}
\begin{matrix}
\begin{matrix}
\clan{\dots+-\dots}\\
\clan{\dots-+\dots}
\end{matrix}&\,\,&
\clan{\dots\pm2\dots.}&\,\,&
\clan{2\dots.\pm\dots}\\[-1ex]
\mpto&&\qquad\quad\mpto\hfill&&\hfill\mpto\quad\qquad\\
\clan{\dots1.\dots}&&
\clan{\dots3\pm\dots.}&&
\clan{3\dots\pm.\dots}
\end{matrix}\\[1ex]\label{eq:clanweak2}
\begin{matrix}
\clan{2\dots.2\dots.}&
\clan{34\dots.\dots.}&
\clan{4\dots3\dots..}\\[-1ex]
\mpto&\quad\mpto\hfill&\hfill\mpto\quad\\
\clan{3\dots3.\dots.}&
\clan{52\dots.\dots.}&
\clan{5\dots2\dots..}
\end{matrix}\\[1ex]\label{eq:clanweak3}
\quad
\begin{matrix}
\clan{,\pm}&\qquad&
\clan{,2\dots.}& \quad&
\clan{3\dots,.}
\\
\mpto&& \mpto \qquad\quad && \qquad\mpto\\
\clan{\pm,}&& \clan{3,\dots.} && 
\clan{2\dots.,}
\end{matrix}
\end{gather}

The weak order on all $(p,m,q)$-preclans is generated by the relations $\gamma\leq s_k*\gamma$.

\begin{example}
When $p=q=m=1$, the Hasse diagram of the weak order is displayed in the following.

$$\xymatrix@C=-1pc{
&& {\clan{1.,}}\\
{\clan{+-,}}\ar[urr]^1 &&
{\clan{2,.}}\ar[u]^2 && 
{\clan{-+,}}\ar[ull]_1\\
{\clan{+,-}}\ar[u]^2&&
{\clan{,1.}}\ar[u]^1&& 
{\clan{-,+}}\ar[u]_2\\
&{\clan{,+-}}\ar[ur]_2\ar[ul]^1 &&
{\clan{,-+}}\ar[ul]^2\ar[ur]_1
}$$
\end{example}

\begin{theorem}
\label{th:weakordApp}
For a $(p,m,q)$-preclan $\gamma$, let  $\mathbb{O}_\gamma=S\dot{\gamma}B/B$ denote the corresponding $S$-orbit. 
We have 
\begin{equation}
s_k*\mathbb{O}_{\gamma}=\mathbb{O}_{s_k*\gamma}. 
\end{equation}
Moreover, types (IVa) and (IVb) in Subsection \ref{spherical} will never appear. 
\end{theorem}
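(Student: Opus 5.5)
The plan is to compute $\pi_k^{-1}(\pi_k(\mathbb{O}_\gamma))$ explicitly. Since the fibre $\pi_k^{-1}(\pi_k(\dot\gamma B))$ is $\{\dot\gamma\, u_k(z) s_k B,\ \dot\gamma B : z\in\mathbb{C}\}$, where $u_k(z)=1+zE_{k,k+1}$, it is the $\mathbb{P}^1$ of lines in the plane spanned by the columns $\dot\gamma_k,\dot\gamma_{k+1}$, modulo the flag $V_{k-1}$. So we must identify the $S$-orbit of every point of this $\mathbb{P}^1$, i.e.\ determine the preclan of the flag obtained by replacing $V_k$ with $V_{k-1}+\mathbb{C}(a\dot\gamma_k+b\dot\gamma_{k+1})$. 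The invariants that distinguish $S$-orbits are those from the quiver classification of Section~\ref{sec:finiteofS}. Concretely, they are the dimensions of $V_i\cap E_{\le p}$, $V_i\cap E_{\ge p+1}$, $V_i + E_{\le p+m}$ and the related relative positions. Here $E_{\le p}=\mathrm{span}(\mathbf e_1,\dots,\mathbf e_p)$ is the $P$-stable subspace and $E_{\le p+m}$ is the $Q$-stable one; we work modulo the stabilizers. I would first record a rank-function characterization of $\mathbb{O}_\gamma$, read off from $\dot\gamma$. A preclan is determined by the numbers $\dim(V_i\cap E_{\le p})$, $\dim(V_i\cap E_{\ge p+1})$, $\dim(V_i\cap E_{\ge p+m+1})$, $\dim (V_i\cap (E_{\le p}+E_{\ge p+m+1}))$, etc., for all $i$, together with pairwise comparisons $\dim(V_i\cap \cdots)$ along $i<j$ that detect matchings. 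These are the analogue of Yamamoto's rank description of clans.

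Next comes a case analysis on the pair of nodes $(k,k+1)$ of $\gamma$. There are four kinds of node: $\clan{+}$, $\clan{-}$, uncolored, and matching ends. The cases are the pairs of node types, plus, when both are ends, the relative configuration of the two arcs (nested, crossing, or disjoint in each order). For each case I would write the two columns of $\dot\gamma$ and parametrize the line $[a:b]$. Using the action of the torus and the unipotent parts of $S$ (the blocks $*$ in the middle row), I would normalize generic and special points and read off their preclans.

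For example, take $\clan{+}\clan{-}$. The generic vector $a\mathbf e_i+b\mathbf e_j$ with $i\le p<p+m<j$ gives a matching, and the two special points give $\clan{+-}$ and $\clan{-+}$. So we get three orbits, of type (IIIa); this matches the first move in \eqref{eq:clanweak1}. For an uncolored node next to $\clan{\pm}$, the vector $\mathbf e_{p+r}+z\mathbf e_{i}$ can be absorbed by the $*$ blocks of $S$ for all $z$ when $i$ is on the appropriate side. Only the point $z=\infty$ differs, giving type (IIa)/(IIb) and the move in \eqref{eq:clanweak3}. The cases of two arcs reproduce \eqref{eq:clanweak2}. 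The arc-with-sign cases and the arc-with-uncolored cases are handled similarly, and all remaining configurations give type (I) or (IIb), so that $s_k*\gamma=\gamma$. In each case the dense orbit is the one whose preclan is the target of the listed move, which proves $s_k*\mathbb{O}_\gamma=\mathbb{O}_{s_k*\gamma}$. We could cross-check the dimension increase against the dimension formula that length will encode, but that is not needed.

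For the absence of type IV, I would argue that in every case each fibre meets each orbit in either a single point, an $\mathbb{A}^1$, or $\mathbb{P}^1$ minus one or two points, never in exactly two points. A two-point intersection would require a nontrivial involution on the fibre, i.e.\ an element of $\mathrm{Stab}_S$ acting on the $\mathbb{P}^1$ by swapping two points. In our normalizations the stabilizer image in $PGL_2$ of the fibre is always Borel-like or torus-like, never the normalizer of a torus. Abstractly, this holds because $S$ is connected and contains a Borel of a Levi of $P$ and $Q$, and type IV arises only from disconnected stabilizers such as those for $O_n$ or $Sp$. I would make this precise by noting that all stabilizers $\mathrm{Stab}_S(\dot\gamma B)$ are connected, since they are unit groups of endomorphism algebras of quiver representations, hence connected. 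A connected group acting on a fibre with a finite orbit of size $2$ is impossible.

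I expect the main obstacle to be the bookkeeping: establishing the complete set of rank invariants that separate preclans, so that the special points are identified rigorously rather than by guessing. I would derive these invariants from the quiver-representation dictionary in the Appendix and then reduce each case to a $2$-dimensional computation.
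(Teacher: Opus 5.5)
Your route is different from the paper's. The paper realizes $\pi_k$ as contracting vertex $k$ of the type $D$ quiver. Since contraction commutes with direct sums, every indecomposable summand whose dimension does not jump at $k$ can be discarded. What remains is a sum of two indecomposables with a $\mathbb{P}^1$ of choices at vertex $k$, and the rest of the flag never enters. Computing directly in the fibre through $\dot\gamma B$ is a legitimate alternative. However, the mechanism you describe is wrong in a way that breaks the case analysis.

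$E_{\le p}$ is not $P$-stable. $P$ is block lower triangular, so it stabilizes $K_+=\operatorname{span}(\mathbf e_{p+1},\dots,\mathbf e_n)$, while $K_-=\operatorname{span}(\mathbf e_1,\dots,\mathbf e_{p+m})$ is the $Q$-stable one. The $*$ blocks of $S$ add arbitrary vectors of $K_+\cap K_-=\operatorname{span}(\mathbf e_{p+1},\dots,\mathbf e_{p+m})$ to each $\mathbf e_i$ with $i\le p$ or $i>p+m$. Hence for $m>0$ the quantities $\dim(V_i\cap E_{\le p})$, $\dim(V_i\cap E_{\ge p+m+1})$ and $\dim(V_i\cap(E_{\le p}+E_{\ge p+m+1}))$ are not orbit invariants. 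For example, $1+E_{p+1,1}\in S$ moves $V_1=\mathbb{C}\mathbf e_1$ off $E_{\le p}$.

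The same confusion reverses your uncolored example. Take $\dot\gamma_k=\mathbf e_{p+r}$ and $\dot\gamma_{k+1}=\mathbf e_i$. The element $1-z^{-1}E_{p+r,i}\in S$ changes only $\dot\gamma_{k+1}$, the only column with an $\mathbf e_i$-coordinate. It carries $V_{k-1}+\mathbb{C}(\mathbf e_{p+r}+z\mathbf e_i)$ to $V_{k-1}+\mathbb{C}\mathbf e_i$ for every $z\neq 0$. The point $z=0$ is a different orbit, because $\dim(V_k\cap K_+\cap K_-)$ is larger there. So $\clan{,\pm}$ meets the fibre in one point and $\clan{\pm,}$ is dense. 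Your version, where ``only $z=\infty$ differs'', would give the reverse of \eqref{eq:clanweak3}.

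You also do not need a rank characterization at all. The parametrization $G/B=\bigsqcup_\gamma S\dot\gamma B/B$ already separates orbits. It therefore suffices to exhibit $s\in S$ and $b\in B$ with $s\,x\,b=\dot\gamma'$ for each point $x$ of the fibre.

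For type IV, the group acting on $F=\pi_k^{-1}(y)$, with $y=\pi_k(\dot\gamma B)$, is $\mathrm{Stab}_S(y)$, not $\mathrm{Stab}_S(\dot\gamma B)$. A two-point intersection $F\cap\mathbb{O}$ only means that $\mathrm{Stab}_S(\dot\gamma B)$ has index $2$ in $\mathrm{Stab}_S(y)$, and this is compatible with the former being connected. For instance, for $N_{SL_2}(T)$ acting on $\mathbb{P}^1$ the point stabilizer $T$ is connected, yet $\{0,\infty\}$ is of type (IVa). Connectedness of $S$ is not the reason either: $SO_3\subset SL_3$ has orbits of type (IVa).

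The fix is to run your endomorphism-algebra argument on the contracted quiver $Q/k$. Via $\check E^\circ_{\check{\mathbf v}}/\check G'_{\check{\mathbf v}}\cong G/S\times G/P_k$ with free action, $\mathrm{Stab}_S(y)$ is the automorphism group of a $Q/k$-representation. It is therefore connected and cannot swap two points. Alternatively, the absence of type IV falls out of a correct and complete case analysis. A minor point: an arc joining $k$ and $k+1$ is of type (IIIb), not (I) or (IIb).
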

\begin{proof}
Again, we only give a sketch of the proof here; see Section \ref{sec:weakordApp}.  
\begin{itemize}
    \item We can realize $G/B\to G/P_k$ as a contraction of quivers, obtained by taking composition of arrows. 
    \item By ignoring the indecomposable summands not contributing to the fiber, we are led to finitely many cases. The assertion follows from direct computation. \qedhere
\end{itemize}
\end{proof}

\begin{definition}\label{def:monoidaction}
For a permutation $w\in S_n$ and a $(p,m,q)$-preclan $\gamma$, we define 
$$w*\gamma = s_{i_1}*\cdots *s_{i_\ell}*\gamma$$
for any reduced decomposition $w=s_{i_1}\cdots s_{i_\ell}$. 
By Theorem \ref{th:weakordApp} and general theory of spherical subgroup orbits, $w*\gamma$ does not depend on the choices of  reduced decompositions of $w$. 
\end{definition}

\begin{definition}\label{length}    
For a preclan $\gamma$, let $\{a_1<\cdots<a_m\}$ be the set of indices of uncolored nodes of $\gamma$. Define the length of $\gamma$ to be 
\begin{equation}\label{eq:lenghts}
\ell(\gamma):= \ell(\bar{\gamma})+\sum_{i=1}^m(a_i-i),
\end{equation}
where $\bar{\gamma}$ is the clan obtained from $\gamma$ by ignoring all the uncolored $\clan{,}$'s. 
\end{definition}
For example, for the preclan $\gamma$ in \eqref{eq:preclaneg}, 
the clan $\bar{\gamma}$ is \eqref{eq:claneg} with length $12$ 
and the indices of uncolored nodes are $5,9$; we thus have 
$$\ell(\gamma)=12+(5-1)+(9-2)=23.$$
For given $(p,m,q)$, one can check that 
\begin{itemize}
    \item 
each $(p,m,q)$-preclan with minimal length is of length $0$ and it is given by 
$$
\text{$m$ many uncolored nodes $\clan{,}$'s followed by a matchless clan}; 
$$
    \item 
there is a unique $(p,m,q)$-preclan with maximal length given by 
$$\text{a rainbow clan followed by $m$ many uncolored nodes $\clan{,}$'s}.
$$
\end{itemize}
The case $m=0$ follows from \cite{WY}, and the general case follows from the definition of length function in \eqref{eq:lenghts}.

\begin{lemma}\label{lem:preclanisconn}
For a $(p,m,q)$-preclan $\gamma$, it is of maximal (resp., minimal) length if and only if it is maximal (resp., minimal) under the weak order.

\end{lemma}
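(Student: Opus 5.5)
We prove the two directions separately, working directly with the nine local moves \eqref{eq:clanweak1}--\eqref{eq:clanweak3} and the length formula \eqref{eq:lenghts}. The key intermediate claim is that every nontrivial local move increases length by exactly one: if $s_k*\gamma\neq\gamma$, then $\ell(s_k*\gamma)=\ell(\gamma)+1$. Given this claim, any element of maximal length is weak-maximal, since no move can increase its length further. Likewise any element of length $0$ is weak-minimal. For weak-minimality at length $0$, we also check that nothing moves down onto it: every move raises length, so a predecessor would need length $-1$, which is impossible.

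To verify the claim we go through the moves one at a time. For the moves in \eqref{eq:clanweak1} and \eqref{eq:clanweak2}, the uncolored nodes keep their positions. So the term $\sum(a_i-i)$ is unchanged, and the claim reduces to the known clan statement from \cite{WY} applied to $\bar\gamma$. There is one subtlety: nodes $k,k+1$ of $\gamma$ are adjacent in $\bar\gamma$ only if no uncolored node lies between them, and here they are adjacent positions, so this holds. For \eqref{eq:clanweak3}, an uncolored node moves from position $k$ to $k+1$, which raises $\sum(a_i-i)$ by $1$. The clan $\bar\gamma$ is unchanged, because deleting uncolored nodes gives the same matching pattern.

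For the converse directions, we show that a non-maximal preclan $\gamma$ admits a nontrivial move $s_k$, and that a preclan of positive length admits a nontrivial move down. We handle the maximal case first.

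1. If some uncolored node is followed by a colored node or by an end of a matching, apply \eqref{eq:clanweak3}. This pushes the uncolored nodes to the right. Note that \eqref{eq:clanweak3} requires the neighbour to be $\pm$, a left end, or a right end, which covers all possibilities.
2. Once all uncolored nodes are at the end, $\gamma$ is a clan followed by $m$ uncolored nodes.
3. By the $m=0$ case \cite{WY}, a clan that is not the rainbow has a move among \eqref{eq:clanweak1}--\eqref{eq:clanweak2}. Since all nodes involved lie in the first $p+q$ positions, such a move remains valid.

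The minimal case is symmetric. If an uncolored node is preceded by a colored node, the reverse move of \eqref{eq:clanweak3} exhibits $\gamma$ as $s_k*\gamma'$ with $\gamma'\neq\gamma$. Otherwise all uncolored nodes come first, and the clan part is minimal in the clan weak order exactly when it is matchless, again by \cite{WY}.

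The main obstacle is the bookkeeping in the claim that $\bar\gamma$'s crossing count behaves correctly under \eqref{eq:clanweak2} when uncolored nodes sit inside arcs. Uncolored nodes change neither arc spans in $\bar\gamma$ nor crossings, so the argument reduces cleanly to the clan case. Still, this needs to be checked carefully against the precise convention in \cite{WY}.
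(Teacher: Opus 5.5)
Your proposal is correct and follows essentially the same route as the paper: use the moves in \eqref{eq:clanweak3} to force all uncolored nodes to the end (for weak-maximal) or to the beginning (for weak-minimal), then invoke the $m=0$ clan case from Wyser--Yong. The only addition is that you explicitly verify the unit length increment $\ell(s_k*\gamma)=\ell(\gamma)+1$ to obtain the other implications, which the paper states as ``easy to check'' in the proof of Theorem~\ref{thm:dim=length+}.
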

\begin{proof}
The case when $m=0$ (i.e. $(p,q)$-clans) is implied in Wyser and Yong \cite[Section 2.1]{WY}; see also Richardson and Springer \cite[Theorem 4.6]{RS92}. 

Assume that $\gamma$ is maximal, i.e., $s_k*\gamma = \gamma$ for all $1\leq k\leq n-1$. 
It follows from  \eqref{eq:clanweak3} that the last $m$ nodes are all uncolored nodes $\clan{,}$'s. That is, $\gamma$ is given by 
$$
\text{a clan followed by $m$ many uncolored nodes $\clan{,}$'s}.$$
The lemma follows from the case $m=0$. 

Assume $\gamma$ is minimal, i.e. $s_k*\gamma' \neq \gamma$ for all $1\leq k\leq n-1$ and $\gamma'$. 
It follows from  \eqref{eq:clanweak3} that the first $m$ nodes are all uncolored nodes $\clan{,}$'s. That is, $\gamma$ is given by 
$$
\text{$m$ many uncolored nodes $\clan{,}$'s followed by a clan}.$$
Again, the lemma follows from the case $m=0$. 
\end{proof}

\begin{remark}
As pointed out by \cite[Section 6.6]{Mars}, the ``minimal'' part of Lemma \ref{lem:preclanisconn} is manifestly true when $m=0$, since $(G,S)$ forms a symmetric pair. However, this property is not always true for general spherical subgroups. 
\end{remark}

\begin{theorem}\label{thm:dim=length+}
For any $(p,m,q)$-preclan $\gamma$, we have 
\begin{equation}\label{eq:length}
\dim S\dot{\gamma}B/B=\ell(\gamma)+\binom{p}{2}+\binom{q}{2}+\binom{m}{2}.
\end{equation}
\end{theorem}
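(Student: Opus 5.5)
We argue by induction along the weak order, starting from the minimal preclans. By Lemma \ref{lem:preclanisconn}, a preclan of minimal length is minimal in the weak order. Such a preclan consists of $m$ uncolored nodes followed by a matchless $(p,q)$-clan. For these minimal preclans $\gamma$ we compute $\dim S\dot{\gamma}B/B$ directly. The representative $\dot\gamma$ is then a permutation matrix, so the orbit is $S\dot\gamma B/B$ with $\dot\gamma=\sigma_\gamma$. Its dimension is
$$\dim S-\dim(S\cap \sigma_\gamma B\sigma_\gamma^{-1}).$$
The ordering of the labels (uncolored nodes first, receiving labels $p+1,\dots,p+m$, then $\pm$ interleaved but increasing within each block) means that $\sigma_\gamma B\sigma_\gamma^{-1}$ meets each diagonal block $GL_p,GL_m,GL_q$ in a Borel subgroup. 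It also contains all of the off-diagonal blocks $*$ of $S$ in rows $p+1,\dots,p+m$, and contains these entries exactly, because those coordinates appear in positions before the $\pm$ nodes. Hence the stabilizer has codimension $\binom p2+\binom m2+\binom q2$, which matches \eqref{eq:length} with $\ell(\gamma)=0$. I would verify this block bookkeeping carefully, since the rows $p+1,\dots,p+m$ carry the extra $*$ entries of $S$.

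For the inductive step, we use the general theory recalled in Subsection \ref{spherical}. If $s_k*\mathbb{O}\neq\mathbb{O}$, then $\mathbb{O}$ has type (IIa) or (IIIa); type (IVa) is excluded by Theorem \ref{th:weakordApp}. In either case $\pi_k^{-1}\pi_k(\mathbb{O})$ is a $\mathbb{P}^1$-bundle over $\pi_k(\mathbb{O})$, and the fibers of $\pi_k|_{\mathbb{O}}$ are finite, in fact single points in these types. Thus
$$\dim(s_k*\mathbb{O})=\dim\mathbb{O}+1.$$
By Theorem \ref{th:weakordApp}, $s_k*\mathbb{O}_\gamma=\mathbb{O}_{s_k*\gamma}$. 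It therefore suffices to prove the combinatorial identity
$$\ell(s_k*\gamma)=\ell(\gamma)+1\qquad\text{whenever } s_k*\gamma\neq\gamma.$$
Every preclan is reachable from some minimal one by a chain of nontrivial moves $s_k*$: descending from $\gamma$ through any predecessor always terminates, since the set is finite and, once the identity holds, length strictly drops along the chain. So the theorem follows by induction on the number of such moves.

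The main work is checking the length identity for each of the nine local moves \eqref{eq:clanweak1}--\eqref{eq:clanweak3}, using Definition \ref{length}.

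For the moves in \eqref{eq:clanweak1} and \eqref{eq:clanweak2}, no uncolored node moves, so the sum $\sum_i(a_i-i)$ is unchanged. However, $\bar\gamma$ changes by a clan move on positions that may become nonadjacent once the uncolored nodes are deleted. I would handle this by noting that $k,k+1$ are adjacent in $\gamma$ and neither node is uncolored, so they stay adjacent in $\bar\gamma$. The move is then the corresponding clan weak-order move, and it raises $\ell(\bar\gamma)$ by one by the case $m=0$ (\cite{WY}, \cite{RS92}).

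For the moves in \eqref{eq:clanweak3}, an uncolored node at position $k$ swaps with a colored node or an endpoint at position $k+1$. This increases exactly one $a_i$ by one. Meanwhile $\bar\gamma$ is unchanged: the matching arc length $j-i$ is measured in $\bar\gamma$, where the uncolored node is deleted, so arcs and crossings are unaffected. Hence $\ell$ increases by exactly one.

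The subtle point I expect to be the obstacle is confirming that the clan length in $\bar\gamma$ is indeed computed after deleting the uncolored nodes. This is what keeps the \eqref{eq:clanweak3} moves from changing $\ell(\bar\gamma)$, and it should be immediate from the definition of $\bar\gamma$.
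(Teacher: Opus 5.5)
Your proof is correct. Its inductive engine is the same as the paper's: $\dim(s_k*\mathbb{O})=\dim\mathbb{O}+1$ for nontrivial moves, combined with $\ell(s_k*\gamma)=\ell(\gamma)+1$. The paper only asserts the latter as ``easy to check''. Your reduction fills it in validly: the moves \eqref{eq:clanweak1}--\eqref{eq:clanweak2} become the $m=0$ clan moves on $\bar\gamma$, and each move in \eqref{eq:clanweak3} shifts one uncolored node right while leaving $\bar\gamma$ unchanged.

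The genuine difference is where you anchor the induction.

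\textbf{The paper's anchor.} The paper goes upward. By Lemma \ref{lem:preclanisconn}, every upward chain of nontrivial moves ends at the unique maximal-length preclan $\gamma_0$ (the rainbow clan followed by $m$ uncolored nodes). Its orbit is dense, so the base case reduces to the identity $\binom n2=\binom p2+\binom q2+\binom m2+pq+m(p+q)$, using $\ell(\gamma_0)=pq+m(p+q)$.

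\textbf{Your anchor.} You go downward to the $\binom{p+q}{p}$ minimal preclans and compute each orbit dimension directly from the stabilizer $S\cap\sigma_\gamma B\sigma_\gamma^{-1}$. Your count is correct:
\begin{itemize}
\item you get Borel subgroups in the three diagonal blocks;
\item you get all $m(p+q)$ off-diagonal entries in rows $p+1,\dots,p+m$, because $\sigma_\gamma^{-1}$ sends those rows to positions $1,\dots,m$, which precede everything else.
\end{itemize}
This gives codimension $\binom p2+\binom m2+\binom q2$ with $\ell(\gamma)=0$.

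\textbf{Comparison.} The paper's anchor is cheaper, since it involves a single orbit whose dimension is free. However, it relies on uniqueness of the weak-order maximal element and on identifying that element's orbit with the dense one. Yours needs a small explicit stabilizer computation, but it handles every minimal orbit uniformly and never uses density.

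\textbf{One small point.} For the descent from $\gamma$ to land in your base case, you need the direction ``minimal in the weak order $\Rightarrow$ minimal length, hence $m$ uncolored nodes followed by a matchless clan''. You cite the converse instead. Since Lemma \ref{lem:preclanisconn} is an equivalence, this causes no problem, but it is the other half you are actually using.
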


\begin{proof}
It is easy to check that  $\ell(s_k*\gamma)=\ell(\gamma)+1$ if $s_k*\gamma\neq \gamma$. 
Since $\dim(s_k*\mathbb{O})=\dim\mathbb{O}+1$ if $s_k*\mathbb{O}\neq \mathbb{O}$, by Lemma \ref{lem:preclanisconn}, it suffices to show that \eqref{eq:length} holds when $\gamma$ is of maximal length. 
On one hand, $S\dot{\gamma}B/B$ is dense in the flag variety, thus it has dimension $\binom{n}{2}$. 
On the other hand, 
$$\begin{aligned}
\ell(\gamma) & = \ell(\text{the rainbow $(p,q)$-clan})+m(p+q) = pq+m(p+q).
\end{aligned}$$
The theorem follows from the obvious identity 
$\binom{n}{2}=\binom{p}{2}+\binom{q}{2}+\binom{m}{2}+pq+m(p+q)$. 
\end{proof}

\subsection{Richardson preclans}
For a $(p,m,q)$-preclan $\gamma$, we can define a $p$-inverse Grassmannian permutation $v_{\gamma}$ and a $q$-inverse Grassmannian $u_\gamma$ as follows.
\begin{itemize}
    \item 
To define $v_\gamma$,  first label the left-ends and $\clan{+}$'s of $\gamma$ from left to right with $1,2,\ldots,p$, and then label the other nodes by $p+1,\ldots,n$ from left to right. Then $v_\gamma$ is obtained by reading the labels of $\gamma$ from left to right.

\item Similarly, to define $u_\gamma$,  first label the left-ends and $\clan{-}$'s of $\gamma$ from left to right with $1,2,\ldots,q$, and then label the other nodes by $q+1,\ldots,n$ from left to right. Then $u_\gamma$ is obtained by reading the labels of $\gamma$ from left to right.
\end{itemize}
For example, let $\gamma$ be the preclan in \eqref{eq:preclaneg}. We have 
\begin{equation}\label{uvgamma}
\def\m#1{\makebox[1.2pc]{$#1$}}
\begin{aligned}
\gamma&=\,\clan{6-84,-..,+.}\\
v_\gamma &= \,\m{1}\m{5}\m{2}\m{3}\m{6}\m{7}\m{8}\m{9}\m{10}\m{4}\m{11}\\ 
u_\gamma &= \,\m{1}\m{2}\m{3}\m{4}\m{6}\m{5}\m{7}\m{8}\m{9}\m{10}\m{11}.\\
\end{aligned}
\end{equation}

Recall the subgroups $P,Q,S$ of $GL_n$ introduced in the Introduction. 

\begin{theorem}\label{thm:gammainPQorb}
For any $(p,m,q)$-preclan $\gamma$, we have 
\begin{equation}
S\dot{\gamma}B/B\subseteq Pv_\gamma B/B\cap Qw_0u_\gamma B/B.
\end{equation}
\end{theorem}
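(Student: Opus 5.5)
Since $S\subseteq P$ and $S\subseteq Q$, and the double cosets $PwB$ and $QwB$ are stable under left multiplication by $S$, it suffices to prove the single-point statement
$$\dot{\gamma}B\in Pv_\gamma B\quad\text{and}\quad \dot{\gamma}B\in Qw_0u_\gamma B.$$
The $P$-orbits on $G/B$ are indexed by minimal length coset representatives of $S_p\times S_{m+q}\backslash S_n$, i.e.\ by $p$-inverse Grassmannian permutations. We identify the relevant orbit of a flag $V_\bullet$ through the rank data
$$\dim\bigl(V_i\cap E\bigr),\qquad E=\operatorname{span}(\mathbf e_{p+1},\ldots,\mathbf e_n),$$
which is the $P$-stable subspace. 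Dually, the $Q$-orbits are read off from $\dim(V_i\cap F)$ with $F=\operatorname{span}(\mathbf e_1,\ldots,\mathbf e_{p+m})$, the $Q$-stable subspace. Equivalently, we can use the projections $G/B\to Gr$ onto the appropriate Grassmannian data. For the flag $V_i=\operatorname{span}(\dot\gamma_1,\ldots,\dot\gamma_i)$ we compute these dimensions directly.

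\textbf{The $P$ side.} The column $\dot\gamma_i$ has a component in $\mathbf e_1,\ldots,\mathbf e_p$ exactly when node $i$ is a $\clan{+}$ or a left-end, because $\sigma_\gamma(i)\le p$ precisely in that case. The right-end columns and the $\clan{-}$ and uncolored columns lie in $E$. Each left-end column has the form $\mathbf e_{\sigma(i)}+\mathbf e_{\sigma(j)}$, whose first summand is in $\{1,\ldots,p\}$, and these summands are distinct for distinct $i$. Hence the projection of $V_i$ to $\mathbb C^n/E$ has dimension $\#\{k\le i: \text{node }k\text{ is }\clan{+}\text{ or a left-end}\}$. This is exactly the rank data of $v_\gamma B$, as $v_\gamma$ sends those nodes to $1,\ldots,p$ in order. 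So $\dot\gamma B$ and $v_\gamma B$ have the same invariants and lie in the same $P$-orbit.

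One point needs justification: that these invariants separate $P$-orbits. This holds because $P\backslash G/B$ is in bijection with $B$-orbits on $G/P\cong Gr_{m+q}(\mathbb C^n)$ via inverses, and the Schubert cell is detected by exactly these incidence dimensions. Alternatively, we can exhibit $p\in P$ directly: column-reduce to $v_\gamma$ after right multiplication by $B$, using the fact that for a left-end $i$ the column $\dot\gamma_i$ differs from $\mathbf e_{\sigma(i)}$ by $\mathbf e_{\sigma(j)}$, which lies in $E$. Left multiplication by a lower unipotent element of $P$ with entries in the block below the first $p$ rows removes that term, sending $\mathbf e_{\sigma(i)}+\mathbf e_{\sigma(j)}\mapsto\mathbf e_{\sigma(i)}$. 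After this, $\dot\gamma$ becomes the permutation matrix $\sigma_\gamma$. Since $\sigma_\gamma$ and $v_\gamma$ differ by left multiplication by a permutation preserving the block $\{1..p\}$ versus $\{p+1..n\}$, and $v_\gamma$ is its minimal representative, we get $\sigma_\gamma\in Pv_\gamma$. This explicit route is what I will actually write.

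\textbf{The $Q$ side.} We argue symmetrically, and this is the step I expect to need care. Now the rows $p+m+1,\ldots,n$ are the special block, and a left-end column $\mathbf e_{\sigma(i)}+\mathbf e_{\sigma(j)}$ has its $\sigma(j)$ in that block. The subgroup $Q$ contains the upper unipotent part mapping the last block into the first, which adds rows of the last block to earlier rows. So we instead eliminate $\mathbf e_{\sigma(i)}$, leaving $\mathbf e_{\sigma(j)}$ in column $i$. The resulting matrix is a permutation $\tau$ in which the left-end $i$ and the right-end $j$ both carry labels in the last $q$ rows. Two columns cannot share a row, so column $j$ (namely $\mathbf e_{\sigma(j)}$) must be handled using right multiplication by $B$. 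Subtracting column $i$ from column $j$ is allowed since $i<j$, and it turns column $j$ into $-\mathbf e_{\sigma(i)}$, after which we rescale.

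The outcome is a permutation sending left-ends and $\clan{-}$'s to the last block and everything else to the first. Its $Q$-coset representative is determined by the positions of those nodes. Comparing with $w_0u_\gamma$, which sends the left-ends and $\clan{-}$'s, labelled $1..q$ by $u_\gamma$, to $n,\ldots,n-q+1$ under $w_0$, the set of nodes landing in the last $q$ rows agrees. This determines the $Q$-orbit, since $Q\backslash G/B$ is indexed by which positions map into the last block. This completes the plan.
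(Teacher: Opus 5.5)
Your proof is correct, but it takes a different route from the paper's.

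\textbf{The paper's route.} It works through quiver representations. $P$-orbits on $G/B$ are identified with isomorphism classes of representations of the type $A$ quiver obtained by deleting the vertex $-$, and these are indexed by $p$-subsets (Proposition \ref{prop:Porbis}). The map from $S$-orbits to $P$-orbits is realized as vertex deletion. On indecomposables it sends \eqref{eq:repD2} to the direct sum in \eqref{eq:deleteofDrep2}, so the resulting subset is the set of positions of $+$'s and left-ends. The $Q$ side is the same with the vertex $+$ deleted, which accounts for the $w_0$ twist.

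\textbf{Your route.} You use only $P\backslash G/B\cong (S_p\times S_{m+q})\backslash S_n$ (and the analogue for $Q$) plus explicit row and column reduction. You show $\dot\gamma\in P\sigma_\gamma$ exactly, since a single lower unipotent element of $P$ clears every $\mathbf{e}_{\sigma_\gamma(j)}$ entry from the left-end columns. You then show $\dot\gamma\in Q\tau B$, where $\tau$ is obtained from $\sigma_\gamma$ by swapping the two values at the ends of each matching.

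\textbf{Comparison.} Your argument is self-contained and elementary: it needs neither Gabriel's theorem nor Krull--Schmidt. It also applies directly to the explicit representative $\dot\gamma$, without passing through Proposition \ref{prop:an-ele-S}. The paper's argument is uniform with the quiver framework it already uses for finiteness and the weak order, and it interprets ``$S$-orbit $\mapsto$ $P$-orbit'' conceptually as deleting a vertex.

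\textbf{A correction to your $Q$-side computation.} Left multiplication by $\left(\begin{smallmatrix}1&D\\0&1\end{smallmatrix}\right)\in Q$ with $D\mathbf{e}_{\sigma_\gamma(j)}=-\mathbf{e}_{\sigma_\gamma(i)}$ also changes the right-end column $j$, turning it into $\mathbf{e}_{\sigma_\gamma(j)}-\mathbf{e}_{\sigma_\gamma(i)}$. The matrix stays invertible, so it is not a ``permutation with two columns in the same row.''

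If column $j$ really were $\mathbf{e}_{\sigma_\gamma(j)}$ as you wrote, subtracting column $i$ (now $\mathbf{e}_{\sigma_\gamma(j)}$) would give $0$, not $-\mathbf{e}_{\sigma_\gamma(i)}$. With the correct intermediate column, the subtraction is an upper triangular column operation (since $i<j$) and yields exactly $-\mathbf{e}_{\sigma_\gamma(i)}$. So your final permutation, and its comparison with $w_0u_\gamma$, are right; only the intermediate step needs to be stated correctly.

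A cleaner order is to do the column operation first, which makes column $j$ equal to $-\mathbf{e}_{\sigma_\gamma(i)}$. Then apply the row operation, which clears $\mathbf{e}_{\sigma_\gamma(i)}$ from column $i$ and leaves column $j$ untouched.
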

\begin{proof}Let us sketch the key steps.
\begin{itemize}
    \item The $P$-orbits are bijective to certain isomorphism classes of quiver representations of type $A$, and they are naturally indexed by $p$-subsets of $[n]$, equivalently, by $p$-inverse Grassmannian permutations, see Proposition \ref{prop:Porbis}.
    
\item We can realize the map  $S\dot{\gamma}B/B\mapsto P\dot{\gamma} B/B$ sending an $S$-orbit to its $P$-orbit as the operator of deleting a vertex of the corresponding quiver. By decomposing the representations of the quiver after deleting a vertex into indecomposables, we can show that the map is given by $S\dot{\gamma}B/B\mapsto Pv_\gamma B/B$, see Proposition \ref{prop:StoP}.

    \item Similar derivation  for $Q$, see Proposition \ref{prop:StoQ}. 
\end{itemize}
The complete proof is given in Section \ref{sec:RichApp}. 
\end{proof}

\begin{theorem}\label{thm:equivuandv}
Let $v\in S_n$ be a $p$-inverse Grassmannian permutation, $u\in S_n$ be a $q$-inverse Grassmannian permutation. 
The following statements are equivalent. 
\begin{itemize}
    \item[(1)] there exists a $(p,m,q)$-preclan $\gamma$ such that $v_\gamma=v$ and $u_\gamma=u$; 
    \item[(2)] the intersection $PvB/B\cap Qw_0uB/B$ is nonempty; 
    \item[(3)] the Richardson variety $\overline{B^-vB/B}\cap \overline{Bw_0uB/B}$ is nonempty; 
    \item[(4)] the open Richardson variety $B^-vB/B\cap Bw_0uB/B$ is nonempty; 
    \item[(5)] $v\leq w_0u$ in the Bruhat order. 
\end{itemize}
\end{theorem}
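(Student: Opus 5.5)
We will prove the equivalences via the cycle (1)$\Rightarrow$(2)$\Rightarrow$(3)$\Leftrightarrow$(4)$\Leftrightarrow$(5)$\Rightarrow$(1).

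For (1)$\Rightarrow$(2), take $\gamma$ with $v_\gamma=v$ and $u_\gamma=u$. Then $\dot\gamma B/B$ lies in $S\dot\gamma B/B$, which is contained in $PvB/B\cap Qw_0uB/B$ by Theorem \ref{thm:gammainPQorb}. Hence the intersection is nonempty.

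For (2)$\Rightarrow$(3), we compare orbits with cells. Since $v$ is $p$-inverse Grassmannian, it is the minimal-length representative of its coset in $W_P\backslash S_n$, where $W_P=S_p\times S_{m+q}$ is the Weyl group of the Levi of $P$. Since $P\supseteq B^-$ is a parabolic containing the opposite Borel up to a Levi factor, we get $PvB/B=\bigcup_{x\in W_P}B^-xvB/B$, and these cells lie in $\overline{B^-vB/B}$: minimality of $v$ gives $xv\geq v$, and the closure of $B^-yB$ consists of the cells indexed by $y'\ge y$. Similarly $Q\supseteq B$, so $Qw_0uB/B$ is a union of $B$-orbits $Bw_0yuB/B$ with $y\in W_Q$. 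Here $u$ minimal in its coset for $W_Q=S_{p+m}\times S_q$ means $w_0u$ is maximal in $w_0W_Qw_0\cdot w_0u$, so all these cells lie in $\overline{Bw_0uB/B}$. Thus $PvB/B\cap Qw_0uB/B\subseteq\overline{B^-vB/B}\cap\overline{Bw_0uB/B}$, which gives (3). The one point to check is that the block conventions (sizes $p$ on top for $P$, $q$ at the bottom for $Q$) match the definition of $p$-, resp.\ $q$-, inverse Grassmannian. I would verify this on the example \eqref{uvgamma}.

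The equivalences (3)$\Leftrightarrow$(4)$\Leftrightarrow$(5) are the standard facts about Richardson varieties (Richardson, Deodhar). The Richardson variety $\overline{B^-vB/B}\cap\overline{Bw_0uB/B}$ is nonempty iff it contains a $T$-fixed point $y$ with $v\le y\le w_0u$, iff $v\le w_0u$. In that case the open Richardson variety is dense in it and nonempty.

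The main work is (5)$\Rightarrow$(1), which is a combinatorial construction. Encode $v$ by the $p$-subset $A=v^{-1}([p])$ of positions and $u$ by the $q$-subset $C=u^{-1}([q])$. For an inverse Grassmannian permutation, the values $1..p$ sit at the positions in $A$ in increasing order, so $A$ determines $v$. From the definitions of $v_\gamma,u_\gamma$, the preclan $\gamma$ must have
\[
A=\{\text{left-ends}\}\cup\{\clan{+}\},\qquad C=\{\text{right-ends}\}\cup\{\clan{-}\}.
\]
So we must find a matching whose left-ends lie in $A\setminus C$ and right-ends lie in $C\setminus A$, with each arc going from a left point to a right point. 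The nodes of $A\cap C$ cannot be coloured consistently, because a single node would have to be both $\clan{+}$ (or a left-end) and $\clan{-}$ (or a right-end). Hence every point of $A\cap C$ must be matched, which is impossible for a single node. I therefore expect that the right condition is $A\cap C=\emptyset$ up to matching, and that the correct reading is: the nodes of $A\cap C$ are left-ends whose right-ends lie in $C\setminus A$. I will pin down the exact convention from the example, where node $1$ is a left-end lying in both $A$ and $C$. Then $A\cap C$ is the set of left-ends, which must be matched injectively to later positions in $[n]\setminus(A\cup C)$. This is a mistake to repair: with the correct convention the right-ends lie outside $A$ and inside $C$. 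Once the convention is fixed, $\gamma$ exists iff there is a non-crossing (say, greedy) injection from $A\cap C$ to later elements of the appropriate complementary set. By a Hall/ballot argument this holds iff, for every $k$, the number of elements of $A\cap C$ in $[k]$ is at most the number of available right-ends in $[k+1,n]$.

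The remaining uncoloured nodes are then automatically $m$ in number by counting. The last step, which I expect to be the main obstacle, is to show that this ballot-type condition is equivalent to the Bruhat condition $v\le w_0u$. For inverse Grassmannian permutations this reduces to the tableau criterion: $v\le w_0u$ iff $\#(A\cap[k])+\#(C\cap[k])\le k+\#(\text{overlap forced})$ for all $k$. Equivalently, by the Gale order on the subsets, $A\cap[k]$ and $C\cap[k]$ must jointly fit. I would prove this by computing the rank matrices of $v$ and $w_0u$ and checking that the rank inequalities at the corners reduce exactly to the matching condition.
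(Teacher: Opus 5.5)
\textbf{There is a genuine gap at (5)$\Rightarrow$(1).} This is the only arrow in your cycle that returns to (1), and what you wrote there is not yet a proof.

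First, the bookkeeping is wrong. By the definition of $u_\gamma$, the set $C=u_\gamma^{-1}([q])$ consists of the left-ends and the $\clan{-}$'s; in \eqref{uvgamma}, for instance, $C=\{1,2,3,4,6\}$. So $A\cap C$ is the set of left-ends, $A\setminus C$ is the set of $\clan{+}$'s, and $C\setminus A$ is the set of $\clan{-}$'s. The right-ends together with the uncolored nodes form $D=[n]\setminus(A\cup C)$. Right-ends are never in $C$, contrary to your ``repair''.

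Second, the Hall condition is misstated. What is needed is $\#(A\cap C\cap[k,n])\le\#(D\cap[k,n])$ for all $k$. Your prefix version, taken at $k=n$, forces $A\cap C=\varnothing$. That would rule out every preclan with a matching, e.g.\ $\gamma_{v,u}$ in Example~\ref{eg:maineg}.

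Third, and decisively, the equivalence of this condition with $v\le w_0u$ is the whole content of the step. You leave it as ``the main obstacle'', with a criterion involving an undefined ``overlap forced'' term and only a plan to compare rank matrices. So nothing in the proposal derives (1) from any of (2)--(5).

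\textbf{The missing idea is that no combinatorics is needed; the loop closes geometrically.}

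For (4)$\Rightarrow$(2): since $B^-\subseteq P$ and $B\subseteq Q$, we get $B^-vB/B\cap Bw_0uB/B\subseteq PvB/B\cap Qw_0uB/B$, so this is immediate.

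For (2)$\Rightarrow$(1):
\begin{itemize}
\item The intersection $PvB/B\cap Qw_0uB/B$ is stable under $S=P\cap Q$, so if it is nonempty it contains an $S$-orbit.
\item By the parametrization $G/B=\bigsqcup_\gamma S\dot\gamma B/B$, that orbit is $S\dot\gamma B/B$ for some $(p,m,q)$-preclan $\gamma$.
\item Theorem~\ref{thm:gammainPQorb} gives $S\dot\gamma B/B\subseteq Pv_\gamma B/B\cap Qw_0u_\gamma B/B$.
\item $P$-orbits (resp.\ $Q$-orbits) indexed by distinct $p$- (resp.\ $q$-) inverse Grassmannian permutations are disjoint (Proposition~\ref{prop:Porbis} and its $Q$-analogue). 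Hence $v_\gamma=v$ and $u_\gamma=u$.
\end{itemize}
Together with your (1)$\Rightarrow$(2)$\Rightarrow$(3)$\Leftrightarrow$(4)$\Leftrightarrow$(5), this is exactly the paper's proof.

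Your combinatorial route is not wrong in principle. With the corrected conventions and the suffix Hall condition, existence of $\gamma$ is indeed equivalent to $v\le w_0u$, as the theorem itself shows. But proving that directly requires a genuine Bruhat-order computation that you have not supplied.
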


\begin{proof}
We prove these equivalences in the following steps.

\medbreak
\paragraph{(1) $\Leftrightarrow$ (2)}
Note that $PvB/B\cap Qw_0uB/B$ can be decomposed into $S$-orbits, since $S=P\cap Q$. This equivalence follows from Theorem \ref{thm:gammainPQorb}.

\medbreak
\paragraph{(4) $\Rightarrow$ (2) $\Rightarrow$ (3)}
Note that 
$$\begin{aligned}
B^-vB/B& \subseteq 
PvB/B \subseteq
\overline{PvB/B}=\overline{B^-vB/B},\\
Bw_0uB/B & \subseteq
Qw_0uB/B \subseteq
\overline{Qw_0uB/B}
= \overline{Bw_0uB/B}.
\end{aligned}
$$
We have 
$$
\begin{aligned}
B^-vB/B\cap Bw_0uB/B
& \subseteq 
PvB/B\cap Qw_0uB/B\\
& \subseteq 
\overline{PvB/B}\cap 
\overline{Qw_0uB/B}\\
&= \overline{B^-vB/B}\cap \overline{Bw_0uB/B}.
\end{aligned}$$

\medbreak
\paragraph{(3) $\Leftrightarrow$ (4) $\Leftrightarrow$ (5)}
This is a standard fact of Richardson variety. 
\end{proof}

\begin{definition}\label{def:Richardosnpreclan}
A preclan $\gamma$ is called a \emph{Richardson} preclan, if 
\begin{itemize}
    \item the matchings of $\gamma$ are pairwise non-crossing; 
    \item uncolored nodes $\clan{,}$'s of $\gamma$ are not covered by any matchings. 
\end{itemize}
\end{definition}

\begin{lemma}\label{richardmax}
A Richardson preclan $\gamma$ is the preclan of maximal length with fixed $u_\gamma$ and $v_\gamma$.  
\end{lemma}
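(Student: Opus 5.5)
The plan is to show that, once $u_\gamma$ and $v_\gamma$ are fixed, the length $\ell(\gamma)$ equals a constant minus a nonnegative ``penalty''. The penalty will vanish exactly when $\gamma$ is Richardson, and we will check that exactly one preclan with the given $(u,v)$ has zero penalty.

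\emph{Step 1: what $(u,v)$ determines.} By definition, $v_\gamma$ records the set $A$ of positions carrying left-ends or $\clan{+}$'s, and $u_\gamma$ records the set $A'$ of positions carrying left-ends or $\clan{-}$'s. A node cannot be both a $\clan{+}$ and a $\clan{-}$. Hence fixing $u_\gamma=u$ and $v_\gamma=v$ determines:
\begin{itemize}
\item the set $L=A\cap A'$ of left-ends;
\item the positions $A\setminus A'$ of the $\clan{+}$'s and $A'\setminus A$ of the $\clan{-}$'s;
\item the complementary set $Z=[n]\setminus(A\cup A')$, which consists of the right-ends together with the uncolored nodes.
\end{itemize}
So a preclan with these $u,v$ is the same as a choice of $R\subseteq Z$ with $|R|=|L|$, together with a perfect matching of $L$ to $R$ in which each left-end precedes its partner. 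The remaining $m=|Z|-|L|$ nodes of $Z$ are uncolored.

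\emph{Step 2: rewrite the length.} In Definition~\ref{length}, the clan length $\ell(\bar\gamma)$ uses indices in $\bar\gamma$. For a matching $(i,j)$ of $\gamma$, its span in $\bar\gamma$ is $(j-i)$ minus the number of uncolored nodes strictly between $i$ and $j$. Crossings are unchanged by deleting uncolored nodes. Therefore
$$\ell(\gamma)=\sum_{(i,j)}(j-i)+\sum_{k=1}^m a_k-\binom{m+1}{2}-\#\{\text{crossings}\}-\#\{(\text{matching},\text{uncolored node covered by it})\}.$$
Since $R\sqcup\{a_k\}=Z$, we have
$$\sum_{(i,j)}(j-i)+\sum_k a_k=\sum_{z\in Z}z-\sum_{i\in L}i,$$
which depends only on $(u,v)$. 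So $\ell(\gamma)=C_{u,v}-\mathrm{pen}(\gamma)$, where $\mathrm{pen}(\gamma)\ge0$ counts crossings plus covered uncolored nodes. Moreover $\mathrm{pen}(\gamma)=0$ exactly when $\gamma$ is Richardson in the sense of Definition~\ref{def:Richardosnpreclan}.

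\emph{Step 3: existence and uniqueness of a zero-penalty preclan.} We read $L$ as opening parentheses and $Z$ as potential closing parentheses, scanning from left to right. In a Richardson preclan, each node $z\in Z$ must close the most recently opened unmatched left-end if one exists. Indeed, if an open left-end is pending and $z$ is left uncolored, then $z$ is covered. If $z$ is matched to a non-innermost pending left-end, the matching crosses the matching of the innermost pending one. Conversely, if no left-end is pending, $z$ cannot be a right-end.

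Hence the greedy stack matching is forced, which gives uniqueness. It is noncrossing and covers no uncolored node, so it has zero penalty provided it matches all of $L$. That it matches all of $L$ is the only point needing care, and I expect it to be the main (mild) obstacle. For this we use the existence of some preclan $\gamma$ with these $u,v$, which is the hypothesis of the lemma. Its matching gives, for every suffix of positions, at least as many elements of $Z$ as elements of $L$. Under this Hall-type counting condition the greedy stack leaves no unmatched opener: any unmatched opener would see only matched $Z$-elements after it, contradicting the suffix count. The number of uncolored nodes is then automatically $|Z|-|L|=m$.

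Combining Steps 2 and 3, the Richardson preclan is the unique preclan attaining the maximal value $C_{u,v}$ of $\ell$ among preclans with the given $u_\gamma$ and $v_\gamma$.
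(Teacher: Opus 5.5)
Your proof is correct. Step 2 coincides with the paper's proof: both rewrite $\ell(\gamma)$ as a quantity fixed by $(u_\gamma,v_\gamma)$ minus the number of crossings and covering pairs.

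You differ in Step 3. The paper uses two local moves that preserve $u_\gamma,v_\gamma$: replace a crossing pair of arcs by a nested pair, and swap a covered uncolored node with the right-end of an arc covering it. It asserts that each move strictly increases length, so a maximal-length preclan has no crossings and no covered uncolored nodes. These moves give existence of a Richardson preclan and show that maximal length implies Richardson, but they do not give uniqueness. The uniqueness invoked right after the lemma to define $\gamma_{v,u}$ is actually supplied geometrically, by irreducibility of $PvB/B\cap Qw_0uB/B$ in Theorem \ref{prop:RichApp}.

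Your greedy stack argument buys several things:
\begin{itemize}
\item It gives existence and uniqueness at once, purely combinatorially.
\item It avoids checking that the moves lower the penalty. For the uncovering move that check requires looking at third arcs, since the move can trade a crossing for a covering pair with another arc, and the paper leaves it implicit.
\item It identifies the Richardson preclan with the Motzkin-path construction of Definition \ref{gammvu}. There the down steps are $L$ and the up steps are $Z$, and matching each down step with the first later up step at the same height is exactly your stack matching.
\end{itemize}
The paper's route is shorter and gives a chain of length-increasing steps from any preclan to the Richardson one.

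One small point to make explicit in your suffix argument: the $Z$-elements after an unmatched opener $i$ are matched to openers lying after $i$, because those openers sit above $i$ on the stack. This is what makes the count contradict the suffix inequality.
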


\begin{proof}
Actually, one can check the length defined in \eqref{eq:lenghts} can be rewritten as
$$
\begin{aligned}
\ell(\gamma) & = 
\sum\{\text{indices of right-ends and uncolored $\clan{,}$}\}
- 
\sum\{\text{indices of left-ends}\}\\
&\quad - \texttt{\#}\{\text{crossings in $\gamma$}\}
- \texttt{\#}\{\text{covering pairs in $\gamma$}\}
-\binom{m+1}{2},
\end{aligned}
$$
where a covering pair is a pair of an uncolored node $\clan{,}$ with   a matching covering it. 
The lemma follows from the 
following two length increasing operators preserving $u_\gamma$ and $v_\gamma$: 
$$
\clan{3\dots3.\dots.}
\longmapsto 
\clan{5\dots1.\dots.},
\qquad 
\clan{4\dots,\dots.}
\longmapsto 
\clan{2\dots.\dots,}.
$$
Both operators will give a preclan with fewer crossings or covering pairs, thus increase length. 
For example, for the preclan in \eqref{eq:preclaneg}, applying these two operators, we will arrive at
\begin{equation*}
\clan{7-41.-..,+,}.\qedhere 
\end{equation*}
\end{proof}

By Theorem \ref{thm:equivuandv} and Lemma \ref{richardmax}, there exists a unique  Richardson preclan $\gamma_{v,u}$, whenever $v \le w_0 u$ 
(which always holds when $m \gg 0$).
Let $v\in S_\infty$ be a $p$-inverse Grassmannian permutation,   and $u\in S_\infty$ be  a $q$-inverse Grassmannian permutation.  We construct $\gamma_{v,u}$ 
explicitly as below.
Let us first construct a 2-colored Motzkin path \cite{BLPP,Chen} $T_{v,u}$ as follows. For $i\ge 1$, the  $i$-th step $T_i$ of $T_{v,u}$ is determined according to the following four cases.
$$\begin{array}{c|c|c}
     \vphantom{\dfrac12}
     & i\in v^{-1}([p]) & i\notin v^{-1}([p]) \\\hline
     \vphantom{\dfrac12}
i\in u^{-1}([q]) & 
\begin{matrix}
\begin{tikzpicture}[scale=0.5]
\draw[ultra thick, gray, ->] (0,1) -- (1,0);
\end{tikzpicture}
\end{matrix} 
& 
\begin{matrix}
\begin{tikzpicture}[scale=0.5]
\draw[ultra thick, blue, ->] (0,0.5) -- (1,0.5);
\end{tikzpicture}
\end{matrix} 
\\\hline
     \vphantom{\dfrac12}
i\notin u^{-1}([q]) & 
\begin{matrix}
\begin{tikzpicture}[scale=0.5]
\draw[ultra thick, red, ->] (0,0.5) -- (1,0.5);
\end{tikzpicture}
\end{matrix} 
 & 
\begin{matrix}
\begin{tikzpicture}[scale=0.5]
\draw[ultra thick, gray, ->] (0,0) -- (1,1);
\end{tikzpicture}
\end{matrix} 
\end{array}$$

\begin{definition}\label{gammvu}
Taking $m\gg 0$,  construct a  $(p,m,q)$-Richardson preclan $\gamma_{v,u}$ based on $T_{v,u}$ as follows. 
If the $i$-th step of $T_{v,u}$ is $\searrow$, then locate the minimal $j>i$ such that the $j$-th step of $T_{v,u}$ is $\nearrow$ and  of the same height with the $i$-th step,  match $i$ with $j$. Replace each unmatched $\nearrow$ step by a node $\clan{,}$. 
Color the horizontal steps by $\clan{+}$ or $\clan{-}$ as indicated in $T_{v,u}$. 
\end{definition}

Since we take $m\gg 0 $, the construction of $\gamma_{v,u}$ is always feasible, i.e., each $\searrow$ step can find a $\nearrow$ on its right. Obviously, we have   $v_{\gamma_{v,u}} =v$ and $u_{\gamma_{v,u}}=u$.
For example, consider $v$ and $u$ in Example \ref{eg:maineg}. 
$$
\begin{matrix}
\begin{tikzpicture}[scale=0.5]
\draw[ultra thick, gray, ->]
    (0.5,0.5)--++(1,-1)--++(1,0)--++(1,1)--++(1,1)--++(1,-1)--++(1,0)--++(1,0)--++(1,-1)--++(1,1)--++(1,1);
\draw[ultra thick, red]
    (1.5,-.5)--(2.5,-.5);
\draw[ultra thick, red]
    (5.5,0.5)--(6.5,0.5);
\draw[ultra thick, blue]
    (6.5,0.5)--(7.5,0.5);
\draw[very thick, rounded corners]
    (1,-2)--(1,0)--(3,0)--(3,-2);
\draw[very thick, rounded corners]
    (5,-2)--(5,1)--(10,1)--(10,-2);
\draw[very thick, rounded corners]
    (8,-2)--(8,0)--(9,0)--(9,-2);
\node at (1,-2) {$\clan{.}$};
\node at (2,-2){$\clan{+}$};
\node at (3,-2) {$\clan{.}$};
\node at (4,-2){$\clan{,}$};
\node at (5,-2) {$\clan{.}$};
\node at (6,-2){$\clan{+}$};
\node at (7,-2) {$\clan{-}$};
\node at (8,-2){$\clan{.}$};
\node at (9,-2) {$\clan{.}$};
\node at (10,-2){$\clan{.}$};
\node at (1,-3) {\color{red}1};
\node at (2,-3) {\color{red}2};
\node at (3,-3) {\color{lightgray}6};
\node at (4,-3) {\color{lightgray}7};
\node at (5,-3) {\color{red}3};
\node at (6,-3) {\color{red}4};
\node at (7,-3) {\color{lightgray}8};
\node at (8,-3) {\color{red}5};
\node at (9,-3) {\color{lightgray}9};
\node at (10,-3) {\color{lightgray}10};
\node at (1,-4) {\color{blue}1};
\node at (2,-4) {\color{lightgray}5};
\node at (3,-4) {\color{lightgray}6};
\node at (4,-4) {\color{lightgray}7};
\node at (5,-4) {\color{blue}2};
\node at (6,-4) {\color{lightgray}8};
\node at (7,-4) {\color{blue}3};
\node at (8,-4) {\color{blue}4};
\node at (9,-4) {\color{lightgray}9};
\node at (10,-4) {\color{lightgray}10};
\end{tikzpicture}
\end{matrix}\qquad 
\def\m#1{\makebox[1.2pc]{$#1$}}
\begin{aligned}
\gamma_{v,u}=&\,\clan{2+.,5+-1..}\\
v_\gamma=& \,\m{1}\m{2}\m{6}\m{7}\m{3}\m{4}\m{8}\m{5}\m{\color{gray}9}\m{\color{gray}10}\\
u_\gamma=&\,\m{1}\m{5}\m{6}\m{7}\m{2}\m{8}\m{3}\m{4}\m{\color{gray}9}\m{\color{gray}10}
\end{aligned}
$$

\begin{theorem}\label{prop:RichApp}
For any $v,u$ satisfying any one of the conditions in Theorem \ref{thm:equivuandv}, $\gamma=\gamma_{v,u}$ is the unique  Richardson preclan  such that $v_{\gamma}=v$ and $u_{\gamma} =u$. 
Moreover, we have 
$$\overline{S\dot{\gamma}B/B}=\overline{B^-v_\gamma B/B}\cap \overline{Bw_0u_\gamma B/B}. $$
\end{theorem}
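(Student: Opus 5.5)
We split the statement into a uniqueness part and a geometric part.

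\emph{Uniqueness.} By Theorem \ref{thm:equivuandv}, the hypothesis gives a $(p,m,q)$-preclan $\gamma$ with $v_\gamma=v$ and $u_\gamma=u$. Applying the two length-increasing operators from the proof of Lemma \ref{richardmax} repeatedly terminates, and yields a Richardson preclan with the same $v_\gamma,u_\gamma$. For uniqueness, we show that a Richardson preclan is recovered from $(v_\gamma,u_\gamma)$ by the Motzkin path construction of Definition \ref{gammvu}. Node $i$ lies in $v_\gamma^{-1}([p])$ iff it is a left-end or $\clan{+}$, and in $u_\gamma^{-1}([q])$ iff it is a left-end or $\clan{-}$. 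So the four step types of $T_{v,u}$ distinguish left-ends ($\searrow$), $\clan{+}$, $\clan{-}$, and right-ends or uncolored nodes ($\nearrow$). Given the node types, non-crossing matchings with no uncolored node under an arc are forced to be the greedy nearest-same-height matching. Any $\nearrow$ not matched this way sits at a height below all open arcs, so it must be uncolored. Hence the preclan is exactly $\gamma_{v,u}$.

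\emph{The equality of closures.} By Theorem \ref{thm:gammainPQorb},
\[
S\dot\gamma B/B\subseteq PvB/B\cap Qw_0uB/B\subseteq \overline{B^-vB/B}\cap\overline{Bw_0uB/B}.
\]
The Richardson variety on the right is irreducible of dimension $\ell(w_0u)-\ell(v)=\binom n2-\ell(u)-\ell(v)$. Since $\overline{S\dot\gamma B/B}$ is closed and irreducible, it suffices to prove the dimension equality
\[
\dim S\dot\gamma B/B=\binom n2-\ell(u)-\ell(v).
\]
By Theorem \ref{thm:dim=length+}, this reduces to the combinatorial identity
\[
\ell(\gamma_{v,u})+\binom p2+\binom q2+\binom m2=\binom n2-\ell(u)-\ell(v).
\]

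\emph{Proving the identity.} We use the rewritten length formula in the proof of Lemma \ref{richardmax}; for a Richardson preclan it has no crossing and no covering terms. Write the inversion counts of inverse Grassmannian permutations as
\[
\ell(v)=\sum_{i\in v^{-1}([p])} i-\binom{p+1}2,\qquad \ell(u)=\sum_{i\in u^{-1}([q])} i-\binom{q+1}2.
\]
Let $L$, $R$, $C$, $P_+$, $M_-$ be the index sets of left-ends, right-ends, uncolored nodes, $\clan{+}$'s and $\clan{-}$'s. Then
\[
\sum_{v^{-1}([p])}+\sum_{u^{-1}([q])}=2\textstyle\sum L+\sum P_++\sum M_-,
\]
while $\binom{n+1}2=\sum L+\sum R+\sum C+\sum P_++\sum M_-$. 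The identity then becomes
\[
\sum R+\sum C-\sum L-\binom{m+1}2+\binom p2+\binom q2+\binom m2=\binom n2-2\textstyle\sum L-\sum P_+-\sum M_-+\binom{p+1}2+\binom{q+1}2.
\]
This follows by substituting $\binom{n+1}2$ and the elementary identity $\binom n2+n=\binom{n+1}2$, with $n=p+m+q$. I will check that the constants balance, since the bookkeeping is routine but error-prone.

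\emph{Main obstacle.} I expect the hardest step to be the uniqueness argument, namely that the Richardson conditions force the greedy matching. I will prove it by induction on the leftmost right-end: it must be matched with the nearest unmatched left-end to its left. Otherwise either a crossing is created or an uncolored node or another endpoint gets trapped under the arc.
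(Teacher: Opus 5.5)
Your proposal is correct, and it reaches the closure equality by a different route from the paper. The constants balance: after substituting $\binom{n+1}2=\sum L+\sum R+\sum C+\sum P_++\sum M_-$, the difference of your two sides is $\binom{n+1}2-\binom n2-(p+m+q)=0$.

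The paper never computes a dimension. It argues as follows:
\begin{enumerate}
\item $PvB/B\cap Qw_0uB/B$ lies between the open Richardson variety and its closure, so it is irreducible.
\item Being $S$-stable with finitely many $S$-orbits, it therefore contains a unique dense orbit, namely the one of maximal dimension.
\item Theorem \ref{thm:gammainPQorb} identifies the orbits in this intersection as those with $(v_\gamma,u_\gamma)=(v,u)$.
\item Theorem \ref{thm:dim=length+} converts ``maximal dimension'' into ``maximal length,'' and Lemma \ref{richardmax} identifies the maximal-length preclan as Richardson.
\end{enumerate}

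You instead verify directly that \emph{every} Richardson preclan with the given $(v,u)$ has orbit dimension $\ell(w_0u)-\ell(v)$. You then conclude by irreducibility and equality of dimensions.

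What your route buys is explicitness about two points the paper leaves implicit.
\begin{itemize}
\item Your identity shows that the length of a Richardson preclan depends only on the sets $L$ and $R\cup C$, hence only on $(v,u)$. This is exactly what is needed to pass from what the operators in Lemma \ref{richardmax} actually prove (the maximal-length preclan is Richardson) to ``every Richardson preclan is the maximal one.''
\item Your bracket-matching argument identifies the Richardson preclan with the Motzkin construction $\gamma_{v,u}$ and shows that construction is feasible within $n$ nodes. The paper treats both as obvious.
\end{itemize}

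Once your closure equality holds for every Richardson preclan with given $(v,u)$, uniqueness also follows geometrically: two orbits dense in the same irreducible variety coincide. So your combinatorial argument is really needed only to match that unique preclan with $\gamma_{v,u}$.

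The paper's argument is shorter and purely conceptual. Yours is more hands-on but self-contained at the level of the length formula.
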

\begin{proof}
From the proof of Theorem \ref{thm:equivuandv}, 
the closure of the intersection $PvB/B\cap Qw_0uB/B$ is a Richardson variety, and thus $PvB/B\cap Qw_0uB/B$ is irreducible. 

Since the intersection $PvB/B\cap Qw_0uB/B$ is $S$-invariant and irreducible, there is a unique $S$-orbit dense inside it, i.e., the orbit of maximal dimension. 
The theorem  follows from Theorem \ref{thm:dim=length+},  Theorem \ref{thm:gammainPQorb}, and Lemma \ref{richardmax}.
\end{proof}

\section{Polynomial Representatives}\label{Polynomial Representatives}

Recall that 
\begin{equation}\label{eq:HTGBcongrho}
H_T^*(G/B)\cong  \bigoplus_{\lambda\leq (n-1,\ldots,1,0)} \mathbb{Q}[t_1,\ldots,t_n]x^{\lambda}.
\end{equation}
For each $(p,m,q)$-preclan $\gamma$, we define a polynomial
$$\Upsilon_{\gamma}(x;t) \in  \text{the right-hand side of \eqref{eq:HTGBcongrho}}$$
to represent the fundamental class $[\overline{S\dot{\gamma}B/B}]_T$.

\begin{theorem}\label{thm:dkonUpsilon}
For a $(p,m,q)$-preclan $\gamma$ and any $1\leq k\leq n-1$, we have
$$\partial_k 
\Upsilon_\gamma(x;t)
=\begin{cases}
\Upsilon_{s_k*\gamma}(x;t), & \ell(s_k*\gamma)=\ell(\gamma)+1,\\
0,& \text{otherwise}. 
\end{cases}$$
\end{theorem}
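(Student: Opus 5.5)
The plan is to deduce the identity geometrically from the push–pull formula for the $\mathbb{P}^1$-bundle $\pi_k:G/B\to G/P_k$, and then lift it from cohomology classes to polynomials using the uniqueness of representatives in the staircase basis \eqref{eq:HTGBcongrho}. Throughout I take $\Upsilon_\gamma(x;t)$ to be the unique element of the right-hand side of \eqref{eq:HTGBcongrho} that maps to $[\overline{S\dot\gamma B/B}]_T$. This is how I read the sentence preceding the theorem; if $\Upsilon_\gamma$ is instead given by an explicit formula, the first step below is to show that it represents this class.

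\textbf{Step 1: the cohomological identity.} Under the Borel presentation, $\partial_k$ on $H_T^*(G/B)$ is $\pi_k^*\pi_{k*}$. Write $Y=\overline{\mathbb{O}_\gamma}$ with $\mathbb{O}_\gamma=S\dot\gamma B/B$. The map $\pi_k|_Y:Y\to\pi_k(Y)$ is either generically finite or has positive-dimensional generic fibre.
\begin{itemize}
\item If it has positive-dimensional generic fibre, then $\pi_{k*}[Y]_T=0$. This happens exactly when $\pi_k^{-1}\pi_k(\mathbb{O}_\gamma)$ has the same dimension as $\mathbb{O}_\gamma$, i.e. when $s_k*\mathbb{O}_\gamma=\mathbb{O}_\gamma$ (types I, IIb, IIIb, IVb).
\item Otherwise $\pi_k^*\pi_{k*}[Y]_T=d\cdot[\overline{\pi_k^{-1}\pi_k(\mathbb{O}_\gamma)}]_T$, where $d$ is the number of points of $\mathbb{O}_\gamma$ in a generic fibre $F\cong\mathbb{P}^1$. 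By the Mars–Springer list, $d=1$ in types IIa and IIIa and $d=2$ in type IVa.
\end{itemize}
Theorem \ref{th:weakordApp} says that types (IVa) and (IVb) never occur. Hence $d=1$ whenever $s_k*\mathbb{O}_\gamma\neq\mathbb{O}_\gamma$, and then
\[
\overline{\pi_k^{-1}\pi_k(\mathbb{O}_\gamma)}=\overline{\mathbb{O}_{s_k*\gamma}},
\]
again by Theorem \ref{th:weakordApp}. So $\partial_k[\overline{\mathbb{O}_\gamma}]_T=[\overline{\mathbb{O}_{s_k*\gamma}}]_T$ in that case, and $0$ otherwise.

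\textbf{Step 2: matching the case distinction.} The condition in the statement is $\ell(s_k*\gamma)=\ell(\gamma)+1$, so I need it to be equivalent to $s_k*\mathbb{O}_\gamma\ne\mathbb{O}_\gamma$.
\begin{itemize}
\item By Theorem \ref{thm:dim=length+}, $\ell$ differs from orbit dimension by a constant depending only on $(p,m,q)$.
\item If $s_k*\gamma=\gamma$, the lengths agree.
\item If $s_k*\gamma\neq\gamma$, then $s_k*\mathbb{O}_\gamma\neq\mathbb{O}_\gamma$, so the dimension, and hence $\ell$, goes up by exactly one. This is the remark made in the proof of Theorem \ref{thm:dim=length+}, and it can also be checked directly on the nine local moves \eqref{eq:clanweak1}–\eqref{eq:clanweak3}.
\end{itemize}

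\textbf{Step 3: from classes to polynomials.} This is the step I expect to be the real obstacle, though it is standard. The span $\bigoplus_{\lambda\le(n-1,\dots,0)}\mathbb{Q}[t]x^\lambda$ must be stable under $\partial_k$ for $k\le n-1$. I would prove this by noting that it coincides with the $\mathbb{Q}[t]$-span of the double Schubert polynomials $\mathfrak{S}_w(x;t)$, $w\in S_n$. That span is preserved by $\partial_k$, since each $\partial_k\mathfrak{S}_w$ is again some $\mathfrak{S}_{w'}$ or $0$; equivalently, the span is obtained from $\mathfrak{S}_{w_0}$ by applying divided differences and $\mathbb{Q}[t]$-multiples.

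With stability in hand, the argument closes as follows:
\begin{itemize}
\item Both $\partial_k\Upsilon_\gamma$ and $\Upsilon_{s_k*\gamma}$ (or $0$) lie in this span.
\item They represent the same class in $H_T^*(G/B)$, by Step 1 together with the fact that the Borel map intertwines $\partial_k$ with $\pi_k^*\pi_{k*}$.
\item The map from the span to $H_T^*(G/B)$ is an isomorphism, by \eqref{eq:HTGBcongrho}.
\end{itemize}
Therefore $\partial_k\Upsilon_\gamma=\Upsilon_{s_k*\gamma}$ or $0$ as polynomials.

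The substantive geometric input, the exclusion of type IV and hence $d\le1$, is already supplied by Theorem \ref{th:weakordApp}. The remaining care is in the push–pull degree computation, namely the correct identification of $d$ with the number of points of $\mathbb{O}_\gamma$ in a generic fibre.
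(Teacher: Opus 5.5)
Your proposal is correct and follows the paper's own argument: push--pull along $\pi_k$, exclusion of type (IV) via Theorem~\ref{th:weakordApp}, and uniqueness of representatives in the $\partial_k$-stable right-hand side of \eqref{eq:HTGBcongrho}; the paper simply cites Brion for the push--pull formula, whereas you re-derive the degree $d$ from the Mars--Springer fibre types. Your case split is also more accurate than the paper's displayed formula, which lists type (I) among the nonzero cases, whereas, as you say, type (I) gives $\pi_{k*}[\overline{\mathbb{O}}]_T=0$, consistent with the theorem since there $s_k*\gamma=\gamma$.
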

\begin{proof}
Let $\pi_k:G/B\to G/P_k$ be the natural projection. By Brion \cite{Brion}, for any spherical subgroup orbit $\mathbb{O}$, we have 
$$\pi_{k}^*\pi_{k*} [\overline{\mathbb{O}}]
= \begin{cases}
[\overline{s_k*\mathbb{O}}], 
& \text{type (I), (IIa), (IIIa)},\\
2[\overline{s_k*\mathbb{O}}], 
& \text{type (IVa)},\\
0, & \text{otherwise}. 
\end{cases}$$
The argument for the non-equivariant case also works for the equivariant case. 
Note that the right-hand side of \eqref{eq:HTGBcongrho} is preserved under $\partial_k$. 
When passing through \eqref{eq:HTGBcongrho}, the divided difference operator $\partial_k$ corresponds to $\pi_k^*\pi_{k*}$.  
Thus the Theorem follows from Theorem \ref{th:weakordApp}. 
\end{proof}

For any $f(x;t)\in \mathbb{Q}[x_i;t_j]_{i,j}$, we define 
$$f(t;t)=f|_{x_1=t_1,x_2=t_2,\ldots}\in \mathbb{Q}[t_j]_j.$$

\begin{prop}\label{prop:eqSchubertexp}
For a $(p,m,q)$-preclan $\gamma$, we have 
$$
\Upsilon_\gamma(x;t)=\sum_{w\in S_n}
\Upsilon_{w*\gamma}(t;t)\cdot 
\mathfrak{S}_w(x;t),
$$
where the sum is over all $w\in S_n$ such that $\ell(w)+\ell(\gamma)=\ell(w*\gamma)$.  
\end{prop}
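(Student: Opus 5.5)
Since $\Upsilon_\gamma(x;t)$ lies in the right-hand side of \eqref{eq:HTGBcongrho}, and the double Schubert polynomials $\{\mathfrak{S}_w(x;t)\}_{w\in S_n}$ form a $\mathbb{Q}[t]$-basis of that module, we may write
$$\Upsilon_\gamma(x;t)=\sum_{w\in S_n} a_w(t)\,\mathfrak{S}_w(x;t)$$
with unique coefficients $a_w(t)\in\mathbb{Q}[t_1,\ldots,t_n]$. The plan is to extract each $a_w(t)$ by applying divided differences and then specializing $x=t$. The only facts needed are the ones below.

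\emph{Step 1: specialization of double Schubert polynomials.} We have $\mathfrak{S}_w(t;t)=\delta_{w,e}$. This is standard. From the pipe dream formula, for $w\neq e$ every pipe dream contains a $\BPD{\X}$ tile on the diagonal. Equivalently, localization of the opposite Schubert class at the fixed point $1\cdot B/B$ vanishes unless $w=e$, since $1\cdot B/B\in\overline{B^-wB/B}$ only when $w=e$.

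\emph{Step 2: the action of divided differences on Schubert polynomials.} For $w\in S_n$ with reduced word $w=s_{i_1}\cdots s_{i_\ell}$, set $\partial_w=\partial_{i_1}\cdots\partial_{i_\ell}$. Since $\partial_k$ is $t$-linear, we have $\partial_k\mathfrak{S}_v=\mathfrak{S}_{vs_k}$ if $\ell(vs_k)<\ell(v)$ and $0$ otherwise. Hence $\partial_w\mathfrak{S}_v=\mathfrak{S}_{vw^{-1}}$ when $\ell(vw^{-1})=\ell(v)-\ell(w)$, and $0$ otherwise. Combining with Step 1,
$$(\partial_w\mathfrak{S}_v)(t;t)=\delta_{v,w}.$$
(Check the ordering convention: $\partial_{i_\ell}$ acts first, which peels $s_{i_\ell}$ off the right of $v$. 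After all steps we reach $e$ exactly when $v=w$ with lengths adding.) Therefore
$$a_w(t)=(\partial_w\Upsilon_\gamma)(t;t).$$

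\emph{Step 3: the action of divided differences on $\Upsilon_\gamma$.} Iterate Theorem~\ref{thm:dkonUpsilon} along the reduced word, applying $\partial_{i_\ell}$ first. This gives
$$\partial_w\Upsilon_\gamma=\Upsilon_{s_{i_1}*\cdots*s_{i_\ell}*\gamma}=\Upsilon_{w*\gamma}$$
if every step raises the length by one, and $0$ otherwise. All steps raise the length iff $\ell(w*\gamma)=\ell(w)+\ell(\gamma)$: each step raises the length by at most one, so the total equals $\ell(w)$ exactly when no step is stationary. Definition~\ref{def:monoidaction} guarantees that $w*\gamma$ is independent of the reduced word. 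Substituting into Step 2 gives the stated expansion.

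The main point requiring care is the bookkeeping in Step 3. The order in which the operators are applied must match the left action $w*\gamma=s_{i_1}*\cdots*s_{i_\ell}*\gamma$, and the length-additivity criterion must be equivalent to the nonvanishing of every intermediate step. We also need that each $\partial_k$ preserves the module in \eqref{eq:HTGBcongrho}, which the paper notes in the proof of Theorem~\ref{thm:dkonUpsilon}.
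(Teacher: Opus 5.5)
Your argument is correct and is the paper's proof with the bookkeeping written out: expand $\Upsilon_\gamma$ in the $\mathbb{Q}[t]$-basis $\{\mathfrak{S}_w(x;t)\}_{w\in S_n}$, extract $a_w(t)=(\partial_w\Upsilon_\gamma)(t;t)$ by iterating Theorem~\ref{thm:dkonUpsilon}, and use $\mathfrak{S}_w(t;t)=\delta_{w,e}$. One correction to Step~1: your pipe-dream justification is false, since for $w=s_2$ the two pipe dreams have their single cross at $(1,2)$ or $(2,1)$ and give $(x_1-t_2)+(x_2-t_1)$, which vanishes at $x=t$ only by cancellation; rely on the localization argument instead, which is correct.
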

\begin{proof}
Note that we have an isomorphism of vector spaces
$$
\bigoplus_{\lambda\leq (n-1,\ldots,1,0)} \mathbb{Q}[t_1,\ldots,t_n]\cdot
x^{\lambda}
= 
\bigoplus_{w\in S_n}
\mathbb{Q}[t_1,\ldots,t_n]\cdot 
\mathfrak{S}_w(x;t).$$
As a result, the double Schubert expansion of $\Upsilon_\gamma(x;t)$ only involves those $w\in S_n$. The Proposition follows immediately from Theorem \ref{thm:dkonUpsilon} and the fact 
$$\mathfrak{S}_w(t;t)=\begin{cases}
1, & w=e,\\
0, & \text{otherwise}.
\end{cases}$$
The same argument was used by the authors in \cite[Section 4.2]{WE}.
\end{proof}

\begin{prop}\label{prop:noneqSchubertexp}
For a $(p,m,q)$-preclan $\gamma$, we have 
$$\Upsilon_\gamma(x;0)=\sum_{w\in S_n} \mathfrak{S}_w(x;0),$$
where the sum is over all $w\in S_n$ such that $w*\gamma = \gamma_0$ and $\ell(w)+\ell(\gamma)=\ell(\gamma_0)$ where $\gamma_0$ is the rainbow $(p,q)$-clan followed by $m$  uncolored nodes $\clan{,}$'s.
\end{prop}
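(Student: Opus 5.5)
Setting $t=0$ in Proposition \ref{prop:eqSchubertexp} gives
$$\Upsilon_\gamma(x;0)=\sum_{w}\Upsilon_{w*\gamma}(0;0)\,\mathfrak{S}_w(x;0),$$
where the sum runs over $w\in S_n$ with $\ell(w)+\ell(\gamma)=\ell(w*\gamma)$. Here $\mathfrak{S}_w(x;t)|_{t=0}=\mathfrak{S}_w(x;0)$, and $\Upsilon_{w*\gamma}(t;t)|_{t=0}=\Upsilon_{w*\gamma}(0;0)$ is the constant term of $\Upsilon_{w*\gamma}(x;t)$. So it suffices to evaluate the integer $\Upsilon_{\gamma'}(0;0)$ for every $(p,m,q)$-preclan $\gamma'$. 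The claim is
$$\Upsilon_{\gamma'}(0;0)=\begin{cases}1,&\gamma'=\gamma_0,\\ 0,&\text{otherwise}.\end{cases}$$
Granting this, the sum collapses to those $w$ with $w*\gamma=\gamma_0$ and $\ell(w)+\ell(\gamma)=\ell(\gamma_0)$, which is exactly the statement.

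For the claim, use degrees. $\Upsilon_{\gamma'}(x;t)$ represents an equivariant fundamental class, so it is homogeneous of degree equal to the codimension of $\overline{S\dot{\gamma}'B/B}$. By Theorem \ref{thm:dim=length+}, this codimension is $\binom n2-\binom p2-\binom q2-\binom m2-\ell(\gamma')=\ell(\gamma_0)-\ell(\gamma')$. In the computation inside Theorem \ref{thm:dim=length+}, $\gamma_0$ is the unique preclan of maximal length $pq+m(p+q)$.

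If $\gamma'\neq\gamma_0$, the degree is positive. A homogeneous polynomial of positive degree vanishes at $x=t=0$, so $\Upsilon_{\gamma'}(0;0)=0$.

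If $\gamma'=\gamma_0$, the orbit is dense in $G/B$, so its closure is all of $G/B$ and $[\overline{S\dot{\gamma}_0B/B}]_T=1$. The representative of $1$ in the right-hand side of \eqref{eq:HTGBcongrho} is the constant $1$, since representatives in that basis are unique. Hence $\Upsilon_{\gamma_0}=1$, and in particular $\Upsilon_{\gamma_0}(0;0)=1$.

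The only point needing care is that $\Upsilon_{\gamma'}$ is genuinely homogeneous of the stated degree. This holds because the isomorphism \eqref{eq:HTGBcongrho} is graded, with $x_i,t_j$ in degree $2$, and the fundamental class of a subvariety lies in the graded piece given by its codimension. Alternatively, one can argue recursively: starting from $\Upsilon_{\gamma_0}=1$, Theorem \ref{thm:dkonUpsilon} shows that each $\partial_k$ step lowers the degree by one.
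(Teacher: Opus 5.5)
Your argument is correct and is essentially the paper's proof: you specialize Proposition~\ref{prop:eqSchubertexp} at $t=0$, then use that $\Upsilon_{\gamma'}$ is homogeneous of degree equal to the codimension to kill every constant term except at $\gamma'=\gamma_0$, where $\Upsilon_{\gamma_0}=1$. One caveat on your optional side remark: a recursive proof of homogeneity must be anchored at the length-$0$ (Richardson) preclans, whose $\Upsilon$ are products of double Schubert polynomials, and then propagated upward via $\partial_k$; starting from $\Upsilon_{\gamma_0}=1$ does not work, since knowing that $\partial_k\Upsilon_\gamma$ is homogeneous does not by itself force $\Upsilon_\gamma$ to be homogeneous.
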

\begin{proof}
Note that the polynomial $\Upsilon_\gamma(x;t)$ is 
homogeneous of degree the codimension of $S\dot{\gamma}B/B$. 
We have 
$\Upsilon_\gamma(0;0)=0$ except when $\gamma=\gamma_0$. 
When $\gamma=\gamma_0$, the orbit closure $\overline{S\dot{\gamma}B/B}=G/B$, thus $\Upsilon_\gamma(x;t)=1$. 
The assertion follows from Proposition \ref{prop:eqSchubertexp}. 
\end{proof}

\begin{coro}\label{coro:RichUpsilon0}
For a Richardson $(p,m,q)$-preclan $\gamma$, we have 
\begin{align}\label{eq:upsilo}
\Upsilon_\gamma(x;0) =
\mathfrak{S}_{v_\gamma}(x;0)
\mathfrak{S}_{u_\gamma}(x;0).
\end{align}
\end{coro}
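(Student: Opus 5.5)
We argue geometrically. By Theorem \ref{prop:RichApp}, for a Richardson preclan $\gamma$ with $v=v_\gamma$ and $u=u_\gamma$ we have
$$\overline{S\dot{\gamma}B/B}=\overline{B^-vB/B}\cap \overline{Bw_0uB/B}.$$
Since $\Upsilon_\gamma(x;t)$ represents $[\overline{S\dot{\gamma}B/B}]_T$, the specialization $\Upsilon_\gamma(x;0)$ represents the ordinary class of this Richardson variety in $H^*(G/B)$, written in the basis of \eqref{eq:HTGBcongrho} with $t=0$. So it suffices to compute the non-equivariant class of the Richardson variety and to express it as a polynomial in the span of $x^\lambda$ with $\lambda\le(n-1,\ldots,0)$.

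The opposite Schubert variety $\overline{B^-vB/B}$ is represented by $\mathfrak{S}_v(x;0)$, by the Proposition of \cite{AF23} with $t=0$. The Schubert variety $\overline{Bw_0uB/B}$ is the $w_0$-translate of $\overline{B^-uB/B}$. Because $G$ is connected, $w_0$ acts trivially on ordinary cohomology, so its class is also $\mathfrak{S}_u(x;0)$. Richardson varieties are proper intersections: by Kleiman transversality, or the standard fact that $\overline{B^-vB/B}$ and $\overline{Bw_0uB/B}$ meet generically transversally with reduced irreducible intersection of the expected codimension $\ell(v)+\ell(u)$. Hence
$$[\overline{S\dot{\gamma}B/B}]=\mathfrak{S}_v(x;0)\,\mathfrak{S}_u(x;0)\in H^*(G/B).$$

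It remains to pass from this identity in cohomology to an identity of polynomials. The cohomology ring is a quotient of $\mathbb{Q}[x]$ by the ideal generated by symmetric functions of positive degree. The space in \eqref{eq:HTGBcongrho} is only a set of coset representatives, so equality of classes does not automatically give equality of polynomials. This is the main obstacle. We handle it through the combinatorial description in Proposition \ref{prop:noneqSchubertexp}. There, $\Upsilon_\gamma(x;0)$ is a sum of $\mathfrak{S}_w(x;0)$ over $w\in S_n$, so it is exactly the polynomial with the same class, taken in the staircase span. Thus we need the product $\mathfrak{S}_v(x;0)\mathfrak{S}_u(x;0)$ to already lie in that span, equivalently its Schubert expansion to involve only $w\in S_n$.

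For this we use that $v$ has its unique descent at $p$ and $u$ has its unique descent at $q$, so both are Grassmannian-type polynomials in the first $p$ and first $q$ variables respectively. The product then involves only $\mathfrak{S}_w$ with $w$ having descents at most at positions $\le\max(p,q)$ and degree $\ell(v)+\ell(u)$. By Monk/Pieri-type support bounds, every such $w$ fits in $S_n$ once $n=p+m+q$ is large relative to the data. This is where $m$ matters: if necessary, we pass to $m\gg0$ using that both sides are compatible with increasing $m$. Adding uncolored nodes at the end does not change $v_\gamma$, $u_\gamma$ or the Richardson property.

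Finally we check this last compatibility directly. The sum in Proposition \ref{prop:noneqSchubertexp} is stable under appending $\clan{,}$'s, so the stable-range equality specializes to the given $n$.
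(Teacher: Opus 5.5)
Your reduction is the paper's: both sides represent $[\overline{S\dot{\gamma}B/B}]$, so everything rests on showing $\mathfrak{S}_{v_\gamma}(x;0)\,\mathfrak{S}_{u_\gamma}(x;0)\in\bigoplus_{w\in S_n}\mathbb{Q}\,\mathfrak{S}_w(x;0)$ for the \emph{given} $n=p+m+q$. Your argument for this step fails.

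\textbf{The descent argument.} The descent claim is wrong. $v_\gamma$ and $u_\gamma$ are \emph{inverse} Grassmannian, so it is $v_\gamma^{-1}$ whose only descent is at $p$. Hence $\mathfrak{S}_{v_\gamma}(x;0)$ is not a polynomial in $x_1,\ldots,x_p$. For example, $v=1342$ is $2$-inverse Grassmannian, yet $\mathfrak{S}_{1342}(x;0)=x_1x_2+x_1x_3+x_2x_3$. So the descent bound on the $w$'s in the product is unfounded.

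A ``support bound once $n$ is large'' cannot rescue this either. Here $n$ is fixed by $\gamma$, and $m$ may be small. For arbitrary inverse Grassmannian $v,u\in S_n$ the conclusion is false: $\mathfrak{S}_{21}(x;0)^2=x_1^2=\mathfrak{S}_{312}(x;0)$ with $n=2$. What must be used is the existence of the Richardson $(p,m,q)$-preclan itself. Likewise, saying ``both sides are compatible with increasing $m$'' is circular, since compatibility of $\Upsilon_\gamma(x;0)$ with $m$ is exactly what is at stake.

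\textbf{The stability step.} Your last paragraph names the right mechanism. But its key sentence, that the sum in Proposition \ref{prop:noneqSchubertexp} is stable under appending uncolored nodes, is asserted rather than proved, and it is the real content of the corollary.

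Concretely, let $\tilde\gamma$ be $\gamma$ with $N-n$ uncolored nodes appended. It is still Richardson with the same $v_\gamma,u_\gamma$. For $N\gg0$ the product lies in $\bigoplus_{w\in S_N}\mathbb{Q}\,\mathfrak{S}_w(x;0)$, so $\Upsilon_{\tilde\gamma}(x;0)=\mathfrak{S}_{v_\gamma}(x;0)\,\mathfrak{S}_{u_\gamma}(x;0)$.

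You must then show that every $w\in S_N$ with $\ell(w*\tilde\gamma)=\ell(w)+\ell(\tilde\gamma)$ lies in $S_n$. This comes from the local moves:
\begin{itemize}
\item The moves \eqref{eq:clanweak1} and \eqref{eq:clanweak2} never involve uncolored nodes.
\item The moves \eqref{eq:clanweak3} only fire when node $k$ is uncolored and node $k+1$ is not. Hence $s_k*\tilde\gamma=\tilde\gamma$ for $n\le k\le N-1$.
\item Appending uncolored nodes commutes with $s_k*{-}$ for $k\le n-1$.
\end{itemize}
So along a length-additive reduced word the uncolored tail persists, and a letter $s_k$ with $k\ge n$ can never raise the length.

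Finally, appending shifts every length by the same constant $(N-n)(p+q)$. Therefore the index sets in Proposition \ref{prop:noneqSchubertexp} for $\tilde\gamma$ and for $\gamma$ coincide, giving $\Upsilon_\gamma(x;0)=\Upsilon_{\tilde\gamma}(x;0)$.
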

\begin{proof}
By Theorem \ref{prop:RichApp}, the right-hand side of \eqref{eq:upsilo} represents $[\overline{S\dot{\gamma}B/B}]$.  Since by Proposition \ref{prop:noneqSchubertexp}, $\Upsilon_\gamma(x;0) $ involves only $w\in S_n$, it is enough to show 
\begin{align}\label{eq:schubuv}
\mathfrak{S}_{v_\gamma}(x;0)
\mathfrak{S}_{u_\gamma}(x;0)\in \bigoplus_{\lambda\leq (n-1,\ldots,0)}\mathbb{Q}\cdot x^\lambda
=\bigoplus_{w\in S_n}\mathbb{Q}\cdot \mathfrak{S}_w(x;0).
\end{align}
Let us first pick $N\gg 0$ such that \eqref{eq:schubuv} is true with $S_n$ replaced by $S_N$.  

Let $\tilde{\gamma}$ be the preclan obtained by adding $N-n$ many uncolored nodes $\clan{,}$'s at the end of $\gamma$. 
Note that adding uncolored nodes $\clan{,}$'s at the end of $\gamma$, the permutations $v_\gamma$ and $u_\gamma$ are unchanged. 
By Theorem \ref{prop:RichApp}, we also have 
$$\Upsilon_{\tilde{\gamma}}(x;0) =
\mathfrak{S}_{v_\gamma}(x;0)\cdot
\mathfrak{S}_{u_\gamma}(x;0).$$
Note that a priori, the Schubert expansion of $\Upsilon_{\tilde{\gamma}}(x;0)$ may involve $w\in S_N$. By Proposition \ref{prop:noneqSchubertexp}, it suffices to show that if $\ell(w*\tilde{\gamma})=\ell(\tilde{\gamma})+\ell(w)$, then we must have $
w\in S_n.$

By the definition of weak order \eqref{eq:clanweak1}, \eqref{eq:clanweak2}, \eqref{eq:clanweak3}, we have $\ell(w*\tilde{\gamma})=\ell(\tilde{\gamma})$ for
$w=s_{k}$ for $n\leq k\leq N-1$. 
The general case follows from the observation that adding uncolored nodes $\clan{,}$'s commutes with $\gamma\mapsto s_k*\gamma$ for $1\leq k\leq n-1$. 
\end{proof}

\begin{coro}\label{coro:RichUpsilon}
For a Richardson $(p,m,q)$-preclan $\gamma$, we have 
\begin{align}\label{eq:doubleupsilo}
\Upsilon_\gamma(x;t) =
\mathfrak{S}_{v_\gamma}(x;t)\cdot
\mathfrak{S}_{u_\gamma}(x;\cev{t}),
\end{align}
where $\cev{t}=(t_n,\ldots,t_1)$.
\end{coro}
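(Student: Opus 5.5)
The plan is to mimic the proof of Corollary \ref{coro:RichUpsilon0}, now in the equivariant setting. First I will identify the right-hand side of \eqref{eq:doubleupsilo} geometrically. By Theorem \ref{prop:RichApp}, $\overline{S\dot{\gamma}B/B}=\overline{B^-v_\gamma B/B}\cap\overline{Bw_0u_\gamma B/B}$. This is a Richardson variety, so its equivariant class is the product of the classes of the two Schubert varieties, by transversality in equivariant cohomology.

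The factor $[\overline{B^-v_\gamma B/B}]_T$ is represented by $\mathfrak{S}_{v_\gamma}(x;t)$. For the factor $[\overline{Bw_0u_\gamma B/B}]_T$, translate by $w_0$: $\overline{Bw_0uB/B}=w_0\cdot\overline{B^-uB/B}$. Left multiplication by $w_0$ twists the $T$-action by the automorphism $t_j\mapsto t_{n+1-j}$, and it is the identity on $x$, since the tautological bundles are $G$-equivariant. Hence this class is represented by $\mathfrak{S}_{u_\gamma}(x;\cev{t})$. So the right-hand side of \eqref{eq:doubleupsilo} represents $[\overline{S\dot{\gamma}B/B}]_T$ in $H_T^*(G/B)$.

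The remaining point is that $\Upsilon_\gamma$ is by definition the representative lying in the normal form $\bigoplus_{\lambda\le(n-1,\dots,0)}\mathbb{Q}[t]x^\lambda$. So I must show the product $\mathfrak{S}_{v_\gamma}(x;t)\mathfrak{S}_{u_\gamma}(x;\cev t)$ already lies in this span, i.e.\ its double Schubert expansion (in $\mathfrak{S}_w(x;t)$, $w\in S_N$) uses only $w\in S_n$. Here is the obstacle: $\cev t$ depends on $n$, so the stabilization trick of appending uncolored nodes at the end must be done carefully. I will pad $\gamma$ to $\tilde\gamma$ with $N-n$ uncolored nodes at the end, with $N\gg0$. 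The padded preclan is still Richardson with the same $v,u$, so $\Upsilon_{\tilde\gamma}(x;t)=\mathfrak{S}_{v_\gamma}(x;t)\mathfrak{S}_{u_\gamma}(x;t_N,\dots,t_1)$, now valid in $H_T^*(Fl_N)$ where everything is in normal form for $S_N$.

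By Proposition \ref{prop:eqSchubertexp}, only $w$ with $\ell(w*\tilde\gamma)=\ell(w)+\ell(\tilde\gamma)$ occur, and as in Corollary \ref{coro:RichUpsilon0} these $w$ lie in $S_n$. The coefficients are $\Upsilon_{w*\tilde\gamma}(t;t)$, and I need them to agree with $\Upsilon_{w*\gamma}(t;t)$ under the reindexing of $t$. The difficulty is that the variables $t_N,\dots,t_1$ versus $t_n,\dots,t_1$ differ. I expect to resolve this by a cleaner route. $u_\gamma$ is $q$-inverse Grassmannian, so $\mathfrak{S}_{u}(x;t)$ involves only $t_1,\dots,t_{q}$-type variables in a controlled way. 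Alternatively, I can verify directly that $\mathfrak{S}_{u}(x;\cev t)$ with $u\in S_n$ lies in the normal form, since it is $w_0$-twisted from $\mathfrak{S}_{w_0uw_0}$-type expressions.

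Concretely, I would first try the simplest argument. In the Schubert basis of $H_T^*(G/B)$, write $\mathfrak{S}_u(x;\cev t)=\sum_{w\in S_n}a_w(t)\mathfrak{S}_w(x;t)$ using the $w_0$-twist (localization coefficients are polynomials in $t_1..t_n$), and check that the difference is in the ideal killed modulo degree considerations. Then I use that a product of two elements of $\bigoplus_{w\in S_n}\mathbb{Q}[t]\mathfrak{S}_w$ stays there. This holds because the $\mathfrak{S}_v,\mathfrak{S}_w$ with $v,w\in S_n$ multiply within the $S_n$-span by the degree bound $\lambda\le(n-1,\dots,0)$ argument used in Corollary \ref{coro:RichUpsilon0}, combined with the equality of classes. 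If that fails, I fall back to the padding argument above, comparing localizations at $T$-fixed points.
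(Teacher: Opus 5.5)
Your identification of the right-hand side with $[\overline{S\dot{\gamma}B/B}]_T$ is fine, and it is also what the paper does: the Richardson variety from Theorem \ref{prop:RichApp}, together with the $w_0$-twist producing $\mathfrak{S}_{u_\gamma}(x;\cev{t})$. The gap is in the normal-form step, which is the entire content of the corollary. Your ``simplest argument'' rests on the claim that $\bigoplus_{w\in S_n}\mathbb{Q}[t]\,\mathfrak{S}_w(x;t)=\bigoplus_{\lambda\le(n-1,\dots,1,0)}\mathbb{Q}[t]\,x^\lambda$ is closed under multiplication. That claim is false. For $n=2$, $\mathfrak{S}_{s_1}(x;t)\,\mathfrak{S}_{s_1}(x;\cev{t})=(x_1-t_1)(x_1-t_2)$ contains $x_1^2$. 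Equality of classes cannot repair this, because $\mathbb{Q}[x,t]\to H_T^*(G/B)$ has a kernel; in this example the class is $0$, since $s_1\not\le w_0s_1=e$. The example also shows that membership in the normal form genuinely depends on $\gamma$ being a Richardson preclan on $n$ nodes, a hypothesis your argument never uses. Corollary \ref{coro:RichUpsilon0} is a statement about this particular product, not a closure property. Your fallback (padding, then matching $\Upsilon_{w*\tilde\gamma}(t;t)$ with $\Upsilon_{w*\gamma}(t;t)$, or ``comparing localizations'') is left unresolved, and coefficient matching is not what is needed anyway.

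The idea you are missing, which the paper uses, is positivity. Put $f=\mathfrak{S}_{v_\gamma}(x;t)\,\mathfrak{S}_{u_\gamma}(x;\cev{t})\in\mathbb{N}[x_i-t_j]$. In each product of factors $x_i-t_j$, every $x$-monomial divides the top one, and the top monomials cannot cancel in $f(x;0)$ because all coefficients are positive. Hence every $x^\lambda$ occurring in $f(x;t)$ satisfies $\lambda\le\mu$ for some $x^\mu$ occurring in $f(x;0)=\mathfrak{S}_{v_\gamma}(x;0)\,\mathfrak{S}_{u_\gamma}(x;0)$. That product lies in normal form by Corollary \ref{coro:RichUpsilon0}, and the index set $\{\lambda\le(n-1,\dots,0)\}$ is down-closed, so $f$ lies in normal form.

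Alternatively, your padding route does close, without positivity, if you aim only for membership. Take $N\ge 2n$. Then $\mathfrak{S}_{v_\gamma}(x;T)\,\mathfrak{S}_{u_\gamma}(x;T_N,\dots,T_1)$ is automatically in $S_N$-normal form, since its $x_i$-degree is at most $2(n-i)\le N-i$. So it equals $\Upsilon_{\tilde\gamma}(x;T)$ by Theorem \ref{prop:RichApp} for $\tilde\gamma$. By Proposition \ref{prop:eqSchubertexp} and the weak-order argument of Corollary \ref{coro:RichUpsilon0}, it expands in $\mathfrak{S}_w(x;T)$ with $w\in S_n$ only. Now substitute $T_i\mapsto t_i$ for $i\le n$, $T_{N+1-j}\mapsto t_{n+1-j}$ for $j\le n$, and the remaining $T_i\mapsto 0$. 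This is consistent because $\mathfrak{S}_{u_\gamma}(x;T_N,\dots,T_1)$ involves only $T_N,\dots,T_{N-n+2}$, while $\mathfrak{S}_{v_\gamma}$ and the $\mathfrak{S}_w$ involve only $T_1,\dots,T_{n-1}$. The substitution yields an expansion of $f$ in $\mathfrak{S}_w(x;t)$, $w\in S_n$, with coefficients in $\mathbb{Q}[t_1,\dots,t_n]$. This is the same device the paper uses in the proof of Theorem \ref{th:triplecoeffi}.
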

\begin{proof}
By Theorem \ref{prop:RichApp}, the right-hand side of \eqref{eq:doubleupsilo} represents $[\overline{S\dot{\gamma}B/B}]_T$. 
It suffices to show 
$$
\mathfrak{S}_{v_\gamma}(x;t)
\mathfrak{S}_{u_\gamma}(x;\cev{t})\in \text{the right-hand side of \eqref{eq:HTGBcongrho}}.$$
Since Schubert polynomial is contained in $\mathbb{N}[x_i-t_j]_{i,j}$, so is $\mathfrak{S}_{v_\gamma}(x;t)
\mathfrak{S}_{u_\gamma}(x;\cev{t})$. 
For any $f(x;t)\in \mathbb{N}[x_i-t_j]_{i,j}$, we have 
$$f(x;t)\in f(x;0)+
\sum_{\lambda} \mathbb{Q}[t_1,\ldots,t_n]x^\lambda$$
where the sum over $\lambda\leq \mu$ componentwise for some $\mu$ such that the coefficient of $x^\mu$ in $f(x;0)$ is nonzero. 
Thus it suffices to show 
$$\mathfrak{S}_{v_\gamma}(x;0)\cdot
\mathfrak{S}_{u_\gamma}(x;0)\in \bigoplus_{\lambda\leq (n-1,\ldots,0)}\mathbb{Q}x^\lambda.$$
This follows from Corollary \ref{coro:RichUpsilon0} above. 
\end{proof}

\begin{remark}
We have the following remarks. 
\begin{itemize}
    \item The method of proving Corollary \ref{coro:RichUpsilon0} does not generalize to Corollary \ref{coro:RichUpsilon} as $\cev{t}$ might change when replacing $n$ by $N$. 

    \item Corollary \ref{coro:RichUpsilon} and Theorem \ref{thm:dkonUpsilon} characterize $\Upsilon_\gamma(x;t)$. 
    \begin{itemize}
        \item[(1)] Note that a preclan $\gamma$ of length $0$, i.e. $m$ many uncolored nodes $\clan{,}$ followed by a matchless clan, is a Richardson preclan, and thus $\Upsilon_\gamma(x;t)$ is given by Corollary \ref{coro:RichUpsilon}. 
        \item[(2)] For general preclan $\gamma$ of positive length, by Lemma \ref{lem:preclanisconn}, we can write it as $\gamma=s_k*\gamma'$ for a preclan $\gamma'$ of smaller length and $1\leq k\leq n-1$. We can determine $\Upsilon_\gamma(x;t)$ using Theorem \ref{thm:dkonUpsilon} and induction. 
    \end{itemize}
    Thus when $m=0$, these polynomials agree with the one defined by Wyser and Yong \cite{WY}. 

    \item Proposition \ref{prop:noneqSchubertexp} and Corollary \ref{coro:RichUpsilon0} imply the result of Pechenik and Weigandt \cite{Pandt}. 
    Let us briefly explain the equivalence. 
For each $(p,m,q)$-preclan, we can complete it into a $(p+m,q+m)$-clan by adding $m$ new nodes before the first node, and matching the $i$-th new node (from right) with the $i$-th uncolored node. 
For example, for the preclan in \eqref{eq:preclaneg}, we get 
$$
\clan{{10}56-84.-...+.}.
$$
One can check this defines an embedding of the set of $(p,m,q)$-preclans to the set of $(p+m,q+m)$-clans and it intertwines the actions $s_i*-$ and $s_{i+m}*-$. 
We can further embed it into the set of back stable clans of Pechenik and Weigandt \cite{Pandt}.

\end{itemize}
\end{remark}

\begin{example}\label{eg:Upsilon}
Let us compute $\Upsilon_{\gamma}(x,t)$ for all $(1,1,1)$-preclans.
\def\term#1#2#3#4{
#1 \qquad 
\makebox[2pc]{$#2$}\qquad 
\makebox[2pc]{$#3$}\qquad 
\makebox[0.4\linewidth][l]{$#4$}\qquad 
}
$$\term{\text{preclan $\gamma$}}{v_\gamma}{u_\gamma}{\Upsilon_\gamma(x,t)}$$
$$\term{\clan{,+-}}{213}{231}{(x_1-t_1)\cdot (x_1-t_3)(x_2-t_3)}$$
$$\term{\clan{,-+}}{231}{213}{(x_1-t_1)(x_2-t_1)\cdot (x_1-t_3)}$$
$$\term{\clan{,1.}}{213}{213}{(x_1-t_1)\cdot (x_1-t_3)}$$
$$\term{\clan{+,-}}{123}{231}{(x_1-t_3)(x_2-t_3)}$$
$$\term{\clan{-,+}}{231}{123}{(x_1-t_1)(x_2-t_1)}$$
$$\term{\clan{+-,}}{123}{213}{(x_1-t_3)}$$
$$\term{\clan{-+,}}{213}{123}{(x_1-t_1)}$$
$$\term{\clan{2,.}}{123}{123}{x_1+x_2-t_1-t_3}$$
$$\term{\clan{1.,}}{123}{123}{1}$$
\end{example}

\section{Localization and Globalization}\label{Localization and Globalization}

The main purpose of this section is to relate $\Upsilon_\gamma(t;t)$ with the Grassmannian $Gr_q(\mathbb{C}^n)$.
The maps used in the argument of this section can be summarized in the following diagram 
\begin{equation}\label{eq:bigdiagram}
\begin{matrix}\xymatrix{
&&&&\\
G/B & P\cdot B/B\ar[l]_-f & N\cdot B/B \ar[l]_-g & 1\cdot B\ar[l]_-h
\ar`u[ul]`[lll][lll]\\
 && N\ar[u]_-\wr\ar[d]^-\wr & \{1_n\}\ar[l]\ar[u]\ar[d]\\
& Q\backslash G & Q\backslash Q\cdot N \ar[l]_-\phi & Q\cdot 1\ar[l]_-\psi
\ar`d[d]`[dll][ll]\\
&&&&} 
\end{matrix}
\end{equation}
For the last row of \eqref{eq:bigdiagram}, the classical convention of $T$-action is 
$$t\cdot Qg=Qgt,\quad \text{ for $t\in T,\ Qg\in Q\backslash G$}.$$
In particular, we have the following sign issue. 

\begin{lemma}\label{lemma:diag1}
The induced map 
$$\mathbb{Q}[t_1,\ldots,t_n]\cong H_T^*(1\cdot B/B)
\stackrel{\sim}\longrightarrow 
H_T^*(Q\backslash Q\cdot 1)\cong \mathbb{Q}[t_1,\ldots,t_n]$$
is given by sending $t_i$ to $-t_i$. 
\end{lemma}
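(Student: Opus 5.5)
Both equivariant cohomologies of a point are the character ring of $T$: $\mathbb{Q}[t_1,\ldots,t_n]=\operatorname{Sym}(\mathfrak{t}^*)$, where $t_i$ is the $i$-th coordinate character of $T$. A $T$-equivariant isomorphism of one-point spaces induces the identity on $\operatorname{Sym}(\mathfrak{t}^*)$ once the two $T$-actions agree. So the plan is to compare the two $T$-actions on the common point of diagram \eqref{eq:bigdiagram} and show they differ by the inversion $t\mapsto t^{-1}$. We then check that inversion acts by $-1$ on characters and read off the sign.

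\textbf{Step 1 (left action on $G/B$).} On $G/B$ the torus acts by left multiplication, $t\cdot gB=tgB$. The identification $1\cdot B/B\cong\{1_n\}\subset N$ in the middle column is made equivariant by letting $T$ act on $N$ by conjugation, $t\cdot n=tnt^{-1}$. Indeed $tnB=tnt^{-1}B$ because $t^{-1}\in B$. With this convention, $H_T^*(1\cdot B/B)=\mathbb{Q}[t]$ carries the standard generators: $t_i$ is the equivariant first Chern class of the character $t\mapsto t_i$.

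\textbf{Step 2 (right action on $Q\backslash G$).} On $Q\backslash G$ the convention is $t\cdot Qg=Qgt$. The map $N\to Q\backslash Q\cdot N$, $n\mapsto Qn$, must intertwine some $T$-action on $N$ with this one. We have $Qnt=Qt^{-1}nt$ since $t^{-1}\in Q$, so the action on $N$ is $t\cdot n=t^{-1}nt$. At the fixed point $1_n$, the action of $t$ through the lower row of the diagram therefore coincides with the action of $t^{-1}$ through the upper row. Consequently the composite isomorphism of one-point spaces is $T$-equivariant only after twisting by the automorphism $\iota:t\mapsto t^{-1}$ of $T$. The induced map on $H_T^*(\mathrm{pt})$ is the pullback $\iota^*$.

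\textbf{Step 3 (effect of inversion).} The pullback $\iota^*$ acts on $\mathfrak{t}^*$ by $d\iota=-\mathrm{id}$. Equivalently, $\iota$ sends the character $\chi_i$ to $\chi_i^{-1}$, and $c_1^T(\chi^{-1})=-c_1^T(\chi)$. Hence $t_i\mapsto -t_i$. Extending multiplicatively gives the ring isomorphism stated in the lemma.

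The only genuinely delicate point is bookkeeping, namely confirming which action on $N$ each vertical isomorphism in \eqref{eq:bigdiagram} uses. In particular, one must check that the point $Q\cdot 1$ is identified with $H_T^*(\mathrm{pt})$ using the same generators $t_i$ under the right-action convention. I would verify this by computing the tangent weights at the fixed point on both sides. On $N\cdot B/B$, the weights at $1$ are $t_i-t_j$ for the entries below the diagonal block. Through $\phi$ and $\psi$, the corresponding weights of $Q\backslash G$ at $Q\cdot 1$ under $Qg\mapsto Qgt$ come out as $t_j-t_i$. This exhibits the sign change directly on generators and confirms $t_i\mapsto -t_i$.
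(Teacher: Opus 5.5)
Your proposal is correct and follows the paper's own reasoning. The paper gives no separate proof and attributes the sign entirely to the convention $t\cdot Qg=Qgt$ on $Q\backslash G$; your comparison of the two actions on $N$ makes that explicit. Making $N\to N\cdot B/B$ equivariant requires $n\mapsto tnt^{-1}$, while making $N\to Q\backslash Q\cdot N$ equivariant requires $n\mapsto t^{-1}nt$. These differ by the inversion $t\mapsto t^{-1}$, which acts by $-1$ on $\mathfrak{t}^*$, hence $t_i\mapsto -t_i$.
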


\subsection{Localization}
Geometrically, $\Upsilon_\gamma(t;t)$ is the image of $[\overline{S\dot{\gamma}B/B}]_T$ under the localization map 
\begin{equation}\label{eq:locatid}
-|_e:H_T^*(G/B)\longrightarrow H_T^*(1\cdot B/B)\cong \mathbb{Q}[t_1,\ldots,t_n]. 
\end{equation}
Consider the upper row of \eqref{eq:bigdiagram}. 
We can decompose \eqref{eq:locatid} into maps 
$$H_T^*(G/B)
\stackrel{f^*}\longrightarrow 
H_T^*(P\cdot B/B)
\stackrel{g^*}\longrightarrow 
H_T^*(N\cdot B/B)
\stackrel{h^*}\longrightarrow 
H_T^*(1\cdot B/B). $$

\begin{lemma}\label{lemma:diag2}
For any $(p,m,q)$-preclan $\gamma$, 
$$g^*f^*[\overline{S\dot{\gamma}B/B}]_T
= 
\begin{cases}
[\overline{S\dot{\gamma}B/B\cap N\cdot B/B}]_T, 
& v_\gamma = e,\\
0, & \text{otherwise},
\end{cases}
$$
where the closure of the right-hand side is taken inside $N\cdot B/B$. 
In the first case $v_\gamma=e$, the intersection $S\dot{\gamma}B/B\cap N\cdot B/B$ is always nonempty. 
\end{lemma}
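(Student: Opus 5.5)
The argument splits along the open/closed structure of $P\cdot B/B$. Since $P\cdot B/B = PeB/B$ is the $P$-orbit of the identity coset, and the $P$-orbits on $G/B$ are indexed by $p$-inverse Grassmannian permutations, $P\cdot B/B$ is the open $P$-orbit. Indeed, $\overline{PvB/B}=\overline{B^-vB/B}$, which equals $G/B$ only for $v=e$. So $f$ is an open embedding, and $f^*$ of a fundamental class is the fundamental class of the restriction.

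By Theorem \ref{thm:gammainPQorb}, $S\dot{\gamma}B/B\subseteq Pv_\gamma B/B$.

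\textbf{Case $v_\gamma\neq e$.} The orbit lies in the closed complement of the open orbit $PeB/B$. Its closure lies in $\overline{Pv_\gamma B/B}$, which is $P$-stable and does not contain $PeB/B$, so it misses $PeB/B$ entirely. Hence $f^*[\overline{S\dot\gamma B/B}]_T=0$, and therefore $g^*f^*=0$.

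\textbf{Case $v_\gamma=e$.} Here $f^*$ gives the class of $\overline{S\dot\gamma B/B}\cap P\cdot B/B$ in $P\cdot B/B$, whose closure is $\overline{S\dot\gamma B/B}\cap P\cdot B/B$. For $g^*$, I would use the product structure asserted in the introduction: the map
\[
(S\cap P)\text{-part}\times N\to P\cdot B/B,\qquad (s,n)\mapsto snB/B,
\]
or rather $\bar P\times N$ with $\bar P=\left[\begin{smallmatrix}GL_p&0\\0&GL_{m+q}\end{smallmatrix}\right]$. Write $P=U_P\rtimes L$ with unipotent radical $U_P\cong N$ (the lower-left block) and Levi factor $L=GL_p\times GL_{m+q}$. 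Then
\[
P\cdot B/B\cong U_P\times L/(L\cap B)=N\times (L/B_L),
\]
via $(n,lB)\mapsto nlB$. Is $S$ acting compatibly? Since $N\subseteq S$ only partially, this is the crux, so I would instead use $N'=\left[\begin{smallmatrix}1&0\\ *&1\end{smallmatrix}\right]$, of which the paper's $N$ is the part in rows $q$. More precisely, $U_P=N_1N$ where $N_1$ is the middle-row block, and $N_1\subseteq S$. The slice statement is then: the action map
\[
S\times^{S\cap \mathrm{Stab}} (N\cdot B/B)\to P\cdot B/B
\]
is smooth, so intersections of $S$-stable subvarieties with $N\cdot B/B$ are transversal. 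Concretely, $P\cdot B/B=L_S\cdot N_1\cdot N\cdot B/B$ with $L_S=GL_p\times GL_m\times GL_q$. Pulling back along the smooth surjection
\[
L_S N_1\times N\to P\cdot B/B\qquad (\text{after quotienting } L_S\text{ by }B)
\]
shows that each $S$-orbit $\mathbb{O}\cap P\cdot B/B$ corresponds to $(\text{bundle})\times(\mathbb{O}\cap N\cdot B/B)$. Thus the restriction $g^*$ of the class equals the class of the (scheme-theoretic, reduced by smoothness) intersection closure. This also needs $T$-equivariance: $T\subseteq L_S$ normalizes $N$.

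\textbf{Main obstacle.} The key step is establishing this local product decomposition: that $N\cdot B/B$ is a transversal slice to all $S$-orbits in $P\cdot B/B$, i.e.\ that $S\cdot(N\cdot B/B)=P\cdot B/B$ with $S\times N\to P\cdot B/B$ smooth. I would first try a dimension/tangent-space check at $1\cdot B$, namely $\mathfrak s+\mathfrak n+\mathfrak b=\mathfrak g$ restricted to $\mathfrak p$, and then propagate using the $T$-contraction of $N$ to $1$ (a positive cocharacter scaling the lower-left block). Since every $T$-stable closed subset of $N$ contains $1$, transversality at $1$ spreads to all of $N\cdot B/B$.

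\textbf{Nonemptiness when $v_\gamma=e$.} The orbit meets $P\cdot B/B$, hence meets $S\cdot(N\cdot B/B)$, hence by $S$-stability meets $N\cdot B/B$.
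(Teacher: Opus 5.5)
Your treatment of $f^*$ and of the case $v_\gamma\neq e$ agrees with the paper. The transversality half of the $g^*$ step can also be made rigorous along the lines of your ``main obstacle'' paragraph, and it would replace the paper's citation of Richardson by a direct argument. In fact $\mathfrak s\oplus\mathfrak n=\mathfrak p$, and the map $S\times N\to P$, $(s,n)\mapsto sn$, is equivariant for left $S$- and right $N$-translation. So it is \'etale everywhere, and $a\colon S\times N\to P\cdot B/B$, $(s,n)\mapsto snB$, is smooth, hence open; no contraction argument is needed. Openness of $a$ gives $\overline{\mathbb O}\cap N\cdot B/B=\overline{\mathbb O\cap N\cdot B/B}$ and the class identity.

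\textbf{The gap is the final assertion of the lemma.} Smoothness only shows that $S\cdot(N\cdot B/B)$ is an \emph{open} $S$-stable subset of $P\cdot B/B$. It says nothing about $S$-orbits that miss $N\cdot B/B$ altogether. Your $T$-contraction contracts $N$, not $P\cdot B/B$, so it cannot detect such orbits. Your nonemptiness paragraph assumes $S\cdot(N\cdot B/B)=P\cdot B/B$, which is exactly what must be proved, so it is circular. The ``concrete'' decomposition you offer instead is false. For $p=0$, $m=q=1$ one gets $L_SN_1N\cdot B/B=N\cdot B/B$, which is $\mathbb P^1$ minus a point, not $G/B$. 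Relatedly, $U_P\neq N_1N$: the block of $N$ in rows $p+m+1,\dots,n$ and columns $p+1,\dots,p+m$ lies in the Levi factor $GL_{m+q}$ of $P$, not in $U_P$.

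What is missing is a fixed-point reduction, which is how the paper argues.
\begin{itemize}
\item Take a cocharacter $\rho$ with $\lim_{t\to\infty}\rho(t)x=wB$ for all $x\in B^-wB/B$. Since $P\cdot B/B=\bigsqcup_{w\in S_p\times S_{m+q}}B^-wB/B$, the closure of $\mathbb O=S\dot\gamma B/B$ inside $P\cdot B/B$ contains some $wB$ with $w=w_1\times w_2$. Because $w_1\times e\in S$, it also contains $(e\times w_2)B$.
\item It remains to show $(e\times w_2)B\in S\cdot(N\cdot B/B)$. This is the case $p=0$, inside $GL_{m+q}$. The group $S'=\left[\begin{smallmatrix}GL_m&*\\0&GL_q\end{smallmatrix}\right]$ contains the Borel subgroup, so each $S'$-orbit is a union of Schubert cells. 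Each Schubert cell meets the opposite big cell, which lies in $MB/B$ for $M=\left[\begin{smallmatrix}GL_m&0\\{}*&GL_q\end{smallmatrix}\right]=N'L$, where $L=GL_m\times GL_q\subseteq S'$. Absorbing the $L$-factor into $S'$ gives $S'(e\times w_2)B\cap N'B\neq\varnothing$.
\item Hence $\overline{\mathbb O}$ meets the open $S$-stable set $S\cdot(N\cdot B/B)$, so $\mathbb O$ meets it too. By $S$-stability, $\mathbb O\cap N\cdot B/B\neq\varnothing$; equivalently, $S\cdot(N\cdot B/B)=P\cdot B/B$.
\end{itemize}
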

\begin{proof}
Let us first study the behavior of $f^*$. 
Note that $P\cdot B/B$ is an $S$-invariant open neighborhood of $1\cdot B/B$. We thus have 
$$f^*[\overline{S\dot{\gamma}B/B}]_T
= [\overline{S\dot{\gamma}B/B}\cap P\cdot B/B]_T.$$
Note that 
$$\overline{S\dot{\gamma}B/B}\cap P\cdot B/B
=\overline{S\dot{\gamma}B/B\cap P\cdot B/B}
=\begin{cases}
\overline{S\dot{\gamma}B/B}, & S\dot{\gamma}B/B\subseteq P\cdot B/B,\\
\varnothing, & \text{otherwise}. 
\end{cases}
$$
where the closures of the right-hand two terms are all taken inside $P\cdot B/B$. 
By Theorem \ref{thm:gammainPQorb}, 
the first case happens if and only if $v_\gamma = e$. 

Then let us study the behavior of $g^*$. 
We can identify $P\cdot B/B\cong P/\bar{B}$. 
By Richardson \cite{Richardson92}, the $N$-orbit $N\cdot B/B$ intersects every $S$-orbit $S\dot{\gamma}B/B$ transversally, and the intersection is irreducible. 
We thus have 
$$g^*[\overline{S\dot{\gamma}B/B}\cap P\cdot B/B]_T
=[\overline{S\dot{\gamma}B/B}\cap N\cdot B/B]_T,$$
when $v_\gamma=e$. 

It is well-known that transversal intersection preserves the closure relation. 
Precisely, in our previous work \cite[Lemma 3.2]{WE}, we proved that for any $K$-orbit,  taking closure
commutes with the transversal intersection with any closed $N$-orbits. 
But it is also true for all $N$-orbits, since they are locally closed \cite[Section 8.3]{Humph} and restriction to an open subset also commutes with taking closure. We thus have 
$$\overline{S\dot{\gamma}B/B}\cap N\cdot B/B = \overline{S\dot{\gamma}B/B\cap N\cdot B/B},$$
where the closure of the right-hand side is taken inside $N\cdot B/B$. 

To see the intersection is nonempty, we need two further steps. 
Let us first deal with the case $p=0$. 
Recall and introduce 
$$\def\O{\color{lightgray}0}
S=\left[\begin{matrix}
GL_m & * \\ \O & GL_q
\end{matrix}\right],\quad 
N=\left[\begin{matrix}
1_m & \O \\ * & 1_q
\end{matrix}\right],\quad 
M:=\left[\begin{matrix}
GL_m & \O \\ * & GL_q
\end{matrix}\right],\quad 
L:=\left[\begin{matrix}
GL_m & \O \\ \O & GL_q
\end{matrix}\right].$$
Note that $v_{\gamma}=e$ for any $(0,m,q)$-preclan. 
Note that 
$SgB/B\cap MB/B\neq \varnothing$ for any $g\in G$, since $SgB/B=QgB/B$ is a union of Schubert cells and $MB/B$ contains the dense opposite Schubert cells. 
Since $M=NL$ and $S$ is $L$-invariant, we have 
$SgB/B\cap NB/B\neq \varnothing$. 
That is, $SgB\cap N\neq \varnothing$ for all $g\in G=GL_{m+q}$. 

For general $p$, we take a one-parameter subgroup $\rho:\mathbb{C}^\times \to T$ such that all point $g B\in B^-wB/B$, the limit
$\lim_{t\to \infty}\rho(t)gB=wB$. 
Assume $v_\gamma=e$, then by  Definition \ref{def:dotgamma}, we have $\dot{\gamma}\in P$. 
Since $P\cdot B/B=\bigsqcup_{w\in S_p\times S_{m+q}}B^-wB/B$, the limit
$\lim_{t\to\infty}\rho(t)\dot{\gamma}B$ exists and the limit is $wB\in P\cdot B/B$ for some $w=w_1\times w_2\in S_p\times S_{m+q}$. 
Note that the permutation matrix $w_1\times e$ is an element of $S$. We thus have $(e\times w_2)\cdot B\in \overline{S\dot{\gamma}B/B}$.
Let us introduce 
$$\def\O{\color{lightgray}0}
S'=\left[\begin{matrix}
1_p &\O &\O\\
\O & GL_m & * \\ \O & \O & GL_q
\end{matrix}\right]\subseteq S,\quad 
N'=\left[\begin{matrix}
1_p& \O& \O\\
\O & 1_m & \O \\\O & * & 1_q
\end{matrix}\right]\subseteq N ,\quad 
B'=
\left[\begin{matrix}
1_p&\O & \O\\
\O & B_m & * \\\O & \O & B_q
\end{matrix}\right]\subseteq B.
$$
By the case $p=0$ above, we can find $s\in S'$ and $b\in B'$ such that $s(e\times w_2)b\in N'$. 
In particular, 
$s(e\times w_2)\in 
\overline{S\dot{\gamma}B/B}\cap N\cdot B/B = \overline{S\dot{\gamma}B/B\cap N\cdot B/B}\neq \varnothing$ and thus $S\dot{\gamma}B/B\cap N\cdot B/B\neq \varnothing$. 
\end{proof}

\subsection{Globalization}
Next, we will globalize the intersection $S\dot{\gamma}B/B\cap N\cdot B/B$ considered above to Grassmannian via the vertical map of \eqref{eq:bigdiagram}: 
$$
N\cdot B/B
\stackrel{\sim}\longleftarrow N\stackrel{\sim}\longrightarrow Q\backslash Q\cdot N.$$

\begin{lemma}\label{lemma:diag3}
For a $(p,m,q)$-preclan $\gamma$ with $v_\gamma=e$, we have 
$$
S\dot{\gamma}B/B\cap N\cdot B/B
\cong Q\backslash Q\dot{\gamma}\bar{B}\cap Q\backslash Q\cdot N
$$
under the vertical map of \eqref{eq:bigdiagram}. 
\end{lemma}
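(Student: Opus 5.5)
The plan is to unwind both sides into conditions on elements $n\in N$ and show they coincide. The vertical maps send $n\in N$ to $nB/B$ on one side and to $Qn$ on the other; both are isomorphisms onto their images. For $nB/B$ the reason is $N\cap B=\{1\}$ and $N\cdot B/B$ is the orbit of a unipotent group. For $Qn$ the reason is $Q\cap N=\{1\}$, which holds because $N$ lives in the lower-left $q\times(p+m)$ block and $Q$ has zero there. So it suffices to prove that for $n\in N$,
$$nB\in S\dot{\gamma}B \iff Qn\in Q\dot{\gamma}\bar{B}.$$

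First reduce the left side to $P$. Since $v_\gamma=e$, Definition~\ref{def:dotgamma} gives $\dot{\gamma}\in P$, and $N\subseteq P$. We also have $P\cap B=\bar{B}$. Hence $nB\in S\dot{\gamma}B$ if and only if $n\in S\dot{\gamma}\bar{B}$, i.e. $n=s\dot{\gamma}b$ with $s\in S$, $b\in\bar B$. This is simply intersecting the $S$-orbit with the open subset $P\cdot B/B\cong P/\bar B$.

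For the forward direction, if $n=s\dot{\gamma}b$ then $Qn=Qs\dot{\gamma}b=Q\dot{\gamma}b$, because $s\in S\subseteq Q$. So $Qn\in Q\dot{\gamma}\bar B$.

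For the converse, suppose $n=r\dot{\gamma}b$ with $r\in Q$ and $b\in\bar B$. Then $r=n b^{-1}\dot{\gamma}^{-1}$, and since $n,b,\dot{\gamma}\in P$, this gives $r\in P\cap Q=S$. Hence $n\in S\dot\gamma\bar B$.

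The two sets therefore correspond exactly under $N$. The scheme-theoretic isomorphism follows because both identifications $N\to N\cdot B/B$ and $N\to Q\backslash Q\cdot N$ are isomorphisms of varieties, and each locus is the preimage of an orbit (taken with its reduced structure) under an isomorphism.

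I expect the main obstacle to be checking that $Q\backslash Q\cdot N\cong N$ is really an isomorphism onto an open subset, not merely a bijection. I would handle this by noting that $Q\cdot N$ is open in $G$: it is the big-cell-type product of $Q$ with the opposite unipotent radical $N$ of the parabolic $Q$, and the multiplication map $Q\times N\to QN$ is an isomorphism. Everything else is formal bookkeeping with $\dot\gamma\in P$ and $P\cap Q=S$.
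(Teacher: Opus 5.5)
Your proposal is correct and takes essentially the same approach as the paper: from $v_\gamma=e$ you get $\dot\gamma\in P$, and then $P\cap B=\bar B$ and $P\cap Q=S$ give $N\cap S\dot\gamma B = N\cap S\dot\gamma\bar B = N\cap Q\dot\gamma\bar B$, exactly as in the paper's proof. Your extra justification that $N\to N\cdot B/B$ and $N\to Q\backslash Q\cdot N$ are isomorphisms (using $N\cap B=\{1\}$, $Q\cap N=\{1\}$, and openness of $Q\cdot N$) fills in details the paper takes for granted.
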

\begin{proof}
Note that under the isomorphism
$N\cong N\cdot B/B$, 
$$N\cap S\dot{\gamma}B\cong 
S\dot{\gamma}B/B\cap N\cdot B/B$$
and under the isomorphism $N\cong Q\backslash Q\cdot N$, 
$$N\cap Q\dot{\gamma}\bar{B}\cong 
Q\backslash Q\dot{\gamma}\bar{B}\cap Q\backslash Q\cdot N.$$
It suffices to show 
$$
N\cap S\dot{\gamma}B = N\cap S\dot{\gamma}\bar{B} = N\cap Q\dot{\gamma}\bar{B}. $$
By Definition \ref{def:dotgamma}, it is easy to see that if $v_\gamma=e$, then $\dot{\gamma}\in P$. 

For any element $x\in N\cap S\dot{\gamma}B$ with $x=s\dot{\gamma}b$ for $s\in S$ and $b\in B$, we have 
$$b=\dot{\gamma}^{-1}s^{-1}x\in 
P\cdot S\cdot N\subseteq P. $$
Thus $b\in \bar{B}=B\cap P$. 
This proves 
$N\cap S\dot{\gamma}B = N\cap S\dot{\gamma}\bar{B}$. 

For any element $x\in N\cap Q\dot{\gamma}\bar{B}$ with $x=q\dot{\gamma}b$ for $q\in Q$ and $b\in\bar{B}$, we have 
$$q=xb^{-1}\dot{\gamma}^{-1}\in N\cdot \bar{B}\cdot P\subseteq P. $$
Thus $q\in S=Q\cap P$. 
This proves $N\cap S\dot{\gamma}\bar{B} = N\cap Q\dot{\gamma}\bar{B}$. 
\end{proof}

Consider the lower row of \eqref{eq:bigdiagram}. 
We have an isomorphism 
$$Q\backslash G
\stackrel{\sim}\longrightarrow
\operatorname{Gr}_q(\mathbb{C}^n),\qquad Qg\longmapsto \operatorname{span}(\text{last $q$ rows of $g$}).$$
Moreover, $Q\backslash Q\cdot N$ coincides with the dense opposite Schubert cell over $\operatorname{Gr}_q(\mathbb{C}^n)$. 
We thus have 
$$\overline{Q\backslash Q\dot{\gamma}\bar{B}}\cap Q\backslash Q\cdot N
= \overline{Q\backslash Q\dot{\gamma}\bar{B}\cap Q\backslash Q\cdot N},$$
where the closure of the right-hand side is taken inside $Q\backslash Q\cdot N$. 
The following Lemma is clear. 

\begin{lemma}\label{lemma:diag4}
For any $(p,m,q)$-preclan $\gamma$ with $v_\gamma=e$, 
$$\phi^*[\overline{Q\backslash Q\dot{\gamma}\bar{B}}]_T
= [\overline{Q\backslash Q\dot{\gamma}\bar{B}\cap Q\backslash Q\cdot N}]_T,$$
where the closure of the right-hand side is taken inside $Q\backslash Q\cdot N$. 
\end{lemma}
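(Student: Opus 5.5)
The map $\phi: Q\backslash Q\cdot N\hookrightarrow Q\backslash G$ is the inclusion of an open subset, namely the dense opposite Schubert cell of $\operatorname{Gr}_q(\mathbb{C}^n)$. The plan is to apply the general principle that, for an open $T$-stable embedding $j:U\hookrightarrow X$ and a $T$-stable closed subvariety $Z\subseteq X$, the pullback satisfies $j^*[Z]_T=[Z\cap U]_T$ in $H_T^*(U)$. Here $Z\cap U$ is understood as a closed subvariety of $U$, and the class is zero if $Z\cap U=\varnothing$.

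First I would check that the objects are $T$-stable for the right action $t\cdot Qg=Qgt$. The subvariety $Q\backslash Q\dot{\gamma}\bar{B}$ is stable under right multiplication by $\bar{B}\supseteq T$, so its closure is $T$-stable. The set $Q\backslash Q\cdot N$ is $T$-stable because $T$ normalizes $N$ and $T\subseteq Q$, which gives $QNt=Qt(t^{-1}Nt)=QN$. It is open because it is the big cell: the last $q$ rows of $g\in N$ have the form $[\,*\ \ *\ \ 1_q\,]$, i.e. the standard affine chart where the last $q$ Plücker coordinates are nonzero.

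Second, the restriction of fundamental classes to an open set is compatible with closures. Equivariant cohomology classes of closed subvarieties are defined through equivariant Borel–Moore homology and Poincaré duality on the smooth variety $Q\backslash G$, or through approximation spaces $E_NT\times^T(-)$. In either setting, restricting to an open subset sends $[Z]$ to $[Z\cap U]$, since the open restriction map in Borel–Moore homology sends the fundamental class of $Z$ to that of $Z\cap U$. I would then use the displayed identity just before the lemma, $\overline{Q\backslash Q\dot{\gamma}\bar{B}}\cap Q\backslash Q\cdot N=\overline{Q\backslash Q\dot{\gamma}\bar{B}\cap Q\backslash Q\cdot N}$, which holds because the intersection of a closure with an open set equals the closure, inside that open set, of the intersection.

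The only mild point, and the one I expect to need care, is irreducibility and reducedness, so that the class is a genuine fundamental class rather than a cycle with multiplicities. The variety $\overline{Q\backslash Q\dot{\gamma}\bar{B}}$ is irreducible as the closure of a $\bar{B}$-orbit, and its intersection with the open set is open in it. That intersection is nonempty, since by Lemma \ref{lemma:diag2} and Lemma \ref{lemma:diag3} it is isomorphic to the nonempty set $S\dot{\gamma}B/B\cap N\cdot B/B$. Hence the intersection is again irreducible and reduced, and the statement follows.
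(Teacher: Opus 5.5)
Your proposal is correct and follows the paper's route: the paper notes that $Q\backslash Q\cdot N$ is the open dense big cell and that taking closure commutes with intersecting it, then calls the lemma clear, and your open-restriction argument for equivariant fundamental classes supplies exactly the omitted standard details. Two small remarks: the big cell is where the single Plücker coordinate indexed by the last $q$ columns is nonzero, and your nonemptiness/irreducibility check is harmless but not needed, since both sides vanish if the intersection is empty.
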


Combining all 
Lemmas \ref{lemma:diag1}, \ref{lemma:diag2}, 
\ref{lemma:diag3} and 
\ref{lemma:diag4}, we get the following theorem. 

\begin{theorem}\label{thm:globalization}
For any $(p,m,q)$-preclan $\gamma$, 
$$\Upsilon_\gamma(t;t)
|_{t_i\mapsto -t_i}
= \begin{cases}
[\overline{Q\backslash Q\dot{\gamma}\bar{B}}]_T|_{Q\cdot 1}, & v_\gamma = e,\\
0, & \text{otherwise}.
\end{cases}
$$
\end{theorem}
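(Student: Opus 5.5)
The plan is to chase the class $[\overline{S\dot{\gamma}B/B}]_T$ around the diagram \eqref{eq:bigdiagram}, from its upper-left corner to the point $1\cdot B/B\cong\{1_n\}\cong Q\cdot 1$, along two routes. Along the upper row, the localization \eqref{eq:locatid} factors as $h^*g^*f^*$. Since $\Upsilon_\gamma(x;t)$ represents $[\overline{S\dot{\gamma}B/B}]_T$, we have $\Upsilon_\gamma(t;t)=h^*g^*f^*[\overline{S\dot{\gamma}B/B}]_T$.

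By Lemma \ref{lemma:diag2}, this vanishes unless $v_\gamma=e$, which gives the second case. In the case $v_\gamma=e$, it equals $h^*[\overline{S\dot{\gamma}B/B\cap N\cdot B/B}]_T$, with the closure taken in $N\cdot B/B$.

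Next I would transport this class through the vertical isomorphisms $N\cdot B/B\cong N\cong Q\backslash Q\cdot N$. By Lemma \ref{lemma:diag3}, the subvariety $S\dot{\gamma}B/B\cap N\cdot B/B$ is carried onto $Q\backslash Q\dot{\gamma}\bar{B}\cap Q\backslash Q\cdot N$, so their closures also correspond. The point to check is equivariance. On $N\subset G$, the torus acts by conjugation $t\cdot x=txt^{-1}$, which matches left multiplication on $N\cdot B/B$. On $Q\backslash Q\cdot N$ the action is right multiplication, $Qx\cdot t=Qxt=Q(t^{-1}xt)$, since $t^{-1}\in Q$. The two $T$-actions on $N$ therefore differ by the inversion automorphism $t\mapsto t^{-1}$ of $T$.

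Consequently, the identification of equivariant classes is compatible with the pullback to the fixed point $1_n$, up to the automorphism of $H_T^*(\mathrm{pt})=\mathbb{Q}[t_1,\ldots,t_n]$ induced by inversion. That automorphism is exactly $t_i\mapsto -t_i$, as recorded in Lemma \ref{lemma:diag1}. Thus $h^*[\overline{S\dot{\gamma}B/B\cap N\cdot B/B}]_T$, with $t_i\mapsto -t_i$ applied, equals $\psi^*[\overline{Q\backslash Q\dot{\gamma}\bar{B}\cap Q\backslash Q\cdot N}]_T$.

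Finally, Lemma \ref{lemma:diag4} identifies the latter class with $\phi^*[\overline{Q\backslash Q\dot{\gamma}\bar{B}}]_T$. Hence $\psi^*\phi^*$ of it is precisely the localization $[\overline{Q\backslash Q\dot{\gamma}\bar{B}}]_T|_{Q\cdot 1}$, which gives the first case.

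The main obstacle is bookkeeping rather than geometry: one must verify that the vertical maps are equivariant for the twisted action, and that equivariant fundamental classes of closed subvarieties are preserved under equivariant isomorphisms. Two further points also need care. The relevant closures must be taken consistently inside the open pieces, which Lemmas \ref{lemma:diag2} and \ref{lemma:diag4} already arrange. And the substitution $t_i\mapsto -t_i$, being an involution, may be placed on either side of the equation.
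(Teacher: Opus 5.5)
Your proposal is correct and matches the paper's proof, which simply combines Lemmas \ref{lemma:diag1}, \ref{lemma:diag2}, \ref{lemma:diag3} and \ref{lemma:diag4} by chasing $[\overline{S\dot{\gamma}B/B}]_T$ around the diagram \eqref{eq:bigdiagram}. Your extra check, that conjugation on $N$ and right multiplication on $Q\backslash Q\cdot N$ differ by $t\mapsto t^{-1}$, is exactly the justification for the sign change in Lemma \ref{lemma:diag1}, which the paper states without proof.
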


\section{Positroid Varieties}\label{Positroid Varieties}

The main purpose of this section is to identify $\overline{Q\backslash Q\dot{\gamma}\bar{B}}$ with a positroid variety. 

\begin{definition}\label{fgamma}    
For a $(p,m,q)$-preclan $\gamma$ with $v_\gamma=e$, we associate  an affine permutation $f_\gamma\in \widetilde{S}_n$ to it as follows. 
\begin{itemize}
    \item[(1)] We first define $f_\gamma(*)$ for $1\leq *\leq p$. 
    Note that $v_\gamma=e$ implies that 
    the first $p$ nodes are either $\clan{+}$'s or left-ends. 
    \begin{itemize}
        \item[$\bullet$] If the $i$-th node is $\clan{+}$, then let 
    $$f_\gamma(p+1-i) = p+1-i+\texttt{\#}\{\text{matchings covering the $i$-th node}\}.$$
    
        \item[$\bullet$] If the $i$-th node is the left-end of an $(i,j)$-matching, then let
    $$f_\gamma(p+1-i) = j.$$
    
    \end{itemize}
    \item[(2)] Then let us define $f_\gamma(*)$ for $p<*\leq p+m$. 
    Let    $$f_\gamma(p+i) = \text{the index of the $i$-th uncolored $\clan{,}$}.$$

    \item[(3)] Finally, we define $f_\gamma(*)$ for $p+m<*\leq p+m+q=n$. 
    Write $q=q_1+q_2$ with $q_1$ the number of matchings, and $q_2$ the number of $\clan{-}$'s.
    \begin{itemize}
        \item[$\bullet$] For $1\leq i\leq q_1$, we define 
    $$f_\gamma(p+m+i)=n+i.$$
        \item[$\bullet$] For $1\leq i\leq q_2$, we define
    $$f_\gamma(p+m+q_1+i)=n+\text{the index of the $i$-th $\clan{-}$}.$$
    \end{itemize}    
\end{itemize}

\end{definition}

It is easy to see that the last $q$ values of $f_\gamma$ are increasing.
For example, consider the following $(4,3,4)$-preclan 
\begin{equation}\label{eq:Egvgamma=e}
\def\m#1{\makebox[1.2pc]{$#1$}}
\begin{aligned}
\gamma =&\,  \clan{586+,.-,..,}\\
i = & \,\m{4}\m{3}\m{2}\m{1}\m{5}\m{6}\m{7}\m{8}\m{9}\m{10}\m{11}\\
f_\gamma(i) =& \,\m{6}\m{10}\m{9}\m{4}\m{5}\m{8}\m{11}\m{12}\m{13}\m{14}\m{18}.
\end{aligned}
\end{equation}
We have 
\begin{itemize}
    \item $f_\gamma(1) = 4$, since the $4$-th node is $\clan{+}$, and there are $3$ matchings covering it, we have $f_\gamma(p+1-4)=p+1-4+3=4$. 
    
    \item $f_\gamma(2)=9$, since there is a $(3,9)$-matching, we find $f_\gamma(p+1-3)=9$; 
    similarly for the $(2,10)$-matching and $(1,6)$-matching, we have $f_{\gamma}(3)=10$ and $f_\gamma(4)=6$; 
    \item $f_\gamma(5) = 5,\,f_\gamma(6)=8,\,f_\gamma(7)=11$, since the indices of unmatched $\clan{,}$ are $5,8,11$; 
    \item $f_\gamma(8)=11+1,\,f_\gamma(9)=11+2$ and $f_\gamma(10)=11+3$, since there are three matchings; 
    \item $f_\gamma(11) = 11+7$, since the index of $\clan{-}$ is $7$. 
\end{itemize}

There is an alternative description of the first $p$ values of  $f_{\gamma}$.
Assume that there are $p_1$ $\clan{+}$'s with positions  $i_1<\cdots<i_{p_1}$. Then for any $1\le r\le p_1$, we have
$$i_{r}=r+\#\{\text{matchings covering the $r$-th }\clan{+}\},$$ 
and 
\begin{equation}\label{eq:Lemmafgammaisw-d}
f_{\gamma}(p+1-i_r)=p+1-r.
\end{equation}
In particular, $f_{\gamma}(p+1-i_{p_1})=p+1-p_1=q_1+1$, where $q_1$ is the number of matchings.

\begin{lemma}\label{ggamma}
Let $\gamma$ be a $(p,m,q)$-preclan $\gamma$ with $v_\gamma=e$. Then  $f_\gamma\in \operatorname{Bound}(q,n)$ is well defined and 
\begin{align}
g_{\gamma}:=f_\gamma(1-q)f_\gamma(2-q) \cdots f_\gamma(n-q)\in S_n
\end{align}
is an ordinary permutation.
\end{lemma}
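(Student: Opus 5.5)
The plan is to verify directly from Definition \ref{fgamma} the three conditions defining $\operatorname{Bound}(q,n)$, and then read off $g_\gamma$. Extend $f_\gamma$ to $\mathbb{Z}$ by $f_\gamma(i+n)=f_\gamma(i)+n$. It then suffices to check three things:
\begin{itemize}
\item[(a)] the residues of $f_\gamma(1),\ldots,f_\gamma(n)$ modulo $n$ are pairwise distinct, so that $f_\gamma$ is a bijection of $\mathbb{Z}$;
\item[(b)] $i\le f_\gamma(i)\le i+n$ for $1\le i\le n$;
\item[(c)] $\sum_{i=1}^n (f_\gamma(i)-i)=qn$.
\end{itemize}

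\emph{Key step: the values of $f_\gamma$ modulo $n$.} Since $v_\gamma=e$, the first $p$ nodes are exactly the $\clan{+}$'s and the left-ends. Hence every right-end, every uncolored node and every $\clan{-}$ sits at a position $>p$. We sort the values by type.
\begin{itemize}
\item By \eqref{eq:Lemmafgammaisw-d}, the $\clan{+}$ nodes give the values $\{p+1-r: 1\le r\le p_1\}=\{q_1+1,\ldots,p\}$. I would first re-derive \eqref{eq:Lemmafgammaisw-d}: the $r$-th $\clan{+}$ is preceded by $r-1$ $\clan{+}$'s and by the left-ends of the matchings covering it, and all of these lie among the first $p$ nodes.
\item The left-ends give exactly the set of right-end positions.
\item The uncolored nodes give exactly the set of uncolored positions.
\item The last $q$ inputs give $n+1,\ldots,n+q_1$ together with $n$ plus the $\clan{-}$ positions.
\end{itemize}
Reducing modulo $n$, these sets are $\{1,\ldots,q_1\}\sqcup\{q_1+1,\ldots,p\}\sqcup\{\text{right-ends}\}\sqcup\{\text{uncolored}\}\sqcup\{\clan{-}\}$. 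This is a disjoint decomposition of $\{1,\ldots,p\}\sqcup\{p+1,\ldots,n\}=[n]$, which proves (a).

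\emph{Bounds and the sum.} Condition (b) is checked case by case.
\begin{itemize}
\item For a $\clan{+}$, $f_\gamma(p+1-i)\ge p+1-i$ trivially, and $f_\gamma(p+1-i)\le p\le n$.
\item For a left-end $i$ matched with $j$, we have $p+1-i\le p<j\le n$.
\item The $i$-th uncolored node has index $a_i\ge p+i$, because the first $p$ positions are not uncolored; also $a_i\le n$.
\item For $1\le i\le q_1$, $f_\gamma(p+m+i)=n+i$ lies between $p+m+i$ and $p+m+i+n$.
\item For the $i$-th $\clan{-}$, at index $b_i$, the lower bound is $n+b_i\ge p+m+q_1+i$ since $n\ge p+m+q_1$. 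The upper bound uses $b_i\le n-(q_2-i)=p+m+q_1+i$.
\end{itemize}
For (c), exactly the last $q$ values exceed $n$, and each of them lies in $(n,2n]$. Their residues together with the other values form $[n]$ by (a). Hence $\sum_i f_\gamma(i)=\sum_{k=1}^n k+qn$, which gives (c).

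\emph{The permutation $g_\gamma$.} By periodicity, $f_\gamma(1-q),\ldots,f_\gamma(0)$ equal $f_\gamma(n-q+1)-n,\ldots,f_\gamma(n)-n$. These are $1,\ldots,q_1$ followed by the $\clan{-}$ positions, all in $[n]$. The remaining entries $f_\gamma(1),\ldots,f_\gamma(n-q)$ lie in $[n]$ by the bounds above. By (a), all $n$ entries are distinct, so $g_\gamma\in S_n$. The only delicate point is the bookkeeping behind \eqref{eq:Lemmafgammaisw-d} and the upper bound for the $\clan{-}$ values. I expect this to be the main, though mild, obstacle.
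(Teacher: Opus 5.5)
Your proof is correct and follows the paper's argument. You check the bounds $i\le f_\gamma(i)\le i+n$, you sort the values by node type to show the residues modulo $n$ are distinct, and you note that only the last $q$ values exceed $n$. You also verify explicitly the degree condition $\sum_i (f_\gamma(i)-i)=qn$ and re-derive \eqref{eq:Lemmafgammaisw-d}; the paper leaves both of these implicit.
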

\begin{proof}
By Definition \ref{fgamma}, it is easy to check that $i \le f_{\gamma}(i)\le i+n$ for any $1\le i\le n$.
For $1\leq i\leq p+m$, by Definition \ref{fgamma}  and \eqref{eq:Lemmafgammaisw-d}, we have  
$$f_\gamma(i)\in[q_1+1,p]\cup\{\text{indices of $\clan{,}$ and right-ends}\}.$$
For $p+m+1\leq i\leq n$, by Definition \ref{fgamma} (3), we have 
$$f_\gamma(i)\in[n+1,n+q_1]\cup \{n+\text{indices of $\clan{-}$'s}\}.$$
Thus $f_{\gamma}(i)-f_{\gamma}(j)\not\equiv 0 \mod n$, for any $1\le i<j\le n$.
Also note that, only the last $q$ values of $f_\gamma$ can exceed $n$, so $g_{\gamma}$ belongs to $ S_{n}$.
\end{proof}

Let us denote the permutation matrix 
$$u_0 = \left[
    \begin{array}{c|c}
    \begin{array}{@{}c@{\,\,}c@{\,\,}c@{}}
    && 1\\[-0.75ex]
    &\reflectbox{$\ddots$}\\[-0.75ex] 1
\end{array}\\\hline& 1_{m+q}
\end{array}\right]\in GL_n,$$
corresponding to the permutation $p(p-1)\cdots 1(p+1)\cdots n\in S_n$. 

\begin{theorem} \label{thm:Bbarandpositroid}
Suppose that $\gamma$ is a $(p,m,q)$-preclan 
with $v_\gamma=e$.  
Under the identification $Q\backslash G\cong Gr_q(\mathbb{C}^n)$, we have 
$$\overline{Q\backslash Q\dot{\gamma}\bar{B}} \cdot u_0 = \Pi_{f_\gamma}.$$
\end{theorem}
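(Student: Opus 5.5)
The plan is to prove the equality by exhibiting one explicit point, checking that its orbit lands inside $\Pi_{f_\gamma}$, and then matching dimensions, using that both sides are irreducible.

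First rewrite the left-hand side as an orbit closure of a column-operation group. Set $H:=u_0^{-1}\bar{B}u_0$. Then
$$\overline{Q\backslash Q\dot{\gamma}\bar{B}}\cdot u_0=\overline{Q\backslash Q\dot{\gamma}u_0\,H}.$$
Here $H\cong B_p^-\times B_{m+q}$ acts on a $q\times n$ representing matrix by column operations of two kinds:
\begin{itemize}
\item within columns $1,\dots,p$, adding a later column to an earlier one and rescaling;
\item within columns $p+1,\dots,n$, adding an earlier column to a later one and rescaling.
\end{itemize}
Both kinds of operation are compatible with the cyclic order $p+1,\dots,n$ followed by $p,p-1,\dots,1$ read backwards, so they behave like a Borel with respect to a fixed arrangement of the columns.

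Let $V_\gamma$ be the span of the last $q$ rows of $\dot{\gamma}u_0$. The first concrete step is to compute $V_\gamma$ from Definition \ref{def:dotgamma}. Since $v_\gamma=e$, each of the first $p$ nodes is a $\clan{+}$ or a left-end. After applying $u_0$, the last $q$ rows of $\dot{\gamma}u_0$ are as follows:
\begin{itemize}
\item each matching $(i,j)$ contributes a row with entries in column $p+1-i$ and in the column corresponding to $j$;
\item each $\clan{-}$ contributes a unit vector in its own column;
\item each $\clan{+}$ and each uncolored node contributes a zero column.
\end{itemize}
From this description I will read off the bounded affine permutation $f_{V_\gamma}$ directly from the definition $f_V(i)=\min\{j>i:\vv_i\in\operatorname{span}(\vv_{i+1},\dots,\vv_j)\}$ and check, case by case along Definition \ref{fgamma}, that $f_{V_\gamma}=f_\gamma$:
\begin{itemize}
\item a zero column $\vv_i=0$ gives $f(i)=i$, or the shifted value coming from the covering count in the $\clan{+}$ case;
\item a matching gives the value $j$;
\item the $\clan{-}$ columns together with the wrap-around give the values $n+\cdots$ appearing in part (3).
\end{itemize}
This shows $V_\gamma$ lies in the open positroid cell of $f_\gamma$, and in particular $V_\gamma\in\Pi_{f_\gamma}$.

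Second, show that $V_\gamma H\subseteq\Pi_{f_\gamma}$. I will use the description of $\Pi_f$ as the locus cut out by the cyclic-interval rank conditions $\operatorname{rank}[\vv_i,\dots,\vv_j]\le r_f(i,j)$ from \cite{KLS}. An elementary column operation can only increase the rank of an interval when it adds a column from outside the interval to a column inside it. I will check that for the operations in $H$ this never breaks a defining inequality of $\Pi_{f_\gamma}$. The route I would try first is to avoid checking every interval: instead, verify that the essential rank conditions of $f_\gamma$ involve only intervals that are stable under the two kinds of column operations above, namely intervals that start at column $1$ or at column $p+1$, or that end at $p$ or at $n$. The shape of $f_\gamma$ should force this: its last $q$ values are increasing, and its first $p$ values are built from nested data.

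Third, compare dimensions. Lemma \ref{lemma:diag3}, together with the fact that the intersection with the open cell is nonempty and transversal (Lemma \ref{lemma:diag2}), gives
$$\dim \overline{Q\backslash Q\dot{\gamma}\bar{B}}=\dim S\dot{\gamma}B/B-\dim(S\cdot B/B\text{-orbit part}),$$
which Theorem \ref{thm:dim=length+} expresses through $\ell(\gamma)$. On the other side, $\dim\Pi_{f_\gamma}=q(n-q)-\ell(f_\gamma)$. So it suffices to show that $\ell(f_\gamma)$ equals the codimension computed from $\ell(\gamma)$, by counting affine inversions of $f_\gamma$ against the crossings, covering pairs and right-end indices in the formula for $\ell(\gamma)$ from the proof of Lemma \ref{richardmax}. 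Both $\overline{V_\gamma H}$ and $\Pi_{f_\gamma}$ are irreducible, and the first is contained in the second, so equal dimensions give equality.

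I expect the main obstacle to be the second step, the containment. If the rank-condition check becomes messy, the fallback is to show directly that the open positroid cell $\Pi_{f_\gamma}^\circ$ is $H$-stable. This can be done by checking that each generator of $H$ (a root subgroup or a torus element) preserves $f_V$ for every $V$ with $f_V=f_\gamma$, using the definition of $f_V$ through spans of consecutive columns. The inversion count in the third step is a second, more routine source of difficulty.
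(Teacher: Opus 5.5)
Your architecture (inclusion of the orbit closure in $\Pi_{f_\gamma}$, then a dimension count, then irreducibility) matches the paper's. The inclusion half, however, does not work as proposed.

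\textbf{The first step is false.} In $\dot\gamma u_0$ the columns at $\clan{+}$-positions and at uncolored positions are zero, as you note yourself. A zero column always gives $f_V(i)=i$. The covering counts in Definition \ref{fgamma}(1), and the values $f_\gamma(p+r)=a_r$ (the position of the $r$-th uncolored node) in part (2), only appear after the $\bar B$-column operations have filled those columns in. So $V_\gamma\notin\Pi^\circ_{f_\gamma}$ whenever some $\clan{+}$ is covered by a matching, or some uncolored node comes after a right-end or a $\clan{-}$.

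The smallest example is $(p,m,q)=(2,0,1)$ with $\gamma=\clan{2+.}$ (node $1$ matched to node $3$, a $\clan{+}$ at node $2$). Here $f_\gamma=[2,3,4]$, so $\Pi_{f_\gamma}=\mathbb{P}^2$ and $\Pi^\circ_{f_\gamma}=\{x_1x_2x_3\neq 0\}$. But $V_\gamma=[0:1:1]$ has $f_{V_\gamma}=[1,3,5]$.

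\textbf{The fallback is also false.} In the same example $V_\gamma H=\{[c:d:e]: de\neq 0\}$ meets both $\Pi^\circ_{f_\gamma}$ and its complement, so the open cell is not $H$-stable. This is forced in general: whenever $V_\gamma\notin\Pi^\circ_{f_\gamma}$, $H$-stability of the open cell would keep the whole orbit $V_\gamma H$ out of it, contradicting the density you are trying to prove.

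\textbf{The primary route for step two is only asserted.} It now carries the entire inclusion, and two things are missing. You would need a separate argument that $f_{V_\gamma}\ge f_\gamma$ in Bruhat order. You would also need a proof that $\Pi_{f_\gamma}$ is cut out by rank conditions on $H$-stable cyclic intervals. Your list of such intervals is also off: $[1,b]$ with $b<p$ and $[a,n]$ with $a>p+1$ are not $H$-stable. The stable ones are the intervals $[a,b]$ with $a\le p+1$ and $b\ge p$.

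\textbf{The missing idea is to compute $f_V$ at a generic point} of $\dot\gamma\bar B u_0$ rather than at $\dot\gamma u_0$. Since $\overline{Q\backslash Q\dot\gamma\bar B}$ is irreducible and $\Pi_{f_\gamma}$ is closed, this suffices, and no $H$-stability statement is needed.

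At a generic point the last $q$ rows have nested supports $Z(\vv_1)\supseteq\cdots\supseteq Z(\vv_p)$ and $Z(\vv_{p+1})\subseteq\cdots\subseteq Z(\vv_n)$. Consecutive sizes differ by at most one, and the right-hand block is in echelon form. From this pattern:
\begin{itemize}
\item You read off $f(i)=f_\gamma(i)$ for all $i\le p+m$; for a $\clan{+}$, for instance, the covering count is exactly $z(\vv_i)$.
\item The remaining $q$ values are then forced to be a rearrangement of $f_\gamma(n-q+1)<\cdots<f_\gamma(n)$.
\item Hence $f=f_\gamma v_0\ge f_\gamma$, so the generic point lies in $\Pi_{f_\gamma}$.
\end{itemize}
This replaces your first two steps.

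\textbf{Your third step is fine in outline.} The dimension bookkeeping is right, since $\dim S\cdot B/B=\binom{p}{2}+\binom{m}{2}+\binom{q}{2}+pm=\binom{n}{2}-q(n-q)$. Only the inequality $\ell(f_\gamma)\ge\operatorname{codim}$ is needed, though. The paper obtains it by downward induction on the weak order from the maximal preclan, where both sides vanish. It checks, in seven local cases, that each cover $\gamma<s_k*\gamma$ with $v_\gamma=e$ strictly lowers $\ell(f_\gamma)$. That is much lighter than the exact inversion count you propose.
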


\begin{remark}
More generally, one can show that after twisting by $u_0$, every $\bar{B}$-orbit closure over $Gr_q(\mathbb{C}^n)$  is a positroid variety. 
In particular, the $\bar{B}$-orbits over $Gr_q(\mathbb{C}^n)$ are finite. 
The above case corresponds to the case when the orbit is inside $Q\backslash Q\cdot P$.  
\end{remark}

The remainder of this section is devoted to the proof of Theorem \ref{thm:Bbarandpositroid}. 
Note that both sides are irreducible, so it suffices to show the inclusion ``$\subseteq$'' and compare the dimensions. 
That is, the proof has two steps. 
\begin{itemize}
    \item[(1)] 
The first step is to show the inclusion ``$\subseteq$'', i.e., 
$$\text{a generic element of }
Q\backslash Q\dot{\gamma}\bar{B}
\cdot u_0 \text{ is contained in } \Pi_{f_\gamma}.$$
    \item[(2)] 
The second step is to compare the dimensions
$$\dim \Pi_{f_\gamma}\leq \dim Q\backslash Q\dot{\gamma}\bar{B}.$$
\end{itemize}

Recall that 
$$Q\backslash G
\stackrel{\sim}\longrightarrow
\operatorname{Gr}_q(\mathbb{C}^n),\qquad Qg\longmapsto \operatorname{span}(\text{last $q$ rows of $g$}).$$
Assume that the last $q$ rows of a generic element of $\dot{\gamma}\bar{B}\cdot u_0$ is
\begin{equation}\label{eq:genericqrows}
\left[\begin{array}{ccc|cccc}
\mid & \cdots & \mid &
\mid & \cdots & \cdots & \mid\\
\vv_1 & \cdots & \vv_p & \vv_{p+1} & \cdots & \cdots & \vv_n\\
\mid & \cdots & \mid &
\mid & \cdots & \cdots & \mid\\
\end{array}\right].    
\end{equation}
Let us describe \eqref{eq:genericqrows} explicitly as follows. Recall that $\sigma_{\gamma}$ is obtained by assigning $1,\ldots,p$ to $\clan{+}$'s and left-ends of $\gamma$ from left to right, then  $p+1,\ldots,p+m$ to uncolored $\clan{,}$'s of $\gamma$ from left to right, and finally $p+m+1,\ldots,p+m+q$ to $\clan{-}$'s and right-ends of $\gamma$.
By Definition \ref{def:dotgamma}, for $1\leq k\leq q$, the $(p+m+k)$-th row of $\dot{\gamma}$ is
$$
\def\O{\color{lightgray}0}
\def\I{1}
\begin{aligned}\vphantom{\dfrac12}
\left[\begin{array}{ccccc|ccccccc}
\cdots &\O&
\raisebox{0pc}[0pc][0pc]{$\stackrel{i}{\vphantom{\frac12}\I}$}
& \O&\cdots & 
\cdots &\cdots &\O&
\raisebox{0pc}[0pc][0pc]{$\stackrel{j}{\vphantom{\frac12}\I}$}
& \O&\cdots 
\end{array}\right] & \quad 
\begin{matrix}
\text{if there is an $(i,j)$-matching,}\\ 
\text{where $j=\sigma_\gamma^{-1}(p+m+k)$};\hfill
\end{matrix}
\\\vphantom{\dfrac12}
\left[\begin{array}{ccccc|ccccccc}
\cdots &\O&\O
& \O&\cdots & 
\cdots  &\O&
\raisebox{0pc}[0pc][0pc]{$\stackrel{i}{\vphantom{\frac12}\I}$}
& \O&\cdots&\cdots 
\end{array}\right] & \quad 
\begin{matrix}
\text{if the $i$-th node is  $\clan{-}$,}\\ 
\text{where $i=\sigma_\gamma^{-1}(p+m+k)$}. \hfill  
\end{matrix}
\end{aligned}$$
Then for $1\leq k\leq q$, the $(p+m+k)$-th row of a generic element of $\dot{\gamma}\bar{B}\cdot u_0$ is  
$$
\def\O{\color{lightgray}0}
\def\I{*}
\def\X{*}
\begin{aligned}\vphantom{\dfrac12}
\left[\begin{array}{ccccc|ccccccc}
\cdots &\X&
\raisebox{0pc}[0pc][0pc]{$\stackrel{\!\!\!\!\!\!\!\!\!p+1-i\!\!\!\!\!\!\!\!}{\vphantom{\frac12}\I}$}
& \O&\cdots & 
\cdots &\cdots &\O&
\raisebox{0pc}[0pc][0pc]{$\stackrel{j}{\vphantom{\frac12}\I}$}
& * &\cdots 
\end{array}\right] & \quad 
\begin{matrix}
\text{if there is an $(i,j)$-matching,}\\ 
\text{where $j=\sigma_\gamma^{-1}(p+m+k)$};\hfill
\end{matrix}
\\\vphantom{\dfrac12}
\left[\begin{array}{ccccc|ccccccc}
\cdots &\O&\O
& \O&\cdots & 
\cdots  &\O&
\raisebox{0pc}[0pc][0pc]{$\stackrel{i}{\vphantom{\frac12}\I}$}
& \X &\cdots&\cdots 
\end{array}\right] & \quad 
\begin{matrix}
\text{if the $i$-th node is  $\clan{-}$,}\\ 
\text{where $i=\sigma_\gamma^{-1}(p+m+k)$}.\hfill
\end{matrix}
\end{aligned}$$
Here all $*$'s are independent. 
Moreover, the lower right $q\times (m+q)$ block of $\dot{\gamma}\bar{B}\cdot u_0$ is a full rank echelon form. 
For example, for the $(4,3,4)$-preclan in \eqref{eq:Egvgamma=e}, we have 
\begin{equation*}
\def\m#1{\makebox[1.2pc]{$#1$}}
\begin{aligned}
\gamma =&\,  \clan{586+,.-,..,}\\
\sigma_\gamma = & \,\m{1}\m{2}\m{3}\m{4}\m{5}\m{8}\m{9}\m{6}\m{10}\m{11}\m{7}.\\
\end{aligned}
\end{equation*}
The last $q=4$ rows of $\dot{\gamma}$ are
$$\def\O{{\color{lightgray}0}}
\def\I{1}
\def\ind#1#2{\raisebox{0pc}[0pc][0pc]{%
    $\stackrel{#1}{\vphantom{\frac12}#2}$\vphantom{#2}}}
\begin{matrix}\\[-1ex]
\left[\begin{array}{cccc|ccccccc}
\ind{1}{\I}&\ind{2}{\O}&\ind{3}{\O}&\ind{4}{\O}&\ind{5}{\O}&\ind{6}{\I}&\ind{7}{\O}&\ind{8}{\O}&\ind{9}{\O}&\ind{10}{\O}&\ind{11}{\O}\\
\O&\O&\O&\O&\O&\O&\I&\O&\O&\O&\O\\
\O&\O&\I&\O&\O&\O&\O&\O&\I&\O&\O\\
\O&\I&\O&\O&\O&\O&\O&\O&\O&\I&\O\\
\end{array}\right].
\end{matrix}$$
Then the last $q$ rows of $\dot{\gamma}\bar{B}$ are
$$
\def\O{{\color{lightgray}0}}
\def\I{1}\def\X{*}
\def\ind#1#2{\raisebox{0pc}[0pc][0pc]{%
    $\stackrel{#1}{\vphantom{\frac12}#2}$\vphantom{#2}}}
\begin{matrix}\\[-1ex]
\left[\begin{array}{cccc|ccccccc}
\ind{1}{\X}&\ind{2}\X&\ind{3}{\X}&\ind{4}{\X}&\ind{5}{\O}&\ind{6}{\X}&\ind{7}{\X}&\ind{8}{\X}&\ind{9}{\X}&\ind{10}{\X}&\ind{11}{\X}\\
\O&\O&\O&\O&\O&\O&\X&\X&\X&\X&\X\\
\O&\O&\X&\X&\O&\O&\O&\O&\X&\X&\X\\
\O&\X&\X&\X&\O&\O&\O&\O&\O&\X&\X\\
\end{array}\right].
\end{matrix}$$
Thus the last $q$ rows of a generic element of $\dot{\gamma}\bar{B}\cdot u_0$ are 
\begin{equation}\label{eq:runningPosEg}
\def\O{{\color{lightgray}0}}
\def\I{1}\def\X{*}
\def\ind#1#2{\raisebox{0pc}[0pc][0pc]{%
    $\stackrel{#1}{\vphantom{\frac12}#2}$\vphantom{#2}}}
\begin{matrix}\\[-1ex]
\left[\begin{array}{cccc|ccccccc}
\ind{1}{\X}&\ind{2}\X&\ind{3}{\X}&\ind{4}{\X}&\ind{5}{\O}&\ind{6}{\X}&\ind{7}{\X}&\ind{8}{\X}&\ind{9}{\X}&\ind{10}{\X}&\ind{11}{\X}\\
\O&\O&\O&\O&\O&\O&\X&\X&\X&\X&\X\\
\X&\X&\O&\O&\O&\O&\O&\O&\X&\X&\X\\
\X&\X&\X&\O&\O&\O&\O&\O&\O&\X&\X\\
\end{array}\right].
\end{matrix}
\end{equation}

\subsection{The inclusion}

Let us associate a bounded affine permutation $f$  to \eqref{eq:genericqrows} by 
$$f(i):=\min\{j:\vv_i\in \operatorname{span}(\vv_{i+1},\ldots,\vv_j)\}.$$
We need to show $\Pi_f^\circ\subseteq \Pi_{f_\gamma}$, i.e., $f\geq f_\gamma$ under the affine Bruhat order.

For each column $\vv_i$ in \eqref{eq:genericqrows}, let
$Z(\vv_i)$ denote the set of nonzero row indices of $\vv_i$, and let $z(\vv_i)$ be the cardinality of $Z(\vv_i)$. For $1\le i\le p$, the $i$-th column of $\dot{\gamma}\bar{B}$ contains the maximal number of generic elements among the  first $i$ columns of $\dot{\gamma}\bar{B}$. For  $p+1\le i\le n$, the $i$-th column of $\dot{\gamma}\bar{B}$ contains the maximal number of generic elements among the $(p+1)$-st to $i$-th columns of $\dot{\gamma}\bar{B}$. After multiplying $u_0$ on the right of $\dot{\gamma}\bar{B}$, we find that 
\begin{align}
&Z(\vv_1)\supseteq Z(\vv_2)\supseteq\cdots\supseteq Z(\vv_{p}),\label{eq:containvi}\\[1ex]
&z(\vv_{j})-z(\vv_{j+1})\in \{0,1\},\  \text{ for any } 1\le j\le p-1,\label{eq:differ1}
\end{align}
and
\begin{align}
& Z(\vv_{p+1})\subseteq Z(\vv_{p+2})\subseteq\cdots\subseteq Z(\vv_{n}),\label{eq:containvii} \\[1ex]
&z(\vv_{j+1})-z(\vv_{j})\in \{0,1\},\  \text{ for any } p+1\le j\le n-1. \label{eq:differ2}
\end{align}
Moreover, $[\vv_{p+1}\,\ldots\,\vv_n]$ is an echelon form.

\paragraph{\bf Case 1}
The $(p+1-i)$-th node is $\clan{+}$ for some  $1\leq i\leq p$. 
By definition, $f_\gamma(i)=i+k$, where $k=\texttt{\#}\{\text{matchings covering this $\clan{+}$}\}$. 
 Note that $k$ is the number of $1$'s before the $(p+1-i)$-th column in the last $q$ rows of $\dot{\gamma}$.
Thus the $(p+1-i)$-th column of $\dot{\gamma}\bar{B}$ contains $k$ generic elements, i.e., $k=z(\vv_i)$.  By \eqref{eq:Lemmafgammaisw-d}, we have $i+k\leq p$. 
Therefore,  by \eqref{eq:containvi}, the $k+1$ vectors $\vv_i,\ldots,\vv_{i+k}$ are linearly dependent. 
Moreover, up to column operations and a permutation of rows, $[\vv_i\,\,\cdots\,\,\vv_{i+k-1}]$ contains a $k\times k$ upper triangular matrix.
Thus the $k$ vectors $\vv_i,\ldots,\vv_{i+k-1}$ are generically linearly independent. 
This implies $f(i)=i+k$. 

For example, in the above running example \eqref{eq:runningPosEg}, we have $f(1)=4$ since
$$\def\O{\color{lightgray}0}\def\X{*}
\def\ind#1#2{\raisebox{0pc}[0pc][0pc]{%
    $\stackrel{#1}{\vphantom{\frac12}#2}$\vphantom{#2}}}
\begin{matrix}\\[-1ex]
\left[\begin{array}{c}
\ind{1}{\X}\\\O\\\X\\\,\X\,
\end{array}\right]
\notin
\operatorname{span}
\left[\begin{array}{cc}
\ind{2}{\X}&\ind{3}\X\\
\O&\O\\
\X&\O\\
\X&\X\\
\end{array}\right],\qquad 
\left[\begin{array}{c}
\ind{1}{\X}\\\O\\\X\\\X
\end{array}\right]
\in
\operatorname{span}
\left[\begin{array}{ccc}
\ind{2}{\X}&\ind{3}{\X}&\ind{4}{\X}\\
\O&\O&\O\\
\X&\O&\O\\
\X&\X&\O\\
\end{array}\right].
\end{matrix}$$

\paragraph{\bf Case 2}
The $(p+1-i)$-th node is matched with the $j$-th node for some $1\leq i\leq p<j$. By definition, $f_\gamma(i)=j$. 
Then $\sigma_\gamma(j)$ is the row index of  $\dot{\gamma}$ with two $1$'s at the $(p+1-i)$-th and $j$-th column. Let $k=\sigma_\gamma(j)-p-m$.  
Since the $k$-th row of 
$\vv_{i+1},\ldots,\vv_{j-1}$ are all $0$'s, we have $f(i)\geq j$. 
We claim that 
$$\vv_i\in \operatorname{span}(\vv_{i+1},\ldots,\vv_{j}).$$
Since $z(\vv_p)\in \{0,1\}$ and $Z(\vv_{i})\setminus Z(\vv_{i+1})=\{k\}$, by \eqref{eq:containvi} and \eqref{eq:differ1}, it is easy to see  
$$\mathbf{e}_h \in \operatorname{span}(\vv_{i+1},\ldots,\vv_p), \text{ for any } h\in Z(\vv_{i})\ \text{ and }\ h\neq k . $$
Since $(p+1-i,j)$ is a matching, by the definition of $\dot{\gamma}$, we have $k\in Z(\vv_j)$. Then by \eqref{eq:containvii} and \eqref{eq:differ2},  
$$\mathbf{e}_{k}\in \operatorname{span}(\vv_{p+1},\dots,\vv_j).$$
This proves $f(i)=j$. 

For example, in the above running example \eqref{eq:runningPosEg}, we have $f(2)=9$ since
$$\def\O{{\color{lightgray}0}}
\def\I{1}\def\X{*}
\def\ind#1#2{\raisebox{0pc}[0pc][0pc]{%
    $\stackrel{#1}{\vphantom{\frac12}#2}$\vphantom{#2}}}
\begin{matrix}\\[-1ex]
\left[\begin{array}{c}
\ind{2}{\X}\\\O\\\X\\\X
\end{array}\right]
\notin
\operatorname{span}
\left[\begin{array}{cc|ccccc}
\ind{3}{\X}&\ind{4}{\X}&\ind{5}{\O}&\ind{6}{\X}&\ind{7}{\X}&\ind{8}{\X}\\
\O&\O&\O&\O&\X&\X\\
\O&\O&\O&\O&\O&\O\\
\X&\O&\O&\O&\O&\O\\
\end{array}\right],\qquad 
\left[\begin{array}{c}
\ind{2}{\X}\\\O\\\X\\\X
\end{array}\right]
\in
\operatorname{span}
\left[\begin{array}{cc|ccccc}
\ind{3}{\X}&\ind{4}{\X}&\ind{5}{\O}&\ind{6}{\X}&\ind{7}{\X}&\ind{8}{\X}&\ind{9}{\X}\\
\O&\O&\O&\O&\X&\X&\X\\
\O&\O&\O&\O&\O&\O&\X\\
\X&\O&\O&\O&\O&\O&\O\\
\end{array}\right].
\end{matrix}$$

\paragraph{\bf Case 3}
Let $p+1\leq i\leq p+m$. 
If $\vv_i=0$, then $i=j$ and obviously $f(i)=i$. 
Let us assume $\vv_i\neq 0$. 
By definition, $f_\gamma(i)=j$ where $j$ is the index for the $(i-p)$-th uncolored node $\clan{,}$. 
Since $j-i$ is the number of right-ends and $\clan{-}$'s before this uncolored node $\clan{,}$, each corresponds to a $1$ in the last $q$ rows of $\dot{\gamma}$, so $Z(\vv_j)=[1,j-i]$.

Note that $Z(\vv_{p+1}),\ldots,Z(\vv_j)$ are all contained in $[1,j-i]$. 
Therefore, the $j-i+1$ vectors $\vv_i,\ldots,\vv_j$ are linearly dependent. 
By \eqref{eq:differ2}, up to column operations, $[\vv_i\,\,\cdots\,\,\vv_j]$ contains a $(j-i)\times (j-i)$ upper triangular matrix. 
Thus the $j-i$ vectors $\vv_i,\ldots,\vv_{j-1}$ are generically linearly independent. 
This proves $f(i)=j$. 

For example, in the above running example \eqref{eq:runningPosEg}, we have $f(6)=8$ since
$$\def\O{\color{lightgray}0}\def\X{*}
\def\ind#1#2{\raisebox{0pc}[0pc][0pc]{%
    $\stackrel{#1}{\vphantom{\frac12}#2}$\vphantom{#2}}}
\begin{matrix}\\[-1ex]
\left[\begin{array}{c}
\ind{6}{\X}\\\O\\\O\\\O
\end{array}\right]
\notin
\operatorname{span}
\left[\begin{array}{cc}
\ind{7}{\X}\\
\X\\
\O\\
\O\\
\end{array}\right],\qquad 
\left[\begin{array}{c}
\ind{6}{\X}\\\O\\\O\\\O
\end{array}\right]
\in
\operatorname{span}
\left[\begin{array}{cc}
\ind{7}{\X}&\ind{8}{\X}\\
\X&\X\\
\O&\O\\
\O&\O\\
\end{array}\right].
\end{matrix}$$

\paragraph{\bf Case 4} We have shown that for $1\leq i\leq n-q$, we have $f(i)=f_\gamma(i)\leq n.$
By Definition \ref{fgamma} of $f_\gamma$, we have
\begin{align*}
\{f(n-q+1),\ldots,f(n)\}
& = \{n+k:k\in [n]\setminus \{f(1),\ldots,f(n-q)\}\}\\
& = \{n+k:k\in [n]\setminus \{f_\gamma(1),\ldots,f_\gamma(n-q)\}\}\\
& =[n+1,n+q_1]\cup \{n+\text{indices of $\clan{-}$'s}\}\\
& = \{f_\gamma(n-q+1)<\cdots<f_\gamma(n)\}. 
\end{align*}
In particular, we have a reduced decomposition $f=f_\gamma\cdot v_0$ for $v_0\in S_n$ permuting only elements in $\{n-q+1,\ldots,n\}$.
As a result, $f\geq f_\gamma$ under the affine Bruhat order. 

\begin{remark}
We actually have $f=f_\gamma$, which follows from the dimension comparison in the next subsection. 
\end{remark}

\subsection{Dimension comparison}
Assume $S\dot{\gamma}B/B\cap N\cdot B/B$ is nonempty.
We need to show 
$$\dim \Pi_{f_\gamma} \leq \dim Q\backslash Q\dot{\gamma}\bar{B}\cdot u_0 
= \dim Q\backslash Q\dot{\gamma}\bar{B}.$$
By transversality in the proof of Lemma \ref{lemma:diag2}, 
Lemma \ref{lemma:diag3} and dimension formula  Theorem \ref{thm:dim=length+}, we have
$$
\begin{aligned}
\operatorname{codim} Q\backslash Q\dot{\gamma}\bar{B} & = 
\operatorname{codim} S\dot{\gamma}B/B
= 
\binom{n}{2}-\binom{p}{2}-\binom{q}{2}-\binom{m}{2}-\ell(\gamma).
\end{aligned}$$
Note that 
$$\operatorname{codim} \Pi_{f_\gamma} = \ell(f_\gamma).$$
Thus it is enough to show the following  statement. 
For any $(p,m,q)$-preclan $\gamma$ with $v_\gamma=e$, we have
\begin{equation}\label{eq:combStep2}
\ell(f_\gamma)\geq \binom{n}{2}-\binom{p}{2}-\binom{q}{2}-\binom{m}{2}-\ell(\gamma). 
\end{equation}

Let us first consider the case $\gamma=\gamma_0$ is the $(p,m,q)$-preclan of maximal length, i.e. 
$$\gamma_0 = \text{a rainbow $(p,q)$-clan followed by $m$ uncolored $\clan{,}$'s}. $$
It is easy to check that $f_{\gamma_0}(i)=i+q$, and both sides of \eqref{eq:combStep2} are $0$. 

Note that if $v_\gamma =e$, then $v_{s_k*\gamma}=e$ for any $1\leq k\leq n-1$. 
By Lemma \ref{lem:preclanisconn}, it suffices to show 
$$
s_k*\gamma> \gamma\Longrightarrow 
\ell(f_{s_k*\gamma})<\ell(f_{\gamma}). 
$$
This follows by examining one-by-one the cases  in  
\eqref{eq:clanweak1}, 
\eqref{eq:clanweak2}, 
\eqref{eq:clanweak3} such that $v_\gamma=e$. There are 7 cases.  
For  simplicity,  denote 
$f=f_\gamma$ and $f'=f_{s_k\ast\gamma}$. 

Case 1. 
$\def\m#1{\makebox[1.2pc]{$#1$}}
\begin{array}{lll}
     \clan{\dots+-\dots}&\mapsto &\clan{\dots1.\dots}\\
    \m{}\m{k} & &\m{}\m{k}
\end{array}.$
Since $v_{\gamma}=e$, the two nodes $\clan{+}\clan{-}$ are labeled $p$ and $p+1$, and hence $k=p$ in this case.
Recall that $q_1$ denotes the number of matchings of $\gamma$. Then $\gamma'$ has $q_1+1$ matchings. By the definitions of $f$ and $f'$, node $p$ gives $f(1)=1+q_1$ and $f'(1)=p+1$.   Since the $(p+1)$-st node of $\gamma$ is the first $\clan{-}$, we have $f(b)=n+p+1$, where $b=p+m+q_1+1$. On the other hand, $f'(b)$ is determined by the last right endpoint of $\gamma'$, thus $f'(b)=n+q_1+1\le n+p$. Therefore, $f(i)=f'(i)$ except $i\in \{1,b\}\bmod n$, more precisely, $f'$ is obtained from $f$ by swapping $f(1)=q_1+1$ and $f(b-n)=p+1$.
Thus $\ell(f)>\ell(f')$.

Case 2. $\def\m#1{\makebox[1.2pc]{$#1$}}
\begin{array}{lll}
     \clan{\dots+2\dots.}&\mapsto&
\clan{\dots3+\dots.}  \\
  \m{}\m{k}\m{}\m{}\m{j}   & & \m{}\m{k}\m{}\m{}\m{j}
\end{array}.$
In this case $k<p$, and assume that $(k+1,j)$ is a matching of $\gamma$. Let $k'=p+1-k$, $c=\#\{\text{matchings covering the $k$-th node of $\gamma$}\}$.
So by definition of $f_{\gamma}$, we have 
$f(k'-1)=j, f(k')=k'+c.$
After swapping nodes $k$ and $k+1$, we have
$f'(k'-1)=k'-1+c+1=k'+c, f'(k')=j.$
That is, we have $f'=fs_{k'-1}$ with $f(k')\leq p<f(k'-1)$. 

Case 3. $\def\m#1{\makebox[1.2pc]{$#1$}}
\begin{array}{lll}
     \clan{2\dots.-\dots}&\mapsto&\clan{3\dots-.\dots}\\
    \m{}\m{}\m{k} && \m{}\m{}\m{k}
\end{array}.$
In this case $k>p$, let $i=f^{-1}(k), j=f^{-1}(n+k+1)$, where $1\le i,j\le n$.
The swapping of nodes $k$ and $k+1$ gives 
$f'(i)=k+1$ and $f'(j)=n+k$,
thus we have $f'=s_kf$ with $f^{-1}(k+1)\leq 0<f^{-1}(k)$. 

Case 4. $\def\m#1{\makebox[1.2pc]{$#1$}}\begin{array}{lll}
\clan{34\dots.\dots.}&\mapsto&\clan{52\dots.\dots.}  \\
     \m{k}& &\m{k} 
\end{array}.$
In this case $k<p$, let $k'=p+1-k$. We have $f'=fs_{k'-1}$ with $f(k'-1)>f(k')$.

Case 5. $\def\m#1{\makebox[1.2pc]{$#1$}}
\begin{array}{lll}
\clan{4\dots3\dots..}&\mapsto&\clan{5\dots2\dots..}  \\
  \m{}\m{}\m{}\m{}\m{k} & &\m{}\m{}\m{}\m{}\m{k}    
\end{array}.
$
In this case $k>p$, we have $f'=s_{k}f$ with $f^{-1}(k+1)<f^{-1}(k)$.

Case 6. $\def\m#1{\makebox[1.2pc]{$#1$}}
\begin{array}{lll}
   \clan{\dots,-\dots}&\mapsto&\clan{\dots-,\dots} \\
     \m{}\m{k} &&\m{}\m{k}
\end{array}.
$
In this case $k>p$, we have $f'=s_{k}f$ with $f^{-1}(k+1)\leq 0<f^{-1}(k)$.

Case 7. $\def\m#1{\makebox[1.2pc]{$#1$}}
\begin{array}{lll}
\clan{3\dots,.}&
\mapsto&
\clan{2\dots.,}\\
   \m{}\m{}\m{k} & &\m{}\m{}\m{k}
\end{array}.
$
In this case $k>p$, we have $f'=s_{k}f$ with $f^{-1}(k+1)\leq p<f^{-1}(k)$.

\section{Schubert Expansion}\label{Schubert Expansion}

In this section, we complete the proof of the theorems in the Introduction. 

By \cite{KLS,LLS18}, the equivariant fundamental class of positroid varieties is represented by double affine Stanley symmetric polynomials. 
Thus we have 
$$[\Pi_f]_T|_{Q\cdot 1} = 
\widetilde{F}_f|_{x_1=t_{n-q+1},\ldots,x_q=t_n}
=\widetilde{F}_{f}(t_{n-q+1},\ldots,t_n;t_1,t_2,\ldots,t_n).$$
Combining Proposition \ref{prop:eqSchubertexp}, 
Theorem \ref{thm:globalization} and Theorem \ref{thm:Bbarandpositroid}, we get the following theorem. 

\begin{theorem}\label{thm:Sorbitexpansion}
For any $(p,m,q)$-preclan $\gamma$, assume that 
$$\Upsilon_\gamma(x;t) = \sum_{w\in S_n}
d_{w,\gamma}(t)\cdot \mathfrak{S}_w(x;t).$$
Then we have
\begin{equation*}\label{eq:affcoeff}
d_{w,\gamma}(t)=
\begin{cases}
\widetilde{F}_{f_{w*\gamma}}(-t_{n-q+1},\ldots,-t_n;-t_p,\ldots,-t_1,-t_{p+1},\ldots,-t_n),\\\qquad  \text{if } \ell(w*\gamma)=\ell(w)+\ell(\gamma) \text{ and } v_{w*\gamma}=e,\\[1ex]
0, \quad \text{otherwise}.
\end{cases}
\end{equation*}
\end{theorem}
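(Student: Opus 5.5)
We assemble the chain of identifications already established. By Proposition \ref{prop:eqSchubertexp}, $d_{w,\gamma}(t)=\Upsilon_{w*\gamma}(t;t)$ when $\ell(w*\gamma)=\ell(w)+\ell(\gamma)$, and $d_{w,\gamma}(t)=0$ otherwise. It therefore suffices to compute the localization $\Upsilon_{\gamma'}(t;t)$ for an arbitrary $(p,m,q)$-preclan $\gamma'=w*\gamma$. By Theorem \ref{thm:globalization}, this localization vanishes unless $v_{\gamma'}=e$. In that case,
$$\Upsilon_{\gamma'}(t;t)|_{t_i\mapsto -t_i}=[\overline{Q\backslash Q\dot{\gamma'}\bar B}]_T|_{Q\cdot 1}.$$
This already gives the vanishing case and reduces everything to one localization on $Gr_q(\mathbb{C}^n)$.

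Next we translate by $u_0$. By Theorem \ref{thm:Bbarandpositroid}, $\overline{Q\backslash Q\dot{\gamma'}\bar B}=\Pi_{f_{\gamma'}}\cdot u_0^{-1}$ with $u_0=u_0^{-1}$. Right multiplication by the permutation matrix $u_0$ is an automorphism of $Q\backslash G\cong Gr_q(\mathbb{C}^n)$. It is equivariant with respect to $T$ twisted by conjugation by $u_0$, which reverses $t_1,\ldots,t_p$ and fixes the rest. It also fixes the point $Q\cdot 1$, since $u_0\in Q$ ($u_0$ is block diagonal with its nontrivial block inside the upper-left $GL_{p+m}$). Hence
$$[\overline{Q\backslash Q\dot{\gamma'}\bar B}]_T|_{Q\cdot 1}=[\Pi_{f_{\gamma'}}]_T|_{Q\cdot 1}\big|_{(t_1,\ldots,t_p)\mapsto(t_p,\ldots,t_1)}.$$
Care is needed about whether the twist acts on the variables by $u_0$ or by $u_0^{-1}$, but the two coincide since $u_0$ is an involution.

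Then we apply \cite{KLS,LLS18}: $[\Pi_f]_T|_{Q\cdot 1}=\widetilde F_f(t_{n-q+1},\ldots,t_n;t_1,\ldots,t_n)$. Here $Q\cdot 1$ corresponds to the span of the last $q$ coordinate vectors, so the $x$-variables specialize to the last $q$ weights. We then compose the reversal of $t_1,\ldots,t_p$ with the global sign change $t_i\mapsto -t_i$ from Lemma \ref{lemma:diag1}. The $x$-slots become $-t_{n-q+1},\ldots,-t_n$, which are untouched by the reversal because $n-q+1>p$. The $t$-slots become $-t_p,\ldots,-t_1,-t_{p+1},\ldots,-t_n$. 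This is exactly the claimed formula.

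The main obstacle is bookkeeping of conventions rather than new geometry. First, one must check the sign and order of variables in the right $T$-action on $Q\backslash G$ against the convention for which $\widetilde F_f$ represents $[\Pi_f]_T$ (column torus acting on row spans). Second, one must confirm that the $u_0$-twist reverses the first $p$ $t$-variables and not the $x$-variables. I would verify both on a small case, for instance $p=q=m=1$ using Example \ref{eg:Upsilon}, where $\Upsilon_\gamma(t;t)$ can be computed directly and compared with $\widetilde F_{f_\gamma}$.
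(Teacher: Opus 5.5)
Your proposal is correct and follows the paper's argument. The paper obtains the theorem by combining Proposition~\ref{prop:eqSchubertexp}, Theorem~\ref{thm:globalization}, Theorem~\ref{thm:Bbarandpositroid} and the positroid representative $[\Pi_f]_T|_{Q\cdot 1}=\widetilde{F}_f(t_{n-q+1},\ldots,t_n;t_1,\ldots,t_n)$, exactly as you do. Your explicit handling of the $u_0$-twist (it fixes $Q\cdot 1$ because $u_0\in Q$, and it reverses $t_1,\ldots,t_p$ without touching the $x$-slots) spells out a step the paper leaves implicit when it produces the ordering $-t_p,\ldots,-t_1,-t_{p+1},\ldots,-t_n$.
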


Combining Corollary \ref{coro:RichUpsilon} and Theorem \ref{prop:RichApp}, we are able to compute the double Schubert expansion of
$
\mathfrak{S}_v(x;t)
\mathfrak{S}_u(x;\cev{t})$
for $v,u$ satisfying any of the conditions of Theorem \ref{thm:equivuandv}. 
However, we need the expansion of 
$\mathfrak{S}_v(x;t)
\mathfrak{S}_u(x;t)$, so there  are still some work  to do. 

\subsection{Triple Schubert Structure Constants}
Now we are able to prove our main theorem. 

\begin{theorem}\label{th:triplecoeffi}
For a Richardson $(p,m,q)$-preclan $\gamma$, we have 
$$
\mathfrak{S}_{v_\gamma}(x;t)
\mathfrak{S}_{u_\gamma}(x;y)
=\sum_{w\in S_n} c_{v_\gamma,u_\gamma}^w(t;y)\cdot \mathfrak{S}_w(x;t),$$
where
$$c_{v_\gamma,u_\gamma}^w(t;y)= 
\begin{cases}
\widetilde{F}_{f_{w*\gamma}}(-y_q,\ldots,-y_1;-t_p,\ldots,-t_1,-t_{p+1},\ldots,-t_n), \\\qquad \text{if } \ell(w*\gamma)=\ell(w)+\ell(\gamma) \ \text{ and } v_{w*\gamma}=e, \\[1ex]
0, \quad \text{otherwise}. 
\end{cases}$$
\end{theorem}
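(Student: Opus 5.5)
The plan is to derive the triple statement from Theorem \ref{thm:Sorbitexpansion} combined with Corollary \ref{coro:RichUpsilon}, by a change of the secondary variables in the second factor. By Corollary \ref{coro:RichUpsilon} we have $\Upsilon_\gamma(x;t)=\mathfrak{S}_{v_\gamma}(x;t)\mathfrak{S}_{u_\gamma}(x;\cev{t})$. Theorem \ref{thm:Sorbitexpansion} then gives the expansion of this product with coefficients
$$\widetilde{F}_{f_{w*\gamma}}(-t_{n-q+1},\ldots,-t_n;-t_p,\ldots,-t_1,-t_{p+1},\ldots,-t_n).$$
The essential observation is the following: since $u_\gamma$ is $q$-inverse Grassmannian, all its descents occur at positions where $u_\gamma^{-1}$ fails to increase. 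Equivalently, the code of $u_\gamma$ is supported on the first $q$ entries of $u_\gamma^{-1}$. So $\mathfrak{S}_{u_\gamma}(x;y)$ involves only $y_1,\ldots,y_q$.

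To see this, I would use the pipe dream model. The transpose symmetry $\mathfrak{S}_w(x;y)=\mathfrak{S}_{w^{-1}}(-y;-x)$ turns the statement into the assertion that $\mathfrak{S}_{u^{-1}}$, a Grassmannian Schubert polynomial with descent at $q$, depends only on its first $q$ variables. That is standard. Thus $\mathfrak{S}_{u_\gamma}(x;\cev{t})$ depends on $t$ only through $(t_n,\ldots,t_{n-q+1})$, substituted in the roles of $y_1,\ldots,y_q$.

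With this in hand, I would introduce independent variables $y$ and apply the substitution $t_{n+1-i}\mapsto y_i$ for $1\le i\le q$ to the second factor only. The difficulty is that the substitution cannot be applied to the identity of Theorem \ref{thm:Sorbitexpansion} as it stands, because $t_{n-q+1},\ldots,t_n$ also appear in $\mathfrak{S}_{v_\gamma}(x;t)$ and in $\mathfrak{S}_w(x;t)$. To decouple them, I would enlarge $n$. Appending $N-n$ uncolored nodes at the end of $\gamma$ (with $m\mapsto m+N-n$) keeps the preclan Richardson and leaves $v_\gamma$ and $u_\gamma$ unchanged, as in the proof of Corollary \ref{coro:RichUpsilon0}. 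In $S_N$, the variables $t_{N-q+1},\ldots,t_N$ appearing in $\mathfrak{S}_{u_\gamma}(x;\cev{t})$ are then disjoint from $t_1,\ldots,t_{n}$. These are the only variables that can occur in $\mathfrak{S}_{v_\gamma}(x;t)$ and in $\mathfrak{S}_w(x;t)$, once we check that only $w\in S_n$ contribute (the argument of Corollary \ref{coro:RichUpsilon0}, using that $s_k$ for $k\ge n$ act trivially on the appended tail). We may then treat $t_{N-q+1},\ldots,t_N$ as formal variables and rename them $y_q,\ldots,y_1$.

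On the coefficient side, the first $q$ arguments $-t_{N-q+1},\ldots,-t_N$ of $\widetilde{F}$ become $-y_q,\ldots,-y_1$. The second set of arguments also contains $-t_{N-q+1},\ldots,-t_N$. I expect this to be the main obstacle: one must show that for $w\in S_n$, the polynomial $\widetilde{F}_{f_{w*\tilde\gamma}}$ only involves the column variables $t_j$ with $j\le n$, or at least that those columns carry no crossing tiles. I would prove this from Definition \ref{fgamma}. For the appended uncolored nodes, $f(p+i)$ equals the node index, giving fixed points $f(i)=i$ for the tail positions, and pipes at fixed points force empty tiles in those columns. The last $q$ values shift by $N$ consistently, so $f_{w*\tilde\gamma}$ is essentially $f_{w*\gamma}$ padded with fixed points. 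Finally, I would check that padding yields the same $f$ as in the statement, with the variables $-t_p,\ldots,-t_1,-t_{p+1},\ldots,-t_n$. Since $y$ is now free, the resulting identity is exactly the claimed expansion, with the vanishing conditions inherited verbatim from Theorem \ref{thm:Sorbitexpansion}.
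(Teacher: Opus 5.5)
Your outline matches the paper's proof. You append $N-n$ uncolored nodes, read $t_{n+1},\ldots,t_N$ as $y$-variables, use the argument of Corollary~\ref{coro:RichUpsilon0} to restrict to $w\in S_n$, and then show that $\widetilde{F}_{f_{w*\tilde\gamma}}$ has no crossing tiles in the new columns. Your remark that $\mathfrak{S}_{u_\gamma}(x;y)$ only involves $y_1,\ldots,y_q$ is correct and fits this.

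\textbf{The gap.} It is in the last step, which is the actual content of Lemma~\ref{lemma7.3}; the mechanism you give for it is wrong. By Definition~\ref{fgamma}, $f(p+i)$ is the \emph{index} of the $i$-th uncolored node. The appended nodes have indices $n+1,\ldots,N$ but positions $p+m+1,p+m+2,\ldots$. Hence $f_{\tilde\gamma}(p+m+j)=n+j=(p+m+j)+q$, which is not a fixed point. For one appended node, $f_{\gamma'}(i)=f_\gamma(i)$ for $i\le p+m$, $f_{\gamma'}(p+m+1)=n+1$, and $f_{\gamma'}(i)=f_\gamma(i-1)+1$ for the last $q$ positions.

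Worse, in this pipe dream model a fixed point $f(i)=i$ forces pipe $i$ straight up through $q$ crossing tiles in column $i$. That contributes $\prod_a(x_a-t_i)$, which is exactly the dependence on the new variables you are trying to rule out. So your step would fail even on its own terms.

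\textbf{The missing idea.} The value $i+q$ of the inserted pipe does not by itself force its tiles to be elbows, since its path could still cross other pipes. You need the global monotonicity.
In $f_{\gamma'}$, every pipe starting at a position $\le p+m$ (including shifted periods) ends at a column $\le n$. Meanwhile, the block consisting of the inserted pipe and the last $q$ pipes has increasing values $n+1=f_{\gamma'}(p+m+1)<\cdots<f_{\gamma'}(n+1)$.
Hence no pipe crosses this block, and no two pipes of the block cross each other. So the triangular region the block sweeps, which contains every tile of column $n+1$, consists only of elbow tiles.
Deleting the inserted diagonal strip is then a weight-preserving bijection onto the pipe dreams of $f_\gamma$, and it keeps every crossing in its original column.
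Iterate this one node at a time, using that padding commutes with $s_k*$ for $k<n$. This shows that $\widetilde{F}_{f_{w*\tilde\gamma}}$ involves only $t_1,\ldots,t_n$ among the column variables. With this replacement, your argument goes through.
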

\begin{proof}
The main trick  is to view the $y$-variables as a part of $t$-variables. 
Let us pick $N\gg0$.  Denote
$$T=(t_1,\ldots,t_{N}) = (t_1,t_2,\ldots,t_{n},y_{N-n},\ldots,y_2,y_1),$$
that is,  view $y_1,\ldots,y_{N-n}$ as $t_{N},\ldots,t_{n+1}$. 
Let $\gamma'$ be the $(p,M,q)$-preclan $(M=N-n+m)$ obtained by adding $N-n$ many uncolored nodes $\clan{,}$'s at the end of $\gamma$. 
Now, by Corollary \ref{coro:RichUpsilon} and Theorem \ref{thm:Sorbitexpansion}, we have 
$$
\begin{aligned}
\mathfrak{S}_{v_\gamma}(x;t)\cdot \mathfrak{S}_{u_\gamma}(x;y) 
& 
= \mathfrak{S}_{v_\gamma}(x;T)\cdot \mathfrak{S}_{u_\gamma}(x;\cev{T}) 
= \Upsilon_{\gamma'}(x;T)= \sum_{w\in S_N}d_{w,\gamma'}(T)\cdot \mathfrak{S}_w(x;T). 
\end{aligned}$$
Similar to the proof of Corollary \ref{coro:RichUpsilon0}, the sum actually only involves $w\in S_n$. 
In particular, $\mathfrak{S}_w(x;T)$ only involves $t_1,\ldots,t_n$ and thus it equals $\mathfrak{S}_w(x;t)$. 
Since we view $t_{N-q+1}=y_{q},\ldots,t_N=y_{1}$, by Theorem \ref{thm:Sorbitexpansion}, we have 
\begin{equation}\label{eq:affs}
d_{w,\gamma'}(T) = \begin{cases}
\widetilde{F}_{f_{w*\gamma'}}(-y_q,\ldots,-y_1;-t_p,\ldots,-t_1,-t_{p+1},\ldots,-t_n,-y_{N-n},\ldots,-y_1),\\
\qquad\quad \text{if } \ell(w*\gamma)=\ell(w)+\ell(\gamma)\text{ and } v_{w*\gamma}=e, \\[1ex]
0, \qquad \text{otherwise}. 
\end{cases}    
\end{equation}

It remains to show the double affine Stanley symmetric polynomial
$\widetilde{F}_{f_{w*\gamma'}}$ in \eqref{eq:affs}  only involves $t_1,\ldots,t_n$. 
As pointed out in the proof of Corollary \ref{coro:RichUpsilon0}, adding uncolored nodes $\clan{,}$'s commutes with $\gamma\mapsto s_k*\gamma$ for $1\leq k\leq n-1$. 
Now the assertion follows from the next lemma. 
\end{proof}

\begin{lemma}\label{lemma7.3}
Let $\gamma$ be a $(p,m,q)$-preclan, and $\gamma'$ be the $(p,m+1,q)$-preclan obtained by adding an uncolored node $\clan{,}$ at the end of $\gamma$.
Then $\widetilde{F}_{f_{\gamma'}}(x;t) = \widetilde{F}_{f_\gamma}(x;t)$, which does not involve $t_{n+1}$. 
\end{lemma}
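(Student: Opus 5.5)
The plan is to compare the two bounded affine permutations directly and then to compare their Shimozono--Zhang pipe dreams. Write $n'=n+1$ and $f=f_\gamma$, $f'=f_{\gamma'}$. Appending an uncolored node at the end does not move any $\clan{+}$, left-end, right-end or $\clan{-}$, and it does not change $q_1$ or $q_2$. So Definition \ref{fgamma} gives the following window of $f'$:
\begin{itemize}
\item $f'(i)=f(i)$ for $1\le i\le p+m$, since these values are at most $n$;
\item $f'(p+m+1)=n+1$, because the new uncolored node has index $n+1$;
\item $f'(p+m+1+i)=f(p+m+i)+1$ for $1\le i\le q$, because these values were $n+(\cdot)$ and become $(n+1)+(\cdot)$.
\end{itemize}
Thus $f'$ is obtained from $f$ by inserting one new position $p+m+1$. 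Its value is $n+1$, which has displacement exactly $q$, the number of rows. In the matrix picture \eqref{eq:genericqrows}, this corresponds to adding a last column $\vv_{n+1}$ that is completely generic, with $Z(\vv_{n+1})=[1,q]$ (as in Case 3 of the inclusion proof). This is a consistency check on the identification.

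Next I would establish a weight-preserving bijection $\mathcal{PD}(f')\to\mathcal{PD}(f)$ for the $n$-periodic pipe dreams on $\{1,\ldots,q\}\times\mathbb{Z}$. The idea is that the columns congruent to $n+1$ modulo $n+1$ are forced and carry no crossing tile, and that deleting them (and relabeling columns $>n+1$ in each period) gives a pipe dream of $f$. The key local observation concerns the pipe starting at the bottom of column $n+1$. Since $f'(n+1)=f(n)+1$ and $f(n)\ge n+1$, while the pipes ending at the top of the new column correspond to the entry $f'(p+m+1)=n+1$, I would first pin down the tiles in column $n+1$ row by row, from the top row down. The point to prove is that every tile in that column must be a bump tile $\BPD{\B}$. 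If a crossing occurred there, some pipe would be forced to cross the displacement-$q$ pipe twice, or to fail to reach its endpoint within $q$ rows, contradicting the reducedness condition that no two pipes cross more than once. Once the column is all bumps, removing it splices the pipes on its left and right. The resulting permutation is exactly $f$, because the shift by $+1$ on the last $q$ values is undone. The inverse map is insertion of an all-bump column, and weights match because bump tiles carry weight $1$. This yields $\widetilde{F}_{f'}(x;t)=\widetilde{F}_f(x;t)$ after relabeling the $t$-variables of columns beyond $n+1$. Since the new column contributes no weight, $t_{n+1}$ never appears.

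The main obstacle is the forcing argument for column $n+1$, in particular making the periodic bookkeeping precise and checking the relabeling of $t$-indices. The statement asserts equality with the same variables $t$, so I must verify that the columns of $f$ and of $f'$ carrying crossing tiles are indexed consistently, namely $1,\ldots,n$ in both. If the pipe-dream argument becomes messy, my fallback is geometric. Because $\vv_{n+1}$ is a generic vector, the projection $Gr_q(\mathbb{C}^{n+1})\dashrightarrow Gr_q(\mathbb{C}^n)$ that forgets the last coordinate restricts to a map $\Pi_{f'}\to\Pi_f$. This map is a trivial affine bundle over an open set and is $T$-equivariant with $t_{n+1}$ acting only on the fibre direction, so localizing at the fixed point in question gives equality of the classes.
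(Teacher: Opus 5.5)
Your proposal is correct and is essentially the paper's proof. The paper also builds a weight-preserving bijection of Shimozono--Zhang pipe dreams by inserting bump tiles. It inserts them along the diagonal running from column $p+m+1$ in the bottom row to column $n$ in the top row, rather than as column $n+1$. The two insertions produce the same tilings, because every tile on and to the right of that diagonal is forced to be a bump; in particular all of column $n+1$ is a bump for $f_{\gamma'}$.

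Your stated reason for the forcing (a pipe crossing the displacement-$q$ pipe twice, or failing to reach its endpoint) should be replaced by the precise one the paper uses. Only the pipes starting at $p+m+1,\ldots,n+1$ can reach column $n+1$, and $f_{\gamma'}$ is increasing on them, so in a reduced pipe dream no two of them cross. The same monotonicity shows that the pipe entering $(r,n+1)$ from the left is pipe $p+m+r$, which is exactly what makes your splicing return $f_\gamma$.
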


\begin{proof}
By Definition \ref{fgamma}, we have 
$$f_{\gamma'}(i)=\begin{cases}
f_\gamma(i), & 1\leq i\leq p+m,\\
n+1, & i= p+m+1,\\
f_\gamma(i-1)+1, & p+m+1<i\leq n+1.
\end{cases}$$
The lemma follows from the following  weight-preserving bijection
$$
\def\diag{%
\begin{picture}(0,1)%
   \color{red}
    \linethickness{0.08\unitlength}
    \qbezier(0.2,1.2)(-1,0)(-1.2,-.2)
\end{picture}
}
\BPD{
\M{}\M{}\M{}\M{}\M{}\M{}\M{}\M{}\M{n}\\
\O\O\O\O\O\O\O\O\O\diag\\
\O\O\O\O\O\O\O\O\diag\O\\
\O\O\O\O\O\O\O\diag\O\O\\
\O\O\O\O\O\O\diag\O\O\O\\
\M{}\M{}\M{}\M{}\M{}\M{\scriptstyle p\text{+}m\text{+}1}}
\quad \longleftrightarrow \quad
\BPD{
\M{}\M{}\M{}\M{}\M{}\M{}\M{}\M{}\M{}\M{n\text{+}1}\\
\O\O\O\O\O\O\O\O\F\diag\J\diag\\
\O\O\O\O\O\O\O\F\diag\J\diag\O\\
\O\O\O\O\O\O\F\diag\J\diag\O\O\\
\O\O\O\O\O\F\diag\J\diag\O\O\O\\
\M{}\M{}\M{}\M{}\M{}\M{\scriptstyle p\text{+}m\text{+}1}}.
$$
Since $n<f_\gamma(n-q+1)<\cdots<f_\gamma(n)$ and $n+1=f'_\gamma(n-q+1)<\cdots<f'_\gamma(n+1)$, the tiles on and under the red lines are all $\BPD{\B}$. 
\end{proof} 

\begin{remark}
We have the following remarks. 
\begin{itemize}
    \item[(1)] From the proof of Theorem \ref{th:triplecoeffi}, the argument actually implies the existence of a triple version $\Upsilon_\gamma(x;t;y)$. They are characterized by the following two properties analogous to Corollary \ref{coro:RichUpsilon} and Theorem \ref{thm:dkonUpsilon}:
    $$
    \begin{aligned}
    \Upsilon_\gamma(x;t;y) & = 
    \mathfrak{S}_{v_\gamma}(x;t)\cdot
    \mathfrak{S}_{u_\gamma}(x;y)
    \qquad \text{when $\gamma$ is Richardson}\\
    \partial_k 
    \Upsilon_\gamma(x;t;y)
    & =\begin{cases}
    \Upsilon_{s_k*\gamma}(x;t;y), & \ell(s_k*\gamma)=\ell(\gamma)+1,\\
    0,& \text{otherwise}. 
    \end{cases}
    \end{aligned}
    $$
    Moreover if we switch $\clan{+}$ and $\clan{-}$, then we will need to switch $t$ and $y$. We leave the details to readers. Here we compute $\Upsilon_\gamma(x;t;y)$ for all $(1,1,1)$-preclans, to compare with Example \ref{eg:Upsilon}. 

    \def\term#1#2#3#4{
#1 \qquad 
\makebox[2pc]{$#2$}\qquad 
\makebox[2pc]{$#3$}\qquad 
\makebox[0.4\linewidth][l]{$#4$}\qquad 
}
$$\term{\text{preclan $\gamma$}}{v_\gamma}{u_\gamma}{\Upsilon_\gamma(x;t;y)}$$
$$\term{\clan{,+-}}{213}{231}{(x_1-t_1)\cdot (x_1-y_1)(x_2-y_1)}$$
$$\term{\clan{,-+}}{231}{213}{(x_1-t_1)(x_2-t_1)\cdot (x_1-y_1)}$$
$$\term{\clan{,1.}}{213}{213}{(x_1-t_1)\cdot (x_1-y_1)}$$
$$\term{\clan{+,-}}{123}{231}{(x_1-y_1)(x_2-y_1)}$$
$$\term{\clan{-,+}}{231}{123}{(x_1-t_1)(x_2-t_1)}$$
$$\term{\clan{+-,}}{123}{213}{(x_1-y_1)}$$
$$\term{\clan{-+,}}{213}{123}{(x_1-t_1)}$$
$$\term{\clan{2,.}}{123}{123}{x_1+x_2-t_1-y_1}$$
$$\term{\clan{1.,}}{123}{123}{1}$$

    \item[(2)] It was conjectured by Samuel \cite{Sam24} and proved by Gao and the third author \cite{Gaox} that
    $$c_{v,u}^w(t;y)\in \mathbb{N}[t_i-y_j]_{i,j\geq 0}. $$
Since 
$\widetilde{F}_f(x;t)\in \mathbb{N}[x_i-t_j]$, 
Theorem \ref{th:triplecoeffi} verifies this positivity when $u,v$ are inverse Grassmannian.

Following \cite{FGX,Sam24}, this also implies a positive formula for skew divided difference operators of MacDonald \cite{MacDonald} acting on Schubert polynomials
$$\partial_{w/u_\gamma}\mathfrak{S}_{v_\gamma}(x) = c_{v_\gamma,u_\gamma}^w(x;0)
\in \mathbb{N}[x_i].$$
The positivity is known as Kirillov conjecture \cite{Kirillov} and proved by Gao and the third author \cite{Gaox}.

\end{itemize}

\end{remark}

\begin{example}\label{eg:eg:triplep<q}
Let $p=2,m=1,q=3$. Consider $v=314526$ and $u=412356$.
The corresponding preclan is 
$$\gamma_{v,u} = \clan{,4--+.}
\qquad
\BPD[2pc]{\M{}\M{-t_2}\M{-t_1}\M{-t_3}\M{-t_4}\M{-t_5}\M{-t_6}\\
\M{-y_3}\O\O\O\O\O\O\\\M{-y_2}\O\O\O\O\O\O\\\M{-y_1}\O\O\O\O\O\O\\\M{}\M{}\M{}\M{}\M{}\M{}
}.
$$
The Hasse diagram over $\gamma_{v,u}$ is shown in Figure \ref{fig:HasseEg} (left). 
We have 
\def\term#1#2#3{
\makebox[0.2\linewidth]{$#1$}
\makebox[0.2\linewidth]{\def\m##1{\makebox[1.2pc]{$##1$}}%
$\begin{array}{l}#2\end{array}$}
\makebox[0.6\linewidth][l]{$#3$}
}
$$\term{w}{\gamma=w*\gamma_{v,u}}{\text{pipe dreams of $\widetilde{F}_{f_{\gamma}}(-y_3,-y_2,-y_1;-t_2,-t_1,-t_3,-t_4,-t_5,-t_6)$}}$$
$$\term{s_5s_4s_3s_2s_1s_3s_4}{\clan{42-..,}\\
\m{4}\m{5}\m{6}\m{7}\m{8}\m{9}}{
\BPD{\M{1}\M{2}\M{3}\M{4}\M{5}\M{6}\\
\B\B\B\B\B\B\\\B\B\B\B\B\B\\\B\B\B\B\B\B\\
\M{1}\M{2}\M{3}\M{4}\M{5}\M{6}}
}$$
$$\term{s_4s_3s_2s_1s_3s_4}{\clan{52-.,.}\\
\m{4}\m{6}\m{5}\m{7}\m{8}\m{9}}{
\BPD{\M{1}\M{2}\M{3}\M{4}\M{5}\M{6}\\\B\B\B\B\B\B\\\B\B\B\B\B\B\\\B\B\X\B\B\B\\\M{1}\M{2}\M{3}\M{4}\M{5}\M{6}}
\BPD{\M{1}\M{2}\M{3}\M{4}\M{5}\M{6}\\\B\B\B\B\B\B\\\B\B\B\X\B\B\\\B\B\B\B\B\B\\\M{1}\M{2}\M{3}\M{4}\M{5}\M{6}}
\BPD{\M{1}\M{2}\M{3}\M{4}\M{5}\M{6}\\\B\B\B\B\X\B\\\B\B\B\B\B\B\\\B\B\B\B\B\B\\\M{1}\M{2}\M{3}\M{4}\M{5}\M{6}}
}$$
$$\term{s_3s_2s_1s_3s_4}{\clan{53-,..}\\
\m{5}\m{6}\m{4}\m{7}\m{8}\m{9}}{
\BPD{\M{1}\M{2}\M{3}\M{4}\M{5}\M{6}\\\B\B\B\B\B\B\\\B\B\X\B\B\B\\\B\B\X\B\B\B\\\M{1}\M{2}\M{3}\M{4}\M{5}\M{6}}
\BPD{\M{1}\M{2}\M{3}\M{4}\M{5}\M{6}\\\B\B\B\X\B\B\\\B\B\B\B\B\B\\\B\B\X\B\B\B\\\M{1}\M{2}\M{3}\M{4}\M{5}\M{6}}
\BPD{\M{1}\M{2}\M{3}\M{4}\M{5}\M{6}\\\B\B\B\X\B\B\\\B\B\B\X\B\B\\\B\B\B\B\B\B\\\M{1}\M{2}\M{3}\M{4}\M{5}\M{6}}
}$$
$$\term{s_2s_1s_3s_4}{\clan{53,-..}\\
\m{5}\m{6}\m{3}\m{7}\m{8}\m{10}}{
\BPD{\M{1}\M{2}\M{3}\M{4}\M{5}\M{6}\\\B\B\X\B\B\B\\\B\B\X\B\B\B\\\B\B\X\B\B\B\\\M{1}\M{2}\M{3}\M{4}\M{5}\M{6}}
}$$
Thus 
\begin{align*}
\mathfrak{S}_{v}(x;t)
\mathfrak{S}_{u}(x;y)
& = \mathfrak{S}_{613425}(x;t)+[(t_3-y_1)+(t_4-y_2)+(t_5-y_3)]\mathfrak{S}_{51342}(x;t)\\
&\quad+[(t_3-y_1)(t_3-y_2)+(t_3-y_1)(t_4-y_3)+(t_4-y_2)(t_4-y_3)]\mathfrak{S}_{41352}(x;t)\\
&\quad+(t_3-y_1)(t_3-y_2)(t_3-y_3)\mathfrak{S}_{31452}(x;t).
\end{align*}
\end{example}

\begin{example}\label{eg:triplep>q}
Let $p=3,m=1,q=2$. Consider $v=412356$ and $u=314526$. The corresponding preclan $\gamma_{v,u}$ is 
$$\gamma_{v,u}=\clan{,4++-.}
\qquad
\BPD[2pc]{
\M{}\M{-t_3}\M{-t_2}\M{-t_1}\M{-t_4}\M{-t_5}\M{-t_6}\\
\M{-y_2}\O\O\O\O\O\O\\
\M{-y_1}\O\O\O\O\O\O\\
\M{}\M{}\M{}\M{}\M{}\M{}\M{}
}.
$$
The Hasse diagram over $\gamma_{v,u}$ is shown in Figure \ref{fig:HasseEg} (right), the same as the previous one but with the role of $\clan{+}$ and $\clan{-}$ switched. 
\def\term#1#2#3{
\makebox[0.15\linewidth]{$#1$}
\makebox[0.2\linewidth]{\def\m##1{\makebox[1.2pc]{$##1$}}%
$\begin{array}{l}#2\end{array}$}
\makebox[0.65\linewidth][l]{$\begin{matrix}#3\end{matrix}$}
}
$$\term{w}{\gamma=w*\gamma_{v,u}}{\text{pipe dreams of $\widetilde{F}_{f_\gamma}(-y_2,-y_1;-t_3,-t_2,-t_1,-t_4,-t_5,-t_6)$}}$$
$$\term{s_5s_4s_3s_2s_1s_3s_4}{\clan{42+..,}\\
\m{3}\m{4}\m{5}\m{6}\m{7}\m{8}}{
\BPD{
\M{1}\M{2}\M{3}\M{4}\M{5}\M{6}\\
\B\B\B\B\B\B\\\B\B\B\B\B\B\\
\M{1}\M{2}\M{3}\M{4}\M{5}\M{6}\\
}}$$
$$\term{s_3s_5s_4s_3s_2s_1}{\clan{4+1..,}\\
\m{4}\m{3}\m{5}\m{6}\m{7}\m{8}}{
\BPD{\M{1}\M{2}\M{3}\M{4}\M{5}\M{6}\\
\B\B\B\B\B\B\\\B\X\B\B\B\B\\
\M{1}\M{2}\M{3}\M{4}\M{5}\M{6}}
\BPD{\M{1}\M{2}\M{3}\M{4}\M{5}\M{6}\\
\B\B\X\B\B\B\\\B\B\B\B\B\B\\
\M{1}\M{2}\M{3}\M{4}\M{5}\M{6}}
}$$
$$\term{s_4s_3s_2s_1s_3s_4}{\clan{52+.,.}\\
\m{3}\m{4}\m{6}\m{5}\m{7}\m{8}}{
\BPD{\M{1}\M{2}\M{3}\M{4}\M{5}\M{6}\\
\B\B\B\B\B\B\\\B\B\B\X\B\B\\
\M{1}\M{2}\M{3}\M{4}\M{5}\M{6}}
\BPD{\M{1}\M{2}\M{3}\M{4}\M{5}\M{6}\\
\B\B\B\B\X\B\\\B\B\B\B\B\B\\
\M{1}\M{2}\M{3}\M{4}\M{5}\M{6}}
}$$
$$\term{s_5s_4s_3s_2s_1}{\clan{4++-.,}\\
\m{2}\m{3}\m{5}\m{6}\m{7}\m{10}}{
\BPD{\M{1}\M{2}\M{3}\M{4}\M{5}\M{6}\\
\B\B\B\B\B\B\\\X\X\B\B\B\B\\
\M{1}\M{2}\M{3}\M{4}\M{5}\M{6}}
\BPD{\M{1}\M{2}\M{3}\M{4}\M{5}\M{6}\\
\B\B\X\B\B\B\\\X\B\B\B\B\B\\
\M{1}\M{2}\M{3}\M{4}\M{5}\M{6}}
\BPD{\M{1}\M{2}\M{3}\M{4}\M{5}\M{6}\\
\B\X\X\B\B\B\\\B\B\B\B\B\B\\
\M{1}\M{2}\M{3}\M{4}\M{5}\M{6}}
}$$
$$\term{s_3s_4s_3s_2s_1}{\clan{5+1.,.}\\
\m{4}\m{3}\m{6}\m{5}\m{7}\m{8}}{
\BPD{\M{1}\M{2}\M{3}\M{4}\M{5}\M{6}\\
\B\B\B\B\B\B\\\B\X\B\X\B\B\\
\M{1}\M{2}\M{3}\M{4}\M{5}\M{6}}
\BPD{\M{1}\M{2}\M{3}\M{4}\M{5}\M{6}\\
\B\B\X\B\B\B\\\B\B\B\X\B\B\\
\M{1}\M{2}\M{3}\M{4}\M{5}\M{6}}
\BPD{\M{1}\M{2}\M{3}\M{4}\M{5}\M{6}\\
\B\B\B\B\X\B\\\B\X\B\B\B\B\\
\M{1}\M{2}\M{3}\M{4}\M{5}\M{6}}
\BPD{\M{1}\M{2}\M{3}\M{4}\M{5}\M{6}\\
\B\B\X\B\X\B\\\B\B\B\B\B\B\\
\M{1}\M{2}\M{3}\M{4}\M{5}\M{6}}
}$$
$$\term{s_3s_2s_1s_3s_4}{\clan{53+,..}\\
\m{3}\m{5}\m{6}\m{4}\m{7}\m{8}}{
\BPD{\M{1}\M{2}\M{3}\M{4}\M{5}\M{6}\\
\B\B\B\X\B\B\\\B\B\B\X\B\B\\
\M{1}\M{2}\M{3}\M{4}\M{5}\M{6}}
}$$
$$\term{s_4s_3s_2s_1}{\clan{5++-,.}\\
\m{2}\m{3}\m{6}\m{5}\m{7}\m{10}}{
\BPD{\M{1}\M{2}\M{3}\M{4}\M{5}\M{6}\\
\B\B\B\B\B\B\\\X\X\B\X\B\B\\
\M{1}\M{2}\M{3}\M{4}\M{5}\M{6}}
\BPD{\M{1}\M{2}\M{3}\M{4}\M{5}\M{6}\\
\B\B\X\B\B\B\\\X\B\B\X\B\B\\
\M{1}\M{2}\M{3}\M{4}\M{5}\M{6}}
\BPD{\M{1}\M{2}\M{3}\M{4}\M{5}\M{6}\\
\B\X\X\B\B\B\\\B\B\B\X\B\B\\
\M{1}\M{2}\M{3}\M{4}\M{5}\M{6}}\\
\BPD{\M{1}\M{2}\M{3}\M{4}\M{5}\M{6}\\
\B\B\B\B\X\B\\\X\X\B\B\B\B\\
\M{1}\M{2}\M{3}\M{4}\M{5}\M{6}}
\BPD{\M{1}\M{2}\M{3}\M{4}\M{5}\M{6}\\
\B\B\X\B\X\B\\\X\B\B\B\B\B\\
\M{1}\M{2}\M{3}\M{4}\M{5}\M{6}}
\BPD{\M{1}\M{2}\M{3}\M{4}\M{5}\M{6}\\
\B\X\X\B\X\B\\\B\B\B\B\B\B\\
\M{1}\M{2}\M{3}\M{4}\M{5}\M{6}}
}$$

$$\term{s_3s_2s_1s_4}{\clan{5+2,..}\\
\m{5}\m{3}\m{6}\m{4}\m{7}\m{8}}{
\BPD{\M{1}\M{2}\M{3}\M{4}\M{5}\M{6}\\
\B\B\B\X\B\B\\\B\X\B\X\B\B\\
\M{1}\M{2}\M{3}\M{4}\M{5}\M{6}}
\BPD{\M{1}\M{2}\M{3}\M{4}\M{5}\M{6}\\
\B\B\X\X\B\B\\\B\B\B\X\B\B\\
\M{1}\M{2}\M{3}\M{4}\M{5}\M{6}}
}$$

$$\term{s_3s_2s_1}{\clan{5++,-.}\\
\m{2}\m{3}\m{6}\m{4}\m{7}\m{11}}{
\BPD{\M{1}\M{2}\M{3}\M{4}\M{5}\M{6}\\
\B\B\B\X\B\B\\\X\X\B\X\B\B\\
\M{1}\M{2}\M{3}\M{4}\M{5}\M{6}}
\BPD{\M{1}\M{2}\M{3}\M{4}\M{5}\M{6}\\
\B\B\X\X\B\B\\\X\B\B\X\B\B\\
\M{1}\M{2}\M{3}\M{4}\M{5}\M{6}}
\BPD{\M{1}\M{2}\M{3}\M{4}\M{5}\M{6}\\
\B\X\X\X\B\B\\\B\B\B\X\B\B\\
\M{1}\M{2}\M{3}\M{4}\M{5}\M{6}}
}$$
Thus 
\begin{align*}
\mathfrak{S}_{v}(x;t)\cdot
\mathfrak{S}_{u}(x;y)
 =\, & \mathfrak{S}_{613425}(x;t)+ [(t_2-y_1)+(t_1-y_2)]\mathfrak{S}_{612435}(x;t)\\
 & +[(t_4-y_1)+(t_5-y_2)]\mathfrak{S}_{51342}(x;t)\\
 & + [(t_2-y_1)(t_3-y_1)+(t_1-y_2)(t_3-y_1)+(t_1-y_2)(t_2-y_2)]\mathfrak{S}_{612345}(x;t)\\
 & +[(t_2-y_1)(t_4-y_1)+(t_1-y_2)(t_4-y_1)\\&\quad+(t_2-y_1)(t_5-y_2)+(t_1-y_2)(t_5-y_2)]\mathfrak{S}_{51243}(x;t)\\
 &+(t_4-y_1)(t_4-y_2)\mathfrak{S}_{41352}(x;t)\\
 & + [(t_2-y_1)(t_3-y_1)(t_4-y_1)+(t_1-y_2)(t_3-y_1)(t_4-y_1)\\&\quad+(t_1-y_2)(t_2-y_2)(t_4-y_1)+(t_2-y_1)(t_3-y_1)(t_5-y_2)\\&\quad+(t_1-y_2)(t_3-y_1)(t_5-y_2)+(t_1-y_2)(t_2-y_2)(t_5-y_2)]\mathfrak{S}_{51234}(x;t)\\
 & +[(t_2-y_1)(t_4-y_1)(t_4-y_2)+(t_1-y_2)(t_4-y_1)(t_4-y_2)]\mathfrak{S}_{41253}(x;t)\\
 &+[(t_2-y_1)(t_3-y_1)(t_4-y_1)(t_4-y_2)+(t_1-y_2)(t_3-y_1)(t_4-y_1)(t_4-y_2)\\&\quad+(t_1-y_2)(t_2-y_2)(t_4-y_1)(t_4-y_2)]\mathfrak{S}_{4123}(x;t).
\end{align*}
\end{example}

\begin{figure}[ht]
$$
\def\myclan#1{{\hspace{-1pc}
    \begin{matrix}\\[-1pc]
    \clan{#1}\vphantom{\dfrac12}\\\end{matrix}
    \hspace{-1pc}}}
\def\fmyclan#1{{\hspace{-.5pc}\begin{array}{|c|}\hline\\[-1pc]
    \hspace{-.5pc}\rule{0pc}{1.8pc}
    \clan{#1}\hspace{-.5pc}
    \\\hline\end{array}\hspace{-.5pc}}}
\def\ffmyclan#1{\fbox{$\hspace{-.5pc}\begin{array}{|c|}\hline\\[-1pc]
    \hspace{-.5pc}\rule{0pc}{1.8pc}
    \clan{#1}\hspace{-.5pc}
    \\\hline\end{array}\hspace{-.5pc}$}}
\begin{matrix}
\scalebox{0.7}{%
    \xymatrix@R=1.2em@C=-1.5em{
&& {\fmyclan{42-..,}}\\
&{\myclan{4-1..,}}\ar[ur]^2 &&
{\fmyclan{52-.,.}}\ar[ul]_5\\
{\myclan{4--+.,}}\ar[ur]^3&&
{\myclan{5-1.,.}}\ar[ul]_5\ar[ur]^2&& 
{\fmyclan{53-,..}}\ar[ul]_4\\
{\myclan{5--+,.}}\ar[u]^5\ar[urr]^3&&
{\myclan{5-2,..}}\ar[u]^4\ar[urr]^2&& 
{\fmyclan{53,-..}}\ar[u]_3\\
{\myclan{5--,+.}}\ar[u]^4&&
{\myclan{5-,1..}}\ar[u]^3&& 
{\myclan{5,2-..}}\ar[u]_2\\
{\myclan{5-,-+.}}\ar[u]^3\ar[urr]^4&&
{\myclan{5,-1..}}\ar[u]^2\ar[urr]^3&& 
{\myclan{,42-..}}\ar[u]_1\\
& {\myclan{5,--+.}}\ar[ul]^2\ar[ur]_4 &&
{\myclan{,4-1..}}\ar[ul]^1\ar[ur]_3\\
&&{\myclan{,4--+.}}\ar[ul]^1\ar[ur]_4
}}
\qquad 
\scalebox{0.7}{%
    \xymatrix@R=1.2em@C=-1.5em{
&& {\ffmyclan{42+..,}}\\
&{\fmyclan{4+1..,}}\ar[ur]^2 &&
{\ffmyclan{52+.,.}}\ar[ul]_5\\
{\fmyclan{4++-.,}}\ar[ur]^3&&
{\fmyclan{5+1.,.}}\ar[ul]_5\ar[ur]^2&& 
{\ffmyclan{53+,..}}\ar[ul]_4\\
{\fmyclan{5++-,.}}\ar[u]^5\ar[urr]^3&&
{\fmyclan{5+2,..}}\ar[u]^4\ar[urr]^2&& 
{\myclan{53,+..}}\ar[u]_3\\
{\fmyclan{5++,-.}}\ar[u]^4&&
{\myclan{5+,1..}}\ar[u]^3&& 
{\myclan{5,2+..}}\ar[u]_2\\
{\myclan{5+,+-.}}\ar[u]^3\ar[urr]^4&&
{\myclan{5,+1..}}\ar[u]^2\ar[urr]^3&& 
{\myclan{,42+..}}\ar[u]_1\\
& {\myclan{5,++-.}}\ar[ul]^2\ar[ur]_4 &&
{\myclan{,4+1..}}\ar[ul]^1\ar[ur]_3\\
&&{\myclan{,4++-.}}\ar[ul]^1\ar[ur]_4
}}
\end{matrix}
$$
    \caption{Hasse diagram over $\gamma_{v,u}$}
    \label{fig:HasseEg}
\end{figure}

\subsection{Double Schubert Structure Constants}
\label{sec:permutational}
Let us assume $p\geq q$. 
\begin{definition}\label{def:perm}
A $(p,m,q)$-preclan $\gamma$ is called {\it permutational}, if 
\begin{itemize}
    \item the first $q$ nodes are left-ends; 
    \item the next ($p-q$) nodes are $\clan{+}$.
\end{itemize}
In particular, $v_\gamma=e$ and there is no $\clan{-}$ in $\gamma$.    
For a permutational $(p,m,q)$-preclan $\gamma$, we define a permutation $\eta_{\gamma}\in S_{q+m}$ by 
\begin{equation}
\eta_\gamma(i)=f_\gamma(i+p-q)-p,\quad \text{for $1\leq i\leq q+m$.}
\end{equation}
\end{definition}

If $\gamma$ is permutational,  then $\{f_\gamma(p-q+1),\ldots,f_\gamma(p+m)\}$ is the set of positions of all right-ends and $\clan{,}$'s, thus $\eta_{\gamma}\in S_{q+m}$ is a well-defined permutation. 
Since $\eta_\gamma(q+1)<\cdots<\eta_\gamma(q+m)$,  we find that $\mathfrak{S}_{\eta_\gamma}(x_1,\ldots,x_{q+m};t_1,\ldots,t_{q+m})$ involves only $x_1,\ldots,x_q$.  
For example, 
\begin{equation}\label{permutationalpre}
\def\m#1{\makebox[1.2pc]{$#1$}}
\gamma = \clan{695++,..,,.,}
\qquad 
\begin{aligned}
\eta_\gamma =&\,\m{}\m{}\m{3}\m{6}\m{2}\m{1}\m{4}\m{5}\m{7}\\
f_{\gamma}=&\, \m{4}\m{5}\m{8}\m{11}\m{7}\m{6}\m{9}\m{10}\m{12}\m{13}\m{14}\m{15}. 
\end{aligned}
\end{equation}

\begin{lemma}
For a $(p,m,q)$-preclan $\gamma$, we have 
$$\widetilde{F}_{f_\gamma}(-t_q,\ldots,-t_1;-t_p,\ldots,-t_1,-t_{p+1},\ldots,-t_n)
=\begin{cases}
\mathfrak{S}_{\eta_\gamma}(-t_q,\ldots,-t_1;-t_{p+1},\ldots,-t_n),\\
\qquad \quad \text{if $\gamma$ is permutational;}\\[1ex]
0, \qquad \text{otherwise}. 
\end{cases}
$$
\end{lemma}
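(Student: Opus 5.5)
The plan is to compare the Shimozono--Zhang pipe dream expansion of $\widetilde{F}_{f_\gamma}$ directly with the pipe dream formula for $\mathfrak{S}_{\eta_\gamma}$, after observing that the specialization kills a whole diagonal of tiles.

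\textbf{Step 1: a diagonal of vanishing weights.} Under the specialization the row variable of row $i$ is $-t_{q+1-i}$. For $1\le j\le p$ the column variable of column $j$ is $-t_{p+1-j}$, and for $j>p$ it is $-t_j$. Hence a crossing tile at $(i,j)$ has weight $t_{p+1-j}-t_{q+1-i}$ when $j\le p$. This weight is zero exactly when $j=p-q+i$, and the same holds for all $n$-periodic translates of these cells. Therefore only pipe dreams with \emph{no crossing on the diagonal} $D=\{(i,p-q+i)\}$ survive, and each surviving pipe dream has all its crossings strictly to one side of $D$ within each period.

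\textbf{Step 2: the non-permutational case.} I will show that if $\gamma$ is not permutational, then every pipe dream of $f_\gamma$ has a crossing on $D$, so the whole sum vanishes. I will use the explicit values from Definition \ref{fgamma}, in particular \eqref{eq:Lemmafgammaisw-d} and the fact that the last $q$ values $f_\gamma(n-q+1)<\cdots<f_\gamma(n)$ exceed $n$.
\begin{itemize}
\item First suppose $v_\gamma\neq e$. Then $f_\gamma$ as defined is not the relevant object, and I will treat this case as vacuous or zero, consistent with Theorem \ref{thm:Sorbitexpansion}.
\item Next suppose $v_\gamma=e$ but there is a $\clan{+}$ among the first $q$ nodes, or a left-end among nodes $q+1,\dots,p$. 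Then some pipe starting at bottom position $i\le p$ must end at $f_\gamma(i)\le p$ in a way that forces it to cross the strands entering columns on $D$.
\item More concretely, I will count pipes. Below row $q$ the pipes entering the region left of $D$ in rows $1,\dots,q$ must exit to the right of $D$ when $f_\gamma(i)>p-q+i$. A ``$\clan{+}$ in the wrong place'' produces a pipe with $f_\gamma(i)=i+k$ landing strictly inside the band, and this band contains too few elbow-only positions.
\end{itemize}
I expect this pigeonhole or crossing-forcing argument to be the \emph{main obstacle}. My first attempt will be induction on the weak order: the moves of Cases 1--7 in the dimension comparison change $f$ by a simple reflection, and I would check that the property ``every pipe dream meets $D$'' propagates. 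If that fails, I will argue directly that a pipe starting at column $i$ with $f_\gamma(i)\le p-q+i$ is impossible without using a $D$-cell.

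\textbf{Step 3: the permutational case.} Here $f_\gamma(i)$ for $i\le p-q$ comes from the $\clan{+}$'s, giving $f_\gamma(i)=i+q$ by \eqref{eq:Lemmafgammaisw-d}. I will show that every surviving pipe dream has only elbows in the region weakly left of $D$ (modulo $n$). These pipes are then rigid: each is a straight diagonal that shifts by $q$. The remaining free region is the staircase strictly to the right of $D$ in rows $1,\dots,q$, bounded by $n$ before wrapping. I will also check that wrapped columns $1,\dots,p$ to the right of $D$ cannot carry crossings, since the last $q$ values of $f_\gamma$ are increasing and those pipes just run parallel.

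Next I will reindex, sending the cell $(i,\,p-q+i+k)$ to a cell of the $\eta_\gamma$ pipe dream in row $q+1-i$ and the column carrying $t_{p+\cdot}$, reversing the row order so that $x_1=-t_q$. Under this reindexing, pipe $i+p-q$ ends at $f_\gamma(i+p-q)-p=\eta_\gamma(i)$. This gives a weight-preserving bijection with $\mathcal{PD}(\eta_\gamma)$ evaluated at $(-t_q,\dots,-t_1;-t_{p+1},\dots,-t_n)$, in the same spirit as the bijection in Lemma \ref{lemma7.3}.

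Finally, the fact noted after Definition \ref{def:perm}, that $\mathfrak{S}_{\eta_\gamma}$ only involves $x_1,\dots,x_q$, guarantees that the Schubert pipe dreams use only the $q$ available rows, so the bijection is onto.
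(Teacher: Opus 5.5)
\textbf{There is a genuine gap: the vanishing is not termwise.} Only crossings on $D$ have weight zero. On the $q\times q$ block of columns $p-q+1,\dots,p$ the weights are antisymmetric,
$$\operatorname{wt}(i,p-q+j)=t_{q+1-j}-t_{q+1-i}=-\operatorname{wt}(j,p-q+i),$$
and pipe dreams with off-diagonal crossings in this block do occur with nonzero weight. So both of your key claims are false.

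\emph{Against Step~2.} Take $(p,m,q)=(2,0,2)$ and $\gamma=\clan{3+-.}$, i.e.\ node $1$ matched with node $4$, node $2$ a $\clan{+}$, node $3$ a $\clan{-}$. Then $v_\gamma=e$, $\gamma$ is not permutational, and $f_\gamma=[2,4,5,7]$ has length $1$. Its only two pipe dreams have a single crossing at $(2,1)$, resp.\ $(1,2)$, with weights $t_2-t_1$ and $t_1-t_2$. Neither meets $D=\{(1,1),(2,2)\}$, so no pigeonhole or weak-order induction can prove that every pipe dream meets $D$.

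\emph{Against Step~3.} Take the permutational $(3,0,3)$-preclan with matchings $(1,4),(2,5),(3,6)$, so $f_\gamma=[6,5,4,7,8,9]$. The reduced pipe dream with crossings at $(3,2),(2,4),(1,4)$ avoids $D=\{(1,1),(2,2),(3,3)\}$. It has a crossing strictly left of $D$, and its weight $(t_2-t_1)(t_4-t_2)(t_4-t_3)$ is nonzero. It is cancelled only by the pipe dream with crossings at $(2,3),(2,4),(1,4)$. Separately, your sentence in Step~1 that surviving pipe dreams have all crossings on one side of $D$ does not follow from anything.

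\textbf{The missing idea is the paper's sign-reversing involution.} Consider the pipe dreams with a crossing in this block (region \textbf{(B)} in the paper's proof). Those with a diagonal crossing have weight $0$. The others are paired by reflecting one chosen crossing across the diagonal (minimal $|i-j|$, ties broken by maximal $i+j$). By antisymmetry the whole sum therefore reduces to pipe dreams in which block \textbf{(B)} consists of elbows.

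Only after this reduction does a structural argument of the kind you want go through. If block \textbf{(B)} is all elbows, then $f_\gamma(i)\le p$ for $i\le p-q$ and $f_\gamma(i)>p$ for $p-q<i\le p$. By Definition~\ref{fgamma} this forces $\gamma$ to be permutational, so the sum is $0$ otherwise. It then forces the tiles in columns $1,\dots,p-q$ and in the lower-right triangle of the last $q$ columns to be elbows. What remains is the region you reindex in Step~3, and your identification with pipe dreams of $\eta_\gamma$ there is essentially the paper's final step.
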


\begin{proof}
Let $\operatorname{PD}(f_\gamma)$ denote the set of pipe dreams of $f_\gamma$.
We have 
$$\widetilde{F}_{f_\gamma}(-t_q,\ldots,-t_1;-t_p,\ldots,-t_1,-t_{p+1},\ldots,-t_n) = \sum_{\pi\in \operatorname{PD}(f_\gamma)}
\operatorname{wt}(\pi),
$$
where a $\BPD{\X}$ at position $(i,j)$  contributes $-t_{a}+t_{b}$ to $\operatorname{wt}(\pi)$ with 
$$
a=q+1-i
,\qquad 
b = \text{the $j$-th component of $(p,\ldots,1,{p+1}\ldots,n)$}. $$
Let us decompose a pipe dream of $f_\gamma$ into the following regions. 
$$
\setlength{\unitlength}{1.2pc}
\def\diag{%
\begin{picture}(0,1)%
   \color{red}
    \linethickness{0.08\unitlength}
    \qbezier(0.2,1.2)(-1,0)(-1.2,-.2)
\end{picture}}
\def\|{%
\begin{picture}(0,1)%
   \color{red}
    \linethickness{0.08\unitlength}
    \qbezier(0.0,1.2)(0,0)(0.0,-.2)
\end{picture}}
\def\A#1{\raisebox{-0.5\unitlength}[0pc][0pc]{%
    \makebox[0pc][c]{\huge\bf#1}}}
\begin{array}{c}
\left.\BPD{
\O\O\O\|\O\O\O\O\|\O\O\O\O\O\O\O\O\O\diag\\
\O\O\O\|\O\O\O\O\|\O\O\O\O\O\O\O\O\diag\O\\
\O\A{\,\,(A)}\O\O\|\O\O\A{(B)}\O\O\|\O\O\O\A{(C)}\O\O\O\O\diag\O\A{(D)}\O\\
\O\O\O\|\O\O\O\O\|\O\O\O\O\O\O\diag\O\O\O\\
}\ \ \right\}{\scriptstyle q}
\\
\,\underbrace{\rule{3\unitlength}{0pc}}_{p-q}\!
\underbrace{\rule{4\unitlength}{0pc}}_{q}\!
\underbrace{\rule{5\unitlength}{0pc}}_{m}\!
\underbrace{\rule{4\unitlength}{0pc}}_{q}\qquad
\end{array}
$$

Let $\operatorname{PD}'(f_\gamma)$ be the set of pipe dreams of $f_\gamma$ with  region $\textbf{(B)}$ consisting of only $\BPD{\B}$. 
Firstly, we claim that 
$$\sum_{\pi\in \operatorname{PD}(f_\gamma)}
\operatorname{wt}(\pi) = \sum_{\pi\in \operatorname{PD}'(f_\gamma)}
\operatorname{wt}(\pi).$$
We prove the claim by constructing a sign-reversing involution $\theta$ on the set $\operatorname{PD}(f_\gamma)\setminus \operatorname{PD}'(f_\gamma)$ as follows. Let $\pi\in \operatorname{PD}(f_\gamma)\setminus \operatorname{PD}'(f_\gamma)$. If there is a $\BPD{\X}$ tile  on the diagonal of region $\textbf{(B)}$, then $\operatorname{wt}(\pi)=0$,  set $\theta(\pi)=\pi$. Otherwise,
pick a $\BPD{\X}$ at position $(i,j)$ with $|i-j|$ smallest (if there are more than one such $\BPD{\X}$, pick the  one  with $j+i$ maximal). Such a position $(i,j)$ is unique, since it is impossible to have the following  "symmetric" double intersections:
$$\BPD{
\B\B\B\X\\
\B\B\B\B\\
\B\B\B\B\\
\X\B\B\B}.$$
We define $\theta(\pi)$ to be the pipe dream obtained from $\pi$ by reflecting the $\BPD{\X}$ tile at $(i,j)$ to position $(j,i)$ along the diagonal of region $\textbf{(B)}$, that is, $\theta$ is the following local move
\[
\BPD{\B\B\B\B\\
\B\B\B\B\\
\B\B\B\B\\
\X\B\B\B\\}\qquad \stackrel{\theta}\longleftrightarrow\qquad 
\BPD{
\B\B\B\X\\
\B\B\B\B\\
\B\B\B\B\\
\B\B\B\B}.\]
Since the weight of a $\BPD{\X}$ tile at position $(i,j)$ of region $\textbf{(B)}$ is the negative of 
the weight of a $\BPD{\X}$ tile at position $(j,i)$, we have 
$\operatorname{wt}(\pi)+\operatorname{wt}(\theta(\pi))=0$. Thus the claim holds.

Next, we show that if region $\textbf{(B)}$ consists of only $\BPD{\B}$, then the regions $\textbf{(A)},\textbf{(D)}$ also consist of only  $\BPD{\B}$. Let $\operatorname{PD}''(f_\gamma)$ denote the set of pipe dreams of $f_\gamma$ with $\BPD{\X}$ only appearing in region $\textbf{(C)}$. We aim to show that  $\operatorname{PD}'(f_\gamma)=\operatorname{PD}''(f_\gamma)$ as sets of pipe dreams. 
Since region $\textbf{(B)}$ consists of only $\BPD{\B}$, 
we have $f_\gamma(i)\leq p$ for $1\leq i\leq p-q$ and $f_\gamma(i)>p$ for $p-q+1\leq i\leq p$. By Definition \ref{fgamma}, the first $p$ nodes of $\gamma$ consist of $q$ left-ends followed by $(p-q)$ $\clan{+}$'s. Moreover, it is easy to see that there are no $\clan{-}$ in $\gamma$. This implies that $\gamma$ is permutational. Therefore,
$$f_\gamma(i-q)=i,\quad 1\le i\le p.$$
Thus regions $\textbf{(D)}$ and $\textbf{(A)}$ consist of  only $\BPD{\B}$. 

Since $\gamma$ is permutational, the pipes exiting  from the upper boundary from columns $p+1,\ldots,n$ are exactly the pipes entering from the lower bound from columns $p-q+1,\ldots,p+m$. 
Thus, the pipe dreams in $\operatorname{PD}''(f_\gamma)$ are exactly pipe dreams of $\eta_\gamma$ where we omit the $\BPD{\B}$'s in the last $m$ rows since $\eta_{\gamma}(q+1)<\cdots<\eta_{\gamma}(q+m)$.
By the definitions of 
$\eta_\gamma$ and $f_\gamma$, we have 
$$\sum_{\pi\in \operatorname{PD}(f_\gamma)}
\operatorname{wt}(\pi) = \sum_{\pi\in \operatorname{PD}''(f_\gamma)}
\operatorname{wt}(\pi)=\mathfrak{S}_{\eta_\gamma}(-t_q,\ldots,-t_1;-t_{p+1},\ldots,-t_n).$$

\end{proof}

\begin{example}
    Let $\gamma$ be the permutational preclan in \eqref{permutationalpre}, the following is a pipe dream of $f_\gamma$ in $\operatorname{PD}''(f_\gamma)$. Restricting to region \textbf{(C)} and adding $m$ rows consisting entirely of $\BPD{\B}$'s, we obtain a pipe dream of $\eta_\gamma$.     $$\setlength{\unitlength}{1.2pc}
\def\diag{%
\begin{picture}(0,1)%
   \color{red}
    \linethickness{0.08\unitlength}
    \qbezier(0.2,1.2)(-1,0)(-1.2,-.2)
\end{picture}}
\def\|{%
\begin{picture}(0,1)%
   \color{red}
    \linethickness{0.08\unitlength}
    \qbezier(0.0,1.2)(0,0)(0.0,-.2)
\end{picture}}
    \BPD{\M{1}\M{2}\M{3}\M{4}\M{5}\M{6}\M{7}\M{8}\M{9}\M{10}\M{11}\M{12}\\
    \B\B\|\B\B\B\|\X\X\B\B\B\B\B\diag\\
        \B\B\|\B\B\B\|\X\X\X\X\B\B\diag\B\\
        \B\B\|\B\B\B\|\X\B\B\B\B\diag\B\B\\
\M{1}\M{2}\M{3}\M{4}\M{5}\M{6}\M{7}\M{8}\M{9}\M{10}\M{11}\M{12}
    }\qquad
\BPD{\M{}\M{1}\M{2}\M{3}\M{4}\M{5}\M{6}\M{7}\\\M{3}\X\X\B\B\B\B\J\\\M{6}\X\X\X\X\B\J\\\M{2}\X\B\B\B\J\\{\color{lightgray}\M{1}\B\B\B\J}\\{\color{lightgray}\M{4}\B\B\J}\\{\color{lightgray}\M{5}\B\J}\\{\color{lightgray}\M{7}\J}}
    $$
\end{example}

Now we obtain the main result of this paper. 

\begin{theorem}\label{thm:maindouble}
For a Richardson $(p,m,q)$-preclan $\gamma$, we have 
$$
\mathfrak{S}_{v_\gamma}(x;t)\cdot
\mathfrak{S}_{u_\gamma}(x;t)
=\sum_{w\in S_n} c_{v_\gamma,u_\gamma}^w(t)\cdot \mathfrak{S}_w(x;t),$$
where 
$$c_{v_\gamma,u_\gamma}^w(t)
=\begin{cases}
\mathfrak{S}_{\eta_{w*\gamma}}(-t_q,\ldots,-t_1;-t_{p+1},\ldots,-t_n),\\
\qquad \quad \text{if $\ell(w*\gamma)=\ell(w)+\ell(\gamma)$ and $w*\gamma$ is permutational};\\[1ex]
0, \qquad \text{otherwise}. 
\end{cases}
$$
\end{theorem}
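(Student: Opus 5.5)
The plan is to specialize the triple formula of Theorem~\ref{th:triplecoeffi} at $y=t$ and then apply the preceding Lemma, which evaluates the double affine Stanley polynomial $\widetilde{F}_{f_\gamma}$ at this specialization.

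\textbf{Step 1 (specialize $y=t$).} Setting $y_j=t_j$ for all $j$ in Theorem~\ref{th:triplecoeffi} turns the left-hand side $\mathfrak{S}_{v_\gamma}(x;t)\,\mathfrak{S}_{u_\gamma}(x;y)$ into $\mathfrak{S}_{v_\gamma}(x;t)\,\mathfrak{S}_{u_\gamma}(x;t)$. The basis $\mathfrak{S}_w(x;t)$ on the right is unaffected, since it does not involve $y$. Because the expansion in $\{\mathfrak{S}_w(x;t)\}_{w\in S_n}$ is unique over $\mathbb{Q}[t]$, we obtain
\[
c_{v_\gamma,u_\gamma}^w(t)=c_{v_\gamma,u_\gamma}^w(t;y)\big|_{y=t}.
\]
Consequently $c^w_{v_\gamma,u_\gamma}(t)=0$ unless $\ell(w*\gamma)=\ell(w)+\ell(\gamma)$ and $v_{w*\gamma}=e$. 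When these hold,
\[
c^w_{v_\gamma,u_\gamma}(t)=\widetilde{F}_{f_{w*\gamma}}(-t_q,\ldots,-t_1;-t_p,\ldots,-t_1,-t_{p+1},\ldots,-t_n).
\]

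\textbf{Step 2 (evaluate via the Lemma).} We apply the Lemma just before the statement to $\gamma'':=w*\gamma$, which is a $(p,m,q)$-preclan with $v_{\gamma''}=e$, so $f_{\gamma''}$ is defined. The Lemma gives
\[
\widetilde{F}_{f_{\gamma''}}(-t_q,\ldots,-t_1;-t_p,\ldots,-t_1,-t_{p+1},\ldots,-t_n)
=\mathfrak{S}_{\eta_{\gamma''}}(-t_q,\ldots,-t_1;-t_{p+1},\ldots,-t_n)
\]
if $\gamma''$ is permutational, and $0$ otherwise. The hypothesis $v_{w*\gamma}=e$ can be dropped from the final statement, because a permutational preclan automatically has $v=e$ (as noted in Definition~\ref{def:perm}). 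Combining Steps 1 and 2 gives exactly the stated case split.

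\textbf{Main obstacle.} The only delicate point is checking that the variable substitution in Theorem~\ref{th:triplecoeffi} matches the Lemma's evaluation point exactly after setting $y=t$. In the triple formula, the first argument list is $(-y_q,\ldots,-y_1)$, which becomes $(-t_q,\ldots,-t_1)$ under $y=t$, matching the Lemma. The second argument list $(-t_p,\ldots,-t_1,-t_{p+1},\ldots,-t_n)$ is the same in both.

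This requires the standing assumption $p\ge q$, which the Lemma's sign-reversing involution on region~\textbf{(B)} depends on. That involution uses the fact that the column weights $t_p,\ldots,t_1$ in region~\textbf{(B)} pair off against the row weights $t_q,\ldots,t_1$, so a crossing at $(i,j)$ and its reflection at $(j,i)$ carry opposite weights. This pairing holds only after the identification $y=t$, which is why the specialization must be made before invoking the Lemma rather than after.
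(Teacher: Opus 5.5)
Your proposal is correct and follows exactly the argument the paper intends: the paper gives no separate proof and presents the theorem as the combination of Theorem~\ref{th:triplecoeffi} specialized at $y=t$ with the preceding Lemma applied to $w*\gamma$. Your handling of the $v_{w*\gamma}=e$ condition (permutational preclans have $v=e$) and of the standing assumption $p\geq q$ is also the right bookkeeping.
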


\begin{example}
Let $p=3,m=1,q=2$. 
Consider $v=412356$ and $u=314526$. 
The permutational preclan over $\gamma$ was doubly framed in Figure \ref{fig:HasseEg} (right). 
\def\term#1#2#3#4{
\makebox[0.15\linewidth]{$#1$}
\makebox[0.2\linewidth]{\def\m##1{\makebox[1.2pc]{$##1$}}%
$\begin{array}{l}#2\end{array}$}
\makebox[0.3\linewidth][l]{$\begin{matrix}#3\end{matrix}$}
}
$$\term{w}{w*\gamma}{\text{pipe dreams}}{\text{bumpless pipe dreams}}$$
$$\term{s_5s_4s_3s_2s_1s_3s_4}{\clan{42+..,}\\
}{
\BPD{\M{}\M{4}\M{5}\M{6}\\
\M{2}\B\B\J\\\M{1}\B\J\\}}{
\BPD{\F\H\H\M{2}\\\I\F\H\M{1}\\\M{4}\M{5}\M{6}}
}$$
$$\term{s_4s_3s_2s_1s_3s_4}{\clan{52+.,.}\\
}{
\BPD{\M{}\M{4}\M{5}\M{6}\\
\M{2}\B\B\J\\\M{1}\X\J}
\BPD{\M{}\M{4}\M{5}\M{6}\\
\M{2}\B\X\J\\\M{1}\B\J}
}{
\BPD{\F\H\H\M{2}\\\I\O\F\M{1}\\\M{4}\M{5}\M{6}}
\BPD{\O\F\H\M{2}\\\F\J\F\M{1}\\\M{4}\M{5}\M{6}}
}$$
$$\term{s_3s_2s_1s_3s_4}{\clan{53+,..}\\
}{
\BPD{\M{}\M{4}\M{5}\M{6}\\
\M{2}\X\B\J\\\M{1}\X\J\\}
}{
\BPD{\O\F\H\M{2}\\\O\I\F\M{1}\\\M{4}\M{5}\M{6}}
}$$
Thus 
$$\begin{aligned}
\mathfrak{S}_{v}(x;t)\cdot
\mathfrak{S}_{u}(x;t)
& = \mathfrak{S}_{613425}(x;t)+(t_4+t_5-t_1-t_2)\mathfrak{S}_{51342}(x;t)\\
&\quad +(t_4-t_1)(t_4-t_2)\mathfrak{S}_{41352}(x;t).
\end{aligned}$$
\end{example}

\subsection{Application to simple coefficients}

Recall that the Stanley symmetric function for a permutation $w\in S_\infty$ is the stable limit 
$$
F_w(x) = 
\lim_{k\to \infty} \mathfrak{S}_{1^k\times w}(x),$$
where $1^k\times w=1\cdots k(w(1)+k)(w(2)+k)\cdots$. In particular, 
$$F_w(x_1,\ldots,x_q) = F_w(x_1,\ldots,x_q,0,\ldots)
= \mathfrak{S}_{1^q\times w}(x_1,\ldots,x_q,0,\ldots).
$$
Recall that $g_\gamma\in S_n$ is defined by $g_\gamma(i)=f_\gamma(i-q),\ \text{for $1\le i\le n$}.$

\begin{lemma}\label{lem:affinestanley}
For  a $(p,m,q)$-preclan $\gamma$ with $v_\gamma=e$, we have 
$$\widetilde{F}_{f_\gamma}(x_1,\ldots,x_q)
= F_{g_\gamma}(x_1,\ldots,x_q). $$
\end{lemma}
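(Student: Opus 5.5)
We prove the identity with both sides in the single variables $x_1,\ldots,x_q$ (the $t$-variables set to $0$), by comparing their pipe dream models. The left-hand side $\widetilde{F}_{f_\gamma}(x_1,\ldots,x_q)$ is the Shimozono--Zhang sum over $n$-periodic tilings of $\{1,\ldots,q\}\times\mathbb{Z}$. In each tiling, pipe $i$ enters at position $i$ at the bottom and exits at position $f_\gamma(i)$ at the top, and each crossing tile in row $r$ has weight $x_r$. The right-hand side is $\mathfrak{S}_{1^q\times g_\gamma}(x_1,\ldots,x_q,0,\ldots)$. This is the sum over ordinary pipe dreams of $1^q\times g_\gamma$ whose crossing tiles all lie in the first $q$ rows. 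The plan is to build a weight-preserving bijection between these two sets of tilings.

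The first step is to cut the periodic strip along a fundamental domain. Reindex the sources as $1-q,\ldots,n-q$, so that $g_\gamma(i)=f_\gamma(i-q)$. By Lemma~\ref{ggamma}, $f_\gamma\in\operatorname{Bound}(q,n)$ and these values form a genuine permutation of $[n]$. Since $f_\gamma(i)\le i+n$, each pipe travels at most $n$ steps to the right. In a $q$-row periodic tiling, a pipe can move only as far right as the rows allow. From this one checks that, in the window where sources are labelled $1-q,\ldots,n-q$ and targets $1,\ldots,n$, no pipe wraps around the period.

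The second step identifies the cut-open window with an ordinary pipe dream. After the shift by $q$, the window is a $q\times(\text{width})$ trapezoid, with sources on its bottom edge and targets on its top edge. Rotating the trapezoid, and adding the forced elbow-only staircase for the $q$ leading fixed points of $1^q\times g_\gamma$, turns it into the top $q$ rows of an ordinary pipe dream of $1^q\times g_\gamma$. The lower rows are then all elbows, which is allowed because the $x_r$ with $r>q$ are set to $0$. Conversely, an ordinary pipe dream of $1^q\times g_\gamma$ whose crossings all lie in rows $\le q$ unrolls, by periodic extension, into a periodic tiling for $f_\gamma$.

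Under this correspondence, row $r$ goes to row $r$, so weights are preserved. The reducedness condition (no two pipes cross twice) also matches. This is the same kind of diagonal-strip identification used in Lemma~\ref{lemma7.3} and in the lemma preceding Theorem~\ref{thm:maindouble}, where regions (A), (D) were forced to consist of elbows.

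The main obstacle is the claim that no pipe wraps around the period, that is, that the periodic tiling really lives in a single fundamental domain. I would prove it from the boundedness $i\le f_\gamma(i)\le i+n$ together with the fact that $g_\gamma$ is an ordinary permutation. If two periodic translates of pipes met across the seam, the corresponding values of $g_\gamma$ would have to exceed $n$ or fall below $1$, which Lemma~\ref{ggamma} excludes. If a direct bijection proves too fiddly, I would instead use the known fact that, for a bounded affine permutation, the finite-variable affine Stanley polynomial equals the ordinary Stanley polynomial of the corresponding window permutation when that window is an honest element of $S_n$. This is the standard ``affine Stanley function of a non-wrapping affine permutation'' statement of Lam, and I would cite it in that form.
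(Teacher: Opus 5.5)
Your route is correct, but it differs genuinely from the paper's.

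\textbf{How the paper argues.} The paper cuts the strip along the vertical line and the diagonal of its figure. It uses that the last $q$ values of $f_\gamma$ increase to force the whole triangle \textbf{(B)}, hence \textbf{(B$'$)}, to consist of elbows. The window is then the trapezoid \textbf{(A)}, the top $q$ rows of the staircase of $g_\gamma$ itself. This gives $\widetilde{F}_{f_\gamma}(x_1,\ldots,x_q)=\mathfrak{S}_{g_\gamma}(x_1,\ldots,x_q,0,\ldots)$. A separate algebraic step then finishes: the first $q+k$ values of $1^k\times g_\gamma$ increase, so $\mathfrak{S}_{1^k\times g_\gamma}$ is symmetric in those variables, and this slides the identity to $\mathfrak{S}_{1^q\times g_\gamma}(x_1,\ldots,x_q,0,\ldots)=F_{g_\gamma}(x_1,\ldots,x_q)$.

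\textbf{How you argue.} You cut only along the seam antidiagonal. With source $c$ under cell $(q,c)$, these are the cells $(r,1-r)$ modulo $n$; in the paper's figure they are the gray half-elbows and the closing $J$-cells. A crossing at $(r,c)$ contributes the letter $s_{r+c-1}$ (mod $n$) to a reduced word of $i\mapsto f_\gamma(i-q)$, which is $g_\gamma\in S_n$. Hence no seam cell can be a crossing. The fundamental domain is then a parallelogram with $n-1$ cells per row. Shifting it $q$ columns right and padding with the $1^q$ staircase gives exactly the crossing region of $1^q\times g_\gamma$ in rows $\le q$.

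\textbf{What each route buys.} Your single row-preserving bijection lands directly on the Stanley side and needs no symmetrization. It never uses the monotonicity of the last $q$ values of $f_\gamma$. So it proves the identity for every $f\in\operatorname{Bound}(q,n)$ whose shifted window lies in $S_n$. For example, $f=[3,5,4]\in\operatorname{Bound}(2,3)$ has window $213$ and $\widetilde{F}_f(x_1,x_2)=x_1+x_2=F_{213}(x_1,x_2)$. The paper's intermediate identity would instead give $\mathfrak{S}_{213}(x_1,x_2,0)=x_1$. In exchange, the paper's route gives a literal picture of each affine pipe dream as a truncated pipe dream of $g_\gamma$.

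\textbf{Points to tighten.}

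The window that makes a staircase padding fit is this parallelogram, not a trapezoid, and the map is a translation, not a rotation. Padding the paper's trapezoid \textbf{(A)} would need a full $q\times q$ square of elbows, and that is exactly where monotonicity is needed. In the example, $1^2\times 213=12435$ has a pipe dream with a single crossing at $(2,2)$, inside that square.

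Your argument for the seam only shows that distinct periodic translates of the window never cross, since their sources and targets are in the same order and two pipes cross at most once. You still need to explain why a seam crossing would be such a meeting. One option is the $s_0$ argument above. Another is to note that at every level the $n$ pipes of the window occupy a contiguous interval. Conservation in each row then forces this interval to move right by exactly one, with an elbow at its left end. Starting from $[1-q,n-q]$ at the bottom, those left ends are precisely the seam cells.

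Your fallback citation of Lam's $\widetilde{F}_w=F_w$ for $w\in S_n$ also works. You must check, though, that the pipe-dream convention produces $g_\gamma$ and not $g_\gamma^{-1}$.
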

\begin{proof} 
By Lemma \ref{ggamma}, $g_\gamma$
is an ordinary permutation in $S_n$.
Let $\pi$ be a pipe dream for the affine Stanley symmetric function $f_\gamma$, see Figure \ref{fig:pipedreamofg} for an illustration. Since the last $q$ values of $f_\gamma$ are increasing and larger than $n$, the region $\textbf{(B)}$ of $\pi$ consists of only $\BPD{\B}$. Thus the region $\textbf{(B')}$ also consists of only $\BPD{\B}$.
By appending the region $\textbf{(C)}$ consisting of only $\BPD{\B}$ at the bottom of region $\textbf{(A)}$, we find that region $\textbf{(A)}\cup\textbf{(C)}$ forms a pipe dream of $g_\gamma$. That is,
$$\widetilde{F}_{f_\gamma}(x_1,\ldots,x_q)
= \mathfrak{S}_{g_\gamma}(x_1,\ldots,x_q,0,\ldots,0). $$

It is easy to see $\mathfrak{S}_{g_\gamma}(x_1,\ldots,x_n)=\mathfrak{S}_{1\times g_\gamma}(0,x_1,\ldots,x_n)$.
Since the first  $q+1$ values of $1\times g_\gamma$ are increasing, we have $\mathfrak{S}_{1\times g_\gamma}(0,x_1,\ldots,x_q,0,\ldots,0)$ is symmetric. Therefore,
\begin{align*}
\mathfrak{S}_{g_\gamma}(x_1,\ldots,x_q,0,\ldots,0)&=\mathfrak{S}_{1\times g_\gamma}(0,x_1,\ldots,x_q,0,\ldots,0)\\
&=\mathfrak{S}_{1\times g_\gamma}(x_1,\ldots,x_q,0,0,\ldots,0)\\
&=\cdots\\
&=\mathfrak{S}_{1^q\times g_\gamma}(x_1,\ldots,x_q,0,0,\ldots,0)\\
&=F_{g_\gamma}(x_1,\ldots,x_q). 
\qedhere
\end{align*}

\begin{figure}
    \centering
    $$
    \setlength{\unitlength}{1.2pc}
\def\diag{%
\begin{picture}(0,1)%
   \color{red}
    \linethickness{0.08\unitlength}
    \qbezier(0.2,1.2)(-1,0)(-1.2,-.2)
\end{picture}}
\def\|{%
\begin{picture}(0,1)%
   \color{red}
    \linethickness{0.08\unitlength}
    \qbezier(0.0,1.2)(0,0)(0.0,-.2)
\end{picture}}
\def\_{%
\begin{picture}(0,1)%
   \color{red}
    \linethickness{0.08\unitlength}
    \qbezier(-12.5,0.0)(0,0)(0,0)
\end{picture}}
\def\A#1{\raisebox{-0.5\unitlength}[0pc][0pc]{%
    \makebox[0pc][c]{\huge\bf#1}}}
\begin{array}{l}
\left.\BPD{
\M{}\M{}\M{}\O\|\O\O\O\O\O\O\O\O\O\O\O\O\diag\\
\M{}\M{}\O\O\|\O\O\O\O\A{(A)}\O\O\O\O\O\O\O\diag\O\\
\M{}\O\A{(B')}\O\O\|\O\O\O\O\O\O\O\O\O\O\diag\O\A{(B)}\O\\
\O\O\O\O\|\O\O\O\O\O\O\O\O\O\diag\O\O\O\_}\right\}{\,\,\scriptstyle q}\\
\left.\BPD{
\M{}\M{}\M{}\M{}\O\O\O\O\O\O\O\O\\
\M{}\M{}\M{}\M{}\O\O\O\O\O\O\O\\
\M{}\M{}\M{}\M{}\O\O\A{(C)}\O\O\O\O\\
\M{}\M{}\M{}\M{}\O\O\O\O\O\\
\M{}\M{}\M{}\M{}\O\O\O\O\\
\M{}\M{}\M{}\M{}\O\O\O\\
\M{}\M{}\M{}\M{}\O\O\\
\M{}\M{}\M{}\M{}\O\\
}\right.\phantom{\,\,\scriptstyle q}
\\
\rule{4\unitlength}{0pc}
\end{array}$$
\vspace{-.3cm}
\caption{The pipe dream of $g_{\gamma}$}
\label{fig:pipedreamofg}
\end{figure}
\end{proof}

\def\m#1{\makebox[1.2pc]{$#1$}}
For example, let $\gamma$ be as in \eqref{eq:Egvgamma=e}, where $f_\gamma=\m{4}\m{9}\m{10}\m{6}\m{5}\m{8}\m{11}\m{12}\m{13}\m{14}\m{18}, g_\gamma=\m{1}\m{2}\m{3}\m{7}\m{4}\m{9}\m{10}\m{6}\m{5}\m{8}\m{11}$.
$$
\setlength{\unitlength}{1.2pc}
\def\diag{%
\begin{picture}(0,1)%
   \color{red}
    \linethickness{0.08\unitlength}
    \qbezier(0.2,1.2)(-1,0)(-1.2,-.2)
\end{picture}}
\def\|{%
\begin{picture}(0,1)%
   \color{red}
    \linethickness{0.08\unitlength}
    \qbezier(0.0,1.2)(0,0)(0.0,-.2)
\end{picture}}
\def\_{%
\begin{picture}(0,1)%
   \color{red}
    \linethickness{0.08\unitlength}
    \qbezier(-11.5,0.0)(0,0)(0,0)
\end{picture}}
\def\A#1{\raisebox{-0.5\unitlength}[0pc][0pc]{%
    \makebox[0pc][c]{\huge\bf#1}}}
\begin{array}{l}
\BPD{
\M{}\M{}\M{}\M{}\M{1}\M{2}\M{3}\M{4}\M{5}\M{6}\M{7}\M{8}\M{9}\M{10}\M{11}\\
\M{}\M{}\M{}{\color{lightgray}\F}\|\B\B\B\X\X\X\B\B\B\B\J\diag\\
\M{}\M{}{\color{lightgray}\F\B}\|\B\B\B\B\X\B\B\B\B\J\diag\O\\
\M{}{\color{lightgray}\F\B\B}\|\B\B\B\X\X\X\B\B\J\diag\O\O\\
{\color{lightgray}\F\B\B\B}\|\B\B\B\X\X\X\B\J\diag\O\O\O\_\\
{\color{lightgray}\M{1}\M{2}\M{3}}\M{{\color{lightgray}7}4}{\color{lightgray}\B\B\B\B\B\B\J}\M{}\M{}\M{}\M{}\\
\M{}\M{}\M{}\M{9}{\color{lightgray}\B\B\B\B\B\J}\\
\M{}\M{}\M{}\M{10}{\color{lightgray}\B\B\B\B\J}\\
\M{}\M{}\M{}\M{6}{\color{lightgray}\B\B\B\J}\\
\M{}\M{}\M{}\M{5}{\color{lightgray}\B\B\J}\\
\M{}\M{}\M{}\M{8}{\color{lightgray}\B\J}\\
\M{}\M{}\M{}\M{11}{\color{lightgray}\J}\\
}
\end{array}
$$

For a partition $\lambda=(\lambda_1,\ldots,\lambda_k)$, we can associate  a $k$-Grassmannian permutation  $w_{\lambda}$ to $\lambda$  by 
$$ w_\lambda(i)=i+\lambda_{k+1-i},\quad \text{for}\ 1\leq i\leq k,$$ and let the remaining entries $w_{k+1}<\cdots<w_n$ to be the elements of $[n]\backslash\{w_1,\ldots,w_k\}$ written in increasing order. 
A permutation $w=w(1)w(2)\cdots w(n)\in S_n$ is called \emph{321-avoiding} if there does not exist a sequence
$i<j<k$ such that  $w(i) > w(j) > w(k).$

\begin{lemma}[{\cite[Section 2]{BJS93}}]
\label{skewshape}
Let $w\in S_n$. Then $w$ is  $321$-avoiding if and only if $w$ can be written as $w=w_{\lambda/\mu}:=w_{\lambda}w^{-1}_{\mu}$ with $\ell(w)=\ell(w_\lambda)-\ell(w_\mu^{-1})$ for some partitions $\mu\subseteq\lambda$. 
\end{lemma}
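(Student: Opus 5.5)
The plan is to prove both directions directly from inversion sets. Throughout, $w_\lambda$ is Grassmannian with its unique descent at $k$, and its inverse $w_\lambda^{-1}$ is inverse Grassmannian. I use the standard fact that for $x,y\in S_n$ we have $\ell(xy)=\ell(x)+\ell(y)$ if and only if no position inversion of $y$ is a value inversion of $x$. Equivalently, $\operatorname{Inv}(y)\cap \operatorname{Inv}(x)=\varnothing$, where pairs are read as positions for $y$ and as their images under $y$ for $x$.

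\emph{($\Leftarrow$).} Suppose $w=w_\lambda w_\mu^{-1}$ with $\ell(w)=\ell(w_\lambda)-\ell(w_\mu^{-1})$. Then $w_\lambda=w\cdot w_\mu$ with lengths adding, so $w\le w_\lambda$ in the right weak order. Hence the set of inverted value pairs of $w$, namely $\{(a<b): a \text{ appears to the right of } b \text{ in } w\}$, is contained in the corresponding set for $w_\lambda$. If $w$ contained a $321$ pattern with values $a<b<c$, then all three pairs among $a,b,c$ would be inverted in $w$, and therefore also in $w_\lambda$. That would put a $321$ pattern in $w_\lambda$. This is impossible, because a permutation with at most one descent is a union of two increasing runs.

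\emph{($\Rightarrow$).} Let $w\in S_n$ be $321$-avoiding, and let $I=\{i: w(i)>i\}=\{i_1<\cdots<i_k\}$ be its excedance positions. The key steps are as follows.

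First, I check that $w|_I$ and $w|_{[n]\setminus I}$ are both increasing. This is the standard fact that for $321$-avoiding permutations the excedance values and the non-excedance values each form increasing subsequences; it is proved by a short pigeonhole argument on a hypothetical descent within either class. This is the step I expect to need the most care.

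Second, I define $w_\lambda$ by $w_\lambda(r)=w(i_r)$ for $r\le k$ and by listing the remaining values increasingly. Then $w_\lambda$ is $k$-Grassmannian, so $w_\lambda=w_\lambda$ for the partition $\lambda$ read off from $w(I)$. I also set $u=w_\lambda^{-1}w$. This $u$ sends $i_r\mapsto r$ and sends the complement of $I$, in order, to $k+1,\dots,n$. Thus $u$ is inverse Grassmannian, so $u^{-1}=w_\mu$ for the partition $\mu$ determined by the $k$-set $I$, and $w=w_\lambda w_\mu^{-1}$.

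Third, I prove length additivity $\ell(w_\lambda)=\ell(w)+\ell(w_\mu)$, where $w_\lambda=w\cdot u^{-1}$. By the criterion above applied to $w\cdot u^{-1}$, it suffices to show that every position inversion of $u$, that is, every pair $j<i$ with $j\notin I$ and $i\in I$, is a non-inversion of $w$. This holds because $w(j)\le j<i<w(i)$.

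Finally, $\mu\subseteq\lambda$ follows. Indeed, $w_\mu\le w_\lambda$ in weak order, since $w_\lambda=w\,w_\mu$ reduced, and for $k$-Grassmannian permutations this comparison is exactly containment of Young diagrams.
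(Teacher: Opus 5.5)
Your proof is correct. The paper does not prove this lemma; it cites \cite{BJS93} and records the same excedance recipe you use: with $I(w)=\{i:w(i)>i\}$ and $J(w)=w(I(w))$, deleting the rows in $I^c(w)$ and the columns in $J(w)$ from the Rothe diagram gives the skew shape $\lambda/\mu$. So your argument is a self-contained substitute for the citation, built on the same decomposition, rather than a different one.

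The difference is in the tools. The Rothe-diagram route of Billey--Jockusch--Stanley also identifies $D(w)$ with the skew diagram $\lambda/\mu$ itself. You work with inversion sets and the weak order, which is more elementary. It also makes visible that your length-additivity step uses only the definition of $I$; $321$-avoidance enters only through the monotonicity of $w|_I$ and $w|_{[n]\setminus I}$.

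Since you flagged that monotonicity step, here is the pigeonhole argument in full.
\begin{itemize}
\item Suppose $i<j$ lie in $I$ with $w(i)>w(j)$. The two values $w(i),w(j)>j$ sit at positions $\le j$. Hence some position $l>j$ carries a value $\le j<w(j)$, and $i<j<l$ is a $321$.
\item Suppose $i<j$ lie in $[n]\setminus I$ with $w(i)>w(j)$. At most $w(i)-2$ of the $i-1\ge w(i)-1$ positions before $i$ carry values below $w(i)$, because $w(j)$ sits after $i$. Hence some $h<i$ has $w(h)>w(i)$, and $h<i<j$ is a $321$.
\end{itemize}

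You can also get $\mu\subseteq\lambda$ without the weak order, since $\lambda_{k+1-r}-\mu_{k+1-r}=w(i_r)-i_r>0$.

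Two wording fixes. In your length criterion, the image pair $\{y(j),y(i)\}$ must be a non-inversion of $x$ as a pair of positions of $x$; calling it a ``value inversion'' is misleading, though your application to $w\cdot u^{-1}$ is right. And ``$w_\lambda=w_\lambda$'' should say that your permutation equals $w_\lambda$ with $\lambda_{k+1-r}=w(i_r)-r$.
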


For a 321-avoiding permutation $w\in S_n$, an explicit way of constructing $\lambda$ and $\mu$ is as follows. 
Denote $I(w)=\{i:w(i)>i\}$ and $I^c(w)=\{i:w(i)\leq i\}$ and $J(w)=\{w(i):i\in I(w)\}$. By \cite{BJS93}, after deleting the empty rows in $I^c(w)$ and empty columns in $J(w)$ from its Rothe diagram $D(w)$ and then flipping each column, we obtain a skew shape Young diagram, denoted as $\lambda/\mu$, where $\lambda$ is the minimal partition. 

For example, $w=312465$ is $321$-avoiding with $I(w)=\{1,5\}$, $I^c(w)=\{2,3,4,6\}$ and $J(w)=\{3,6\}$. After deleting rows in $I^c(w)$ and columns in $J(w)$ from $D(w)$ and flipping each column, we have $\lambda=(4,2)$ and $\mu=(3,0)$. Thus $312465=w_{(4,2)}w^{-1}_{(3,0)}=361425(152346)^{-1}$.

Note that if $u$ is $321$-avoiding, then $u^{-1}$ is  also $321$-avoiding. By Lemma \ref{skewshape}, we can also write $u^{-1}=w_{\lambda}w_{\mu}^{-1}$ for some partitions $\mu\subseteq \lambda$. 
Then we have 
\begin{equation}\label{uu}
u=w_{\mu}w^{-1}_{\lambda}=:w_{\mu}\tilde{u}, 
\end{equation}
 where $\ell(\tilde{u})=\ell(u)+\ell(w_{\mu})$.
For example, let $u^{-1}=231465$, which is  321-avoiding, then we can write $u^{-1}=w_{(3,1,1)}w^{-1}_{(2,0,0)}$. Thus
$u=w_{(2,0,0)}w^{-1}_{(3,1,1)}$ and $\tilde{u}=w^{-1}_{(3,1,1)}=(236145)^{-1}=412563$.

Recall that the Edelman--Greene coefficient $EG^w_{\mu}$ is defined by
 $$F_{w}(x)=\sum_{\mu}EG^w_{\mu}\cdot s_{\mu}(x),$$
 which is known to be the number of reduced word tableaux of $w$ with shape $\mu$ \cite{EG}; for more combinatorial formulas, see \cite{LLS18,FGS}.

\begin{theorem}\label{thm:egcoeff}
Let $v$ be an inverse Grassmannian permutation and $u$ be a $321$-avoiding permutation. Let $\tilde{u}=w^{-1}_{\lambda}$ as in \eqref{uu} such that $u=w_{\mu}\cdot\tilde{u}$. Then we have 
$$c_{v,u}^w=\begin{cases}
EG^{g_{\gamma}}_{\mu}, & 
\text{if $\gamma=w*\gamma_{v,\tilde{u}}$ 
satisfies $\ell(\gamma)=\ell(w)+\ell(\gamma_{v,\tilde{u}})$ and 
$v_\gamma=e$}; \\[1ex]
0,& \text{otherwise}.  
\end{cases}$$
\end{theorem}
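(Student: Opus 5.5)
The plan is to reduce the statement to the triple expansion of Theorem \ref{th:triplecoeffi}, specialized to single Schubert polynomials, applied to the pair $(v,\tilde u)$. Throughout, $v$ is $p$-inverse Grassmannian and $\tilde u=w_\lambda^{-1}$ is $q$-inverse Grassmannian, where $q$ is the number of parts of $\lambda$. The key identity is a skew divided difference relation. Since $u=w_\mu\tilde u$ with $\ell(\tilde u)=\ell(u)+\ell(w_\mu)$, I would express $\mathfrak{S}_u(x)$ through $\mathfrak{S}_{\tilde u}$ by extracting the coefficient of the Grassmannian class. Concretely, I would use the coproduct formula for double Schubert polynomials in the separated variables $y$:
\[
\mathfrak{S}_{\tilde u}(x;y)=\sum_{\tilde u\doteq a^{-1} b}\mathfrak{S}_{a}(-y;0)\,\mathfrak{S}_{b}(x;0),
\]
with the sum over length-additive factorizations. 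I would set $t=0$ and read off the coefficient of the Schubert class of $w_\mu$ in the $y$-variables; this coefficient is exactly $\mathfrak{S}_u(x)$. Because $w_\mu$ is $q$-Grassmannian, that Schubert class in $y_1,\dots,y_q$ is the Schur polynomial $s_\mu(y_1,\dots,y_q)$, up to the sign and reversal conventions $-y_q,\dots,-y_1$.

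Next I apply Theorem \ref{th:triplecoeffi} with $\gamma=\gamma_{v,\tilde u}$ and $t=0$:
\[
\mathfrak{S}_v(x)\,\mathfrak{S}_{\tilde u}(x;y)=\sum_w c^w_{v,\tilde u}(0;y)\,\mathfrak{S}_w(x),
\]
with $c^w_{v,\tilde u}(0;y)=\widetilde{F}_{f_{w*\gamma}}(-y_q,\dots,-y_1;0)$ when $\ell(w*\gamma)=\ell(w)+\ell(\gamma)$ and $v_{w*\gamma}=e$, and $0$ otherwise. By Lemma \ref{lem:affinestanley}, this coefficient is $F_{g_{w*\gamma}}(-y_q,\dots,-y_1)$, which is a symmetric polynomial in $q$ variables. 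Expanding it as $\sum_\nu EG^{g}_\nu s_\nu(-y)$ and extracting the $s_\mu$ coefficient on both sides should yield $c^w_{v,u}=EG^{g_{w*\gamma}}_\mu$. To make this rigorous I would compare the two sides in the basis $\{s_\nu(-y)\}_\nu\otimes\{\mathfrak{S}_w(x)\}_w$, using that the Schur polynomials in $q$ variables with $\ell(\nu)\le q$ are linearly independent.

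The main obstacle is justifying the coefficient extraction on the left side. Specifically, I must show that after $t=0$, the expansion of $\mathfrak{S}_{\tilde u}(x;y)$ in products of $y$-Schubert and $x$-Schubert polynomials has all its $y$-dependence carried by $q$-Grassmannian classes, namely Schur polynomials in $y_1,\dots,y_q$. I must also show that the coefficient of $s_\mu$ is precisely $\mathfrak{S}_{w_\mu^{-1}\cdot\ldots}$, that is, $\mathfrak{S}_u(x)$ with the correct inversion convention. I would first try to handle this using the symmetry of $\widetilde F$ on the right side: since the $y$-dependence there is symmetric in $y_1,\dots,y_q$, the left side must be as well. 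Then I would identify the left-side coefficient via the Cauchy-type formula $\mathfrak{S}_{\tilde u}(x;y)=\sum_{a}\mathfrak{S}_{a}(-y)\mathfrak{S}_{a\tilde u}(x)$, and check which $a$ give nonzero symmetric contributions, namely the Grassmannian $a=w_\mu$, via the length-additivity in \eqref{uu}. The sign convention, involving $-y$ and the reversal of order, must be tracked carefully, but it is harmless because $F_g$ is symmetric and homogeneous, so the signs cancel against those in $s_\mu(-y)$.
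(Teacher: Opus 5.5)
Your proposal is correct and is essentially the paper's proof: specialize Theorem~\ref{th:triplecoeffi} at $t=0$ for the pair $(v,\tilde u)$, rewrite the coefficient as $F_{g_\gamma}$ via Lemma~\ref{lem:affinestanley}, expand $\mathfrak{S}_{\tilde u}$ by Macdonald's Cauchy formula, and compare coefficients of $s_\mu(y_1,\ldots,y_q)$. The paper uses the step you flag as the main obstacle without comment, and it has a one-line justification that avoids your symmetry detour: in a length-additive factorization $\tilde u^{-1}=b^{-1}a$, every right descent of $a$ is a descent of $\tilde u^{-1}=w_\lambda$, so $a=w_\mu$ is $q$-Grassmannian with $\mathfrak{S}_a(y)=s_\mu(y_1,\ldots,y_q)$ and $b=w_\mu\tilde u=u$.
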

\begin{proof}
By setting $t=0$ and replacing $y$ by $-y$ in Theorem \ref{th:triplecoeffi}, we have 
$$\mathfrak{S}_v(x)\cdot
\mathfrak{S}_{\tilde{u}}(x;-y)=
\sum_{w\in S_\infty} c_{v,\tilde{u}}^w(-y)\cdot \mathfrak{S}_w(x),$$
where, by letting $\gamma=w*\gamma_{v,\tilde{u}}$,
$$c_{v,\tilde{u}}^w(-y)
=F_{g_{\gamma}}(y)
=\sum_{\mu}EG^{g_{\gamma}}_{\mu}\cdot
s_{\mu}(y_1,\ldots,y_q).$$
On the other hand, since $u=w_{\mu}\tilde{u}$, by the Cauchy formula of the double Schubert polynomials  \cite{MacDonald}, we have
$$\mathfrak{S}_{\tilde{u}}(x;-y)=\sum_{\text{reduced }\tilde{u}=w^{-1}_\mu\cdot u}
\mathfrak{S}_{u}(x)\cdot
s_{\mu}(y_1,\ldots,y_q).$$
The theorem follows by comparing the coefficients of $s_\mu(y_1,\ldots,y_q)$.
\end{proof}

\begin{example}
Following our running Example \ref{eg:triplep>q} where $v=412356$ and $u=314526$, let $t=0$ and replace $y_i$ by $-y_i$, we have
\begin{align*}
    \mathfrak{S}_{v}(x)\cdot
\mathfrak{S}_{u}(x;-y)
 =\, & \mathfrak{S}_{613425}(x)+ s_{(1)}(y_1,y_2)\mathfrak{S}_{612435}(x) +s_{(1)}(y_1,y_2)\mathfrak{S}_{51342}(x)\\ 
 & + s_{(2)}(y_1,y_2)\mathfrak{S}_{612345}(x)
 +[s_{(2)}(y_1,y_2)+s_{(1,1)}(y_1,y_2)]\mathfrak{S}_{51243}(x)\\
 & +s_{(1,1)}(y_1,y_2)\mathfrak{S}_{41352}(x) + [s_{(3)}(y_1,y_2)+s_{(2,1)}(y_1,y_2)]\mathfrak{S}_{51234}(x)\\
 & +s_{(2,1)}(y_1,y_2)\mathfrak{S}_{41253}(x)+s_{(3,1)}(y_1,y_2)\mathfrak{S}_{4123}(x).
\end{align*}
On the other hand, we have 
\begin{align*}
    \mathfrak{S}_{u}(x;-y)=&\,\mathfrak{S}_{31452}(x)+s_{(1)}(y_1,y_2)\mathfrak{S}_{21453}(x)+s_{(1,1)}(y_1,y_2)\mathfrak{S}_{12453}(x)
    +s_{(2)}(y_1,y_2)\mathfrak{S}_{21354}(x)\\
    &+s_{(2,1)}(y_1,y_2)\mathfrak{S}_{12354}(x)+s_{(3)}(y_1,y_2)\mathfrak{S}_{21345}(x)+s_{(3,1)}(y_1,y_2)\mathfrak{S}_{12345}(x).
\end{align*}
Comparing the coefficients, we get 
\begin{align*}
 \mathfrak{S}_{412356}(x)\mathfrak{S}_{31452}(x)&=\mathfrak{S}_{613425}(x)\\
 \mathfrak{S}_{412356}(x)\mathfrak{S}_{21453}(x)&=\mathfrak{S}_{612435}(x)+\mathfrak{S}_{51342}(x)\\
 \mathfrak{S}_{412356}(x)\mathfrak{S}_{12453}(x)&=\mathfrak{S}_{51243}(x)+\mathfrak{S}_{41352}(x)\\
\mathfrak{S}_{412356}(x)\mathfrak{S}_{21354}(x)&=\mathfrak{S}_{612345}(x)+\mathfrak{S}_{51243}(x)\\
    \mathfrak{S}_{412356}(x)\mathfrak{S}_{12354}(x)&=\mathfrak{S}_{41253}(x)+\mathfrak{S}_{51234}(x)\\
\mathfrak{S}_{412356}(x)\mathfrak{S}_{21345}(x)&=\mathfrak{S}_{51234}(x)\\
\mathfrak{S}_{412356}(x)\mathfrak{S}_{12345}(x)&=\mathfrak{S}_{412356}(x).
\end{align*} 
\end{example}

\appendix

\section{Quiver Representations}
\label{sec:Sorbit}

\def\ddim{\operatorname{\mathbf{dim}}}
\def\vv{\mathbf{v}}
\def\Hom{\operatorname{Hom}}
\def\D#1#2#3#4{\begin{matrix}
{\!\!\xymatrix@!=0pc{
\ar@{}[d]|<<{\phantom{A}}="uu"
\ar@{}[d]|>>>{\phantom{A}}="dd"
\ar"uu";[r]^{#3}
\ar"dd";[dr]_{#4}
&\makebox[0pc][l]{{$#1$}}\\
&\makebox[0pc][l]{{$#2$}}}}\\
\end{matrix}}

\subsection{Quiver representations}
A quiver is nothing but a directed graph. 
Formally, a quiver is $Q=(Q_0,Q_1,s,t)$,  
where $Q_0$ is the set of vertices, $Q_1$ is the set of arrows, and $s,t:Q_1\to Q_0$ are the functions indicating the source and target of an arrow. 
A representation of a quiver $Q$ is a family of finite-dimensional vector spaces $\{V_i\}_{i\in Q_0}$ indexed by vertices equipped with a family of linear maps $\{f_{h}:V_{s(h)}\to V_{t(h)}\}_{h\in Q_1}$ indexed by arrows. 
The category of quiver representations forms an abelian category with morphisms given by commutative diagrams. 
In particular, we can define the notion of isomorphism and direct sums. 
We say a representation is indecomposable if it cannot be written as a direct sum of two nonzero representations. 
For a quiver representation $V=(\{V_i\},\{f_{h}\})$, we define its dimension vector 
$$\ddim V = (\dim V_i)_{i\in Q_0}\in \mathbb{N}^{Q_0}. $$
Recall the following classical theorem  of Gabriel \cite{Gabriel}. 

\begin{theorem}[Gabriel]\label{Gabriel}
If the underlying graph of $Q$ is an ADE Dynkin diagram, then up to isomorphism, there are finitely many indecomposable representations. 
They are bijective to positive roots in the corresponding root system via $\ddim$ if we identify the standard basis of $\mathbb{Z}^{Q_0}$ with the simple roots. 
\end{theorem}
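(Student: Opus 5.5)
The statement is Gabriel's classical theorem. Since the paper quotes it from \cite{Gabriel} and only needs it as input, I would follow the Bernstein--Gelfand--Ponomarev approach via reflection functors rather than Gabriel's original orbit-dimension argument. Throughout, let $\Phi^+$ be the positive roots of the finite root system attached to the ADE graph. Let $W$ be its Weyl group, with simple reflections $s_i$ acting on $\mathbb{Z}^{Q_0}$ via the symmetric bilinear form $(\alpha,\beta)$ whose associated quadratic (Tits) form is
$$q(d)=\sum_{i\in Q_0}d_i^2-\sum_{h\in Q_1}d_{s(h)}d_{t(h)}.$$
For ADE graphs $q$ is positive definite. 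This is the only place the Dynkin hypothesis enters.

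\textbf{Step 1 (reflection functors).} For a sink $i$ of $Q$, define $R_i^+$ from representations of $Q$ to representations of $\sigma_iQ$, the quiver with all arrows at $i$ reversed. It replaces $V_i$ by the kernel of $\bigoplus_{h:j\to i}V_j\to V_i$ and keeps the other spaces. Dually, $R_i^-$ is defined at a source using a cokernel. I will prove the following for an indecomposable $V$:
\begin{itemize}
\item[(a)] either $V\cong S_i$, the simple representation at $i$, and then $R_i^+V=0$;
\item[(b)] or $R_i^+V$ is indecomposable, $R_i^-R_i^+V\cong V$, and $\ddim R_i^+V=s_i(\ddim V)$.
\end{itemize}
The key point in (b) is that if $V$ has no summand $S_i$, then the map $\bigoplus V_j\to V_i$ is surjective, since otherwise a complement of its image would split off as a copy of $S_i$. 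Surjectivity makes the kernel computation compute $s_i$ on dimension vectors, and makes $R_i^-R_i^+\cong\mathrm{id}$ on this subcategory. Indecomposability then transfers because $R_i^\pm$ are mutually inverse additive functors on the relevant subcategories. I expect this bookkeeping, together with the exactness and surjectivity checks, to be the main technical obstacle, although it is routine linear algebra.

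\textbf{Step 2 (indecomposables give positive roots).} Fix an admissible ordering $i_1,\dots,i_n$ of the vertices, so that each $i_k$ is a sink of $\sigma_{i_{k-1}}\cdots\sigma_{i_1}Q$. The composite of reflections returns to $Q$ and acts on dimension vectors by the Coxeter element $c=s_{i_n}\cdots s_{i_1}$. Because $W$ is finite and $c$ has no nonzero fixed vector (by positive definiteness of $q$), $1+c+\cdots+c^{h-1}=0$ for the order $h$ of $c$. Hence for any nonzero $d\ge0$ some $c^r$-partial product $s_{i_k}\cdots s_{i_1}c^r d$ fails to be nonnegative.

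Now apply the reflection functors to an indecomposable $V$ along this periodic sequence. By (b), as long as we never hit case (a), the dimension vectors remain nonnegative, which contradicts the previous paragraph. So at some stage the transformed representation is $S_{i_k}$. Inverting the functors along the way, $V$ is the image of $S_{i_k}$ under a sequence of $R^-$'s. Therefore $\ddim V=w(\alpha_{i_k})$ is a root, and it is positive since $\ddim V\ge 0$. Moreover $V$ is determined up to isomorphism by this sequence, and the sequence is determined by $\ddim V$ because the procedure depends only on dimension vectors. This gives injectivity of $\ddim$ on indecomposables.

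\textbf{Step 3 (surjectivity and finiteness).} Conversely, for $\beta\in\Phi^+$, run the same Coxeter iteration on $\beta$. Since $c$ has no fixed vector, the iteration eventually reaches a simple root $\alpha_{i_k}$ at the step just before positivity fails. This is the standard fact that a reflection sending a positive root to a negative one must be $s_i$ acting on $\alpha_i$. Applying the inverse functors to $S_{i_k}$ then produces an indecomposable representation with dimension vector $\beta$, using (b) to maintain indecomposability. So $\ddim$ is a bijection onto $\Phi^+$, and since $\Phi^+$ is finite there are finitely many indecomposables up to isomorphism.
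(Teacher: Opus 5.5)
Your proof is correct. It is the standard Bernstein--Gelfand--Ponomarev argument, and you isolate exactly the needed points:
\begin{itemize}
\item surjectivity of $\bigoplus_{j\to i}V_j\to V_i$ for indecomposable $V\not\cong S_i$;
\item the identity $1+c+\cdots+c^{h-1}=0$, forced by the absence of nonzero $c$-fixed vectors;
\item the stopping time being determined by the dimension vector, because $S_i$ is the only representation with dimension vector $\alpha_i$;
\item the fact that $s_i$ permutes $\Phi^+\setminus\{\alpha_i\}$.
\end{itemize}

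The paper itself gives no proof. It quotes the theorem from Gabriel as classical background. What it actually uses afterwards are the explicit lists of indecomposables in Examples~\ref{eg:QuiverA} and~\ref{eg:QuiverD} for the specific orientations at hand. These lists feed the parametrization of $S$-orbits by preclans and of $P$-orbits by subsets. Citing buys brevity for a result that is only input. Your route buys a self-contained and constructive proof: it exhibits every indecomposable as a prescribed string of $R^-$'s applied to a simple representation. One could therefore generate those type $A$ and type $D$ lists, not merely count them against $\Phi^+$.

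One detail should be made explicit. In Step~3 you apply $R^-$ at a source, so you need the dual of (a)/(b): for an indecomposable $W$ at a source $i$, either $W\cong S_i$, or $R_i^-W$ is indecomposable with dimension vector $s_i(\mathbf{dim}\,W)$. You also need to observe that the representation being reflected at vertex $i$ is never $S_i$. Otherwise the preceding root in your forward iteration would be $s_i\alpha_i=-\alpha_i$, contradicting positivity. Both facts are immediate, but (b) as you stated it only covers $R^+$.
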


\begin{example}\label{eg:QuiverA}
Let $Q$ be the following quiver 
$$
\stackrel{1}\circ 
\longrightarrow 
\stackrel{2}\circ 
\longrightarrow 
\stackrel{3}\circ 
\longrightarrow \cdots 
\longrightarrow 
\stackrel{\!\!\!\!\!\!N-1\!\!\!\!\!\!}\circ.
$$
The underlying graph is a Dynkin diagram of type $A_{N-1}$. 
The indecomposable representations of $Q$ are 
$$\cdots\longrightarrow 
0
\longrightarrow 
\stackrel{i}{\vphantom{\tfrac12}
\,\mathbb{C}\,}
\stackrel{1}\longrightarrow 
\cdots 
\stackrel{1}\longrightarrow 
\stackrel{j-1}{\vphantom{\tfrac12}
\,\mathbb{C}\,}
\longrightarrow 0\longrightarrow 
\cdots$$
for some $1\leq i<j\leq N$. 
\end{example}

\begin{example}\label{eg:QuiverD}
Let us consider the following quiver $Q$
$$\begin{matrix}
\xymatrix@R=0pc{
\ar@{}[d]|{\displaystyle
\,\mathop{\circ}^1_{}\,}="1"& 
\ar@{}[d]|{\displaystyle
\,\mathop{\circ}^2_{}\,}="2"& 
\ar@{}[d]|{\displaystyle
\,\cdots\,}="dot"& 
\ar@{}[d]|{\displaystyle
\,\mathop{\circ}^n_{}\,}="n"& 
\stackrel{+}\circ
\ar"1";"2"\ar"2";"dot"\ar"dot";"n"
\ar"n";[]\ar"n";[d]
\\
&&&&
\underset{-}\circ
}
\end{matrix}.$$
The underlying graph is a Dynkin diagram of type $D_{n+2}$. 
The indecomposable representations are classified as follows. 
\begin{itemize}
    \item 
    For any connected full subgraph $Q'$ of $Q$, we have an indecomposable representation 
    $(\{V_i\},\{f_{h}\})$ 
    of $Q$ with 
$$V_i=
\begin{cases}
\mathbb{C}, & i\in Q_0',\\
0, & \text{otherwise},\\
\end{cases}\qquad 
f_{h}=
\begin{cases}
[\mathbb{C}\stackrel{1}\to \mathbb{C}], 
& h\in Q_1',\\
0& \text{otherwise}. 
\end{cases}$$

    \item For $1\leq i< j\leq n$, we have an indecomposable representation
\begin{equation}\label{eq:repD2}
\cdots\longrightarrow 0 \longrightarrow 
\stackrel{i}{\vphantom{\tfrac12}
\,\mathbb{C}\,} \stackrel{1}\longrightarrow \cdots \stackrel{1}\longrightarrow
\stackrel{\!\!\!\!\!\!j-1\!\!\!\!\!\!}{\vphantom{\tfrac12}
\,\mathbb{C}\,} 
\stackrel{\left[\begin{subarray}{c}1\\1
\end{subarray}\right]}\longrightarrow \mathbb{C}^2
\stackrel{\left[\begin{subarray}{c}1\,\,0\\0\,\,1
\end{subarray}\right]}\longrightarrow \cdots 
\stackrel{\left[\begin{subarray}{c}1\,\,0\\0\,\,1
\end{subarray}\right]}\longrightarrow  \mathbb{C}^2
\D{\mathbb{C}}{\mathbb{C}.}{[1\,\,0]}{[0\,\,1]}
\end{equation}
\end{itemize}
\end{example}

For a vector $\vv\in\mathbb{N}^{Q_0}$, following Lusztig \cite[Chapter 9]{Lus}, we define 
$$E_{\vv}= E_{\vv}(Q)= \bigoplus_{(i\to j)\in Q_1}
\Hom(\mathbb{C}^{\vv_{i}},\mathbb{C}^{\vv_{j}}),\qquad 
G_{\vv} =G_\vv(Q)= \prod_{i\in Q_0}GL(\mathbb{C}^{\vv_i}). $$
For each element $x\in E_\vv$, we can associate a representation of $Q$ with dimension vector $\vv$. Using this construction, it is tautological to identify
$$\{\text{$G_\vv$-orbits of $E_\vv$}\}
=\left\{
\begin{matrix}
\text{isomorphism classes}\\
\text{of representations of $Q$}\\
\text{with dimension vector $\vv$}
\end{matrix}\right\}.$$
By Krull--Schmidt Theorem, every representation can be written uniquely as a direct sum of indecomposable representations up to isomorphism. 
Thus, the Gabriel's Theorem \ref{Gabriel} and the above identification imply that the $G_\vv$-orbits on $E_\vv$ are bijective to partitions of $\vv$ into positive roots, and in particular they are finite.

For our purpose, we will also need two operations on quivers. 
Let $i\in Q_0$ be a vertex. 
We define the deletion $Q\backslash i$ to be the quiver obtained by removing the vertex $i$ and all arrows incident to $i$. 
We define the contraction $Q/i$ to be the quiver obtained from the deletion $Q\backslash i$ by adding a new arrow $j\to k$ if there is a path $j\to i\to k$ in $Q$. 
For a vector $\vv\in \mathbb{N}^{Q_0}$, we have two maps 
$$
\operatorname{del}: E_\vv(Q) \longrightarrow E_{\bar{\vv}}({Q\backslash i}),\qquad 
\operatorname{con}: E_\vv(Q) \longrightarrow E_{\bar{\vv}}({Q/i}),
$$
where $\bar{\vv}\in \mathbb{N}^{Q_0\setminus \{i\}}$. 
The first map is the natural projection, and the second map is by defining $f_{j\to k}=f_{i\to k}\circ f_{j\to i}$ for the new arrow $(j\to k)\in Q_1$ for path $j\to i \to k$. 
It induces a map 
$$\begin{aligned}
\operatorname{del}: & 
\{\text{$G_\vv(Q)$-orbits of $E_\vv(Q)$}\}
\longrightarrow 
\{\text{$G_{\bar{\vv}}(Q\backslash i)$-orbits of $E_{\bar{\vv}}(Q\backslash i)$}\},\\
\operatorname{con}: & 
\{\text{$G_\vv(Q)$-orbits of $E_\vv(Q)$}\}
\longrightarrow 
\{\text{$G_{\bar{\vv}}(Q/ i)$-orbits of $E_{\bar{\vv}}(Q/ i)$}\}.
\end{aligned}$$
Viewing both sides as isomorphism classes of representations of quivers, the map corresponds to a similar operation on representations. 

\begin{example}\label{eg:deletion}
Let $Q$ be the quiver in Example \ref{eg:QuiverD}. 
Let $\bar{Q}=Q\setminus -$, i.e. 
$$\stackrel{1}\circ 
\longrightarrow 
\stackrel{2}\circ 
\longrightarrow 
\stackrel{3}\circ 
\longrightarrow \cdots 
\longrightarrow 
\stackrel{n}\circ
\longrightarrow 
\stackrel{+}\circ.$$
Under the deletion operation, the representation \eqref{eq:repD2} is mapped to 
\begin{equation}\label{eq:deleteofDrep2}
\begin{aligned}
&
\quad \,
\cdots \longrightarrow 0
\longrightarrow \mathbb{C}
\stackrel{1}\longrightarrow\cdots
\stackrel{1}\longrightarrow
\mathbb{C}
\stackrel{\left[\begin{subarray}{c}1\\1
\end{subarray}\right]}\longrightarrow \mathbb{C}^2
\stackrel{\left[\begin{subarray}{c}1\,\,0\\0\,\,1
\end{subarray}\right]}\longrightarrow \cdots 
\stackrel{\left[\begin{subarray}{c}1\,\,0\\0\,\,1
\end{subarray}\right]}\longrightarrow  \mathbb{C}^2
\stackrel{[1\,\,0]}\longrightarrow
\mathbb{C}\\
& \cong 
\cdots \longrightarrow 0
\longrightarrow \mathbb{C}
\stackrel{1}\longrightarrow \cdots
\stackrel{1}\longrightarrow \mathbb{C}
\stackrel{1}\longrightarrow \,\mathbb{C}\,
\stackrel{1}\longrightarrow \cdots
\stackrel{1}\longrightarrow \,\mathbb{C}\,
\stackrel{1}\longrightarrow \mathbb{C}
\\
& \oplus\,
\cdots \longrightarrow 0
\longrightarrow \,0\,
\longrightarrow \cdots
\longrightarrow \,0\,
\longrightarrow \,\mathbb{C}\,
\stackrel{1}\longrightarrow \cdots
\stackrel{1}\longrightarrow \,\mathbb{C}\,
\longrightarrow 0.
\end{aligned}
\end{equation}
\end{example}

\begin{example}\label{eg:contraction}
Let $Q$ be the quiver in Example \ref{eg:QuiverD}. 
For $1\leq k\leq n-1$, consider $\bar{Q}=Q/k$, i.e. 
$$\begin{matrix}
\xymatrix@R=0pc{
\ar@{}[d]|{\displaystyle
\,\mathop{\circ}^1_{}\,}="1"& 
\ar@{}[d]|{\displaystyle
\,\mathop{\circ}^2_{}\,}="2"& 
\ar@{}[d]|{\displaystyle
\,\cdots\,}="dot"& 
\ar@{}[d]|{\displaystyle
\,\mathop{\circ}^{k-1}_{}\,}="i-1"&&
\ar@{}[d]|{\displaystyle
\,\mathop{\circ}^{k+1}_{}\,}="i+1"& 
\ar@{}[d]|{\displaystyle
\,\cdots\,}="dot2"& 
\ar@{}[d]|{\displaystyle
\,\mathop{\circ}^n_{}\,}="n"& 
\stackrel{+}\circ
\ar"1";"2"\ar"2";"dot"\ar"dot";"i-1"\ar"i-1";"i+1"\ar"i+1";"dot2"\ar"dot2";"n"
\ar"n";[]\ar"n";[d]
\\
&&&&&&&&
\underset{-}\circ
}
\end{matrix}.$$
When $(i,j)=(k,k+1)$, 
under the contraction operation, the representation \eqref{eq:repD2} is mapped to 
$$
\begin{aligned}
&\quad \cdots\longrightarrow 0 \longrightarrow 
\stackrel{\!\!\!\!\!\!k-1\!\!\!\!\!\!}{\vphantom{\dfrac12}
\,0\,} 
-\!\!\!-\!\!\!-\!\!\!-\!\!\!\longrightarrow \stackrel{\!\!\!\!\!\!k+1\!\!\!\!\!\!}{\vphantom{\dfrac12}
\,\mathbb{C}^2\,} 
\stackrel{\left[\begin{subarray}{c}1\,\,0\\0\,\,1
\end{subarray}\right]}\longrightarrow \cdots 
\stackrel{\left[\begin{subarray}{c}1\,\,0\\0\,\,1
\end{subarray}\right]}\longrightarrow  \mathbb{C}^2
\D{\mathbb{C}}{\mathbb{C}}{[1\,\,0]}{[0\,\,1]}\\
& = 
\cdots\longrightarrow 0 \longrightarrow 
\stackrel{\!\!\!\!\!\!k-1\!\!\!\!\!\!}{\vphantom{\dfrac12}
\,0\,} 
-\!\!\!-\!\!\!-\!\!\!-\!\!\!\longrightarrow \stackrel{\!\!\!\!\!\!k+1\!\!\!\!\!\!}{\vphantom{\dfrac12}
\,\,\mathbb{C}\,\,} 
\stackrel{1}\longrightarrow \cdots 
\stackrel{1}\longrightarrow  \,\mathbb{C}\,
\D{\mathbb{C}}{0}{1}{}\\
& \oplus 
\cdots\longrightarrow 0 \longrightarrow 
\stackrel{\!\!\!\!\!\!k-1\!\!\!\!\!\!}{\vphantom{\dfrac12}
\,0\,} 
-\!\!\!-\!\!\!-\!\!\!-\!\!\!\longrightarrow \stackrel{\!\!\!\!\!\!k+1\!\!\!\!\!\!}{\vphantom{\dfrac12}
\,\,\mathbb{C}\,\,} 
\stackrel{1}\longrightarrow \cdots 
\stackrel{1}\longrightarrow  \,\mathbb{C}\,
\D{0}{\mathbb{C}.}{}{1}
\end{aligned}
$$
For other $(i,j)$, we will obtain an indecomposable representation of $\bar{Q}$. 
\end{example}

\subsection{Finiteness}\label{sec:finiteofS}
The purpose of this subsection is to show $S$ is spherical, and prove the $S$-orbits of flag variety $G/B$ are parametrized by preclans. 

Let us take $Q$ to be the quiver in Example \ref{eg:QuiverD}. 
Let us consider the dimension vector $\vv$ with 
$$\vv_+=p,\qquad \vv_-=q,\qquad\text{other }\vv_i=i.$$

Recall that we are using $G=GL(\mathbb{C}^n)$. 
We can naturally identify
$$G/B=\{0=V_0\subset V_1\subset \cdots \subset V_{n-1}\subset V_n=\mathbb{C}^n:\dim V_i=i\},$$
where $1\cdot B$ corresponds to the flag $V_\bullet$ with $V_i=\operatorname{span}(e_1,\ldots,e_i)$. 
Similarly, we can naturally identify $G/S$ with the space of pairs $(K_+,K_-)$ of subspaces of $\mathbb{C}^n$ such that  
$$\dim \mathbb{C}^n/K_+=p,\qquad 
\dim \mathbb{C}^n/K_-=q,$$
and the natural map $\mathbb{C}^n\to \mathbb{C}^n/K_+\oplus \mathbb{C}^n/K_-$ is surjective. 
The element $1\cdot S$ corresponds to the pair $(K_+,K_-)$ such that 
\begin{equation}\label{eq:stdKpm}
K_+=\operatorname{span}(e_{p+1},\ldots,e_{n}),\qquad 
K_-=\operatorname{span}(e_{1},\ldots,e_{n-q}).
\end{equation}

Let $E_\vv^\circ\subset E_\vv$ be the $G_\vv$-invariant open subset of $(f_{ij})_{(i\to j)\in Q_1}$ such that 
\begin{itemize}
    \item the linear map 
    $f_{k,k+1}:\mathbb{C}^{\vv_k}\to \mathbb{C}^{\vv_{k+1}}$ is injective for $k=1,\ldots,n-1$; 
    \item the linear map 
    $f_{n,+}\oplus f_{n,-}:\mathbb{C}^{\vv_n}\to \mathbb{C}^{\vv_+}\oplus \mathbb{C}^{\vv_-}$ is surjective.
\end{itemize} 
Let 
$G_\vv'=\prod_{i\in Q_0\setminus \{n\}}GL(\mathbb{C}^{\vv_i})$ be the subgroup of $G_\vv$. 
Note that $G_\vv=G_\vv'\times GL(\mathbb{C}^n)$. 
The following proposition is direct. 

\begin{prop}\label{prop:identifyG/S}
The action of $G_\vv'$ on $E_\vv^\circ$ is free, and we have an isomorphism of $G$-varieties
$$E_\vv^\circ/G_\vv'\stackrel{\sim}\longrightarrow G/S\times G/B$$
sending $(f_{ij})$ to $(K_+,K_-,V_\bullet)$ with 
$$K_\pm=\ker[\mathbb{C}^{\vv_n}
\stackrel{f_{n,\pm}}\longrightarrow \mathbb{C}^{\vv_\pm}],\qquad 
V_i=\operatorname{im}[
\mathbb{C}^{\vv_i}
\stackrel{f_{i,i+1}}\longrightarrow
\cdots 
\stackrel{f_{n-1,n}}\longrightarrow
\mathbb{C}^{\vv_{n}}].$$
\end{prop}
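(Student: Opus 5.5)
We need to prove Proposition \ref{prop:identifyG/S}: the action of $G_\vv'$ on $E_\vv^\circ$ is free, and the map $(f_{ij})\mapsto (K_+,K_-,V_\bullet)$ induces a $G$-equivariant isomorphism $E_\vv^\circ/G_\vv'\cong G/S\times G/B$.

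\emph{Splitting the problem.} The quiver splits at the vertex $n$ into a type $A$ chain $1\to\cdots\to n$ and the two outgoing arrows $n\to\pm$. Correspondingly,
$$E_\vv^\circ=E^{\mathrm{chain},\circ}\times E^{\pm,\circ},\qquad G_\vv'=\Big(\prod_{i<n}GL_i\Big)\times \big(GL_p\times GL_q\big),$$
with each factor of $G_\vv'$ acting on only one piece. So we treat the two pieces separately and take the product.

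\emph{Chain part.} Let $E^{\mathrm{chain},\circ}$ be the set of injective tuples $(f_{12},\ldots,f_{n-1,n})$. Freeness is proved by descending induction on $i$. If $g=(g_1,\ldots,g_{n-1})$ fixes a point, then $f_{n-1,n}g_{n-1}^{-1}=f_{n-1,n}$, and injectivity gives $g_{n-1}=1$. Next, $g_{n-1}f_{n-2,n-1}g_{n-2}^{-1}=f_{n-2,n-1}$ then forces $g_{n-2}=1$, and so on down the chain.

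For the quotient, note that $V_i$ is the image of the composite $\mathbb{C}^i\to\mathbb{C}^n$, so it has dimension $i$ and the $V_i$ are nested. This defines a $GL_n$-equivariant map to $G/B$ that is invariant under $\prod_{i<n} GL_i$. We show it is a principal bundle by exhibiting local sections. On the open set where $V_\bullet$ lies in an opposite big cell translate $wB^-B/B$, choose bases depending regularly on the flag via the unipotent coordinates. Taking $\mathbb{C}^i\cong V_i$ using the first $i$ of these basis vectors, with the $f_{i,i+1}$ the standard inclusions, recovers every fiber as a single $\prod GL_i$-orbit. Hence $E^{\mathrm{chain},\circ}/\prod GL_i\cong G/B$.

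\emph{Pair part.} Let $E^{\pm,\circ}$ be the set of pairs $(f_+,f_-)\in\Hom(\mathbb{C}^n,\mathbb{C}^p)\times\Hom(\mathbb{C}^n,\mathbb{C}^q)$ with $f_+\oplus f_-$ surjective. Surjectivity forces $f_+$ and $f_-$ to be surjective individually, so $GL_p\times GL_q$ acts freely, by the same argument as above. The orbit of $(f_+,f_-)$ is determined by $(\ker f_+,\ker f_-)$. The joint surjectivity condition is exactly the surjectivity of $\mathbb{C}^n\to\mathbb{C}^n/K_+\oplus\mathbb{C}^n/K_-$ imposed in the description of $G/S$.

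It remains to see that $GL_n$ acts transitively on such pairs, with stabilizer $S$ at the base point \eqref{eq:stdKpm}. Surjectivity means $\dim(K_+\cap K_-)=n-p-q=m$. Choose a basis adapted to the three subspaces $K_+\cap K_-$, a complement of it in $K_-$ (of dimension $p$), and a complement of it in $K_+$ (of dimension $q$). Together these span $K_++K_-=\mathbb{C}^n$, so this is a basis of $\mathbb{C}^n$, and it gives transitivity. The stabilizer of the standard pair consists of matrices preserving $\operatorname{span}(e_{p+1},\ldots,e_n)$ and $\operatorname{span}(e_1,\ldots,e_{n-q})$. These are exactly the matrices in $P\cap Q=S$, so the second quotient is $G/S$.

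\emph{Main obstacle.} The only non-formal point is upgrading the bijection on points to an isomorphism of varieties, i.e.\ showing the quotient map admits local sections. The adapted-basis constructions above depend regularly on the point on Zariski open sets, which gives local triviality. Combining the two pieces yields the $G$-equivariant isomorphism $E_\vv^\circ/G_\vv'\cong G/S\times G/B$.
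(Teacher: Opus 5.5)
Your proof is correct. The paper gives no argument and simply calls the proposition direct; your verification is the natural one. You split $E_\vv^\circ$ at vertex $n$ into the injective chain, whose quotient by $\prod_{i<n}GL_i$ is $G/B$, and the jointly surjective pair, whose quotient by $GL_p\times GL_q$ is the space of pairs $(K_+,K_-)$ with $K_++K_-=\mathbb{C}^n$, on which $G$ acts transitively with stabilizer $P\cap Q=S$. Local sections then upgrade the bijection to an isomorphism. One small precision in the chain-part section: $f_{n-1,n}$ must be the matrix of the first $n-1$ adapted basis vectors rather than the standard inclusion, which is what your identification $\mathbb{C}^i\cong V_i$ implicitly gives.
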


By the discussion above, we have the following 
$$\begin{aligned}
\{\text{$S$-orbits of $G/B$}\}
& \cong \{\text{$G$-orbits of $G/S\times G/B$}\}\\
& \cong \{\text{$G$-orbits of $E_\vv^\circ/G_\vv'$}\}\\
& \cong \{\text{$G_\vv$-orbits of $E_\vv^\circ$}\}
\hookrightarrow 
\{\text{$G_\vv$-orbits of $E_\vv$}\}.
\end{aligned}$$
As a result, the subgroup $S$ is spherical. 

Let us enumerate the $S$-orbits explicitly. 
By the definition, the $G_\vv$-orbits of $E_\vv^\circ$ correspond to representations $V=(\{V_i\},\{f_{ij}\})$ of $Q$ with $\ddim V=\vv$ and 
\begin{itemize}
    \item the linear map 
    $f_{k,k+1}:V_k\to V_{k+1}$ is injective for $k=1,\ldots,n-1$; 
    \item the linear map 
    $f_{n,+}\oplus f_{n,-}:V_n\to V_+\oplus V_-$ is surjective.
\end{itemize} 
Equivalently, it is a direct sum of indecomposable representations of the same properties. 
Using the classification in Example \ref{eg:QuiverD}, they are 
\eqref{eq:repD2} and 
\begin{gather}
\label{eq:repDcomma}
\cdots
\longrightarrow 0 
\longrightarrow 0 
\longrightarrow 
\stackrel{i}{\vphantom{\tfrac12}
\,\mathbb{C}\,} \stackrel{1}\longrightarrow \cdots \stackrel{1}\longrightarrow
\mathbb{C}
\D{0}{0}{}{}
\\
\label{eq:repD+}
\cdots
\longrightarrow 0 
\longrightarrow 0 
\longrightarrow 
\stackrel{i}{\vphantom{\tfrac12}
\,\mathbb{C}\,} \stackrel{1}\longrightarrow \cdots \stackrel{1}\longrightarrow
\mathbb{C}
\D{\mathbb{C}}{0}{1}{}
\\
\label{eq:repD-}
\cdots
\longrightarrow 0 
\longrightarrow 0 
\longrightarrow 
\stackrel{i}{\vphantom{\tfrac12}
\,\mathbb{C}\,} \stackrel{1}\longrightarrow \cdots \stackrel{1}\longrightarrow
\mathbb{C}
\D{0}{\mathbb{C}}{}{1}
\end{gather}
for $1\leq i\leq n$. 
We can define a preclan as follows:
\begin{itemize}
    \item 
    We create a match between the $i$-th node and the $j$-th node if \eqref{eq:repD2} appears as a summand of $V$. 
    \item 
    We color the $i$-th node by $\clan{,}$ (resp., $\clan{+}$, $\clan{-}$) if \eqref{eq:repDcomma} 
    (resp., \eqref{eq:repD+}, \eqref{eq:repD-}) appears as a summand of $V$. 
\end{itemize}
We leave to readers to check that this assignment is well-defined and it induces a bijection 
$$\{\text{$S$-orbits of $G/B$}\}\cong \{\text{$(p,m,q)$-preclans}\}. 
$$

\begin{example}
Let us consider the clan $\gamma$ in 
\eqref{eq:preclaneg}. 
The corresponding representation is 
$$
\def\Q#1#2#3{%
\raisebox{-0.3pc}{\makebox[0pc][l]{
    \setlength{\unitlength}{1.2pc}%
    \hspace{0.6pc}%
    \begin{picture}(0,0)
    \color{lightgray}
    \linethickness{0.2\unitlength}
    \qbezier(0.5,0.5)(10.5,0.5)(10.5,0.5)
    \qbezier(10.5,0.5)(11.5,0.2)(11.5,0.2)
    \qbezier(10.5,0.5)(11.5,0.8)(11.5,0.8)
    \color{black}
    \put(0,0.3){\makebox[0pc][l]{#1}}
    \put(11,0.6){\makebox[0pc][l]{#2}}
    \put(11,-.0){\makebox[0pc][l]{#3}}
    \end{picture}}}}
\def\C{\makebox[1.2pc]{$\scriptstyle\mathbb{C}$}}
\def\O{\makebox[1.2pc]{$\scriptstyle0$}}
\def\CC{\makebox[1.2pc]{$\scriptstyle\mathbb{C}^{\!2\!\!\!}$}}
\begin{aligned}
\gamma=&\clan{6-84,-..,+.}\\
&\Q{\C\C\C\C\C\C\CC\CC\CC\CC\CC}{\C}{\C}\\
\oplus&\Q{\O\C\C\C\C\C\C\C\C\C\C}{\O}{\C}\\
\oplus&\Q{\O\O\C\C\C\C\C\C\C\C\CC}{\C}{\C}\\
\oplus&\Q{\O\O\O\C\C\C\C\CC\CC\CC\CC}{\C}{\C}\\
\oplus&\Q{\O\O\O\O\C\C\C\C\C\C\C}{\O}{\O}\\
\oplus&\Q{\O\O\O\O\O\C\C\C\C\C\C}{\O}{\C}\\
\oplus&\Q{\O\O\O\O\O\O\O\O\C\C\C}{\O}{\O}\\
\oplus&\Q{\O\O\O\O\O\O\O\O\O\C\C}{\C}{\O}\\
\end{aligned}
$$
\end{example}

\begin{prop}\label{prop:an-ele-S}
For a $(p,m,q)$-preclan $\gamma$, 
the corresponding $S$-orbit is given by $S\dot{\gamma}B/B$, 
where $\dot{\gamma}$ is constructed in Definition \ref{def:dotgamma}. 
\end{prop}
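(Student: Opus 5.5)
The plan is to pass through the identification of Proposition \ref{prop:identifyG/S}. The $S$-orbit containing $\dot{\gamma}B/B$ is the $G$-orbit of $(1\cdot S,\dot{\gamma}B)$ in $G/S\times G/B$. This pair is $(K_+,K_-,V_\bullet)$, where $K_\pm$ are the standard subspaces \eqref{eq:stdKpm} and $V_i=\operatorname{span}(\dot{\gamma}_1,\ldots,\dot{\gamma}_i)$. Choosing bases, we obtain an explicit point of $E^\circ_\vv$:
\begin{itemize}
\item take $V_n=\mathbb{C}^n$;
\item let the maps $\mathbb{C}^{\vv_k}\to\mathbb{C}^{\vv_{k+1}}$ be the standard inclusions composed with the columns of $\dot{\gamma}$;
\item let $f_{n,+}$ be the projection onto the first $p$ coordinates, and $f_{n,-}$ the projection onto the last $q$ coordinates.
\end{itemize}
It then suffices to decompose this representation into indecomposables and check that the summands are exactly those attached to $\gamma$: one \eqref{eq:repD2} for each matching $(i,j)$, and one \eqref{eq:repDcomma}, \eqref{eq:repD+} or \eqref{eq:repD-} for each uncolored, $\clan{+}$ or $\clan{-}$ node $i$. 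Since the bijection between $G_\vv$-orbits and preclans is defined by exactly this decomposition, the proposition follows.

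The decomposition will be exhibited directly, by splitting $\mathbb{C}^n=\bigoplus_{\text{blocks}} W_{\text{block}}$ compatibly with the flag and with both projections. The blocks are as follows.
\begin{itemize}
\item For a $\clan{+}$ node $i$, take $W=\mathbb{C}\dot{\gamma}_i=\mathbb{C}\mathbf{e}_{\sigma_\gamma(i)}$ with $\sigma_\gamma(i)\le p$. This line enters the flag at step $i$, maps isomorphically under $f_{n,+}$, and is killed by $f_{n,-}$, so it gives \eqref{eq:repD+}.
\item A $\clan{-}$ node gives \eqref{eq:repD-} in the same way, since $\sigma_\gamma(i)>p+m$.
\item An uncolored node gives \eqref{eq:repDcomma}, because the label $\sigma_\gamma(i)$ lies in the middle block and is killed by both projections.
\item For a matching $(i,j)$, take $W=\operatorname{span}(\mathbf{e}_{\sigma_\gamma(i)},\mathbf{e}_{\sigma_\gamma(j)})$. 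Here $\sigma_\gamma(i)\le p$ and $\sigma_\gamma(j)>p+m$. The flag meets $W$ in $0$ before step $i$, in the line $\mathbb{C}(\mathbf{e}_{\sigma_\gamma(i)}+\mathbf{e}_{\sigma_\gamma(j)})=\mathbb{C}\dot{\gamma}_i$ for steps $i\le k<j$, and in all of $W$ from step $j$ on, because $\dot{\gamma}_j=\mathbf{e}_{\sigma_\gamma(j)}$. The two projections send $W$ isomorphically to $\mathbb{C}\oplus\mathbb{C}$. After the base change identifying the line with the diagonal, this is precisely \eqref{eq:repD2} with the maps $\left[\begin{smallmatrix}1\\1\end{smallmatrix}\right]$, $[1\,\,0]$ and $[0\,\,1]$.
\end{itemize}

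Next we check that $V_k=\bigoplus_{\text{blocks}}(V_k\cap W)$ for every $k$. The columns $\dot{\gamma}_1,\ldots,\dot{\gamma}_k$ each lie in a single block, so the span $V_k$ is the sum of its intersections with the blocks. The labels $\sigma_\gamma(\cdot)$ are distinct, so the blocks form a direct sum decomposition of $\mathbb{C}^n$. Each of $K_\pm$ is spanned by coordinate vectors, so each is also a sum of its intersections with the blocks. This makes the representation a direct sum of the listed indecomposables. Krull--Schmidt then identifies its isomorphism class with $\gamma$.

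The main obstacle is bookkeeping rather than conceptual. We must make sure that the convention in Proposition \ref{prop:identifyG/S} matches the standard pair \eqref{eq:stdKpm}: the kernel of $f_{n,+}$ should be $K_+=\operatorname{span}(\mathbf{e}_{p+1},\ldots,\mathbf{e}_n)$, so $f_{n,+}$ remembers the first $p$ coordinates. We must also make sure that the roles of $i$ and $j$ in \eqref{eq:repD2} (it starts at $i$ and becomes $\mathbb{C}^2$ at $j$) match the matching convention used in the bijection. Finally, there is the well-definedness of the orbit itself: $S\dot{\gamma}B/B$ corresponds to the $G$-orbit of $(S,\dot{\gamma}B)$, which is the same as the orbit of $(\dot{\gamma}^{-1}S,B)$. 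I would fix this convention first and then carry out the block computation above.
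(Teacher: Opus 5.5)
Your proposal is correct and follows the paper's proof: realize $(1\cdot S,\dot{\gamma}B)$ as the representation $V_1\subset\cdots\subset\mathbb{C}^n\to\mathbb{C}^n/K_\pm$, and split $\mathbb{C}^n$ into the coordinate blocks $\operatorname{span}(\mathbf{e}_{\sigma_\gamma(i)},\mathbf{e}_{\sigma_\gamma(j)})$ (one per matching) and $\mathbb{C}\mathbf{e}_{\sigma_\gamma(i)}$ (one per unmatched node), each of which yields the corresponding indecomposable summand. Your explicit check that the flag and both kernels $K_\pm$ respect this block decomposition fills in what the paper leaves implicit in its ``similarly''.
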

\begin{proof}
Recall that
$\dot{\gamma}=[\dot{\gamma}_1\,\,\dot{\gamma}_2\,\,\cdots\,\,\dot{\gamma}_n]\in G=GL_n$ for column vectors $\dot{\gamma}_1,\ldots,\dot{\gamma}_n$. For $1\le i\le n$, let $V_i=\operatorname{span}(\dot{\gamma}_1,\ldots,\dot{\gamma}_i)$ and $K_\pm$ be given by \eqref{eq:stdKpm}. 
It suffices to show the preclan associated with the representation of $Q$
$$V_1
\stackrel{\subset}\longrightarrow 
V_2
\stackrel{\subset}\longrightarrow 
\cdots 
\stackrel{\subset}\longrightarrow 
\mathbb{C}^n
\D{\mathbb{C}^n/K_+}{\mathbb{C}^n/K_-,}{}{}
$$
is $\gamma$ itself. 
Assume that there is an $(i,j)$-matching. 
Then $\dot{\gamma}_i=\mathbf{e}_a+\mathbf{e}_b$ and $\dot{\gamma}_j=\mathbf{e}_b$ for $1\leq a\leq p\leq n-q<b\leq n$. 
We can construct a summand isomorphic to \eqref{eq:repD2}
$$\cdots 
\stackrel{=}\longrightarrow
0
\stackrel{\subset}\longrightarrow
\stackrel{i}{\vphantom{\tfrac12}\,C\,}
\stackrel{=}\longrightarrow
\cdots 
\stackrel{=}\longrightarrow
\stackrel{\!\!\!\!\!\!j-1\!\!\!\!\!\!}{\vphantom{\tfrac12}
\,C\,}
\stackrel{\subset}\longrightarrow
D
\stackrel{=}\longrightarrow
\cdots 
\stackrel{=}\longrightarrow
D
\D{D/B}{D/A}{}{}
$$
where $A=\operatorname{span}(\mathbf{e}_a)$, $B=\operatorname{span}(\mathbf{e}_b)$, 
$C=\operatorname{span}(\mathbf{e}_a+\mathbf{e}_b)$ and 
$D=\operatorname{span}(\mathbf{e}_a,\mathbf{e}_b)$. 
Similarly, for each unmatched node $\clan{,}$, 
$\clan{+}$ or  
$\clan{-}$, there is a summand isomorphic to \eqref{eq:repDcomma} 
\eqref{eq:repD+} or \eqref{eq:repD-}, respectively. 
\end{proof}

\subsection{Weak order}\label{sec:weakordApp}
The main purpose of this subsection is to prove Theorem \ref{th:weakordApp} on the weak order of preclans \eqref{eq:clanweak1}, \eqref{eq:clanweak2}, \eqref{eq:clanweak3}.

Let $\check{Q}= Q/k$ be the contraction considered in Example \ref{eg:contraction}. 
Let us use a check $\check{*}$ to denote the corresponding object for the quiver $\check{Q}$ obtained by composing the path. 
Recall that we can naturally identify
$$G/P_k=\{0\subset V_1\subset \cdots \subset 
V_{k-1}\subset V_{k+1}\subset \cdots \subset V_{n-1}\subset \mathbb{C}^n:\dim V_i=i\}.$$
The following proposition is similar to Proposition \ref{prop:identifyG/S}.

\begin{prop}The action of $\check{G}_{\check{\vv}}'$ on $\check{E}_{\check{\vv}}^\circ$ is free, and we have an isomorphism of $G$-varieties
$$
\check{E}_{\check{\vv}}^\circ/\check{G}_{\check{\vv}}'\stackrel{\sim}\longrightarrow G/S\times G/P_k. $$
\end{prop}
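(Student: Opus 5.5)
The plan is to imitate the (direct) argument behind Proposition \ref{prop:identifyG/S}, with the contracted quiver $\check{Q}=Q/k$ in place of $Q$. First I will fix the notation. The dimension vector $\check{\vv}$ is $\vv$ with the entry at vertex $k$ removed. The group is $\check{G}_{\check{\vv}}'=\prod_{i\in \check{Q}_0\setminus\{n\}}GL(\mathbb{C}^{\check{\vv}_i})$. The open set $\check{E}_{\check{\vv}}^\circ$ consists of those $(f_{ij})$ for which every arrow of the chain $1\to\cdots\to k-1\to k+1\to\cdots\to n$ is injective (including the new composite arrow $k-1\to k+1$) and $f_{n,+}\oplus f_{n,-}$ is surjective. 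Since $k\leq n-1$, the vertex $n$ survives the contraction, so $GL(\mathbb{C}^n)=G$ is still the complementary factor. The map $\check{E}_{\check{\vv}}^\circ\to G/S\times G/P_k$ is the same recipe as before. It sends $(f_{ij})$ to $K_\pm=\ker f_{n,\pm}$ together with the partial flag $V_i=\operatorname{im}(\mathbb{C}^{\check{\vv}_i}\to\cdots\to\mathbb{C}^n)$ for $i\neq k$. Injectivity gives $\dim V_i=i$, and surjectivity gives the required condition on $(K_+,K_-)$. The map is clearly $G$-equivariant and constant on $\check{G}_{\check{\vv}}'$-orbits.

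Freeness is a top-down induction. Suppose $g=(g_i)\in\check{G}_{\check{\vv}}'$ fixes a point, and set $g_n=1$. Walk down the chain. If $g_{j}=1$ on the target of the injective arrow $f:\mathbb{C}^{\check{\vv}_i}\to\mathbb{C}^{\check{\vv}_j}$, then $f g_i = g_j f = f$ forces $g_i=1$. For the two leaves, $g_\pm f_{n,\pm}=f_{n,\pm}$ with $f_{n,\pm}$ surjective (as a component of a surjective map onto a direct sum) forces $g_\pm=1$. The same reasoning shows that each fiber of the map is a single orbit. Two injective chains with the same images differ by unique automorphisms $g_i$ of the sources, and two surjections with the same kernel differ by a unique automorphism of the target. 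Surjectivity onto $G/S\times G/P_k$ follows by choosing bases of the $V_i$ and identifying $\mathbb{C}^n/K_\pm\cong\mathbb{C}^{\check{\vv}_\pm}$.

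The step I expect to be the main obstacle is upgrading this bijection of orbit spaces to an isomorphism of varieties, that is, showing the map is a principal $\check{G}_{\check{\vv}}'$-bundle. I would argue it one vertex at a time. Each of the following is a Zariski-locally trivial principal $GL_a$-bundle with local sections given by standard Plücker or Stiefel charts:
\[
\{\text{injective }\mathbb{C}^a\to\mathbb{C}^b\}\to Gr_a(\mathbb{C}^b),\qquad \{\text{surjective }\mathbb{C}^n\to\mathbb{C}^a\}\to Gr_{n-a}(\mathbb{C}^n).
\]
Iterating these identifications from vertex $n$ downwards expresses $\check{E}_{\check{\vv}}^\circ$ as a tower of such bundles over the partial flag variety times the open subset of $Gr_{n-p}\times Gr_{n-q}$ cut out by transversality. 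That base is exactly $G/P_k\times G/S$, so the quotient exists and the map is an isomorphism. This is identical to the unstated argument for Proposition \ref{prop:identifyG/S}, since deleting vertex $k$ from the chain changes nothing in it.
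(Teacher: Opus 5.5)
Your proposal is correct and follows the same route as the paper. The paper gives no separate argument: it simply calls this proposition analogous to Proposition~\ref{prop:identifyG/S} (itself described as direct), since contracting vertex $k$ only shortens the injective chain and leaves vertex $n$, the group $G=GL(\mathbb{C}^n)$, and the surjectivity condition at $n$ untouched. Your write-up supplies the details the paper leaves implicit: freeness by descending the injective chain and using surjectivity at the two leaves, fibers being single orbits, and local triviality. Your ``tower'' is more precisely a tower of Grassmannian bundles of tautological bundles over the partial flag variety, which is still Zariski locally trivial.
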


Moreover, the contraction operation $\operatorname{con}:E_\vv^\circ\to \check{E}_{\check{\vv}}^\circ$ induces the following commutative diagram
$$
\xymatrix{
E_\vv^\circ/G_\vv'
\ar[r]^-{\sim}\ar[d]&
G/S\times G/B\ar[d]^{\operatorname{id}\times \pi_k}\\
\check{E}_{\check{\vv}}^\circ/\check{G}_{\check{\vv}}'\ar[r]^-{\sim} & G/S\times G/P_k.
}$$
It reduces to studying the fiber of the left vertical map, i.e., the contraction operation. 
Since the contraction operation commutes with direct sums, we can ignore the indecomposable summands $V$ with $(\ddim V)_{k+1}=(\ddim V)_{k-1}$. 
Since $(\ddim V)_{k+1}=(\ddim V)_{k-1}+2$, it reduces to classify the representations of $\check{Q}$ of the form 
$V_1\oplus V_2$ 
where each $V_i$ is indecomposable with $(\ddim V_i)_{k+1}=(\ddim V_i)_{k-1}+1$. 
Thus Theorem \ref{th:weakordApp} follows from a case-by-case check. 
We will only illustrate three cases to show the main idea.

\begin{example}
Consider the representation of $\check{Q}$
$$
\left(\cdots\longrightarrow 0 \longrightarrow 
\stackrel{\!\!\!\!\!\!k-1\!\!\!\!\!\!}{\vphantom{\dfrac12}
\,0\,} 
-\!\!\!-\!\!\!-\!\!\!-\!\!\!\longrightarrow \stackrel{\!\!\!\!\!\!k+1\!\!\!\!\!\!}{\vphantom{\dfrac12}
\,\mathbb{C}\,} 
\stackrel{1}\longrightarrow \cdots 
\stackrel{1}\longrightarrow  \mathbb{C}
\D{\mathbb{C}}{0}{1}{}\quad\right)^{\oplus 2}.
$$
The fiber is the space of $Q$-representations
$$
\begin{matrix}
\def\idd{\left[\begin{subarray}{c}1\,\,0\\0\,\,1
\end{subarray}\right]}
\cdots\longrightarrow 0 \longrightarrow 
\stackrel{\!\!\!\!\!\!k-1\!\!\!\!\!\!}{\vphantom{\dfrac12}\,0\,}
\longrightarrow 
\stackrel{k}{\vphantom{\dfrac12}
\,V\,}
\stackrel{\subset}\longrightarrow 
\stackrel{\!\!\!\!\!\!k+1\!\!\!\!\!\!}{\vphantom{\dfrac12}
\,\mathbb{C}^2\,}
\stackrel{\idd}\longrightarrow  
\cdots 
\stackrel{\idd}\longrightarrow  
\mathbb{C}^2
\D{\mathbb{C}^2}{0.}{\idd}{\vphantom{\idd}}\\[-4ex]
\end{matrix}$$
where $V$ is a one-dimensional subspace of $\mathbb{C}^2$.
For each $V$, the representation is   isomorphic to 
$$\begin{aligned}
&\quad \cdots\longrightarrow 0 \longrightarrow 
\stackrel{\!\!\!\!\!\!k-1\!\!\!\!\!\!}{\vphantom{\dfrac12}\,0\,}
\longrightarrow 
\stackrel{k}{\vphantom{\dfrac12}
\,\mathbb{C}\,}
\stackrel{1}\longrightarrow 
\stackrel{\!\!\!\!\!\!k+1\!\!\!\!\!\!}{\vphantom{\dfrac12}
\,\mathbb{C}\,}
\stackrel{1}\longrightarrow  
\cdots 
\stackrel{1}\longrightarrow  
\mathbb{C}
\D{\mathbb{C}}{0}{1}{}\\
& \oplus\cdots\longrightarrow 0 \longrightarrow 
\stackrel{\!\!\!\!\!\!k-1\!\!\!\!\!\!}{\vphantom{\dfrac12}\,0\,}
\longrightarrow 
\stackrel{k}{\vphantom{\dfrac12}
\,\,0\,\,}
\stackrel{}\longrightarrow 
\stackrel{\!\!\!\!\!\!k+1\!\!\!\!\!\!}{\vphantom{\dfrac12}
\,\mathbb{C}\,}
\stackrel{1}\longrightarrow  
\cdots 
\stackrel{1}\longrightarrow  
\mathbb{C}
\D{\mathbb{C}}{0.}{1}{} 
\end{aligned}$$
That is, the orbit corresponding to 
$$\gamma = \clan{\dots\dots++\dots\dots}$$
is of type (I). 
\end{example}

\begin{example}
Consider the representation of $\check{Q}$
$$\begin{aligned}
& \quad \cdots\longrightarrow 0 \longrightarrow 
\stackrel{\!\!\!\!\!\!k-1\!\!\!\!\!\!}{\vphantom{\dfrac12}
\,0\,} 
-\!\!\!-\!\!\!-\!\!\!-\!\!\!\longrightarrow \stackrel{\!\!\!\!\!\!k+1\!\!\!\!\!\!}{\vphantom{\dfrac12}
\,\mathbb{C}\,} 
\stackrel{1}\longrightarrow \cdots 
\stackrel{1}\longrightarrow  \mathbb{C}
\D{\mathbb{C}}{0}{1}{}\\
& \oplus
\cdots\longrightarrow 0 \longrightarrow 
\stackrel{\!\!\!\!\!\!k-1\!\!\!\!\!\!}{\vphantom{\dfrac12}
\,0\,} 
-\!\!\!-\!\!\!-\!\!\!-\!\!\!\longrightarrow \stackrel{\!\!\!\!\!\!k+1\!\!\!\!\!\!}{\vphantom{\dfrac12}
\,\mathbb{C}\,} 
\stackrel{1}\longrightarrow \cdots 
\stackrel{1}\longrightarrow  \mathbb{C}
\D{0}{\mathbb{C}}{}{1}
\end{aligned}$$
The fiber is the space of $Q$-representations 
$$\def\idd{\left[\begin{subarray}{c}1\,\,0\\0\,\,1
\end{subarray}\right]}
\cdots\longrightarrow 0 \longrightarrow 
\stackrel{\!\!\!\!\!\!k-1\!\!\!\!\!\!}{\vphantom{\dfrac12}\,0\,}
\longrightarrow 
\stackrel{k}{\vphantom{\dfrac12}
\,V\,}
\stackrel{\subset}\longrightarrow 
\stackrel{\!\!\!\!\!\!k+1\!\!\!\!\!\!}{\vphantom{\dfrac12}
\,\mathbb{C}^2\,}
\stackrel{\idd}\longrightarrow  
\cdots 
\stackrel{\idd}\longrightarrow  
\mathbb{C}^2
\D{\mathbb{C}}{\mathbb{C}}{[1\,\,0]}{[0\,\,1]}
$$
where $V$ is a one-dimensional subspace of $\mathbb{C}^2$. 
As a representation we can replace 
$$
\cdots\longrightarrow
\stackrel{\!\!\!\!\!\!k-1\!\!\!\!\!\!}{\vphantom{\dfrac12}\,0\,}
\longrightarrow 
\stackrel{k}{\vphantom{\dfrac12}
\,V\,}
\stackrel{\subset}\longrightarrow 
\stackrel{\!\!\!\!\!\!k+1\!\!\!\!\!\!}{\vphantom{\dfrac12}
\,\mathbb{C}^2\,}
\longrightarrow\cdots
\qquad \text{ by }\qquad 
\cdots\longrightarrow
\stackrel{\!\!\!\!\!\!k-1\!\!\!\!\!\!}{\vphantom{\dfrac12}\,0\,}
\longrightarrow 
\stackrel{k}{\vphantom{\dfrac12}
\,\mathbb{C}\,}
\stackrel{\left[\begin{subarray}{c}a\\b
\end{subarray}\right]}\longrightarrow 
\stackrel{\!\!\!\!\!\!k+1\!\!\!\!\!\!}{\vphantom{\dfrac12}
\,\mathbb{C}^2\,}
\longrightarrow\cdots$$
if $V=\operatorname{span}(\left[\begin{subarray}{c}a\\b \end{subarray}\right]\neq 0)$. 
When $V = \mathbb{C}\oplus 0$, i.e. $b=0$, the representation is isomorphic to 
$$\begin{aligned}
&\quad \cdots\longrightarrow 0 \longrightarrow 
\stackrel{\!\!\!\!\!\!k-1\!\!\!\!\!\!}{\vphantom{\dfrac12}\,0\,}
\longrightarrow 
\stackrel{k}{\vphantom{\dfrac12}
\,\mathbb{C}\,}
\stackrel{1}\longrightarrow 
\stackrel{\!\!\!\!\!\!k+1\!\!\!\!\!\!}{\vphantom{\dfrac12}
\,\mathbb{C}\,}
\stackrel{1}\longrightarrow  
\cdots 
\stackrel{1}\longrightarrow  
\mathbb{C}
\D{\mathbb{C}}{0}{1}{}\\
& \oplus\cdots\longrightarrow 0 \longrightarrow 
\stackrel{\!\!\!\!\!\!k-1\!\!\!\!\!\!}{\vphantom{\dfrac12}\,0\,}
\longrightarrow 
\stackrel{k}{\vphantom{\dfrac12}
\,\,0\,\,}
\stackrel{}\longrightarrow 
\stackrel{\!\!\!\!\!\!k+1\!\!\!\!\!\!}{\vphantom{\dfrac12}
\,\mathbb{C}\,}
\stackrel{1}\longrightarrow  
\cdots 
\stackrel{1}\longrightarrow  
\mathbb{C}
\D{0}{\mathbb{C}.}{}{1} 
\end{aligned}$$
When $V=0\oplus \mathbb{C}$, i.e., $a=0$, we have similar decomposition with the role of $\clan{+}$ and $\clan{-}$ switched.
When $a,b\neq 0$, the representation is isomorphic to the representation with $a=b=1$, i.e. \eqref{eq:repD2} with $(i,j)=(k,k+1)$. 
In summary, we have
$$\begin{array}{c@{\qquad}c@{\qquad}c@{\qquad}c}
V & V=\mathbb{C}\oplus 0 & V=0\oplus \mathbb{C}& \text{otherwise}\\
\text{preclan} & \clan{\dots\dots+-\dots\dots} & 
\clan{\dots\dots-+\dots\dots} & 
\clan{\dots\dots1.\dots\dots} \\
\text{type} & \rm (IIIa) & \rm (IIIa) & \rm (IIIb)
\end{array}$$
\end{example}

\begin{example}
Consider the representation of $\check{Q}$
$$\begin{aligned}
& \quad \cdots\longrightarrow \,0\, \longrightarrow 
\stackrel{\!\!\!\!\!\!k-1\!\!\!\!\!\!}{\vphantom{\dfrac12}
\,\,0\,\,} 
-\!\!\!-\!\!\!-\!\!\!-\!\!\!\longrightarrow \stackrel{\!\!\!\!\!\!k+1\!\!\!\!\!\!}{\vphantom{\dfrac12}
\,\mathbb{C}\,} 
\stackrel{1}\longrightarrow \cdots 
\stackrel{1}\longrightarrow  \,\mathbb{C}\,
\D{0}{0}{}{}\\
& \oplus
\def\idd{\left[\begin{subarray}{c}1\,\,0\\0\,\,1
\end{subarray}\right]}
\cdots\longrightarrow \mathbb{C} 
\stackrel{1}\longrightarrow 
\stackrel{\!\!\!\!\!\!k-1\!\!\!\!\!\!}{\vphantom{\dfrac12}
\,\mathbb{C}\,} 
\stackrel{\left[\begin{subarray}{c}1\\1\end{subarray}\right]}{
-\!\!\!-\!\!\!-\!\!\!-\!\!\!\longrightarrow} \stackrel{\!\!\!\!\!\!k+1\!\!\!\!\!\!}{\vphantom{\dfrac12}
\,\mathbb{C}^2\,} 
\stackrel{\idd}\longrightarrow \cdots 
\stackrel{\idd}\longrightarrow  \mathbb{C}^2
\D{\mathbb{C}}{\mathbb{C}}{[1\,\,0]}{[0\,\,1]}
\end{aligned}$$
The fiber is the space of $Q$-representations 
$$
\def\idd{\left[\begin{subarray}{c}1\,\,0\,\,0\\0\,\,1\,\,0\\0\,\,0\,\,1\end{subarray}\right]}
\cdots\longrightarrow \mathbb{C} \stackrel{1}\longrightarrow 
\stackrel{\!\!\!\!\!\!k-1\!\!\!\!\!\!}{\vphantom{\dfrac12}\,\mathbb{C}\,}
\stackrel{\left[\begin{subarray}{c}0\\1\\1\end{subarray}\right]}{\longrightarrow}
\stackrel{k}{\vphantom{\dfrac12}
\,V\,}
\stackrel{\subset}\longrightarrow 
\stackrel{\!\!\!\!\!\!k+1\!\!\!\!\!\!}{\vphantom{\dfrac12}
\,\mathbb{C}^3\,}
\stackrel{\idd}\longrightarrow  
\cdots 
\stackrel{\idd}\longrightarrow  
\mathbb{C}^3
\D{\mathbb{C}}{\mathbb{C}}{[0\,\,1\,\,0]}{[0\,\,0\,\,1]}
$$
where $V$ is a two-dimensional subspace of $\mathbb{C}^3$ containing $\left[\begin{subarray}{c}0\\1\\1\end{subarray}\right]$. 
When   $V = \operatorname{span}(\left[\begin{subarray}{c}0\\1\\1\end{subarray}\right],\left[\begin{subarray}{c}1\\0\\0\end{subarray}\right])$, the representation is isomorphic to 
$$\begin{aligned}
& \quad \cdots\longrightarrow \,0\, \longrightarrow 
\stackrel{\!\!\!\!\!\!k-1\!\!\!\!\!\!}{\vphantom{\dfrac12}
\,\,0\,\,} 
\longrightarrow 
\stackrel{k}{\vphantom{\dfrac12}
\,\,\mathbb{C}\,\,} 
\longrightarrow 
\stackrel{\!\!\!\!\!\!k+1\!\!\!\!\!\!}{\vphantom{\dfrac12}
\,\mathbb{C}\,} 
\stackrel{1}\longrightarrow \cdots 
\stackrel{1}\longrightarrow  \,\mathbb{C}\,
\D{0}{0}{}{}\\
& \oplus
\def\idd{\left[\begin{subarray}{c}1\,\,0\\0\,\,1
\end{subarray}\right]}
\cdots\longrightarrow \mathbb{C} 
\stackrel{1}\longrightarrow 
\stackrel{\!\!\!\!\!\!k-1\!\!\!\!\!\!}{\vphantom{\dfrac12}
\,\mathbb{C}\,} 
\stackrel{1}\longrightarrow
\stackrel{k}{\vphantom{\dfrac12}
\,\mathbb{C}\,} 
\stackrel{\left[\begin{subarray}{c}1\\1\end{subarray}\right]}{
\longrightarrow} \stackrel{\!\!\!\!\!\!k+1\!\!\!\!\!\!}{\vphantom{\dfrac12}
\,\mathbb{C}^2\,} 
\stackrel{\idd}\longrightarrow \cdots 
\stackrel{\idd}\longrightarrow  \mathbb{C}^2
\D{\mathbb{C}}{\mathbb{C}.}{[1\,\,0]}{[0\,\,1]}
\end{aligned}$$
Otherwise when $\left[\begin{subarray}{c}1\\0\\0\end{subarray}\right]\notin V$, the representation is isomorphic to 
$$\begin{aligned}
& \quad \cdots\longrightarrow \,0\, \longrightarrow 
\stackrel{\!\!\!\!\!\!k-1\!\!\!\!\!\!}{\vphantom{\dfrac12}
\,\,\,0\,\,\,} 
\longrightarrow 
\stackrel{k}{\vphantom{\dfrac12}
\,\,\,0\,\,\,} 
\longrightarrow 
\stackrel{\!\!\!\!\!\!k+1\!\!\!\!\!\!}{\vphantom{\dfrac12}
\,\mathbb{C}\,} 
\stackrel{1}\longrightarrow \cdots 
\stackrel{1}\longrightarrow  \,\mathbb{C}\,
\D{0}{0}{}{}\\
& \oplus
\def\idd{\left[\begin{subarray}{c}1\,\,0\\0\,\,1
\end{subarray}\right]}
\cdots\longrightarrow \mathbb{C} 
\stackrel{1}\longrightarrow 
\stackrel{\!\!\!\!\!\!k-1\!\!\!\!\!\!}{\vphantom{\dfrac12}
\,\mathbb{C}\,} 
\stackrel{\left[\begin{subarray}{c}1\\1\end{subarray}\right]}\longrightarrow
\stackrel{k}{\vphantom{\dfrac12}
\,\mathbb{C}^2\,} 
\stackrel{\idd}{
\longrightarrow} \stackrel{\!\!\!\!\!\!k+1\!\!\!\!\!\!}{\vphantom{\dfrac12}
\,\mathbb{C}^2\,} 
\stackrel{\idd}\longrightarrow \cdots 
\stackrel{\idd}\longrightarrow  \mathbb{C}^2
\D{\mathbb{C}}{\mathbb{C}.}{[1\,\,0]}{[0\,\,1]}
\end{aligned}$$
In summary, we have
$$\begin{array}{c@{\qquad}c@{\qquad}c@{\qquad}c}
V & \operatorname{span}(\left[\begin{subarray}{c}0\\1\\1\end{subarray}\right],\left[\begin{subarray}{c}1\\0\\0\end{subarray}\right])
& \text{otherwise}\\
\text{preclan} & \clan{3\dots,.\dots\dots} & 
\clan{2\dots.,\dots\dots} \\
\text{type} & \rm (IIa) & \rm (IIb) 
\end{array}$$
\end{example}

\subsection{Richardson variety}\label{sec:RichApp}
Since $S\subseteq P$, we have a natural map
$$
\begin{aligned}
\{\text{$S$-orbits of $G/B$}\}
& \to 
\{\text{$P$-orbits of $G/B$}\}\\
& \cong \{\text{$p$-inverse Grassmannian in $S_n$}\}.\\
\end{aligned}$$
The purpose of this subsection is to study the map.
 
Let $\bar{Q}= Q\backslash -$ the deletion considered in Example \ref{eg:deletion}. 
Let us use a bar $\bar{*}$ to denote the corresponding object for the quiver $\bar{Q}$ obtained by omitting the component not in $\bar{Q}$. 
For example, let $\bar{E}_{\bar{\vv}}^\circ\subset \bar{E}_{\bar{\vv}}$ be the $\bar{G}_{\bar{\vv}}$-invariant open subset of $(f_{ij})_{(i\to j)\in Q_1}$ such that 
\begin{itemize}
    \item the linear map 
    $f_{k,k+1}:\mathbb{C}^{\vv_k}\to \mathbb{C}^{\vv_{k+1}}$ is injective for $k=1,\ldots,n-1$; 
    \item the linear map 
    $f_{n,+}:\mathbb{C}^{\vv_n}\to \mathbb{C}^{\vv_+}$ is surjective.
\end{itemize} 
We can naturally identify 
$$G/P =\{K_+\subseteq \mathbb{C}^n:\dim \mathbb{C}^n/K_+=p\}$$
with $1\cdot P$ corresponding to $K_+$ determined by \eqref{eq:stdKpm}. 
The following proposition is similar to Proposition \ref{prop:identifyG/S}.

\begin{prop}
The action of $\bar{G}_{\bar{\vv}}'$ on $\bar{E}_{\bar{\vv}}^\circ$ is free, and we have an isomorphism of $G$-varieties
$$
\bar{E}_{\bar{\vv}}^\circ/\bar{G}_{\bar{\vv}}'\stackrel{\sim}\longrightarrow G/P\times G/B. $$
\end{prop}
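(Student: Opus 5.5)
The plan is to imitate the proof of Proposition \ref{prop:identifyG/S}, now for the deleted quiver $\bar{Q}=Q\backslash -$, which is of type $A_{n+1}$. There are two claims: freeness of the $\bar{G}_{\bar{\vv}}'$-action, and that the orbit space is $G/P\times G/B$.

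\textbf{Freeness.} Here $\bar{G}_{\bar{\vv}}'=\prod_{i\neq n}GL(\mathbb{C}^{\vv_i})$, with the vertex $+$ included and $-$ omitted. Suppose $g=(g_i)$ fixes a point $(f_{ij})\in\bar{E}^\circ_{\bar{\vv}}$; the component at vertex $n$ is trivial.
\begin{itemize}
\item At vertex $+$: the condition $g_+ f_{n,+}=f_{n,+}$ together with surjectivity of $f_{n,+}$ forces $g_+=1$.
\item At vertex $n-1$: the condition $f_{n-1,n}g_{n-1}=f_{n-1,n}$ together with injectivity of $f_{n-1,n}$ forces $g_{n-1}=1$.
\item Descending induction along the chain $k\to k+1$, using injectivity at each step, gives $g_k=1$ for all $k$.
\end{itemize}

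\textbf{The map.} Send $(f_{ij})$ to the pair $(K_+,V_\bullet)$, where
\[
K_+=\ker f_{n,+},\qquad V_i=\operatorname{im}(f_{n-1,n}\circ\cdots\circ f_{i,i+1}).
\]
Injectivity gives $\dim V_i=i$, and surjectivity gives $\dim\mathbb{C}^n/K_+=p$. So we get a $G$-equivariant morphism to $G/P\times G/B$, using the identification $G/P=\{K_+:\dim\mathbb{C}^n/K_+=p\}$ recalled just before the statement. The map is constant on $\bar{G}_{\bar{\vv}}'$-orbits.

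\textbf{Bijectivity on orbits.}
\begin{itemize}
\item Surjectivity: for a target $(K_+,V_\bullet)$, choose bases of each $V_i$ and take the inclusions $V_i\hookrightarrow V_{i+1}$. Choose an identification $\mathbb{C}^n/K_+\cong\mathbb{C}^p$, and let $f_{n,+}$ be the quotient map.
\item Injectivity: two points with the same image differ by unique base changes at each $V_i$ and at $\mathbb{C}^n/K_+$. This is because $\ker f_{n,+}=K_+$ determines $f_{n,+}$ up to $GL_p$, and an injective map with prescribed image is determined up to $GL$ of its source. So the fibres are exactly single $\bar{G}_{\bar{\vv}}'$-orbits.
\end{itemize}

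\textbf{Upgrading to an isomorphism of varieties.} This is the only point needing care. Since the action is free, I will argue that $\bar{E}^\circ_{\bar{\vv}}\to G/P\times G/B$ is a principal $\bar{G}_{\bar{\vv}}'$-bundle. Concretely, it is locally trivial: over the open sets where chosen coordinate projections of $V_i$ and of $\mathbb{C}^n/K_+$ are isomorphisms, one gets canonical sections, just as in the standard Stiefel-type presentation of flag varieties and Grassmannians. A bijective equivariant morphism from the geometric quotient that is a locally trivial principal bundle is then an isomorphism.

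Alternatively, and more simply, one can note that the product of the analogous statements for $G/P$ (a Grassmannian of quotients) and for $G/B$ (iterated injections) is exactly this situation, since the only arrow linking them is through the common vertex $n$, where no group acts. Hence the quotient factors as a product of the two classical quotients.
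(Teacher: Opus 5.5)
Your proposal is correct and is essentially the routine verification the paper has in mind when it calls this proposition ``similar to'' Proposition~\ref{prop:identifyG/S}, which it in turn calls direct: trivial stabilizers from injectivity/surjectivity, the map $(f_{ij})\mapsto(\ker f_{n,+},V_\bullet)$ whose fibres are single $\bar{G}_{\bar{\vv}}'$-orbits, and local sections from coordinate charts --- or, more simply, your observation that $\bar{E}^\circ_{\bar{\vv}}$ splits as (injective chains) $\times$ (surjections $\mathbb{C}^n\to\mathbb{C}^p$), with $\bar{G}'_{\bar{\vv}}$ acting factorwise because no group acts at vertex $n$. The only thing to tighten is the wording of the last step: what you actually use is that a Zariski-locally trivial principal $\bar{G}'_{\bar{\vv}}$-bundle $\bar{E}^\circ_{\bar{\vv}}\to G/P\times G/B$ exhibits its base as the geometric quotient, not a statement about a ``bijective morphism from the quotient.''
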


Similarly, we have the following 
$$\begin{aligned}
\{\text{$P$-orbits of $G/B$}\}
& \cong \{\text{$G$-orbits of $G/P\times G/B$}\}\\
& \cong \{\text{$G$-orbits of $\bar{E}_{\bar{\vv}}^\circ/\bar{G}_{\bar\vv}'$}\}\\
& \cong \{\text{$\bar{G}_{\bar{\vv}}$-orbits of $\bar{E}_{\bar{\vv}}^\circ$}\}
\hookrightarrow 
\{\text{$\bar{G}_{\bar{\vv}}$-orbits of $\bar{E}_{\bar{\vv}}$}\}.
\end{aligned}$$
By  definition, the $\bar{G}_{\bar{\vv}}$-orbits of $\bar{E}_{\bar{\vv}}^\circ$ correspond to representations $V=(\{V_i\},\{f_{ij}\})$ of $\bar{Q}$ with $\ddim V=\bar{\vv}$ and 
\begin{itemize}
    \item the linear map 
    $f_{k,k+1}:V_k\to V_{k+1}$ is injective for $k=1,\ldots,n-1$; 
    \item the linear map 
    $f_{n,+}:V_n\to V_+$ is surjective.
\end{itemize} 
Equivalently, it is a direct sum of indecomposable representations of the same property. 
By Example \eqref{eg:QuiverA}, they are 
\begin{gather}
\label{eq:repA0}
\cdots
\longrightarrow 0 
\longrightarrow 0 
\longrightarrow 
\stackrel{i}{\vphantom{\tfrac12}
\,\mathbb{C}\,} \stackrel{1}\longrightarrow \cdots \stackrel{1}\longrightarrow
\stackrel{n}{\vphantom{\tfrac12}
\,\mathbb{C}\,} 
\longrightarrow 
\stackrel{+}{\vphantom{\tfrac12}
\,0\,}\\
\label{eq:repA1}
\cdots
\longrightarrow 0 
\longrightarrow 0 
\longrightarrow 
\stackrel{i}{\vphantom{\tfrac12}
\,\mathbb{C}\,} \stackrel{1}\longrightarrow \cdots \stackrel{1}\longrightarrow
\stackrel{n}{\vphantom{\tfrac12}
\,\mathbb{C}\,} 
\longrightarrow 
\stackrel{+}{\vphantom{\tfrac12}
\,\mathbb{C}\,} 
\end{gather}
for $1\leq i\leq n$. 
We can define a $p$-subset $A\subset [n]$ of the indices of $i\in [n]$ such that \eqref{eq:repA1} appears as a summand of $V$. 
We have 
$$\{\text{$P$-orbits of $G/B$}\}
\cong \{\text{$p$-subsets of $[n]$}\}.$$
The proof of the following Proposition is similar to the proof of Proposition \ref{prop:an-ele-S}. 

\begin{prop}\label{prop:Porbis}
For a $p$-subset $A$, 
the corresponding $P$-orbit is given by $PvB/B$, 
where $v\in S_n$ is the $p$-inverse Grassmannian permutation such that  $v^{-1}([p])=A$. 
\end{prop}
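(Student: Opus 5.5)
The approach mirrors the proof of Proposition \ref{prop:an-ele-S}, now for the deleted quiver $\bar{Q}=Q\backslash -$. Let $v$ be the $p$-inverse Grassmannian permutation with $v^{-1}([p])=A$. Set $V_i=\operatorname{span}(v\mathbf{e}_1,\ldots,v\mathbf{e}_i)=\operatorname{span}(\mathbf{e}_{v(1)},\ldots,\mathbf{e}_{v(i)})$, which is the flag $vB/B$, and let $K_+=\operatorname{span}(\mathbf{e}_{p+1},\ldots,\mathbf{e}_n)$ be the standard point $1\cdot P$. By the identification $\bar E^\circ_{\bar\vv}/\bar G'_{\bar\vv}\cong G/P\times G/B$, the $P$-orbit $PvB/B$ corresponds to the isomorphism class of the $\bar Q$-representation
$$V_1\stackrel{\subset}\longrightarrow V_2\stackrel{\subset}\longrightarrow\cdots\stackrel{\subset}\longrightarrow \mathbb{C}^n\longrightarrow \mathbb{C}^n/K_+ .$$
It therefore suffices to show that the set of indices $i$ for which summand \eqref{eq:repA1} appears in this representation is exactly $A$.

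The decomposition is direct. For each $i\in[n]$, take the one-dimensional line $L_i=\operatorname{span}(\mathbf{e}_{v(i)})$ placed at vertices $i,i+1,\ldots,n$, with the image of $L_i$ in $\mathbb{C}^n/K_+$ at vertex $+$.

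1. This image is nonzero exactly when $v(i)\le p$, that is, when $i\in v^{-1}([p])=A$.
2. In that case the summand is isomorphic to \eqref{eq:repA1} starting at $i$; otherwise it is isomorphic to \eqref{eq:repA0} starting at $i$.
3. The representation is the direct sum of these $n$ summands: $V_k=\bigoplus_{i\le k}L_i$ holds at each vertex $k$, and $\mathbb{C}^n/K_+=\bigoplus_{v(i)\le p}\operatorname{im}(L_i)$ holds since the $\mathbf{e}_a$ with $a\le p$ form a basis of the quotient.

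The only point needing care is that the summand decomposition at vertex $+$ is compatible with the maps. This holds because the quotient map sends each standard basis vector either to zero or to a basis vector of $\mathbb{C}^n/K_+$, and distinct $\mathbf{e}_a$ with $a\le p$ go to independent vectors. Combined with Krull--Schmidt uniqueness, this identifies the $p$-subset attached to $PvB/B$ as $A$.

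Finally, the assignment $A\mapsto PvB/B$ hits every orbit. The correspondence between $P$-orbits and $p$-subsets is a bijection, and each $p$-subset $A$ arises from a unique $p$-inverse Grassmannian permutation $v$. So the orbit labelled by $A$ is exactly $PvB/B$.
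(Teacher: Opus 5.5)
Your proposal is correct and is essentially the argument the paper intends, since the paper only says the proof mirrors that of Proposition~\ref{prop:an-ele-S}. You realize $PvB/B$ as the $\bar{Q}$-representation $V_1\subset\cdots\subset\mathbb{C}^n\to\mathbb{C}^n/K_+$ with $V_i=\operatorname{span}(\mathbf{e}_{v(1)},\ldots,\mathbf{e}_{v(i)})$, then split it along the lines $\operatorname{span}(\mathbf{e}_{v(i)})$; these give summand \eqref{eq:repA1} exactly for $i\in v^{-1}([p])$, and Krull--Schmidt uniqueness finishes the identification.
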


The map $\operatorname{del}:E_\vv^\circ\to \bar{E}_{\bar{\vv}}^\circ$ induces the following commutative diagrams
$$\xymatrix{
E_\vv^\circ/G_\vv'
\ar[r]^-{\sim}\ar[d]&
G/S\times G/B\ar[d]\\
\bar{E}_{\bar{\vv}}^\circ/\bar{G}_{\bar{\vv}}'\ar[r]^-{\sim} & G/P\times G/B.
}\qquad 
\xymatrix{
\{\text{$S$-orbits of $G/B$}\}
\ar[r]^-{\sim}\ar[d]& 
\{\text{$(p,m,q)$-preclans}\}\ar[d]\\
\{\text{$P$-orbits of $G/B$}\}
\ar[r]^-{\sim}& 
\{\text{$p$-subsets of $[n]$}\}.}$$
Under the deletion operation, 
$$
\eqref{eq:repDcomma},\eqref{eq:repD-}\longmapsto 
\eqref{eq:repA0}, \qquad 
\eqref{eq:repD+} \longmapsto \eqref{eq:repA1},
\qquad 
\eqref{eq:repD2} \longmapsto 
\eqref{eq:deleteofDrep2}.
$$
Thus the right vertical map above is given by sending a preclan $\gamma$ to the $p$-subset $A$ of the indices of $\clan{+}$ and left-ends. We thus have the following. 

\begin{prop}\label{prop:StoP}
The $S$-orbit  $S\dot{\gamma}B/B$ is contained in $Pv_\gamma B/B$. 
\end{prop}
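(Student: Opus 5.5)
The plan is to read off Proposition \ref{prop:StoP} from the commutative square just displayed. Write the point $\dot{\gamma}B/B$ as the representation $V_\bullet\to\mathbb{C}^n\to\mathbb{C}^n/K_\pm$ of $Q$, as in the proof of Proposition \ref{prop:an-ele-S}. Applying $\operatorname{del}$, we forget $K_-$ and obtain a representation of $\bar{Q}$. By the left square, this gives the $G$-orbit of $(1\cdot P,\dot{\gamma}B)$ in $G/P\times G/B$, which is the same as the $P$-orbit $P\dot{\gamma}B/B$. So it suffices to show that this $\bar{Q}$-representation corresponds, under Proposition \ref{prop:Porbis}, to the subset $A=v_\gamma^{-1}([p])$. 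That subset is the set of positions of the $\clan{+}$'s and left-ends of $\gamma$.

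First, the representation attached to $\gamma$ is a direct sum of indecomposables: one \eqref{eq:repD2} for each matching, and one of \eqref{eq:repDcomma}, \eqref{eq:repD+}, \eqref{eq:repD-} for each unmatched node. Deletion commutes with direct sums, so we compute summand by summand.
\begin{itemize}
\item \eqref{eq:repDcomma} and \eqref{eq:repD-}, starting at $i$, go to \eqref{eq:repA0} starting at $i$.
\item \eqref{eq:repD+} starting at $i$ goes to \eqref{eq:repA1} starting at $i$.
\item By the computation \eqref{eq:deleteofDrep2} in Example \ref{eg:deletion}, the summand \eqref{eq:repD2} for an $(i,j)$-matching goes to \eqref{eq:repA1} starting at $i$ plus \eqref{eq:repA0} starting at $j$.
\end{itemize}
The one thing to check here is the splitting in \eqref{eq:deleteofDrep2}. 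After the change of basis $(x,y)\mapsto(x,y-x)$ on the $\mathbb{C}^2$'s, the map $[1\ 0]$ to $V_+$ and the inclusion $[1\ 1]^T$ become the visible direct sum. This is a one-line verification.

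Collecting the summands, the copies of \eqref{eq:repA1} start exactly at the $\clan{+}$ nodes and the left-ends. So the associated $p$-subset is $A=\{\text{positions of }\clan{+}\text{ and left-ends}\}$, which is $v_\gamma^{-1}([p])$ by the definition of $v_\gamma$. Proposition \ref{prop:Porbis} then gives $P\dot{\gamma}B/B=Pv_\gamma B/B$, hence $S\dot{\gamma}B/B\subseteq P\dot{\gamma}B/B=Pv_\gamma B/B$.

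The main obstacle is making sure the identification in Proposition \ref{prop:Porbis} follows the same labelling convention as $v_\gamma$ (subset $A$ versus $v^{-1}([p])$ rather than $v([p])$). To settle this, check directly, as in the proof of Proposition \ref{prop:an-ele-S}, that for the flag of a permutation matrix $v$, the summand \eqref{eq:repA1} starts at $i$ exactly when $\mathbf{e}_{v(i)}\notin K_+$, that is, when $v(i)\le p$. This gives $A=v^{-1}([p])$ as required.
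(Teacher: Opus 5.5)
Your proposal is correct and matches the paper's argument. The paper also computes the deletion summand by summand: \eqref{eq:repDcomma} and \eqref{eq:repD-} go to \eqref{eq:repA0}, \eqref{eq:repD+} goes to \eqref{eq:repA1}, and \eqref{eq:repD2} goes to \eqref{eq:deleteofDrep2}. It then reads off that the $p$-subset is the set of positions of the $\clan{+}$'s and left-ends, and concludes via Proposition \ref{prop:Porbis}. Your explicit change of basis and your check that the labelling convention gives $A=v^{-1}([p])$ fill in details the paper leaves to the reader.
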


Let us deal with $Q$-orbits. 
We can naturally identify 
$$G/Q =\{K_-\subset \mathbb{C}^n:\dim \mathbb{C}^n/K_-=q\}$$
with $1\cdot Q$ corresponding to $K_-$ determined by \eqref{eq:stdKpm}. 
Similarly, we have
$$\{\text{$Q$-orbits of $G/B$}\}
\cong \{\text{$q$-subsets of $[n]$}\}.$$
Recall $w_0=n\cdots 21\in S_n$ is the longest element. 

\begin{prop}
For a $q$-subset $A$, 
the corresponding $Q$-orbit is given by $Qw_0uB/B$, where $u\in S_n$ is the $q$-inverse Grassmannian permutation such that $u^{-1}([q])=A$. 
\end{prop}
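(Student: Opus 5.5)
The plan is to mirror the proof of Proposition \ref{prop:Porbis}, with the vertex $+$ replaced by $-$. First I would set up the deletion $Q\backslash +$. This is the type $A$ quiver $1\to 2\to\cdots\to n\to -$, and the analogue of the proposition above gives an isomorphism of $G$-varieties between $E^\circ/G'$ for this quiver and $G/Q\times G/B$. Here $E^\circ$ is cut out by injectivity of the maps $f_{k,k+1}$ and surjectivity of $f_{n,-}$, and the point $1\cdot Q$ corresponds to $K_-=\operatorname{span}(\mathbf e_1,\ldots,\mathbf e_{n-q})$ as in \eqref{eq:stdKpm}.

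Next I would invoke the classification of Example \ref{eg:QuiverA}. The indecomposables satisfying the injectivity and surjectivity conditions are the strings $\mathbb C$ supported on $[i,n]$, either with $V_-=0$ (the analogue of \eqref{eq:repA0}) or with $V_-=\mathbb C$ and the identity map (the analogue of \eqref{eq:repA1}). By Krull--Schmidt, an orbit is determined by the set $A\subseteq[n]$ of starting indices $i$ of summands of the second kind. Counting $\dim V_-=q$ shows that $A$ is a $q$-subset, and every $q$-subset occurs. This yields the bijection between $Q$-orbits on $G/B$ and $q$-subsets of $[n]$.

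The remaining step is to identify the orbit attached to $A$ with $Qw_0uB/B$. For this I would compute the representation determined by the flag of $w_0u$. Its $i$-th column is $\mathbf e_{w_0u(i)}=\mathbf e_{n+1-u(i)}$, so $V_i=\operatorname{span}(\mathbf e_{n+1-u(1)},\ldots,\mathbf e_{n+1-u(i)})$. Since $K_-$ is a coordinate subspace, the lines $L_i=\operatorname{span}(\mathbf e_{n+1-u(i)})$ split the whole representation as a direct sum over $i$. The $i$-th summand is supported on $[i,n]$, and its image in $\mathbb C^n/K_-$ is nonzero exactly when $n+1-u(i)>n-q$, that is, when $u(i)\le q$. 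Hence the set attached to this orbit is $u^{-1}([q])$, which equals $A$.

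The only delicate point is bookkeeping rather than a real obstacle. One must check that the convention $w_0u$ (and not, say, $uw_0$ or $w_0uw_0$) matches both the identification $G/Q\cong\{K_-\}$ and the choice of $K_-$ as the span of the first $n-q$ basis vectors. The coordinate computation above settles this. Finally, $Qw_0uB/B$ depends only on $u^{-1}([q])$, because changing $u$ by an element of $S_q\times S_{n-q}$ acting on values becomes, after conjugation by $w_0$, left multiplication by a permutation matrix lying in $Q$. Thus the $q$-inverse Grassmannian $u$ is a well-defined representative.
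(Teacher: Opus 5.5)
Your proposal is correct and follows the route the paper intends. You delete the vertex $+$ to get the type $A$ quiver and label orbits by the $q$-subset of starting indices of the summands reaching $-$. You then identify the orbit by splitting the representation of the coordinate flag of $w_0u$ against $K_-=\operatorname{span}(\mathbf e_1,\ldots,\mathbf e_{n-q})$, which is where the $w_0$ comes from. The paper only says this is analogous to Propositions \ref{prop:an-ele-S} and \ref{prop:Porbis}, and your check $n+1-u(i)>n-q \iff u(i)\le q$ supplies the detail it omits.
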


Note that it is necessary to insert a $w_0$ in front of $u$. 
This is because, distinct from the case $K_+$, the subspace $K_-$ is the span of the first $n-q$ standard basis vectors. 
We have 
$$\xymatrix{
\{\text{$S$-orbits of $G/B$}\}
\ar[r]^-{\sim}\ar[d]& 
\{\text{$(p,m,q)$-preclans}\}\ar[d]\\
\{\text{$Q$-orbits of $G/B$}\}
\ar[r]^-{\sim}& 
\{\text{$q$-subsets of $[n]$}\}.}$$
The right vertical map is given by sending a preclan $\gamma$ to the $q$-subset $A$ of the indices of $\clan{-}$ and left-ends. 

\begin{prop}\label{prop:StoQ}
The $S$-orbit  $S\dot{\gamma}B/B$ is contained in $Qw_0u_\gamma B/B$. 
\end{prop}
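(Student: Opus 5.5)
The statement to prove is Proposition~\ref{prop:StoQ}: the $S$-orbit $S\dot{\gamma}B/B$ lies in $Qw_0u_\gamma B/B$. I would mirror the argument for Proposition~\ref{prop:StoP}, using the deletion $\bar{Q}' = Q\backslash +$ instead of $Q\backslash -$.

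\textbf{Quiver identification for $Q$-orbits.} The quiver $Q\backslash +$ is the type $A$ quiver $1\to 2\to\cdots\to n\to -$. As in the $P$-case, the $G$-orbits on $G/Q\times G/B$ correspond to representations with injective maps $f_{k,k+1}$ and surjective $f_{n,-}$. These are direct sums of the two kinds of indecomposables \eqref{eq:repA0} and \eqref{eq:repA1}, with $+$ replaced by $-$. The $Q$-orbit is therefore recorded by the $q$-subset $A$ of starting indices $i$ whose summand reaches $V_-=\mathbb{C}$.

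\textbf{Computing the deletion.} Deleting the vertex $+$ acts summand by summand on the decomposition attached to $\gamma$:
\begin{itemize}
\item \eqref{eq:repDcomma} and \eqref{eq:repD+} go to summands with $V_-=0$;
\item \eqref{eq:repD-} goes to a summand starting at $i$ with $V_-=\mathbb{C}$;
\item \eqref{eq:repD2} splits, exactly as in \eqref{eq:deleteofDrep2} with the roles of the two legs swapped. One piece starts at $i$ with map $[0\ 1]$ to $V_-$; the other starts at $j$ and dies at $-$.
\end{itemize}
The swap in the last case needs a change of basis in $\mathbb{C}^2$ sending $(1,1)^T$ and the kernel of $[0\ 1]$ to coordinate vectors. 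Hence $A$ is the set of indices of $\clan{-}$'s and left-ends of $\gamma$, which is precisely $u_\gamma^{-1}([q])$ by the definition of $u_\gamma$.

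\textbf{Identifying the $Q$-orbit as $Qw_0uB/B$.} I would show that for a $q$-subset $A$ the orbit is $Qw_0uB/B$ with $u^{-1}([q])=A$. For this, take the permutation flag $w_0u\cdot B$, i.e.\ $V_i=\operatorname{span}(\mathbf{e}_{w_0u(1)},\dots,\mathbf{e}_{w_0u(i)})$, together with $K_-=\operatorname{span}(\mathbf{e}_1,\ldots,\mathbf{e}_{n-q})$. The vector $\mathbf{e}_{w_0u(i)}=\mathbf{e}_{n+1-u(i)}$ lies outside $K_-$ exactly when $u(i)\le q$, i.e.\ $i\in A$. So the summand reaching $V_-$ starts precisely at the indices in $A$. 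This is the point where $w_0$ is needed, since $K_-$ is spanned by the first rather than the last basis vectors.

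\textbf{Conclusion and expected obstacle.} The $G$-equivariant square relating $E_\vv^\circ/G_\vv'\to G/S\times G/B$ and its deletion to $G/Q\times G/B$ commutes, so the $S$-orbit of $\gamma$ maps into the $Q$-orbit indexed by $A=u_\gamma^{-1}([q])$, namely $Qw_0u_\gamma B/B$. The only mildly delicate step is the basis change in the \eqref{eq:repD2} summand and the $w_0$ bookkeeping; everything else is formal, as in Proposition~\ref{prop:StoP}.
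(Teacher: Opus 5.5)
Your proposal is correct and follows essentially the same route as the paper: delete the vertex $+$, track the indecomposable summands so that $\gamma$ is sent to the $q$-subset of indices of minus-colored nodes and left-ends (which is $u_\gamma^{-1}([q])$), and identify the $Q$-orbit indexed by that subset with $Qw_0u_\gamma B/B$. Your explicit check, comparing the coordinate flag of $w_0u$ with $K_-=\operatorname{span}(\mathbf{e}_1,\ldots,\mathbf{e}_{n-q})$, fills in a step the paper only asserts.
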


\end{document}